\pdfoutput=1
\documentclass[11pt]{article} 
\def\arXiv{1} 

\def\usejohntable{1}

\usepackage{enumitem}
\usepackage{amssymb}
\usepackage{amsbsy}
\usepackage{amsmath}
\usepackage{amsfonts}
\usepackage{latexsym}
\usepackage{graphicx}
\usepackage{color}
\usepackage{xifthen}
\usepackage{xspace}
\usepackage{mathtools}
\usepackage{bbm}
\usepackage{multirow}
\usepackage[normalem]{ulem}
\usepackage{array}
\usepackage[utf8]{inputenc}
\usepackage[T1]{fontenc}
\usepackage{etoolbox}
\newtoggle{restatements}

\newtoggle{heavyplots}

\usepackage{xfrac}
\usepackage{booktabs}
\newcommand{\notarxiv}[1]{foo}
\newcommand{\arxiv}[1]{ba}
\ifdefined \arXiv
	\renewcommand{\arxiv}[1]{#1}%
	\renewcommand{\notarxiv}[1]{\ignorespaces}%
\else%
	\renewcommand{\arxiv}[1]{\ignorespaces}%
	\renewcommand{\notarxiv}[1]{#1}%
\fi%

\usepackage{titlesec}

\usepackage{amsthm}
\usepackage{thmtools}
\usepackage{thm-restate}
\usepackage[linesnumbered,ruled,vlined]{algorithm2e}

\notarxiv{
\usepackage[utf8]{inputenc} %
\usepackage[T1]{fontenc}    %
\usepackage{url}            %
\usepackage{booktabs}       %
\usepackage{amsfonts}       %
\usepackage{nicefrac}       %
\usepackage{microtype}      %

\usepackage{tabularx}
\usepackage[noend]{algpseudocode}
\usepackage{stackengine}
\usepackage{comment}
\definecolor{darkblue}{rgb}{0,0,.5}
\usepackage[colorlinks=true,allcolors=darkblue]{hyperref}
}

\arxiv{
	\titlespacing*{\paragraph}{0pt}{6pt plus 2pt minus 2pt}{6pt}
	\usepackage[dvipsnames]{xcolor}
	\usepackage[top=1in, right=1in, left=1in, bottom=1in]{geometry}
	\usepackage[numbers,square]{natbib}
	\usepackage{url}
	\usepackage[hidelinks]{hyperref}
	\hypersetup{
		colorlinks=true,
		linkcolor=blue!70!black,
		citecolor=blue!70!black,
		urlcolor=blue!70!black}
        \usepackage{comment}
}

\usepackage{empheq}

\usepackage{cases}

\definecolor{innerboxcolor}{rgb}{.9,.95,1}
\definecolor{outerlinecolor}{rgb}{.6,0,.2}

\theoremstyle{plain}

\newtheorem{lemma}{Lemma}
\newtheorem{claim}{Claim}
\newtheorem{proposition}{Proposition}

\newtheorem{definition}{Definition}

\theoremstyle{definition}
\newtheorem{remark}{Remark}

\newtheorem*{example*}{Example}

\theoremstyle{plain}

\newenvironment{customthm}[1]
{\innercustomthm}
{\endinnercustomthm}

\newenvironment{customprop}[1]
{\innercustomprop}
{\endinnercustomprop}

\newenvironment{customclaim}[1]
{\innercustomclaim}
{\endinnercustomclaim}

\newtheorem{assumption}{Assumption}

\usepackage{cleveref}

\newcommand{\mc}[1]{\mathcal{#1}}

\newcommand{\what}[1]{\widehat{#1}}  %
\newcommand{\lone}[1]{\norm{#1}_1} %
\newcommand{\linf}[1]{\norm{#1}_\infty} %
\newcommand{\R}{\mathbb{R}} %
\newcommand{\N}{\mathbb{N}} %
\DeclareMathOperator{\E}{\mathbb{E}} %
\newcommand{\Var}{\mathrm{Var}}
\newcommand{\var}{\mathrm{Var}}
\renewcommand{\P}{\mathbb{P}}	%

\newcommand{\simiid}{\stackrel{\textup{iid}}{\sim}}

\SetCommentSty{mycommfont}
\SetKwInput{KwInput}{Input}                %
\SetKwInput{KwOutput}{Output}              %
\SetKwInput{KwReturn}{Return}              %

\makeatletter
\let\oldnl\nl%
\newcommand{\nonl}{\renewcommand{\nl}{\let\nl\oldnl}}%
\makeatother

\DeclareMathOperator*{\argmax}{arg\,max}

\DeclareMathOperator*{\argmin}{arg\,min}

\providecommand{\sign}{\mathop{\rm sign}}

\newcommand{\hinge}[1]{\left({#1}\right)_+} %

\newcommand{\abs}[1]{}
\let\abs\undefined
\newcommand{\brk}[1]{}
\let\brk\undefined
\newcommand{\prn}[1]{}
\let\prn\undefined
\newcommand{\crl}[1]{}
\let\crl\undefined

\DeclarePairedDelimiter{\abs}{\lvert}{\rvert} 
\DeclarePairedDelimiter{\brk}{[}{]}
\DeclarePairedDelimiter{\crl}{\{}{\}}
\DeclarePairedDelimiter{\prn}{(}{)}

\DeclarePairedDelimiter{\norm}{\|}{\|}
\DeclarePairedDelimiter{\tri}{\langle}{\rangle}

\DeclarePairedDelimiter{\ceil}{\lceil}{\rceil}
\DeclarePairedDelimiter{\floor}{\lfloor}{\rfloor}

\providecommand{\minimize}{\mathop{\rm minimize}}

\newcommand{\half}{\frac{1}{2}}

\newcommand{\defeq}{\coloneqq}
\newcommand{\eqdef}{\eqqcolon}
\newcommand{\grad}{\nabla}
\renewcommand{\d}{\mathrm{d}}
\newcommand{\del}{\partial}

\newcommand{\xset}{\mathcal{X}}

\def\ie{{i.e.\ }}

\newcommand{\eps}{\epsilon}

\renewcommand{\floor}[1]{\lfloor #1 \rfloor}

\newcommand{\innermid}{\nonscript\;\delimsize\vert\nonscript\;}
\newcommand{\activatebar}{%
	\begingroup\lccode`\~=`\|
	\lowercase{\endgroup\let~}\innermid 
	\mathcode`|=\string"8000
}

\newcommand{\inner}[2]{\left<#1,#2\right>}

\NewDocumentCommand{\prox}{ m m 
O{\alpha}}{\mathrm{Prox}_{#1}^{#3}(#2)}

\newcommand{\simplexN}{\Delta^N}

\newcommand{\cs}{\chi^2}
\renewcommand{\div}{\mathrm{D}}
\newcommand{\divcs}{\div_{\cs}}

\newcommand{\phidiv}[2]{\div_\phi\left({#1}, {#2}\right)}

\newcommand{\divpsi}{\div_{\psi}}
\newcommand{\divkl}{\div_{\mathrm{kl}}}
\newcommand{\uset}{\mc{U}}
\newcommand{\usetCVaR}{\uset^\alpha_{\textup{CVaR}}}
\newcommand{\usetCS}{\uset^\rho_{\cs}}
\newcommand{\Phat}[1]{\what{P}\brk{#1}}

\newcommand{\ones}{\boldsymbol{1}}

\newcommand{\loss}{\ell}
\newcommand{\opt}{^\star}
\newcommand{\x}{x}
\newcommand{\X}{\mathcal{X}}
\newcommand{\s}{s}
\renewcommand{\S}{S}
\renewcommand{\ss}{\mathbb{S}} %
\renewcommand{\L}{\mc{L}} %
\newcommand{\CSbdd}{C}
\newcommand{\Lcvar}{\L_{\textup{CVaR}}}
\newcommand{\LcvarR}{\L_{\textup{kl-CVaR}}}
\newcommand{\Lcs}{\L_{\cs}}
\newcommand{\Llam}{\L_{\cs\textup{-pen}}}
\newcommand{\bL}{\overline{\L}}
\newcommand{\bLcvar}{\bL_{\textup{CVaR}}}
\newcommand{\bLcvarR}{\bL_{\textup{kl-CVaR}}}
\newcommand{\bLcs}{\bL_{\cs}}
\newcommand{\bLlam}{\bL_{\cs\textup{-pen}}}

\newcommand{\icdfLip}{G_{\textup{icdf}}}

\newcommand{\wset}{\mathcal{W}}
\newcommand{\rset}{\mathcal{R}}

\newcommand{\err}{\mathrm{err}_T}

\newcommand{\Fhat}{\what{F}}

\newcommand{\kmax}{n}
\newcommand{\jmax}{j_{\max}}
\newcommand{\mld}{\what{\mc{D}}}
	
\newcommand{\betarv}{\mathsf{Beta}}
\newcommand{\binrv}{\mathsf{Bin}}
\newcommand{\bernrv}{\mathsf{Bernoulli}}

\newcommand{\unirv}{\mathsf{Unif}}

\newcommand{\ml}{\what{\mc{M}}}
\newcommand{\funct}{\mathsf{F}}

\newcommand{\biasbound}{\mathrm{bb}}
\newcommand{\softind}{\mc{I}_\alpha}

\newcommand{\indic}[1]{1_{\{#1\}}}
\newcommand{\convind}{\mathbb{I}}

\newcommand{\lambdaH}{\overline{\lambda}}
\newcommand{\lambdaL}{\underline{\lambda}}
\newcommand{\pind}[1]{^{(#1)}}
\newcommand{\LcsRange}[1]{\L_{\cs [#1]}}

\makeatletter
\long\def\@makecaption#1#2{
  \vskip 0.8ex
  \setbox\@tempboxa\hbox{\small {\bf #1:} #2}
  \parindent 1.5em  %
  \dimen0=\hsize
  \advance\dimen0 by -3em
  \ifdim \wd\@tempboxa >\dimen0
  \hbox to \hsize{
    \parindent 0em
    \hfil 
    \parbox{\dimen0}{\def\baselinestretch{0.96}\small
      {\bf #1.} #2
    } 
    \hfil}
  \else \hbox to \hsize{\hfil \box\@tempboxa \hfil}
  \fi
}
\makeatother

\notarxiv{
 \titlespacing*{\section}
 {0pt}{6pt plus 2pt minus 1pt}{3pt plus 0pt minus 3pt}
 \titlespacing*{\subsection}
 {0pt}{3pt plus 0pt minus 3pt}{0pt plus 0pt minus 3pt}
 \titlespacing*{\paragraph}
 {0pt}{0pt plus 0pt minus 0pt}{6pt}
}

\newcommand\blfootnote[1]{%
	\begingroup
	\renewcommand\thefootnote{}\footnote{#1}%
	\addtocounter{footnote}{-1}%
	\endgroup
}

\title{Large-Scale Methods for \\
  Distributionally Robust Optimization}

\arxiv{
\author{Daniel Levy\thanks{ Equal contribution.} ~~~  Yair Carmon$^*$ ~~~ 
John Duchi ~~~ Aaron Sidford\\
	\texttt{\{\href{mailto:danilevy@stanford.edu}{danilevy},%
		\href{mailto:jduchi@stanford.edu}{jduchi},%
		\href{mailto:sidford@stanford.edu}{sidford}\}@stanford.edu,
		\href{mailto:ycarmon@cs.tau.ac.il}{ycarmon@cs.tau.ac.il}
	}}
\date{}
}
\notarxiv{
\author{%
	 Daniel Levy\thanks{ Equal contribution. Code is available on GitHub at 
	\url{https://github.com/daniellevy/fast-dro/}.}, Yair Carmon$^*$, John C.\ Duchi and Aaron Sidford\\
	Stanford University\\
	\texttt{\{\href{mailto:danilevy@stanford.edu}{danilevy},%
		\href{mailto:jduchi@stanford.edu}{jduchi},%
		\href{mailto:sidford@stanford.edu}{sidford}\}@stanford.edu,, \href{mailto:ycarmon@cs.tau.ac.il}{ycarmon@cs.tau.ac.il}}\\
}
}

\begin{document}

\maketitle

\arxiv{\blfootnote{Code and data are available on GitHub at 
	\url{https://github.com/daniellevy/fast-dro/}.}}

\begin{abstract}%
  We propose and analyze algorithms for distributionally robust 
  optimization of convex losses with conditional value at risk (CVaR) and 
  $\chi^2$ divergence uncertainty sets. We prove that our algorithms 
  require a number of gradient evaluations independent of training set 
  size and number of parameters, making them suitable for large-scale 
  applications. For $\chi^2$ uncertainty sets these are the first such 
  guarantees in the literature, and for CVaR our guarantees scale linearly in 
  the uncertainty level rather than quadratically as in previous work. We also 
  provide lower bounds proving the worst-case optimality of our algorithms 
  for CVaR and a penalized version of the $\chi^2$ problem. Our primary 
  technical contributions are novel bounds on the bias of batch robust risk 
  estimation and the variance of a multilevel Monte Carlo gradient estimator 
  due to~\citet{BlanchetGl15}. Experiments on MNIST and 
   ImageNet  confirm the theoretical scaling of our algorithms, 
  which are 9--36 times more efficient than full-batch methods.
\end{abstract}

\arxiv{%
\section{Introduction}

The growing role of machine learning in high-stakes decision-making raises
the need to train reliable models that perform robustly across
subpopulations and environments~\cite{BuolamwiniGe18, FusterGoRaWa18,
  TorralbaEf11, RechtRoScSh19, HendrycksDi19, OakdenDuCaRe20, KalraPa16}.
Distributionally robust optimization (DRO)~\cite{Ben-TalHeWaMeRe13,
  Shapiro17} shows promise as a way to address this challenge, with recent
interest in both the machine learning
community~\cite{SinhaNaDu18,WangGuHaCoGuJo20, DuchiNa20, StaibJe19,
  HashimotoSrNaLi18,OrenSaHaLi19} and 
 in operations
research~\cite{DelageYe10, Ben-TalHeWaMeRe13, BertsimasGuKa18,
  EsfahaniKu18}. Yet while DRO has had substantial impact in operations
research, a lack of scalable optimization methods has hindered its adoption 
in common machine learning practice. 

In contrast to empirical risk minimization (ERM), which
minimizes an expected loss $\E_{S\sim P_0}\ell(x;S)$ over $x\in\X\subset \R^d$ with 
respect to a training distribution
$P_0$, DRO minimizes the expected loss with respect to the worst 
distribution in an uncertainty set $\uset(P_0)$, that is, its goal is
to solve
\begin{equation}\label{eq:pop-dro}
  \minimize_{\x \in \X} \ \L(x; P_0)  \defeq \sup_{Q \in 
  \mc{U}(P_0)}
   \E_{S\sim Q} \ell(x; S).
\end{equation}
The literature considers several uncertainty 
sets~\cite{Ben-TalHeWaMeRe13, BertsimasGuKa18, BlanchetKaMu19, 
EsfahaniKu18}, and we focus on two particular choices: (a) the set of 
distributions with bounded likelihood ratio to $P_0$,  so that 
$\L$ becomes the conditional value at risk 
(CVaR)~\cite{RockafellarUr00,ShapiroDeRu09}, and (b) the set of 
distributions with bounded $\cs$ divergence  to 
$P_0$~\cite{Ben-TalHeWaMeRe13, Csiszar67}. 
Some of our results extend to more general $\phi$-divergence (or R\'{e}nyi
divergence) balls~\cite{ErvenHa14}.
Minimizers of these objectives enjoy favorable statistical
properties~\cite{DuchiNa20, HashimotoSrNaLi18}, but finding them is more
challenging than standard ERM. More specifically, stochastic gradient methods
solve ERM with a number of $\grad \ell$ computations independent of both $N$,
the support size of $P_0$ (i.e., number of data points), and $d$, the dimension
of $x$ (i.e., number of parameters). These guarantees do not directly apply to
DRO because the supremum over $Q$ in~\eqref{eq:pop-dro} makes cheap
sampling-based gradient estimates biased. As a consequence, existing techniques
for minimizing the $\cs$ objective~\cite{Ben-TalGhNe09, DelageYe10,
  Ben-TalHeWaMeRe13,BertsimasGuKa18,NamkoongDu16,DuchiNa20} have $\grad \ell$
evaluation complexity scaling linearly (or worse) in either $N$ or $d$, which is
prohibitive in large-scale applications.

In this paper, we consider the setting in which $\ell$ is a Lipschitz convex
loss, a prototype
case for stochastic optimization and machine 
learning~\cite{Zinkevich03, NemirovskiJuLaSh09}, and we propose
methods for solving the problem~\eqref{eq:pop-dro} with $\grad \ell$
 complexity independent of sample size $N$ and dimension $d$, 
and  with
optimal (linear) dependence on the uncertainty set size. 

Let us define the three objectives we consider. For ease of comparison to prior
work, we focus in the introduction on the case where $P_0$ is the uniform
distribution on the points $\{s_i\}_{i=1}^N$. However, our developments in the
remainder of the paper make no assumptions on $P_0$, and our results hold for
non-uniform distributions with infinite support. Let
$\simplexN \defeq \{q \in \R^N_{\ge 0} \mid \ones^T q = 1\}$ denote the
probability simplex in $\R^N$. The first first objective is the
\emph{conditional value at risk (CVaR)} at level $\alpha$, corresponds to the
uncertainty set
$\mc{U}(P_0) = \{q \in \simplexN \mid \linf{q} \le \frac{1}{\alpha N}\}$,
\begin{equation}
  \label{eqn:cvar-def}
  \Lcvar(x; P_0)
  \defeq \sup_{q \in \simplexN} 
  \crl[\bigg]{\sum_{i = 1}^N q_i \ell(x; s_i)~\textrm{s.t.}~\linf{q} \le 
  \tfrac{1}{\alpha N}}
  = \inf_{\eta \in \R}\left\{\frac{1}{\alpha N } \sum_{i=1}^N 
    \hinge{\loss(x; s_i) - \eta}  + \eta \right\},
\end{equation}
where the equality is a standard
duality relationship~\cite{Ben-TalHeWaMeRe13, ShapiroDeRu09}.
The second is the $\chi^2$-constrained objective, where
 the $\chi^2$ divergence is $\divcs(Q,P)
= \half \int (\frac{\d Q}{\d P} - 1)^2 \d P$. For $q\in \Delta^N$ we slightly overload notation to write
\begin{equation*}
  \divcs(q) \defeq \divcs\prn*{\sum_{i=1}^N q_i \delta_{s_i}, P_0}
  = \frac{1}{2 N} \sum_{i = 1}^N (N q_i - 1)^2,
\end{equation*}
so that 
$\mc{U}(P_0) = \{q \in \simplexN \mid \divcs(q) \le \rho\}$ for a
constraint $\rho \ge 0$, and
the \emph{$\cs$-constrained} objective is
\begin{equation}
  \label{eqn:chi-square-def}
  \Lcs(x; P_0) \defeq \sup_{q \in \simplexN} \bigg\{ \sum_{i = 1}^N
  q_i \loss(x; s_i) ~ \mbox{s.t.}~ \divcs(q) \le \rho \bigg\}.
\end{equation}
Finally, the \emph{penalized $\chi^2$ objective} replaces the hard
constraint~\eqref{eqn:chi-square-def} with regularization,
\begin{equation}
  \label{eqn:chi-square-reg-def}
  \Llam(x; P_0)
  \defeq \sup_{q \in \simplexN} \bigg\{ \sum_{i = 1}^N q_i \loss(x; s_i)
  - \lambda \divcs(q) \bigg\}.
\end{equation}

We develop sampling-based algorithms for each of the
objectives~\eqref{eqn:cvar-def}--\eqref{eqn:chi-square-reg-def}. In 
Table~\ref{table:summary} we
summarize their complexities and compare them to previous work.
Each entry of the table shows the number of (sub)gradient evaluations to
obtain a point with optimality gap $\epsilon$; for reference, recall that for 
ERM the stochastic subgradient method requires order $\epsilon^{-2}$ 
evaluations, independent of $d$ and $N$.  We discuss related work further in
Section~\ref{sec:related} after outlining our approach.

\begin{table}\label{table:summary}
  \begin{center}
    \ifdefined\usejohntable
    \begin{tabular}{cccc}
      \toprule
      & CVaR at level $\alpha$
      & $\cs$ constraint $\rho$
      & $\cs$ penalty $\lambda$\\
      \midrule
      Objective
      & $\Lcvar$~\eqref{eqn:cvar-def}
      & $\Lcs$~\eqref{eqn:chi-square-def}
      & $\Llam$~\eqref{eqn:chi-square-reg-def} \\
      \midrule
      Subgradient method
      & $N\eps^{-2}$
      & $N\eps^{-2}$
      & $N\eps^{-2}$ \\
      Dual SGM~[Appendix~\ref{app:dual-sgm}]
      & $\alpha^{-2}\epsilon^{-2}$
      & -
      & $\lambda^{-2}\epsilon^{-2}$ \\
      Subsampling~\cite{DuchiNa20}
      & -
      & $\rho^2d\epsilon^{-4}$
      & - \\
      Stoch.\ primal-dual~\cite{CuriLeJeKr19,NamkoongDu16}
      & $N\epsilon^{-2}$
      & $N\rho\epsilon^{-2}$
      & - \\ \midrule
      \textbf{Ours}
      & $\alpha^{-1} \epsilon^{-2}$ (Thm.~\ref{thm:ml})
      & $\rho\epsilon^{-3}$ (Thm.~\ref{thm:doubling})
      & $\lambda^{-1}\epsilon^{-2}$ (Thm.~\ref{thm:ml}) \\
      Lower Bound
      & $\alpha^{-1}\epsilon^{-2}$ (Thm.~\ref{thm:lb})
      & $\rho\epsilon^{-2}$~\cite{DuchiNa20}
      & $\lambda^{-1}\epsilon^{-2}$ (Thm.~\ref{thm:lb})\\
      \bottomrule
    \end{tabular}
    \else
    \begin{tabular}{cccc}
      \toprule
      & CVaR at level $\alpha$
      & $\cs$ constraint $\rho$
      & $\cs$ penalty $\lambda$\\
      \midrule
      Objective $\L(x; P_0)=$
      & $\displaystyle{\sup_{\norm{q}_\infty \le \frac{1}{\alpha N}}} 
      q^\top\ell(\x)$
      & $\displaystyle{\sup_{\divcs(q) \le \rho}} 
      q^\top\ell(\x)$
      & $\displaystyle{\sup_{q\in\Delta^N}} {q^\top\ell(\x) - 
        \lambda\divcs\prn*{q}}$\\
      \midrule
      Subgradient method
      & $N\eps^{-2}$
      & $N\eps^{-2}$
      & $N\eps^{-2}$ \\
      Dual SGM~[Appendix~\ref{app:dual-sgm}]
      & $\alpha^{-2}\epsilon^{-2}$
      & -
      & $\lambda^{-2}\epsilon^{-2}$ \\
      Subsampling~\cite{DuchiNa20}
      & -
      & $\rho^2d\epsilon^{-4}$
      & - \\
      Stoch.\ primal-dual~\cite{CuriLeJeKr19,NamkoongDu16}
      & $N\epsilon^{-2}$
      & $N\rho\epsilon^{-2}$
      & - \\ \midrule
      \textbf{Ours}
      & $\alpha^{-1} \epsilon^{-2}$ (Thm.~\ref{thm:ml})
      & $\rho\epsilon^{-3}$ (Thm.~\ref{thm:doubling})
      & $\lambda^{-1}\epsilon^{-2}$ (Thm.~\ref{thm:ml}) \\
      Lower Bound
      & $\alpha^{-1}\epsilon^{-2}$ (Thm.~\ref{thm:lb})
      & $\rho\epsilon^{-2}$~\cite{DuchiNa20}
      & $\lambda^{-1}\epsilon^{-2}$ (Thm.~\ref{thm:lb})\\
      \bottomrule
    \end{tabular}
    \fi
    \vspace{3pt}
    \caption{Number of $\grad \ell$ evaluations to obtain $\E[\L(\x;P_0)] -
      \inf_{x'\in\X}\L(\x';P_0) \le \epsilon$ when $P_0$ is uniform on $N$
      training points. For simplicity we omit the Lipschitz constant of
      $\ell$, the size of the domain $\X$, and logarithmic factors.  
    }
\end{center}
\end{table}

We employ two gradient estimation strategies; 
the first uses a biased subsampling approximation to the objective
$\L$, and the second uses an essentially unbiased
 multi-level Monte Carlo~\cite{Giles08, Giles15}
gradient estimator. We begin by describing the former, which we develop in
Section~\ref{sec:batch}. Let $\what{P}_n$ be uniform distribution on a
random mini-batch of size $n$ (typically much smaller than $N$) sampled 
i.i.d.\ 
from $P_0$, and define the
surrogate objective $\bL(x;n)=\E \L(x;\what{P}_n)$, where the expectation 
is over the mini-batch samples.
In contrast to the full objective~\eqref{eq:pop-dro}, 
it is straightforward to 
obtain unbiased gradient estimates for $\bL$---using the mini-batch 
estimator $\grad \L(x;\what{P}_n)$---%
and to optimize it efficiently with stochastic gradient methods. 

We establish that $\bL$ is a useful surrogate for $\L$ by proving uniform 
bounds on the
error $|\L(x;P_0)-\bL(x;n)|$. For CVaR~\eqref{eqn:cvar-def} we prove a bound
scaling as $1/\sqrt{n}$ and extend it to other objectives, 
including~\eqref{eqn:chi-square-def}, via
the Kusuoka representation~\cite{Kusuoka01}. Notably, for the penalty
version of the $\cs$ objective~\eqref{eqn:chi-square-reg-def}  
we prove a stronger bound scaling
as $1/n$.

This analysis implies that, for large enough mini-batch size $n$, an
$\frac{\epsilon}{2}$-minimizer of $\bL$ is also an $\epsilon$-minimizer of
$\L$. Further, for CVaR and the $\cs$ penalized objective, we show that the
variance of the gradient estimator decreases as $1/n$, and we use Nesterov
acceleration to decrease the required number of (stochastic) gradient steps.

To obtain algorithms with improved oracle complexities, in 
Section~\ref{sec:multilevel} we present a theoretically more efficient 
multi-level Monte Carlo (MLMC)~\cite{Giles08, Giles15}  gradient estimator 
which is a slight modification of the general technique 
of~\citet{BlanchetGl15}. The resulting estimator is unbiased for $\grad 
\bL(x;n)$ but requires only a \emph{logarithmic number of samples} in $n$ 
in expectation. (In contrast, the above-mentioned mini-batch estimator 
requires $n$ samples). For CVaR and $\cs$ penalty we control the second 
moment 
of the gradient estimator, resulting in complexity bounds scaling with 
$\epsilon^{-2}$. In Section~\ref{sec:lowerbounds} we prove that these 
rates  are worst-case optimal up to logarithmic factors.

Unfortunately, direct application of the MLMC estimator for the $\cs$-constrained objective~\eqref{eqn:chi-square-def} 
demonstrably 
fails to achieve a second moment bound. Instead, in~\Cref{sec:doubling} 
we optimize its 
Lagrange dual---the $\cs$ 
penalty---with respect to $x$ and Lagrange multiplier $\lambda$. Using a 
 doubling 
scheme on the $\lambda$ domain, we obtain a complexity guarantee 
scaling as 
$\epsilon^{-3}$. 

Section~\ref{sec:experiments} presents experiments where we use
DRO to train linear models for digit classification (on a mixture between
MNIST~\cite{LeCunEtAl95} and typed digits~\cite{deCamposBaVa09}), and
ImageNet~\cite{RussakovskyDeSuKrSaMaHuKaKhBeBeFe15}.  To the best of 
our knowledge, the latter is the largest DRO problem solved to date.
 In both experiments DRO provides generalization improvements over
ERM, and we show that our stochastic gradient estimators require far fewer
$\grad \ell$ computations---between 9$\times$ and 36$\times$---than 
full-batch
methods.
Our experiments also reveal two facts that our theory only hints at. First,
using the mini-batch gradient estimator the error 
due to the difference
between $\bL(x;n)$ and $\L(x;P_0)$ becomes negligible even for batch sizes as
small as 10. Second, while the MLMC estimator avoids these errors 
altogether, its increased variance makes it practically inferior to the
mini-batch estimator with properly tuned batch size and learning rate. Our 
code, which is available at \url{https://github.com/daniellevy/fast-dro/},
implements our gradient estimators in
PyTorch~\cite{PaszkeGrChChYaDeLiDeAnLe17} and combines them seamlessly with the
framework's optimizers; we show an example code snippet in
Appendix~\ref{app:experiments-pytorch}. 

We conclude the paper in Section~\ref{sec:conclusion} with some remarks 
and directions for future research.

\subsection{Related work}\label{sec:related}
Distributionally robust optimization grows from the robust optimization
literature in operations research~\cite{Ben-TalHeWaMeRe13, Ben-TalGhNe09,
  BertsimasBrCa11, BertsimasGuKa18}, and the fundamental uncertainty about
the data distribution at test time makes its application to machine learning
natural. Experiments in the papers \cite{NamkoongDu16, FanLyYiHu17,
  DuchiNa20, HashimotoSrNaLi18, CuriLeJeKr19, KawaguchiLu20} show promising
results for CVaR~\eqref{eqn:cvar-def} and
$\cs$-constrained~\eqref{eqn:chi-square-def} DRO, while other works
highlight the importance of incorporating additional constraints into the
uncertainty set
definition~\cite{HuNiSaSu18,DuchiHaNa20,OrenSaHaLi19,SagawaKoHaLi20}.
Below, we review the
prior art on solving these DRO problems at scale.
\paragraph{Full-batch subgradient method.}
When $P_0$ has support of size $N$ it is possible to compute a 
subgradient of the objective $\L(x;P_0)$ by evaluating $\ell(x; s_i)$ and 
$\grad \ell(x; s_i)$ for $i = 1, \ldots, N$, computing the
$q \in \simplexN$ attaining the supremum~\eqref{eq:pop-dro},
whence $g = \sum_{i = 1}^N q_i \grad \loss(x; s_i)$ is a subgradient
of $\L$ at $x$.
As the Lipschitz constant of $\L$ is at most that of $\ell$, we may use
these subgradients in the subgradient method~\cite{Nesterov04} and find an
$\epsilon$ approximate solution in order $\epsilon^{-2}$ steps. This 
requires order $N\epsilon^{-2}$ evaluations of $\grad \ell$, regardless of the
uncertainty set.

\paragraph{CVaR.}
Robust objectives of the form~\eqref{eq:pop-dro} often admit tractable 
expression in terms of joint minimization over $x$ and the Lagrange 
multipliers associated with the constrained maximization over 
$Q$~\cite[e.g.,][]{RockafellarUr00,Shapiro17}.
For CVaR, this dual formulation (the second
equality~\eqref{eqn:cvar-def}) is an ERM problem in
$x$ and $\eta\in \R$, which we can solve in 
time independent of $N$ using stochastic gradient methods. We refer to 
this as ``dual SGM,'' providing the associated complexity 
bounds in Appendix~\ref{app:dual-sgm}.
\citet{FanLyYiHu17} apply dual SGM for learning linear classifiers, and  
\citet{CuriLeJeKr19} compare it to their proposed stochastic primal-dual
method based on determinantal point processes. 
While the latter performs better in practice, its 
worst-case guarantees scale roughly as $N\eps^{-2}$, similarly to the full-batch
method. \citet{KawaguchiLu20} propose to only use gradients from the 
highest $k$
losses in every batch, which is essentially identical to our mini-batch
estimator for CVaR; they do not, however, relate their algorithm to CVaR
optimization. We contribute to this line of work by obtaining tight
characterizations of the mini-batch and MLMC gradient estimators, resulting in
optimal complexity bounds scaling as $\alpha^{-1}\epsilon^{-2}$.

\paragraph{DRO with $\cs$ divergence.}
Similar dual formulations exist for both the constrained and penalized $\cs$
objectives~\eqref{eqn:chi-square-def} and~\eqref{eqn:chi-square-reg-def},
and dual SGM provides similar guarantees to CVaR for the penalized $\cs$
objective~\eqref{eqn:chi-square-reg-def}.  For the constrained
problem~\eqref{eqn:chi-square-def}, the additional Lagrange multiplier
associated with the constraint induce a so-called ``perspective
transform''~\cite{Ben-TalHeWaMeRe13, DuchiNa20}, making the method
unstable. Indeed, \citet{NamkoongDu16} report that
it fails to converge in practice and instead propose a stochastic
primal-dual method with convergence rate $(1+\rho N)\epsilon^{-2}$. Their
guarantee is optimal in the weak regularization regime where $\rho \lesssim
1/N$ , but is worse than the full-batch method in the
setting where $\rho \gtrsim 1$.  \citet{HashimotoSrNaLi18} propose a
different scheme alternating between ERM on $x$ and line search over a
Lagrange multiplier, but do not provide complexity bounds.
\citet{DuchiNa20} prove that for a sample of size $N' \approx \rho^2 d 
\epsilon^{-2}$ the empirical objective %
 converges to $\L(x; P_0)$ uniformly in $x\in\X$; substituting 
 $N'$ into the full-batch complexity bound implies a rate of $\rho^2 d 
 \epsilon^{-4}$. This  guarantee is independent of $N$, but features an 
 undesirable dependence on $d$.
\citet{GhoshSqWo18} use the mini-batch gradient estimator and 
gradually increase the batch size to $N$ as optimization progresses; they 
do not provide convergence rate bounds. We establish concrete 
rates for fixed batch sizes independent of $N$.

\paragraph{MLMC gradient estimators.}
Multi-level Monte Carlo techniques~\cite{Giles08,Giles15} facilitate the 
estimation 
of expectations of the form $\E \funct(S_1,\ldots,S_n)$, where the $S_i$ 
are i.i.d. %
 In this work we leverage a variant of a particular MLMC 
estimator proposed by~\citet{BlanchetGl15}. 
Prior work \cite{BlanchetKa20} uses
the estimator of~\cite{BlanchetGl15} in a DRO formulation of 
semi-supervised learning with Wasserstein uncertainty sets 
and $\funct(\cdot)$  a ratio of expectations,
as opposed to a supremum of expectations in our setting.
}
\notarxiv{%
\section{Introduction}

The growing role of machine learning in high-stakes decision-making raises
the need to train reliable models that perform robustly across
subpopulations and environments~\cite{BuolamwiniGe18, FusterGoRaWa18,
  TorralbaEf11, RechtRoScSh19, HendrycksDi19, OakdenDuCaRe20, KalraPa16}.
Distributionally robust optimization (DRO)~\cite{Ben-TalHeWaMeRe13,
  Shapiro17} shows promise as a way to address this challenge, with recent
interest in both the machine learning
community~\cite{SinhaNaDu18,WangGuHaCoGuJo20, DuchiNa20, StaibJe19,
  HashimotoSrNaLi18,OrenSaHaLi19} and 
 in operations
research~\cite{DelageYe10, Ben-TalHeWaMeRe13, BertsimasGuKa18,
  EsfahaniKu18}. Yet while DRO has had substantial impact in operations
research, a lack of scalable optimization methods has hindered its adoption 
in common machine learning practice. 

In contrast to empirical risk minimization (ERM), which
minimizes an expected loss $\E_{S\sim P_0}\ell(x;S)$ over $x\in\X\subset \R^d$ with 
respect to a training distribution
$P_0$, DRO minimizes the expected loss with respect to the worst 
distribution in an uncertainty set $\uset(P_0)$, that is, its goal is
to solve
\begin{equation}\label{eq:pop-dro}
  \minimize_{\x \in \X} \ \L(x; P_0)  \defeq \sup_{Q \in 
  \mc{U}(P_0)}
   \E_{S\sim Q} \ell(x; S).
\end{equation}
The literature considers several uncertainty 
sets~\cite{Ben-TalHeWaMeRe13, BertsimasGuKa18, BlanchetKaMu19, 
EsfahaniKu18}, and we focus on two particular choices: (a) the set of 
distributions with bounded likelihood ratio to $P_0$,  so that 
$\L$ becomes the conditional value at risk 
(CVaR)~\cite{RockafellarUr00,ShapiroDeRu09}, and (b) the set of 
distributions with bounded $\cs$ divergence  to 
$P_0$~\cite{Ben-TalHeWaMeRe13, Csiszar67}. 
Some of our results extend to more general $\phi$-divergence (or R\'{e}nyi
divergence) balls~\cite{ErvenHa14}.
Minimizers of these objectives enjoy favorable statistical
properties~\cite{DuchiNa20, HashimotoSrNaLi18}, but finding them is more
challenging than standard ERM. More specifically, stochastic gradient methods
solve ERM with a number of $\grad \ell$ computations independent of both $N$,
the support size of $P_0$ (i.e., number of data points), and $d$, the dimension
of $x$ (i.e., number of parameters). These guarantees do not directly apply to
DRO because the supremum over $Q$ in~\eqref{eq:pop-dro} makes cheap
sampling-based gradient estimates biased. As a consequence, existing techniques
for minimizing the $\cs$ objective~\cite{Ben-TalGhNe09, DelageYe10,
  Ben-TalHeWaMeRe13,BertsimasGuKa18,NamkoongDu16,DuchiNa20} have $\grad \ell$
evaluation complexity scaling linearly (or worse) in either $N$ or $d$, which is
prohibitive in large-scale applications.

In this paper, we consider the setting in which $\ell$ is a Lipschitz convex
loss, a prototype
case for stochastic optimization and machine learning~\cite{Zinkevich03,
  NemirovskiJuLaSh09}, and we propose methods for solving the
problem~\eqref{eq:pop-dro} with $\grad \ell$ complexity independent of sample
size $N$ and dimension $d$, and with optimal (linear) dependence on the
uncertainty set size. In Table~\ref{table:summary} we summarize their
complexities and compare them to previous work.  Each entry of the table shows
the number of (sub)gradient evaluations to obtain a point with optimality gap
$\epsilon$; for reference, recall that for ERM the stochastic subgradient method
requires order $\epsilon^{-2}$ evaluations, independent of $d$ and $N$.  We
discuss related work further in Section~\ref{sec:related} after outlining our
approach.

\begin{table}\label{table:summary}
  \begin{center}
\begin{tabular}{cccc}
  \toprule
  & CVaR at level $\alpha$
  & $\cs$ constraint $\rho$
  & $\cs$ penalty $\lambda$\\
  \midrule
   Objective $\L(x; P_0)=$
  & $\displaystyle{\sup_{\norm{q}_\infty \le \frac{1}{\alpha N}}} 
  q^\top\ell(\x)$
  & $\displaystyle{\sup_{\divcs(q) \le \rho}} 
  q^\top\ell(\x)$
  & $\displaystyle{\sup_{q\in\Delta^N}} {q^\top\ell(\x) - 
  \lambda\divcs\prn*{q}}$\\
  \midrule
  Subgradient method
  & $N\eps^{-2}$
  & $N\eps^{-2}$
  & $N\eps^{-2}$ \\
  Dual SGM~[Appendix~\ref{app:dual-sgm}]
  & $\alpha^{-2}\epsilon^{-2}$
  & -
  & $\lambda^{-2}\epsilon^{-2}$ \\
  Subsampling~\cite{DuchiNa20}
  & -
  & $\rho^2d\epsilon^{-4}$
  & - \\
  Stoch.\ primal-dual~\cite{CuriLeJeKr19,NamkoongDu16}
  & $N\epsilon^{-2}$
  & $N\rho\epsilon^{-2}$
  & - \\ \midrule
  \textbf{Ours}
  & $\alpha^{-1} \epsilon^{-2}$ (Thm.~\ref{thm:ml})
  & $\rho\epsilon^{-3}$ (Thm.~\ref{thm:doubling})
  & $\lambda^{-1}\epsilon^{-2}$ (Thm.~\ref{thm:ml}) \\
  {Lower Bound}
  & $\alpha^{-1}\epsilon^{-2}$ (Thm.~\ref{thm:lb})
  & $\rho\epsilon^{-2}$~\cite{DuchiNa20}
  & $\lambda^{-1}\epsilon^{-2}$ (Thm.~\ref{thm:lb})\\
  \bottomrule
\end{tabular}
\vspace{3pt}
\caption{Number of $\grad \ell$ evaluations to obtain
  $\E[\L(\x;P_0)] - \inf_{x'\in\X}\L(\x';P_0) \le \epsilon$ when $P_0$ is 
  uniform on $N$ training points. For simplicity we omit the Lipschitz 
  constant of $\ell$, the size of the domain $\X$, and logarithmic factors. 
  We define $\ell_i(x)\defeq\ell(x;\S_i)$ and
  $\divcs(q)  \defeq \frac{N}{2}\norm{ q - \frac{1}{N}\mathbf{1}}_2^2$. The 
  suprema are over $q$ in the simplex.  
}
\end{center}
\end{table}

We begin our development in Section~\ref{sec:batch} by considering the 
surrogate objective\arxiv{\linebreak} $\bL(x;n)=\E \L(x;\what{P}_n)$ 
corresponding to the 
average empirical robust objective over random batches of size $n$ 
sampled from $P_0$. 
In contrast to~\eqref{eq:pop-dro}, 
it is straightforward to 
obtain unbiased gradient estimates for $\bL$---using the mini-batch 
estimator $\grad \L(x;\what{P}_n)$---%
and to optimize it efficiently with stochastic gradient methods. To obtain 
guarantees for the true objective $\L$, we establish uniform bounds on the 
error $|\L(x;P_0)-\bL(x;n)|$. For CVaR we prove a bound scaling as 
$1/\sqrt{n}$ and extend it to other uncertainty sets, including $\cs$ balls, 
via 
the Kusuoka representation~\cite{Kusuoka01}. Notably, for the penalty 
version of the $\cs$ objective (Table~\ref{table:summary} right column) we 
prove a stronger bound scaling as $1/n$.
This analysis implies that, for large enough batch size $n$, an 
$\epsilon/2$-minimizer of 
 $\bL$ is also an 
$\epsilon$-minimizer of $\L$. Furthermore, for CVaR and $\cs$ penalty we 
show that the variance of the gradient estimator decreases as $1/n$, and 
we use Nesterov acceleration to decrease the required number of gradient 
steps.

To obtain stronger guarantees, in Section~\ref{sec:multilevel} 
we present a theoretically more efficient multi-level Monte 
Carlo (MLMC)~\cite{Giles08, Giles15}  gradient estimator which is a slight 
modification 
of the general technique of~\citet{BlanchetGl15}. The resulting estimator is 
unbiased for 
$\grad \bL(x;n)$ but requires only a \emph{logarithmic number of 
samples} in $n$. For CVaR and $\cs$ penalty we control the second 
moment 
of 
the gradient estimator, resulting in complexity bounds scaling with 
$\epsilon^{-2}$. We further prove that these 
rates  are worst-case optimal up to logarithmic factors.

Unfortunately, direct application of the MLMC estimator for the $\cs$ 
uncertainty set (Table~\ref{table:summary} center column) demonstrably fails for 
certain inputs. Instead, in~\Cref{sec:doubling} we optimize its 
Lagrange dual---the $\cs$ 
penalty---with respect to $x$ and Lagrange multiplier $\lambda$. Using a 
 doubling 
scheme on the $\lambda$ domain, we obtain a complexity guarantee 
scaling as 
$\epsilon^{-3}$.

Section~\ref{sec:experiments} presents experiments where we use
DRO to train linear models for digit classification (on a mixture between
MNIST~\cite{LeCunEtAl95} and typed digits~\cite{deCamposBaVa09}), and
ImageNet~\cite{RussakovskyDeSuKrSaMaHuKaKhBeBeFe15}.  To the best of 
our knowledge, the latter is the largest DRO problem solved to date.
 In both experiments DRO provides generalization improvements over
ERM, and we show that our stochastic gradient estimators require far less
$\grad \ell$ computations---between 9$\times$ and 40$\times$--- than full-batch
methods.
Our experiments also reveal two facts that our theory only hints at. First,
using the mini-batch gradient estimator the error floor due to the difference
between $\bL(x;n)$ and $\L(x;P_0)$ becomes negligible even for batch sizes as
small as 10. Second, while the MLMC estimator avoids these error floors
altogether, its increased variance makes it practically inferior to the
mini-batch estimator with properly tuned batch size and learning rate. Our 
code implements our gradient estimators in
PyTorch~\cite{PaszkeGrChChYaDeLiDeAnLe17} and combines them seamlessly with the
framework's optimizers; we show an example code snippet in
Appendix~\ref{app:experiments-pytorch}. 

\subsection{Related work}\label{sec:related}
Distributionally robust optimization grows from the robust optimization
literature in operations research~\cite{Ben-TalHeWaMeRe13, Ben-TalGhNe09,
  BertsimasBrCa11, BertsimasGuKa18}, and the fundamental uncertainty about
the data distribution at test time makes its application to machine learning
natural. Experiments in the papers \cite{NamkoongDu16, FanLyYiHu17,
  DuchiNa20, HashimotoSrNaLi18, CuriLeJeKr19, KawaguchiLu20} show promising
results for CVaR and
$\cs$-constrained DRO, while other works
highlight the importance of incorporating additional constraints into the
uncertainty set
definition~\cite{HuNiSaSu18,DuchiHaNa20,OrenSaHaLi19,SagawaKoHaLi20}.
Below, we review the
prior art on solving these DRO problems at scale.
\paragraph{Full-batch subgradient method.}
When $P_0$ has support of size $N$ it is possible to compute a 
subgradient of the objective $\L(x;P_0)$ by evaluating $\ell(x; s_i)$ and 
$\grad \ell(x; s_i)$ for $i = 1, \ldots, N$, computing the
$q \in \simplexN$ attaining the supremum~\eqref{eq:pop-dro},
whence $g = \sum_{i = 1}^N q_i \grad \loss(x; s_i)$ is a subgradient
of $\L$ at $x$.
As the Lipschitz constant of $\L$ is at most that of $\ell$, we may use
these subgradients in the subgradient method~\cite{Nesterov04} and find an
$\epsilon$ approximate solution in order $\epsilon^{-2}$ steps. This 
requires order $N\epsilon^{-2}$ evaluations of $\grad \ell$, regardless of the
uncertainty set.

\paragraph{CVaR.}
Robust objectives of the form~\eqref{eq:pop-dro} often admit tractable 
expression in terms of joint minimization over $x$ and the Lagrange 
multipliers associated with the constrained maximization over 
$Q$~\cite[e.g.,][]{RockafellarUr00,Shapiro17}.
For CVaR, this dual formulation is an ERM problem in
$x$ and $\eta\in \R$, which we can solve in 
time independent of $N$ using stochastic gradient methods. We refer to 
this as ``dual SGM,'' providing the associated complexity 
bounds in Appendix~\ref{app:dual-sgm}.
\citet{FanLyYiHu17} apply dual SGM for learning linear classifiers, and  
\citet{CuriLeJeKr19} compare it to their proposed stochastic primal-dual
method based on determinantal point processes. 
While the latter performs better in practice, its 
worst-case guarantees scale roughly as $N\eps^{-2}$, similarly to the full-batch
method. \citet{KawaguchiLu20} propose to only use gradients from the 
highest $k$
losses in every batch, which is essentially identical to our mini-batch
estimator for CVaR; they do not, however, relate their algorithm to CVaR
optimization. We contribute to this line of work by obtaining tight
characterizations of the mini-batch and MLMC gradient estimators, resulting in
optimal complexity bounds scaling as $\alpha^{-1}\epsilon^{-2}$.

\paragraph{DRO with $\cs$ divergence.}
Similar dual formulations exist for both the constrained and penalized $\cs$
objectives,
and dual SGM provides similar guarantees to CVaR for the penalized $\cs$
objective. For the constrained\notarxiv{-$\cs$}
problem\arxiv{~\eqref{eqn:chi-square-def}}, the additional Lagrange multiplier
associated with the constraint induces a ``perspective
transform''~\cite{Ben-TalHeWaMeRe13, DuchiNa20}, making the method
unstable. Indeed, \citet{NamkoongDu16} report that
it fails to converge in practice and instead propose a stochastic
primal-dual method with convergence rate $(1+\rho N)\epsilon^{-2}$. Their
guarantee is optimal in the weak regularization regime where $\rho \lesssim
1/N$ , but is worse than the full-batch method in the
setting where $\rho \gtrsim 1$.  \citet{HashimotoSrNaLi18} propose a
different scheme alternating between ERM on $x$ and line search over a
Lagrange multiplier, but do not provide complexity bounds.
\citet{DuchiNa20} prove that for a sample of size $N' \approx \rho^2 d 
\epsilon^{-2}$ the empirical objective %
 converges to $\L(x; P_0)$ uniformly in $x\in\X$; substituting 
 $N'$ into the full-batch complexity bound implies a rate of $\rho^2 d 
 \epsilon^{-4}$. This  guarantee is independent of $N$, but features an 
 undesirable dependence on $d$.
\citet{GhoshSqWo18} use the mini-batch gradient estimator and 
gradually increase the batch size to $N$ as optimization progresses; they 
do not provide convergence rate bounds. We establish concrete 
rates for fixed batch sizes independent of $N$.

\paragraph{MLMC gradient estimators.}
Multi-level Monte Carlo techniques~\cite{Giles08,Giles15} facilitate the 
estimation 
of expectations of the form $\E \funct(S_1,\ldots,S_n)$, where the $S_i$ 
are i.i.d. %
 In this work we leverage a variant of a particular MLMC 
estimator proposed by~\citet{BlanchetGl15}. 
Prior work \cite{BlanchetKa20} uses
the estimator of~\cite{BlanchetGl15} in a DRO formulation of 
semi-supervised learning with Wasserstein uncertainty sets 
and $\funct(\cdot)$  a ratio of expectations,
as opposed to a supremum of expectations in our setting. %
}

\vspace{-0.2cm} %

\section{Preliminaries}\label{sec:prelims}

We collect notation, establish a few assumptions, and provide the most
important definitions for the remainder of the paper in this section.

\paragraph{Notation.} 
We denote the optimization variable by $\x\in\R^d$, and use $\s$  (or 
$\S$ when it is random) for a 
data sample in $\ss$.
We use $z_l^m$ as shorthand for the 
sequence $z_l,\ldots,z_m$. For fixed $x$ we denote the cdf of $\ell(x,S)$ by
$F(t) \defeq\P(\ell(x,S)\le t)$ and its inverse by $F^{-1}(u) \defeq 
\inf\crl{t:F(t) > u}$, leaving the dependence on $x$ and $P_0$ implicit.
We use $\norm{\cdot}$ to denote Euclidean norm, but remark that many of 
our results carry over to general norms. 
We let $\Delta^m$ denote the simplex in $m$ dimensions.
We write $\indic{A}$ for the indicator of event $A$, i.e., 1 if $A$ holds and 0
otherwise, and write $\convind_{\mc{C}}$ for the infinite indicator of the set
$\mc{C}$, $\convind_{\mc{C}}(x)=0$ if $x\in \mc{C}$ and
$\convind_{\mc{C}}(x)=\infty$ otherwise. The Euclidean projection to a set
$\mc{C}$ is $\Pi_{\mc{C}}$. We use $\grad$ to denote gradient with respect to
$x$, or, for non-differentiable convex functions, an arbitrary subgradient. We denote the positive part of 
$t\in\R$  by $\hinge{t}\defeq \max\crl{t,0}$. Finally, $f \lesssim g$ means
that there exists $C\in\R_+$, independent of any problem parameters, such that
$f\le Cg$ holds; we also write $f\asymp g$ if
$f\lesssim g \lesssim f$.

\paragraph{Assumptions.}
Throughout, we assume that the domain $\X$ is closed convex and satisfies
$\norm{x-y}\le R$ for all $x,y\in\X$. Moreover, we assume the loss function
$\ell:\X\times\ss\to [0, B]$ is convex and $G$-Lipschitz in $\x$, i.e.,
$0\le \ell(x,s)\le B$ and $|\ell(x;s)-\ell(y;s)|\le G\norm{x-y}$ for
$x,y\in\X$ and $\s\in\ss$.\footnote{Our results hold also when $B$ denotes
$\sup_{x\in\X,s,s'\in\ss} \crl{\ell(x;s)-\ell(x;s')}$. The Lipschitz loss  
and bounded domain assumptions imply $B\le B_0 + GR$ if 
$\inf_{x\in\X}\ell(x;s)-\inf_{x'\in\X}\ell(x';s')\le B_0$ for all $s,s'\in\ss$, 
which typically holds with $B_0\approx 0$  in regression and classification 
problems.} 
In 
some cases,
we entertain two additional assumptions:
\begin{assumption}
  \label{assumption:grad-lipschitz}
  The gradient $\grad \ell(x, s)$ is $H$-Lipschitz in $x$.
\end{assumption}
\begin{assumption}
  \label{assumption:smooth-icdf}
  The inverse cdf $F^{-1}$ of $\ell(x; S)$ is $\icdfLip$-Lipschitz
  for each $x \in \X$.
\end{assumption}
\noindent
Most of our bounds do not require 
Assumptions~\ref{assumption:grad-lipschitz} and~\ref{assumption:smooth-icdf}.
Moreover, 
in Appendix~\ref{app:batch-extra-assumptions} we argue 
that these assumptions are frequently not restrictive.

\paragraph{The distributionally robust objective.}
We consider a slight generalization of $\phi$-divergence
distributionally robust optimization (DRO).
For a convex $\phi : \R_+ \to \R \cup \{+\infty\}$
satisfying $\phi(1) = 0$,
the $\phi$-divergence between distributions $P$ and
$Q$ absolutely continuous w.r.t.\ $P$ by
\arxiv{
\begin{equation*}
  \phidiv{Q}{P}
  \defeq \int \phi\prn[\bigg]{\frac{\d Q}{\d P}(s) } \d P(s).
\end{equation*}}
\notarxiv{$\phidiv{Q}{P} \defeq \int \phi\prn{\tfrac{\d Q}{\d P}(s)}\d P(s)$.}
Then, for convex $\phi, \psi$ with $\phi(1) = \psi(1) = 0$, a constraint
radius $\rho\ge0$, and penalty $\lambda \ge 0$ \emph{the general form of the objectives we consider} is
\begin{equation}\label{eq:pop-dro-reg}
  \L(\x;P) \defeq \sup_{Q:\div_\phi(Q, P) \le 
  \rho} \crl[\Big]{\E_Q\brk{\ell(\x;\S)} - \lambda\div_\psi(Q, P)}.
\end{equation}

\arxiv{The form~\eqref{eq:pop-dro-reg} allows us to redefine the
  objectives~\eqref{eqn:cvar-def}--\eqref{eqn:chi-square-reg-def} for general
  $P_0$ (nonuniform and with infinite support):} \notarxiv{As previewed, we
  consider the following objectives for general $P_0$ (nonuniform with infinite
  support):}
 \begin{itemize}[leftmargin=*]
 \item \textbf{$\cs$ constraint.} $\Lcs$ corresponds to $\phi(t) = \cs(t) 
 \defeq \frac{1}{2}(t-1)^2$ and
   $\psi = 0$.
 \item \textbf{$\cs$ penalty.} $\Llam$ corresponds to $\phi = 0$ and
   $\psi(t) = \cs(t) = \frac{1}{2}(t-1)^2$.
 \item \textbf{Conditional value at risk $\boldsymbol{\alpha\in(0,1]}$ 
 (CVaR).} $\Lcvar$ 
 corresponds to 
 $\phi=0$ and 
 $\psi = \convind_{[0, 1/\alpha)}$.
\end{itemize}
\noindent Additionally, define the following smoothed version of the CVaR objective, which we use in Section~\ref{sec:batch}.
\begin{itemize}[leftmargin=*]
 \item \textbf{KL-regularized CVaR.} $\LcvarR$ corresponds to $\phi=0$ 
 and  and $\psi(t) = \convind_{[0, 1/\alpha]}(t) + t\log t -t +1$. 
 \end{itemize}
 \noindent
In Appendix~\ref{app:prelims} we present additional standard 
formulations and useful properties of these objectives.

With mild
abuse of notation, for a sample $\s_1^n\in\ss^n$, we let
\begin{equation}\label{eq:finite-rob}
\L(x;s_1^n) \defeq \L(x; \Phat{s_1^n}) =
\sup_{q\in\Delta^n: \sum_{i\le n}\frac{1}{n}\phi(nq_i) \le \rho}
\bigg\{\sum_{i = 1}^n \prn*{q_i \ell(\x;s_i) - \tfrac{1}{n}\psi(nq_i)} \bigg\}
\end{equation}
denote the loss with respect to the empirical distribution on $s_1^n$. 
Averaging the robust objective over random batches of size $n$, we define the surrogate objective
\begin{equation}\label{eq:batch-obj}
\bL(x;n) \defeq\E_{S_1^n \sim P_0^n}\L(x; S_1^n).
\end{equation}

\paragraph{Complexity metrics.}
We measure complexity of our methods by the number of
computations of $\grad \loss(x; s)$
they require to reach a solution with accuracy
$\epsilon$. We can bound (up to a constant factor) 
the runtime of every method we consider 
by our 
complexity measure multiplied by $d+\mathsf{T}_{\mathrm{eval}}$, where
$\mathsf{T}_{\mathrm{eval}}$ denotes the time to evaluate $\ell(x;s)$ and
$\grad\ell(x;s)$ at a single point $x$ and sample $s$, and is typically
$O(d)$. (In the problems we study, solving the 
problem~\eqref{eq:batch-obj} given $\ell(x;S_1^n)$  takes $O(n\log n)$ 
time; see Appendix~\ref{app:prelims-compute}).

\section{Mini-batch gradient estimators}\label{sec:batch}

In this section, we develop and analyze stochastic subgradient methods 
using the subgradients of the mini-batch loss~\eqref{eq:finite-rob}. That 
is, we estimate $\grad \L(x;P_0)$ by sampling a mini-batch $S_1,\ldots, 
S_n \simiid P_0$ and computing
\begin{equation*}
\grad \L(x;S_1^n) = \sum_{i=1}^n q_i\opt \grad \ell(x;S_i),
\end{equation*}
where $q\opt \in\Delta^n$ attains the supremum in 
Eq.~\eqref{eq:finite-rob}. By definition~\eqref{eq:batch-obj} of the 
surrogate objective $\bL$, we have that $\E \grad \L(x;S_1^n) = \grad 
\bL(x;n)$. Therefore, we expect stochastic subgradient methods using 
$\grad \L(x;S_1^n)$ to minimize $\bL$. However, in general, $\bL(x;n) \ne \L(x;P_0)$ 
and  
$\E \grad \L(x;S_1^n) \ne \grad \L(x;P_0)$.

To show that the 
mini-batch gradient estimator is nevertheless effective for minimizing 
$\L$, 
we proceed in three steps. %
First, 
in Section~\ref{sec:batch-bias} we prove 
uniform bounds on the bias $\L-\bL$ that tend to zero with $n$. Second, 
in 
Section~\ref{sec:batch-variance} we complement them with $1/n$ variance 
bounds on $\grad \L(x;S_1^n)$. Finally, Section~\ref{sec:batch-rates} puts 
the pieces together: we 
apply the SGM guarantees to bound the complexity of minimizing $\bL$ to 
accuracy $\epsilon/2$, using Nesterov acceleration to 
exploit our variance bounds, and choose the mini-batch size $n$ large 
enough to guarantee (via our bias bounds) that the resulting solution is 
also an $\epsilon$ minimizer of the original objective $\L$.

\subsection{Bias analysis}\label{sec:batch-bias}

\begin{restatable}[Bias of the batch estimator]{proposition}
  {restatePropBatchBias}\label{prop:batch-bias}
  For all $x\in\X$ and $n\in\N$ we have
    \begin{empheq}[left={0 \le \L(x;P_0)-\bL(x;n) \lesssim \empheqlbrace}]{align}
      & B \min \crl[\big]{ 1, (\alpha n)^{-1/2} } & & 
      \hspace{-40pt}\mbox{for~~} \L = 
      \Lcvar\label{eq:cvar-bias}\\
      & B \sqrt{ {(1+\rho) (\log n)/n}} & & \hspace{-40pt}\mbox{for~~} \L = 
      \Lcs 
      \label{eq:cs-bias}\\ 
      & {B^2}(\lambda n)^{-1} & & \hspace{-40pt}\mbox{for~~} \L=\Llam 
      \label{eq:lam-bias}\\
      & {\icdfLip}\,{n^{-1}}  & & \hspace{-40pt}\mbox{for any 
        loss~\eqref{eq:pop-dro-reg},}\label{eq:icdf-bias}
    \end{empheq}
where the bound~\eqref{eq:icdf-bias} holds under Assumption~\ref{assumption:grad-lipschitz}.
\end{restatable}

We present the proof in Appendix~\ref{prf:prop-batch-bias} and make a 
few remarks before proceeding to discuss the main proof ideas. 
First, the bounds~\eqref{eq:cvar-bias},~\eqref{eq:cs-bias} 
and~\eqref{eq:lam-bias} are all tight up to constant or logarithmic factors 
when $\ell(x,S)$ has a Bernoulli distribution, and so are unimprovable 
without further assumptions (see Proposition~\ref{prop:lb-bias} in 
Appendix~\ref{sec:worst-case-bias}). One such assumption is that 
$\ell(x;S)$ has $\icdfLip$-Lipschitz inverse-cdf, and it allows us to obtain 
a general $1/n$ bias bound~\eqref{eq:icdf-bias} independent of the 
uncertainty set size. As we discuss in Appendix~\ref{app:batch-icdf}, this 
assumption has natural relaxations for uniform distributions with finite 
supports and, for CVaR at level $\alpha$, we only need the inverse cdf 
$F^{-1}(\beta)$ to be Lipschitz around $\beta=\alpha$, a common 
assumption in the risk estimation literature~\cite{TrindadeUrShZr07}.  
\notarxiv{\vspace{-6pt}}
\begin{proof}[Proof sketch]
  To show that $\L(x; P_0) \ge \bL(x;n)$ for every loss of the 
  form~\eqref{eq:pop-dro-reg}, we use Lagrange duality to write 
 \begin{equation*}
   \L(x;P_0) = \inf_{\eta,\nu} \E_{S_1^n\sim P_0^n}\frac{1}{n} \sum_{i=1}^n 
   \Upsilon(x;\eta,\nu;S_i)
  ~~\mbox{and}~~
  \bL(x;n) = \E_{S_1^n\sim P_0^n} \inf_{\eta,\nu} 
   \frac{1}{n} \sum_{i=1}^n \Upsilon(x;\eta,\nu;S_i),
 \end{equation*}
  for some $\Upsilon:\X\times \R \times \R_+ \times \ss \to \R$. 
  This exposes the fundamental source of the mini-batch estimator bias: 
  when infimum and 
  expectation do not commute (as is the case in general), exchanging them 
  strictly decreases the 
  result.

  Our upper bound analysis begins with CVaR, where 
  $\Lcvar=\frac{1}{\alpha}\int\indic{\beta\ge 1-\alpha}F^{-1}(\beta)\d 
  \beta$ and $\bLcvar=\frac{1}{\alpha}\int 
  \mc{I}_\alpha(\beta)F^{-1}(\beta)\d \beta$, with $F^{-1}$ the inverse cdf of 
  $\ell(x,S)$ and $\mc{I}_\alpha$ a ``soft step function'' that we write in 
  closed form as a sum of Beta densities. To obtain the 
  bound~\eqref{eq:cvar-bias} we express $\int (\indic{\beta\ge 
    1-\alpha}-\mc{I}_\alpha(\beta))_+\d\beta$ as a sum of binomial tail 
  probabilities and apply Chernoff bounds. For CVaR only, the improved 
  bound~\eqref{eq:icdf-bias} follows from arguing that replacing 
  $F^{-1}(\beta)$ with $\icdfLip\cdot \beta$ overestimates the bias, and 
  showing that $\int (\indic{\beta\ge 1-\alpha}-\mc{I}_\alpha(\beta)) 
  \beta\d\beta \le (n+1)^{-1}$ for any $\alpha$.

  To transfer the CVaR bounds to other objectives we express the 
  objective~\eqref{eq:pop-dro-reg} as a weighted CVaR average over 
  different $\alpha$ values, essentially using the Kusuoka representation of 
  coherent risk measures~\cite{Kusuoka01}.
  Given any bias bound $\biasbound(\alpha)$ for CVaR 
  at level $\alpha$, this expression implies the bound
  $\L - \bL \le \sup_{w\in\wset(\L)} \int \biasbound(\alpha)\d 
  w(\alpha)$, 
  where $\wset(\L)$ is a set of probability measures. %
  Substituting $\biasbound(\alpha) = 1/\sqrt{n\alpha}$ and using the 
  Cauchy-Schwartz inequality gives the bound~\eqref{eq:cs-bias}, while 
  substituting 
  $\biasbound(\alpha) = \icdfLip /n$ shows this bound in fact holds for 
  any 
  $\L$, as 
  we claim in~\eqref{eq:icdf-bias}. 

  Showing the bound~\eqref{eq:lam-bias} requires a fairly different argument. 
  Our proof uses the dual representation of $\Llam$ as a minimum of an 
  expected risk over a Lagrange multiplier $\eta$ imposing the constraint 
  that 
  $q$ in~\eqref{eq:finite-rob} sums to $1$ (or that $Q$ 
  in~\eqref{eq:pop-dro-reg} integrates to $1$). Using convexity with respect 
  to $\eta$ we relate the value of the risk at $\eta_n$ (the minimizer for 
  sample $S_1^n$) to $\eta\opt$ (the population minimizer), which on 
  expectation are $\bLlam$ and $\Llam$, respectively. We then apply 
  Cauchy-Schwartz and bound the variance of $\eta_n$ with the Efron-Stein 
  inequality~\cite{EfronSt81} to obtain a $1/n$ bias bound.
\end{proof}

\subsection{Variance analysis}\label{sec:batch-variance}

With the bias bounds in Proposition~\ref{prop:lb-bias} established, we analyze
the variance of the stochastic gradient estimators $\grad \L(x; S_1^n)$.  More
specifically, we prove that the variance of the mini-batch gradient estimator
decreases as $1/n$ for penalty-type robust objectives (with $\phi=0$) for which
the maximizing $Q$ has bounded $\cs$ divergence from $P_0$, which we call ``$\cs$-bounded objectives'' (see \Cref{app:prelims-cs-bounded}). Noting that $\LcvarR$ (with $\Lcvar$ as a
special case) and $\Llam$ are $\cs$-bounded yields the following.
\begin{restatable}[Variance of the batch estimator]
  {proposition}{restatePropVarianceGrad}\label{prop:variance-grad}
  For all $n\in\N$, $x\in\X$, and $S_1^n \sim P_0^n$, 
  \begin{equation*}
    \Var \brk[\Big]{\grad\LcvarR(x; S_1^n)}
    \lesssim \frac{G^2}{\alpha n} 
    \mbox{~~and~~}
    \Var \brk[\Big]{\grad \Llam(x; S_1^n)}
    \lesssim \frac{G^2  (1+B/\lambda)}{n}.
  \end{equation*}
\end{restatable}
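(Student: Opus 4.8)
The plan is to bound $\Var\brk[\big]{\grad\L(x;S_1^n)}$ with the Efron--Stein inequality~\cite{EfronSt81}, exploiting the penalty ($\phi=0$) structure that makes these objectives $\cs$-bounded. Recall that if $q^\star=q^\star(S_1^n)\in\Delta^n$ attains the supremum in~\eqref{eq:finite-rob}, then $g(S_1^n)\defeq\grad\L(x;S_1^n)=\sum_{i=1}^n q_i^\star\,\grad\ell(x;S_i)$ is a symmetric function of the i.i.d.\ batch, so Efron--Stein gives $\Var\brk[\big]{g(S_1^n)}\le\frac12\sum_{i=1}^n\E\norm[\big]{g(S_1^n)-g(S_1^{(i)})}^2$, where $S_1^{(i)}$ replaces $S_i$ by an independent copy and has optimal weights $q^{\star(i)}$. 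Using $G$-Lipschitzness of $\ell$, $\norm[\big]{g(S_1^n)-g(S_1^{(i)})}\le G\big(\lone{q^\star-q^{\star(i)}}+q_i^\star+q_i^{\star(i)}\big)$, so it suffices to control (i) the \emph{direct} term $\E\brk[\big]{\sum_i(q_i^\star)^2}$ and (ii) the \emph{indirect} term $\sum_i\E\lone{q^\star-q^{\star(i)}}^2$ (using that the $q_i^\star$, resp.\ the vectors $q^\star-q^{\star(i)}$, are identically distributed over $i\in[n]$).

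The direct term is immediate from $\cs$-boundedness. For $\LcvarR$ (and the special case $\Lcvar$) the constraint forces $nq_i^\star\le1/\alpha$, hence $\sum_i(q_i^\star)^2\le\frac1{\alpha n}\sum_i q_i^\star=\frac1{\alpha n}$. For $\Llam$, comparing the value of~\eqref{eq:finite-rob} at $q^\star$ with its value at the uniform vector $\frac1n\ones$ gives $\lambda\divcs(q^\star)\le\sum_i q_i^\star\ell(x;s_i)-\frac1n\sum_i\ell(x;s_i)\le B$, and since $\sum_i(q_i^\star)^2=\frac1n\big(1+2\divcs(q^\star)\big)$ we obtain $\sum_i(q_i^\star)^2\le\frac1n(1+2B/\lambda)$. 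In both cases $G^2\,\E\brk[\big]{\sum_i(q_i^\star)^2}$ is already of the claimed order, and the identical bound at the population level (on $\E\,[\,W^{\star 2}\,]$ for the worst-case likelihood ratio $W^\star$) will be reused below.

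The indirect term is the crux. Here I would use that for penalty objectives Lagrangian duality for the normalization constraint $\ones^\top q=1$ yields the one-dimensional representation $q_i^\star=\frac1n\,h(\ell(x;S_i)-\eta^\star)$, where $h$ is a fixed nondecreasing map (the derivative of the conjugate of $\lambda\psi$ on $\R_{\ge0}$; it is $\tfrac1\lambda$-Lipschitz for $\Llam$ and valued in $[0,1/\alpha]$ for $\LcvarR$) and $\eta^\star$ is the scalar dual optimum pinned down by $\frac1n\sum_i h(\ell(x;S_i)-\eta^\star)=1$. Thus $\lone{q^\star-q^{\star(i)}}$ is controlled entirely by the perturbation of the \emph{single} multiplier $\eta$ caused by resampling one loss value, and I would bound $\E[(\eta^\star-\eta^{\star(i)})^2]$ using strong concavity of the penalized inner problem (modulus $\asymp\lambda n$ for $\Llam$, $\asymp\lambda\alpha n$ for $\LcvarR$) together with the fact that only one summand defining $\eta^\star$ changes --- crucially tracking the size $m$ of the active set $\{j:q_j^\star>0\}$, since the multiplier equation has slope $\asymp m/(\lambda n)$ and the correction $\sum_{j\neq i}(q_j^\star-q_j^{\star(i)})\grad\ell(x;S_j)$ is itself proportional to $m/n$, so the $m$-dependent factors cancel against the $m\ge n/(1+B/\lambda)$ (resp.\ $m\ge\alpha n$) lower bound. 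An equivalent and perhaps cleaner route is to compare $g(S_1^n)$ directly with the \emph{frozen} estimator $\frac1n\sum_i h(\ell(x;S_i)-\bar\eta)\grad\ell(x;S_i)$ at the deterministic population multiplier $\bar\eta$: the frozen estimator is an i.i.d.\ average whose variance is exactly $\frac1n\E\brk[\big]{h(\ell(x;S)-\bar\eta)^2\norm{\grad\ell(x;S)}^2}\le\frac{G^2}n\E[W^{\star 2}]$, already $\le\frac{G^2}{\alpha n}$ (resp.\ $\le\frac{G^2(1+2B/\lambda)}n$) by the $\cs$-bound, while the remainder is an $M$-estimation term requiring a second-moment bound on $\eta^\star-\bar\eta$ obtained from curvature of the population dual objective and concentration of its empirical gradient. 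For $\LcvarR$ the needed stability of $q^\star$ also follows from its explicit description via the order statistics of $\{\ell(x;S_i)\}_{i\le n}$.

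I expect the main obstacle to be exactly this last step: getting the indirect (multiplier-fluctuation) contribution to decay as $O(1/n)$ \emph{with the correct prefactor} $1/\alpha$ (resp.\ $B/\lambda$), since the naive combination of a black-box strong-convexity bound with $\lone{\cdot}\le\sqrt n\,\ltwo{\cdot}$ loses a factor of $n$ and cruder accounting loses further powers of $1/\alpha$ or $B/\lambda$; the one-dimensionality of the dual variable and a careful treatment of the active-set size are what make the cancellations work.
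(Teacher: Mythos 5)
Your setup matches the paper's: Efron--Stein applied coordinate-wise to $\grad\L(x;S_1^n)$, the triangle-inequality split into the ``direct'' contribution $q_I+\tilde q_I$ and the ``indirect'' contribution $\sum_{i\ne I}|q_i-\tilde q_i|$, and the treatment of the direct term via $\cs$-boundedness ($\E\norm{q}_2^2 = \frac{1}{n}(1+2\divcs(q,\frac1n\ones))\le\frac{1+2C}{n}$ with $C=1/\alpha$ resp.\ $B/\lambda$) are all exactly what the paper does. But the step you yourself flag as the main obstacle --- controlling the indirect term --- is left as a program rather than a proof, and the two routes you sketch (a quantitative second-moment bound on $\eta^\star-\eta^{\star(i)}$ via dual strong concavity with active-set bookkeeping, or a frozen-estimator decomposition around the population multiplier $\bar\eta$) are substantially harder than necessary and are not carried out; for $\Lcvar$/$\LcvarR$ in particular, the map $h$ is a (clipped) step function, so the Lipschitz-in-$\eta$ reasoning you lean on does not apply directly.

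The observation that closes this gap in one line, and which your proposal misses, is a \emph{sign-consistency} argument: since $q_i=\frac1n{\psi^*}'[(\ell(x;S_i)-\eta)/\lambda]$ and $\tilde q_i=\frac1n{\psi^*}'[(\ell(x;S_i)-\tilde\eta)/\lambda]$ share the same loss values for every $i\ne I$ and ${\psi^*}'$ is nondecreasing (strict convexity of $\psi$), the differences $q_i-\tilde q_i$ have a common sign across all $i\ne I$, determined by the sign of $\tilde\eta-\eta$. Hence
\begin{equation*}
\sum_{i\ne I}\abs{q_i-\tilde q_i}
=\abs[\Big]{\sum_{i\ne I}(q_i-\tilde q_i)}
=\abs{q_I-\tilde q_I},
\end{equation*}
the last equality because both weight vectors sum to $1$. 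No quantitative control of $\eta-\tilde\eta$, no active-set size, and no Lipschitzness of $h$ is needed: the indirect term collapses onto the direct term, giving $\norm{\grad\L(x;S_1^n)-\grad\L(x;\tilde S_1^n)}\le 2G\max\{q_I,\tilde q_I\}$, after which the $\cs$-bound on $\E\norm{q}_2^2$ finishes the proof with the correct prefactors. You had all the ingredients (the one-dimensional dual representation and monotonicity of $h$) in hand; combining them with the shared normalization constraint is the missing idea.
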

\noindent
(Note that the variance bound on $\LcvarR$ is independent of $\lambda$ and therefore holds also for $\Lcvar$ where $\lambda=0$). \arxiv{
 
}We prove \Cref{prop:variance-grad} in Appendix~\ref{prf:prop-variance-grad} and
provide a proof sketch below.\footnote{In the appendix we provide bounds on the
  variance of $\L(x;S_1^n)$ in addition to $\grad \L(x;S_1^n)$.} Unfortunately,
the bounds do not extend to the $\cs$ constrained
formulation\arxiv{~\eqref{eqn:chi-square-def}}: in
Appendix~\ref{prf:prop-variance-grad} (Proposition~\ref{prop:lb-variance}) we
prove that for any $n$ there exist $\ell$, $P_0$, and $x$ such that
$\Var[\grad \Lcs(x;P_0)] \gtrsim \rho$. Whether
Proposition~\ref{prop:variance-grad} holds when adding a $\cs$ penalty to the
$\cs$ constraint remains an open question.

\notarxiv{\vspace{-6pt}}
\begin{proof}[Proof sketch]
  The Efron-Stein inequality~\cite{EfronSt81} is $\Var[\grad \L(x;S_1^n)]
  \le \frac{n}{2} \E{} \norm{ \grad \L(x;S_1^n) - \grad
    \L(x;\tilde{S}_1^n)}^2$, where $S_1^n$ and $\tilde{S}_1^n$ are identical
  except in a random entry $I \in [n]$ for which $\tilde{S}_I$ is an i.i.d.\
  copy of $S_I$. We bound $\norm{\grad \L(x;S_1^n) - \grad
    \L(x;\tilde{S}_1^n)} \le G q_I + G \lone{q-\tilde{q}}$ with the
  triangle inequality, where $q$ and $\tilde{q}$ attain the maximum
  in~\eqref{eq:finite-rob} for $S$ and $\tilde{S}$, respectively. The crux
  of our proof is the equality $\lone{q-\tilde{q}}=2|q_I - \tilde{q}_I|$,
  which holds since increasing one coordinate of
  $\ell(x;S_1),\ldots,\ell(x;S_n)$ must decrease all other coordinates in
  $q$. Noting
  that $\E{}(q_I-\tilde{q}_I)^2 \le 4\E (q_I-1/n)^2= \frac{8}{n^2}\E\divcs(
  q, \frac{1}{n}\ones)$, the results follow by observing that  $\divcs( q, 
  \frac{1}{n}\ones)$ is bounded by $1/\alpha$ and
  $B/\lambda$ for $\LcvarR$ and
  $\Llam$, respectively. 
\end{proof}

\subsection{Complexity guarantees}\label{sec:batch-rates}

With the bias and variance guarantees established, we now provide bounds on 
the complexity of minimizing $\L(\x;P_0)$ to arbitrary accuracy $\epsilon$ 
using standard gradient methods with the gradient estimator $\tilde{g}(x) = 
\grad \L(x;S_1^n)$. 
(Recall from Section~\ref{sec:prelims} that we measure complexity
by the number of individual first order evaluations
$(\loss(x; s), \grad \loss(x; s))$.)
Writing $\Pi_{\X}$ for the Euclidean projection onto $\X$, the stochastic
gradient method (SGM) with fixed step-size $\eta$ and $\x_0 \in \X$ 
iterates
\begin{equation}\label{eq:sgd}
x_{t+1} = \Pi_{\X}\prn{x_t - \eta\tilde{g}(x_t)},~\mbox{and}~~
\bar{x}_{t} %
= \frac{1}{t}\sum_{\tau\le t} x_\tau.
\end{equation}
We also consider Nesterov's accelerated gradient 
method~\cite{Nesterov83,Lan12}.  For
$x_0 = y_0 = z_0 \in \X$, a fixed step-size $\eta>0$ and a sequence
$\crl{\theta_t}$, we iterate
\begin{equation}\label{eq:agd}
z_{t+1} = \Pi_{\X}(z_t - \tfrac{\eta}{\theta_t} \tilde{g}(x_t)),~
y_{t+1} = \theta_t z_{t+1} + (1-\theta_t) y_t,~\mbox{and}~~
x_{t+1} =  \theta_{t+1} z_{t+1} + (1-\theta_t) y_{t+1}.
\end{equation}

\notarxiv{In Appendix~\ref{app:gangster}, we}
\arxiv{We now} state the rates of convergence of the iterations~\eqref{eq:sgd}
and~\eqref{eq:agd} following the analysis in~\cite{Lan12}, with a small
variation where the stochastic gradient estimates are unbiased for a uniform
approximation of the true objective with additive error $\delta$.
\arxiv{We provide a short proof in Appendix~\ref{app:gangster}.}
\arxiv{
\begin{restatable}[Convergence of stochastic gradient methods~{\cite[][Corollary 1]{Lan12}}]
  {proposition}{restatePropGangster}\label{prop:gangster}
Let $F:\X\to\R$ and $\overline{F}:\X\to\R$ satisfy $0\le 
F(x)-\overline{F}(x)\le
\delta$ for all $\X\in\R$. Assume that $\overline{F}$ is convex and that a
stochastic gradient estimator $\tilde{g}$ satisfies $\E{} \tilde{g}(x)
\in \del \overline{F}(x)$ and $\E{} \norm{\tilde{g}(x)}^2 \le \Gamma^2$ for
all $x\in\X$. For $T\in\N$, the iterate $\bar{x}_T$ in the
sequence~\eqref{eq:sgd} with 
 $\eta \asymp \frac{R}{ T^{1/2}\Gamma}$
satisfies
\begin{equation}\label{eq:sgd-rate}
\E F(\bar{x}_T) - \inf_{x'}F(x') \lesssim \delta + \frac{\Gamma
  R}{\sqrt{T}}.
\end{equation}
If in addition $\grad \overline{F}$ is $\Lambda$-Lipschitz and $\Var
\brk*{\tilde{g}(x)} \le \sigma^2$ for all $x\in\X$, the iterate $y_T$
in the sequence~\eqref{eq:agd} with $\eta \asymp
\min\{\frac{1}{\Lambda}, \frac{R}{T^{3/2}\sigma}\}$ and $\theta_t =
\frac{2}{t+1}$ satisfies
\begin{equation}\label{eq:agd-rate}
\E F(y_T) - \inf_{x'}F(x') \lesssim \delta + \frac{\Lambda R^2}{T^2} +
\frac{\sigma R}{\sqrt{T}}.
\end{equation}
\end{restatable}
}
Since our gradient estimator has norm bounded by $G$, SGM
allows us to find an $\epsilon$-minimizer of $\bL$ in
$T\asymp {(GR)^2}/{\epsilon^2}$ steps. Therefore, choosing $n$ large enough in
accordance to Proposition~\ref{prop:batch-bias} guarantees that we find an
$\epsilon$-minimizer of $\L$. The accelerated scheme~\eqref{eq:agd} admits
convergence guarantees that scale with the gradient estimator variance
instead of its second moment, allowing us to leverage
Proposition~\ref{prop:variance-grad} to reduce $T$ to the order of
$1/\epsilon$. The accelerated guarantees require the loss $\L$ to have 
order $1/\epsilon$-Lipschitz gradients---fortunately, this holds for 
$\Llam$ and $\LcvarR$.

\begin{restatable}{claim}{restateClaimSmoothObj}\label{claim:smooth-obj}
  Let Assumption~\ref{assumption:grad-lipschitz} hold.
  For all $P$, $\grad \LcvarR(x; P)$ and $\grad \Llam(x; P)$ are 
  $(\frac{G^2}{\lambda}+H)$-Lipschitz in $x$, and $0\le 
  \Lcvar(x;P)-\LcvarR(x;P)\le \lambda \log({1}/{\alpha})$ for all $x$.
\end{restatable}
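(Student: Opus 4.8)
The plan is to handle $\LcvarR$ and $\Llam$ simultaneously by writing each as a penalized robust objective
\[
\L(x;P) \;=\; \sup_{Q\in\mathcal{Q}}\crl[\big]{\E_Q\brk{\ell(x;S)} - \lambda\,\divpsi(Q,P)},
\]
with $\psi(t)=t\log t - t + 1$ and $\mathcal{Q}=\crl{Q:\tfrac{\d Q}{\d P}\le\tfrac1\alpha}$ for $\LcvarR$ (so that $\divpsi(Q,P)=\kl(Q\|P)$), and $\psi=\cs$ with $\mathcal{Q}$ the set of all measures absolutely continuous with respect to $P$ for $\Llam$. Two structural facts drive the argument. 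First, on the feasible set the divergence $\divpsi(\cdot,P)$ is $1$-strongly convex with respect to the $L^1(P)$ norm on the density $\tfrac{\d Q}{\d P}$: for $\cs$ this is $\norm{g}_{L^1(P)}^2\le\norm{g}_{L^2(P)}^2$ applied to the exact second-order expansion of the quadratic $\divcs$, and for $\kl$ it is the classical $1$-strong convexity of negative entropy with respect to $\ell_1$ (equivalently Pinsker's inequality, using $\int\tfrac{\d Q}{\d P}\,\d P=1$). Hence $Q\mapsto\E_Q[\ell(x;S)]-\lambda\divpsi(Q,P)$ is $\lambda$-strongly concave over $\mathcal{Q}$ in that norm, so its maximizer $Q^\star_x$ is unique. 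Second, under Assumption~\ref{assumption:grad-lipschitz} the map $x\mapsto\ell(x;s)$ is $C^1$ with $\grad\ell(x;s)$ continuous and bounded by $G$, so a Danskin-type argument yields $\grad\L(x;P)=\E_{Q^\star_x}[\grad\ell(x;S)]$. (Attainment, uniqueness, and measurability in the infinite-support case are the routine preliminaries collected in Appendix~\ref{app:prelims}.)

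The core step is to show $x\mapsto Q^\star_x$ is $(G/\lambda)$-Lipschitz in $L^1(P)$. Writing $\rho_x=\tfrac{\d Q^\star_x}{\d P}$, the standard two-inequality argument---combining $\lambda$-strong concavity with the first-order optimality of $Q^\star_x$ and $Q^\star_y$ over the convex set $\mathcal{Q}$---gives
\[
\lambda\,\norm[\big]{\rho_x-\rho_y}_{L^1(P)}^2 \;\le\; \E_{Q^\star_x}\brk{\ell(x;S)-\ell(y;S)} - \E_{Q^\star_y}\brk{\ell(x;S)-\ell(y;S)},
\]
where the penalty terms drop out because they do not depend on $x$ or $y$. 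The right-hand side equals $\int\prn{\ell(x;s)-\ell(y;s)}\prn{\rho_x-\rho_y}\,\d P(s)\le\norm{\ell(x;\cdot)-\ell(y;\cdot)}_{L^\infty(P)}\norm{\rho_x-\rho_y}_{L^1(P)}\le G\norm{x-y}\,\norm{\rho_x-\rho_y}_{L^1(P)}$, hence $\norm{\rho_x-\rho_y}_{L^1(P)}\le\tfrac{G}{\lambda}\norm{x-y}$. The Lipschitz-gradient bound then follows from the triangle inequality and the envelope formula: $\norm{\grad\L(x;P)-\grad\L(y;P)}\le\E_{Q^\star_x}\norm{\grad\ell(x;S)-\grad\ell(y;S)}+\norm[\big]{\int\grad\ell(y;s)\prn{\rho_x-\rho_y}\,\d P(s)}\le H\norm{x-y}+G\,\norm{\rho_x-\rho_y}_{L^1(P)}\le\prn[\big]{H+\tfrac{G^2}{\lambda}}\norm{x-y}$, using Assumption~\ref{assumption:grad-lipschitz} for the first term and $\sup_s\norm{\grad\ell(y;s)}\le G$ for the second. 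The same computation covers both objectives, and it produces no $\alpha$-dependence since the constraint $\tfrac{\d Q}{\d P}\le\tfrac1\alpha$ only shrinks $\mathcal{Q}$ and never weakens strong concavity.

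For the gap, I would argue directly from the variational definitions. Since $\kl(Q\|P)\ge0$, dropping the penalty can only raise the objective, so $\LcvarR(x;P)\le\sup_{\d Q/\d P\le1/\alpha}\E_Q[\ell(x;S)]=\Lcvar(x;P)$, which is the lower bound. For the upper bound, pick $Q^\star$ attaining $\Lcvar(x;P)$; it is feasible for $\LcvarR$, so $\LcvarR(x;P)\ge\E_{Q^\star}[\ell(x;S)]-\lambda\kl(Q^\star\|P)=\Lcvar(x;P)-\lambda\kl(Q^\star\|P)$, and $\tfrac{\d Q^\star}{\d P}\le\tfrac1\alpha$ forces $\kl(Q^\star\|P)=\E_{Q^\star}[\log\tfrac{\d Q^\star}{\d P}]\le\log\tfrac1\alpha$. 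The only genuine obstacle is the functional-analytic bookkeeping in the infinite-support case---attainment and uniqueness of $Q^\star_x$, differentiating under the integral sign to get $\grad_x\E_Q[\ell(x;S)]=\E_Q[\grad\ell(x;S)]$, and justifying the envelope formula together with the uniform $L^1(P)$-strong convexity of $\divpsi$ over the affinely constrained feasible set---all of which are standard and can be deferred to the preliminaries.
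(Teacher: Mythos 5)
Your proof is correct and follows essentially the same route as the paper's: the identical triangle-inequality decomposition $\norm{\grad\L(x;P)-\grad\L(y;P)}\le H\norm{x-y}+G\norm{Q\opt_x-Q\opt_y}_1$, the same strong-convexity stability argument showing $\norm{Q\opt_x-Q\opt_y}_1\le \frac{G}{\lambda}\norm{x-y}$, and the same two-sided comparison (nonnegativity of the penalty in one direction, the pointwise bound $\log\frac{\d Q}{\d P}\le\log\frac{1}{\alpha}$ in the other) for the CVaR gap. The only cosmetic difference is that for $\Llam$ you invoke $1$-strong convexity of $\divcs$ with respect to the $L^1(P)$ norm directly (valid for general $P$), whereas the paper argues on a finite sample via $2$-norm strong convexity and a $\sqrt{n}$ conversion; both yield the same constant.
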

\noindent
See proof in Appendix~\ref{app:prelims-smoothness}. Thus, to minimize
$\Lcvar$ we instead minimize $\LcvarR$ and choose
$\lambda\asymp \epsilon/\log({1}/{\alpha})$ to satisfy the smoothness
requirement while incurring order $\epsilon$ approximation error. For $\Llam$
with $\lambda \ge \epsilon$ we get sufficient smoothness for 
free.\footnote{%
	We can also handle the case $\lambda < \epsilon$ by adding a 
	KL-divergence term to $\psi$ for $\Llam$.}

As computing every gradient estimator requires $n$ evaluations
of $\grad\ell$, the total gradient complexity is $nT$, and we have the following
suite of guarantees (see Appendix~\ref{prf:thm-complexity-batch} for proof).
\begin{restatable}{theorem}{restateThmBatchComplexity}
  \label{thm:batch-complexity}
  Let Assumptions~\ref{assumption:grad-lipschitz}
  and~\ref{assumption:smooth-icdf} hold, possibly trivially (with  $H=\infty$ or $\icdfLip=\infty$).  Let $\epsilon\in(0,B)$ and write
  $\nu=\frac{H}{G^2}\epsilon$. With suitable choices of the batch size $n$
  and iteration count $T$, the gradient methods~\eqref{eq:sgd}
  and~\eqref{eq:agd} find $\bar{x}$ satisfying $\E \L(\bar{x},P_0) -
  \inf_{x'\in\X}\L(x';P_0) \le \epsilon$ with complexity $n T$ admitting the
  following bounds.
  \begin{itemize}[leftmargin=*]
  \item For $\L=\Lcvar$, we have
    $nT \lesssim 
    \frac{(GR)^2}{\alpha\epsilon^2}
    \prn*{1 + 
      \min\crl[\Big]{
	\frac{\alpha\icdfLip\sqrt{\log\frac{1}{\alpha} + \nu}}{GR},
	\frac{B^2\sqrt{\log\frac{1}{\alpha}+\nu}}{GR \epsilon},
        \frac{B^2}{\epsilon^2}
      }
    }
    $.
  \item For $\L=\Llam$ with $\lambda \le B$, we have
    $nT \lesssim 
    \frac{(GR)^2B}{\lambda\epsilon^2}
    \prn*{1 + 
      \min\crl[\Big]{
	\frac{B}{GR}\sqrt{\frac{\epsilon(1+\nu)}{\lambda}}, 
	\frac{B}{\epsilon}
    }}$.
  \item For $\L=\Lcs$, we have
    $nT \lesssim 
    \frac{(1+\rho)(GR)^2 
      B^2}{\epsilon^4}\log\frac{(1+\rho)B^2}{\epsilon^2}$.
  \item For any loss of the from~\eqref{eq:pop-dro-reg}, we have $nT 
    \lesssim \frac{(GR)^2 \icdfLip }{\epsilon^3}$.
  \end{itemize}
\end{restatable}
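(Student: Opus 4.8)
The plan is to feed the bias, variance, and smoothness estimates established above into the black-box guarantee of \Cref{prop:gangster}. Concretely, I will take $F=\L(\cdot;P_0)$ and $\overline F=\bL(\cdot;n)$ in \Cref{prop:gangster}, so that the mini-batch estimator $\tilde g(x)=\grad\L(x;S_1^n)$ is exactly unbiased for $\grad\overline F(x)$ by the definition~\eqref{eq:batch-obj} of $\bL$. The additive-error parameter is $\delta=\sup_x[\L(x;P_0)-\bL(x;n)]$, which \Cref{prop:batch-bias} bounds; the second moment satisfies $\E\norm{\tilde g(x)}^2\le G^2$ because, by Danskin's theorem, $\grad\L(x;s_1^n)$ is a convex combination of the $\grad\ell(x;s_i)$; the variance $\sigma^2$ is bounded by \Cref{prop:variance-grad} in the $\cs$-bounded cases; and when acceleration is used, $\grad\overline F$ is $(G^2/\lambda+H)$-Lipschitz by \Cref{claim:smooth-obj} applied with $P=\Phat{s_1^n}$ and then taking expectations. \Cref{prop:gangster} then says that $T\asymp(GR/\epsilon)^2$ plain SGM steps, or $T\asymp \sqrt{\Lambda R^2/\epsilon}+(\sigma R/\epsilon)^2$ accelerated steps, bring $\E\overline F$ within $O(\epsilon)$ of its infimum once $\delta\lesssim\epsilon$; choosing $n$ minimal subject to the relevant bias bound being $\le\epsilon$ then gives an $O(\epsilon)$-minimizer of $\L$ at cost $nT$, and rescaling $\epsilon$ by a constant removes the $O(\cdot)$.

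For CVaR I will first replace $\Lcvar$ by the smoothed surrogate $\LcvarR$: by \Cref{claim:smooth-obj} these differ by at most $\lambda\log(1/\alpha)$, so the choice $\lambda\asymp\epsilon/\log(1/\alpha)$ costs only $O(\epsilon)$ and makes $\grad\LcvarR$ $O\big(G^2(\log(1/\alpha)+\nu)/\epsilon\big)$-Lipschitz, whence the acceleration smoothness term becomes $\sqrt{\Lambda R^2/\epsilon}\asymp (GR/\epsilon)\sqrt{\log(1/\alpha)+\nu}$. Since $\LcvarR$ is a penalty objective of the form~\eqref{eq:pop-dro-reg} obtainable as a Kusuoka average over CVaR levels $\ge\alpha$, the bias bounds~\eqref{eq:cvar-bias} and~\eqref{eq:icdf-bias} (the latter under \Cref{assumption:grad-lipschitz}) carry over to it, so $n\asymp\min\{B^2/(\alpha\epsilon^2),\,\icdfLip/\epsilon\}$ suffices; with $\sigma^2\lesssim G^2/(\alpha n)$ from \Cref{prop:variance-grad} the accelerated variance term contributes $\lesssim G^2R^2/(\alpha n\epsilon^2)$ to $T$. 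Multiplying the resulting $T$ by each admissible $n$, and also keeping the un-accelerated alternative $T\asymp(GR/\epsilon)^2$, $n\asymp B^2/(\alpha\epsilon^2)$, yields the three competing products; collecting the common factor $(GR)^2/(\alpha\epsilon^2)$ gives exactly the stated minimum, with the $\icdfLip$- and $\nu$-dependent options dropping out (to the $B^2/\epsilon^2$ fallback) precisely when $\icdfLip=\infty$ or $H=\infty$.

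The other three cases are parallel. For $\Llam$ with $\epsilon\le\lambda\le B$, \Cref{claim:smooth-obj} gives $(G^2/\lambda+H)$-smoothness for free, \Cref{prop:variance-grad} gives $\sigma^2\lesssim G^2B/(\lambda n)$, and~\eqref{eq:lam-bias},~\eqref{eq:icdf-bias} give bias $\lesssim\min\{B^2/(\lambda n),\,\icdfLip/n\}$; choosing $n$ minimal and optimizing $T$ (against the plain-SGM option) produces the claimed bound, and the KL-augmentation trick of the accompanying footnote reduces $\lambda<\epsilon$ to the $\lambda\asymp\epsilon$ case. For $\Lcs$ no $1/n$ variance bound exists (\Cref{prop:lb-variance}), so I will use only plain SGM with $T\asymp(GR/\epsilon)^2$; the bias bound~\eqref{eq:cs-bias} forces $n\gtrsim (1+\rho)B^2(\log n)/\epsilon^2$, i.e. $n\asymp(1+\rho)(B^2/\epsilon^2)\log\big((1+\rho)B^2/\epsilon^2\big)$, and $nT$ is the stated $\epsilon^{-4}$ bound. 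For a general loss~\eqref{eq:pop-dro-reg}, only~\eqref{eq:icdf-bias} is available, so $n\asymp\icdfLip/\epsilon$ and plain SGM give $nT\lesssim(GR)^2\icdfLip/\epsilon^3$.

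The main obstacle is purely organizational: one must vary the three knobs (smoothing $\lambda$, batch size $n$, step count $T$) simultaneously, determine which bias bound and which of the SGM/AGD rates binds in each parameter regime, and verify that the lower envelope of the resulting costs collapses to the compact minima written in the theorem — including the degenerate regimes $H=\infty$ and $\icdfLip=\infty$. A secondary subtlety, dispatched by the ``for all $P$'' phrasing of \Cref{claim:smooth-obj} and the Kusuoka-representation viewpoint of the bias analysis, is checking that the smoothness and bias statements proved for population objectives transfer both to the mini-batch surrogate $\bL(\cdot;n)$ and to the KL-regularized CVaR surrogate.
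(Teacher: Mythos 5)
Your proposal is correct and follows essentially the same route as the paper's proof: invoke Proposition~\ref{prop:gangster} with $\overline F=\bL(\cdot;n)$, feed in the bias bounds of Proposition~\ref{prop:batch-bias}, the variance bounds of Proposition~\ref{prop:variance-grad}, and the smoothness from Claim~\ref{claim:smooth-obj} (smoothing CVaR via $\LcvarR$ with $\lambda\asymp\epsilon/\log(1/\alpha)$), then take the lower envelope over the admissible $(n,T)$ pairs exactly as in the paper's case analysis. The parameter choices you describe match those in the paper's summary table, including the fallback to plain SGM when $H=\infty$ or $\icdfLip=\infty$.
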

The smoothness parameter $H$ only appears in rates resulting from Nesterov
acceleration.  Even there, $H$ appears in lower-order terms in $\epsilon$
since $\nu = \frac{H}{G^2} \epsilon$. We also
note that the final $\icdfLip\epsilon^{-3}$ rate holds even when the
uncertainty set is the entire simplex; therefore, when $\icdfLip<\infty$ it
is possible to approximately minimize the maximum 
loss~\cite{ShalevWe16} in sublinear
time. Theorem~\ref{thm:batch-complexity} achieves the claimed rates
of convergence in Table~\ref{table:summary} in certain settings. In 
particular, it recovers the rates for $\Lcvar$
and $\Llam$ (the first and last column of the table)
when $\nu\lesssim 1$, $\lambda \gtrsim (B/(GR))^2\epsilon$, and $\alpha
\lesssim GR/\icdfLip$.
In the next section, we show how to attain the claimed optimal
rates for $\Lcvar$ and $\Llam$ without conditions,
returning to address the rates for the constrained $\chi^2$ objective
$\Lcs$ in \Cref{sec:doubling}.

\section{Multi-level Monte Carlo (MLMC) gradient 
estimators}\label{sec:multilevel}

In the previous section, we optimized the mini-batch surrogate
$\bL(x; n)$ to the risk $\L(x; P_0)$, using
Proposition~\ref{prop:batch-bias} to guarantee the surrogate's
fidelity for sufficiently large $n$. The increasing (linear) complexity of 
computing the estimator
$\grad \L(x; S_1^n)$ as $n$ grows limits the (theoretical) efficiency of the
method. To that end, in this section we revisit a multi-level Monte Carlo
(MLMC) gradient estimator of \citet{BlanchetGl15} to form an 
unbiased approximation to $\grad \bL(x; n)$ whose sample complexity is
{logarithmic} in $n$. We provide new bounds on the variance of this 
MLMC
estimator, leading immediately to improved (and, as we shall see, optimal)
efficiency estimates for stochastic gradient methods using it.

\notarxiv{
	To define the estimator, let $J\sim \min\{\mathsf{Geo}(1/2), \jmax\}$ be 
	a truncated geometric random variable supported on 
	$\{1,\ldots,\jmax\}$, and let $q(j) = \P(J=j) = 2^{-j + \indic{j=\jmax}}$.
  For a realization of $J$ we draw  a sample of size $2^J n_0$
  and compute the 
  multi-level Monte-Carlo estimator 
  as follows:
  \begin{equation}%
    \ml[\grad \L] \defeq \grad \L(x;S_1^{n_0}) + \frac{1}{q(J)} 
    \mld_{2^J n_0},~\mbox{where}~
    \mld_k \defeq \grad \L(x;S_1^{k}) - \frac{
      \grad \L(x;S_1^{k/2}) + \grad \L(x;S_{k/2+1}^{k})
    }{2}.
    \label{eqn:mlmc}
  \end{equation}
}
\arxiv{
  To define the estimator, let $J\sim \min\{\mathsf{Geo}(1/2), \jmax\}$ be a 
  truncated geometric random variable supported on $\{1,\ldots,\jmax\}$, 
  and let $q(j) = \P(J=j) = 2^{-j + \indic{j=\jmax}}$. Furthermore, for any 
  $k\in2\N$ we define the ``bias increment'' estimate
  \begin{equation*}
  \mld_k \defeq \grad \L(x; S_1^k) - \frac{\grad \L(x; S_1^{k/2})
  	+ \grad \L(x; S_{k/2 + 1}^k)}{2}.
  \end{equation*}	
 For a given minimum sample size parameter $n_0 \ge 1$, we define 
 $\ml[\grad \L]$, the MLMC estimator of $\grad \L$, via
  \begin{align}
    \nonumber
    \mbox{Draw}~ & J \sim \min\left\{\mathsf{Geo}(1/2), \jmax \right\}
    ~~ \mbox{and}~ S_1,\ldots, S_{2^J n_0} \simiid P_0 \\
    \mbox{Estimate}~
    & \ml[\grad \L] \defeq \grad \L(x; S_1^{n_0}) + \frac{1}{q(J)}
    \mld_{2^J n_0}.
     \label{eqn:mlmc} 
  \end{align}
}

Our estimator differs from the proposal~\cite{BlanchetGl15} in two aspects:
the distribution of $J$ and the option to set $n_0>1$. As we further discuss
in Appendix~\ref{app:compare-with-blanchet}, the former difference is
crucial for our setting, while the latter is pratically and theoretically
helpful yet not crucial.  The following properties of the MLMC estimator are 
key to our
analysis (see Appendix~\ref{sec:prf-mlmc-bounds} for proofs).

\begin{restatable}{claim}{restateClaimMLMC}\label{claim:mlmc}
  The estimator $\ml[\grad \L]$ with parameters $n = 2^{\jmax} n_0$ satisfies
  \begin{equation*}
    \E \ml[\grad L] = \E \grad \L(x; S_1^{\kmax})=\grad \bL(x;n),
    ~
    \text{requiring expected sample size}~\E 2^J n_0 =
    n_0(1+\log_2(\kmax/n_0)).
  \end{equation*}
\end{restatable}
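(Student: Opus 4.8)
The plan is to prove both statements by conditioning on the truncated geometric index $J$, using that the samples are i.i.d.\ and independent of $J$, and then invoking a telescoping identity. Fix $x\in\X$ and abbreviate $\bar g_m \defeq \E\,\grad\L(x;S_1^m)$ for $m\in\N$; by the definition~\eqref{eq:batch-obj} of the surrogate and the standard interchange of expectation and subdifferentiation, $\bar g_m$ is (the chosen selection of) $\grad\bL(x;m)$, so the second equality in the claim, $\E\,\grad\L(x;S_1^{\kmax})=\grad\bL(x;n)$, holds by definition once $n=2^{\jmax}n_0$. It is convenient to realize the draw in~\eqref{eqn:mlmc} as an infinite i.i.d.\ sequence $S_1,S_2,\ldots\simiid P_0$ independent of $J$, with the estimator using the prefix $S_1,\ldots,S_{2^J n_0}$; since $n_0\le 2^J n_0$ this prefix already contains $S_1^{n_0}$, so the number of samples actually drawn is exactly $2^J n_0$.

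For the unbiasedness I would first compute $\E\,\mld_k$ for $k=2^j n_0$, $j\ge 1$. By linearity and the fact that $(S_1,\ldots,S_{k/2})$ and $(S_{k/2+1},\ldots,S_k)$ are each i.i.d.\ batches of size $k/2$, we have $\E\,\grad\L(x;S_1^{k/2})=\E\,\grad\L(x;S_{k/2+1}^{k})=\bar g_{k/2}$, whence $\E\,\mld_k=\bar g_k-\bar g_{k/2}$. Since $J\perp(S_i)$, conditioning on $J=j$ gives $\E[\mld_{2^J n_0}\mid J=j]=\bar g_{2^j n_0}-\bar g_{2^{j-1}n_0}$, and therefore
\[
\E\,\ml[\grad\L]=\bar g_{n_0}+\sum_{j=1}^{\jmax} q(j)\cdot\frac{1}{q(j)}\prn*{\bar g_{2^j n_0}-\bar g_{2^{j-1}n_0}}=\bar g_{n_0}+\sum_{j=1}^{\jmax}\prn*{\bar g_{2^j n_0}-\bar g_{2^{j-1}n_0}}.
\]
The sum telescopes to $\bar g_{2^{\jmax}n_0}-\bar g_{n_0}$, leaving $\E\,\ml[\grad\L]=\bar g_{2^{\jmax}n_0}=\bar g_n=\grad\bL(x;n)$. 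The one place to exercise care is that the importance weight $1/q(j)$ cancels $q(j)$ for \emph{every} $j\in\{1,\ldots,\jmax\}$, including the truncated value $j=\jmax$ where $q(\jmax)=2^{-\jmax+1}$ rather than $2^{-\jmax}$; this is exactly what lets the telescoping run over the full range.

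For the expected sample size I would evaluate $\E\,2^J n_0=n_0\sum_{j=1}^{\jmax}2^j q(j)$ directly. Using $q(j)=2^{-j}$ for $j<\jmax$ and $q(\jmax)=2^{-\jmax+1}$, the summand $2^j q(j)$ equals $1$ for each $j<\jmax$ and equals $2$ for $j=\jmax$, so $\sum_{j=1}^{\jmax}2^j q(j)=(\jmax-1)+2=\jmax+1$. Since $n=\kmax=2^{\jmax}n_0$ we have $\jmax=\log_2(\kmax/n_0)$, giving $\E\,2^J n_0=n_0\bigl(1+\log_2(\kmax/n_0)\bigr)$. There is no real obstacle here: the only substantive check is the internal consistency of the truncation --- that $\sum_{j=1}^{\jmax}q(j)=1$ and that the mass normally assigned to $j>\jmax$ is collapsed onto $j=\jmax$ (doubling its weight to $2^{-\jmax+1}$) --- which is precisely the bookkeeping that makes both the telescoping sum and the sample-count sum come out cleanly.
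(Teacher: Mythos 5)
Your proof is correct and follows essentially the same route as the paper's: compute $\E\,\mld_k = \E\,\grad\L(x;S_1^k)-\E\,\grad\L(x;S_1^{k/2})$, note that the importance weight $1/q(J)$ cancels the probability mass so the sum over $j$ telescopes to $\E\,\grad\L(x;S_1^{\kmax})$, and evaluate $\E\,2^J=\sum_j 2^j q(j)=\jmax+1$ directly. Your explicit check that the cancellation and the normalization $\sum_j q(j)=1$ survive the truncation at $j=\jmax$ is a nice bit of added care, but the argument is the same.
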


\begin{restatable}[Second moment of MLMC gradient estimator]
  {proposition}{restatePropMomentMLMC}\label{prop:ml-mom}
	For all $x\in\X$, the multi-level Monte Carlo estimator with parameters 
	$n$ and $n_0$ satisfies
	\begin{equation*}
	\E{}\,\norm[\Big]{\ml\brk[\big]{\grad\Lcvar}}^2 \lesssim 
	\prn*{1+\frac{\log\frac{\kmax}{n_0}}{\alpha n_0}}G^2
	~~\mbox{and}~~
	\E{}\,\norm[\Big]{\ml\brk[\big]{\grad\Llam}}^2 \lesssim 
	\prn*{1+\frac{B\log\frac{\kmax}{n_0}}{\lambda n_0}}G^2.
	\end{equation*}
\end{restatable}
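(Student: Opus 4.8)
The plan is to peel off the base term $\grad\L(x;S_1^{n_0})$, reduce the $\tfrac{1}{q(J)}\mld_{2^Jn_0}$ contribution to a per-level second-moment estimate on the increment $\mld_k$, and then show that $\E\|\mld_k\|^2$ decays like $1/k$ by splitting it into a variance term (handled by Proposition~\ref{prop:variance-grad}) and a gradient-bias-increment term (the real work).

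\textbf{Step 1 (reduction to $\E\|\mld_k\|^2$).} Since $\|a+b\|^2\le 2\|a\|^2+2\|b\|^2$ and $\grad\L(x;S_1^{n_0})=\sum_i q_i\opt(S_1^{n_0})\,\grad\ell(x;S_i)$ is a convex combination of vectors of norm at most $G$ (the $\phi,\psi$ terms in~\eqref{eq:finite-rob} are $x$-independent, so this is the $x$-gradient by Danskin's theorem), we get $\E\|\ml[\grad\L]\|^2\le 2G^2+2\,\E\|q(J)^{-1}\mld_{2^Jn_0}\|^2$. Conditioning on $J$, which is drawn before the samples, and using $q(j)=2^{-j+\indic{j=\jmax}}\ge 2^{-j}$,
\[
\E\bigl\|q(J)^{-1}\mld_{2^Jn_0}\bigr\|^2=\sum_{j=1}^{\jmax}q(j)^{-1}\,\E\|\mld_{2^jn_0}\|^2\le\sum_{j=1}^{\jmax}2^j\,\E\|\mld_{2^jn_0}\|^2 .
\]
So it suffices to prove $\E\|\mld_k\|^2\lesssim \frac{G^2}{\alpha k}$ for $\Lcvar$ and $\E\|\mld_k\|^2\lesssim \frac{G^2(1+B/\lambda)}{k}$ for $\Llam$: summing, the right-hand side above is $\lesssim\frac{G^2\jmax}{\alpha n_0}=\frac{G^2\log_2(n/n_0)}{\alpha n_0}$ (and analogously $\lesssim \frac{G^2 B\log_2(n/n_0)}{\lambda n_0}$ when $\lambda\le B$), which with the $2G^2$ term gives exactly the claimed bounds.

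\textbf{Step 2 (bias/variance split).} Write $\mld_k=g_k-\tfrac12(g^L+g^R)$ with $g_k=\grad\L(x;S_1^k)$, $g^L=\grad\L(x;S_1^{k/2})$, $g^R=\grad\L(x;S_{k/2+1}^k)$, so $\E\mld_k=\grad\bL(x;k)-\grad\bL(x;k/2)$ and $\E\|\mld_k\|^2=\|\grad\bL(x;k)-\grad\bL(x;k/2)\|^2+\Var(\mld_k)$. Expanding $\Var(\mld_k)$, the cross term between $g^L$ and $g^R$ vanishes since $S_1^{k/2}$ and $S_{k/2+1}^k$ are independent and $g^L,g^R$ are identically distributed with mean $\grad\bL(x;k/2)$, and the remaining terms are controlled by Cauchy--Schwarz using $\Var(g_k),\Var(g^L)\lesssim \frac{G^2}{\alpha k}$ (resp. $\lesssim\frac{G^2(1+B/\lambda)}{k}$) from Proposition~\ref{prop:variance-grad}; hence $\Var(\mld_k)$ already meets the target bound, and the entire problem collapses to bounding the squared \emph{gradient bias increment} $\|\grad\bL(x;k)-\grad\bL(x;k/2)\|^2$ by the same quantity. (This sub-step is genuinely necessary, since $\|\E\mld_k\|^2\le\E\|\mld_k\|^2$ regardless.)

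\textbf{Step 3 (the gradient bias increment --- the main obstacle).} Proposition~\ref{prop:batch-bias} only bounds the \emph{function} increment $0\le\bL(x;k)-\bL(x;k/2)\le\L(x;P_0)-\bL(x;k/2)$, and for $\Lcvar$ this $O((\alpha k)^{-1/2})$ bound cannot be turned into a gradient bound by a smoothness argument without losing a square root, so a structural argument is needed. Using $\grad\L(x;S_1^m)=\sum_i q_i\opt(S_1^m)\grad\ell(x;S_i)$, exchangeability, and coupling $S_1^{k/2}$ as the prefix of $S_1^k$, I would write
\[
\grad\bL(x;k)-\grad\bL(x;k/2)=\E\Bigl[\bigl(k\,q_1\opt(S_1^k)-\tfrac k2\,q_1\opt(S_1^{k/2})\bigr)\grad\ell(x;S_1)\Bigr].
\]
Conditioning on the value $\ell(x;S_1)=t$ renders the scalar weight difference $w:=k\,q_1\opt(S_1^k)-\tfrac k2\,q_1\opt(S_1^{k/2})$ independent of $\grad\ell(x;S_1)$ (the optimal weights see $S_1$ only through its loss), so $\|\grad\bL(x;k)-\grad\bL(x;k/2)\|\le G\,\E\bigl|\E[w\mid\ell(x;S_1)]\bigr|$, and one must exploit the cancellation inside the conditional expectation. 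For $\Lcvar$, $k\,q_1\opt(S_1^m)$ equals $\tfrac1\alpha$ when $\ell(x;S_1)$ exceeds the empirical $(1-\alpha)$-quantile of the $m$ losses and $0$ otherwise (up to one fractional weight), and the $m=k$ and $m=k/2$ quantiles disagree only when $\ell(x;S_1)$ lies in an $O(\sqrt{\alpha/k})$-wide quantile window around $F^{-1}(1-\alpha)$; a Chernoff / local-CLT estimate on the relevant binomial tails then gives $\E\bigl|\E[w\mid\ell(x;S_1)]\bigr|\lesssim (\alpha k)^{-1/2}$. For $\Llam$, $k\,q_1\opt(S_1^m)=(1+\lambda^{-1}(\ell(x;S_1)-\eta_m\opt))_+$ is $\lambda^{-1}$-Lipschitz in the optimal multiplier $\eta_m\opt$, so $|w|\le\lambda^{-1}|\eta_k\opt-\eta_{k/2}\opt|$ and $\E\bigl|\E[w\mid\ell(x;S_1)]\bigr|\lesssim \lambda^{-1}\E|\eta_k\opt-\eta_{k/2}\opt|\lesssim \sqrt{B/(\lambda k)}$, invoking the concentration of $\eta_m\opt$ around the population multiplier established (via Efron--Stein) in the proof of Proposition~\ref{prop:batch-bias}. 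Squaring gives the bounds needed to close Step 2, and combining with Step 1 completes the proof. I expect these quantile- and multiplier-fluctuation estimates to be where essentially all the difficulty lies; Steps 1 and 2 are routine given Proposition~\ref{prop:variance-grad}.
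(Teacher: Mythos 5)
Your Steps 1--2 are fine, but the route is genuinely different from the paper's and the $\Llam$ branch of Step~3 has a quantitative gap. You bound the weight increment pointwise by $|w|\le\lambda^{-1}|\eta_k\opt-\eta_{k/2}\opt|$ and then invoke the Efron--Stein variance bound on the dual multiplier; but that bound is $\Var[\eta_m\opt]\lesssim B^3/(\lambda m)$, so the chain gives
\begin{equation*}
\E\abs*{\E[w\mid \ell(x;S_1)]}\;\le\;\lambda^{-1}\,\E\abs{\eta_k\opt-\eta_{k/2}\opt}\;\lesssim\;\lambda^{-1}\sqrt{\frac{B^3}{\lambda k}}\;=\;\frac{B}{\lambda}\sqrt{\frac{B}{\lambda k}},
\end{equation*}
not the $\sqrt{B/(\lambda k)}$ you assert. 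Squaring, your argument yields $\E\norm{\mld_k}^2\lesssim G^2(B/\lambda)^3/k$ rather than $G^2(B/\lambda)/k$, i.e.\ a final bound of order $(1+(B/\lambda)^3\log(n/n_0)/n_0)G^2$ --- weaker by a factor $(B/\lambda)^2$ precisely in the regime $\lambda\ll B$ where the proposition has content. Closing this would require exploiting the cancellation inside $\E[w\mid\ell(x;S_1)]$ (which you flag as the key mechanism but then do not use for $\Llam$, only for CVaR), and there is no obvious way to extract it from the multiplier-fluctuation bound alone; you would also need to control the mean shift $|\E\eta_k\opt-\E\eta_{k/2}\opt|$ separately. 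The CVaR branch (binomial-tail analysis of the quantile window) does appear to close at the right rate $1/\sqrt{\alpha k}$, though it is only sketched and is considerably heavier than what is needed; note also that Proposition~\ref{prop:variance-grad} is stated for $\LcvarR$ (strictly convex $\psi$), so the $\Lcvar$ case formally requires the $\lambda\to 0$ limiting argument the paper uses.

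For comparison, the paper avoids the bias/variance split entirely and never touches $\grad\bL(x;k)-\grad\bL(x;k/2)$ in isolation. It bounds $\norm{\mld_k}$ pathwise by $G\sum_{i\le k/2}|q_i-\tfrac12 q'_i|+G\sum_{i>k/2}|q_i-\tfrac12 q''_{i-k/2}|$ and observes that, since $q_i$ and $q'_i$ are both images of the \emph{same} loss value $\ell(x;S_i)$ under the increasing map ${\psi^*}'[(\cdot-\eta)/\lambda]$ with only $\eta$ differing, every difference $q_i-\tfrac12 q'_i$ on a given half has the same sign; the 1-norm therefore collapses to $|\sum_{i\le k/2}q_i-\tfrac12|$, whose second moment is $\le\frac{1}{2k}\divcs(q,\tfrac1k\ones)\le \frac{C}{2k}$ by Lemma~\ref{lem:subset-cs} and $\cs$-boundedness. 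This single monotonicity argument handles $\LcvarR$ and $\Llam$ uniformly with the sharp constants and no distributional analysis, which is what your objective-by-objective treatment in Step~3 is struggling to reproduce.
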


Claim~\ref{claim:mlmc} follows from a simple calculation, while the core of
Proposition~\ref{prop:ml-mom} is a sign-consistency argument for simplifying a
1-norm, similar to the proof of Proposition~\ref{prop:variance-grad}.  \arxiv{%
  Specifically, for $q$ and $q'$ attaining the maximum~\eqref{eq:finite-rob} for
  samples $S_1^k$ and $S_1^{k/2}$, respectively, we show that
  $\E \norm{\mld_k}^2 \lesssim G^2 \E \norm{q_1^{k/2}-\half q'}^2_1$.  Then, we
  argue that $\norm{q_1^{k/2}-\half q'}_1 = | \ones^\top q_1^{k/2} - \half |$ as
  $q_i-\half q'_i$ has the same sign for $i\le k/2$. This implies that
  $\E \norm{\mld_k}^2$ scales as $1/k$, and the desired bound on the expected
  gradient estimator norm follows by direct calculation. The proof extends to  any unconstrained $\cs$-bounded objective (see \Cref{app:prelims-cs-bounded}), including $\LcvarR$ (independently of $\lambda$). 
}%

Further paralleling Proposition~\ref{prop:variance-grad},
we obtain similar bounds on the MLMC estimates of $\Lcvar$ and $\Llam$ (in
addition to their gradients), and demonstrate that similar bounds fail to hold
for $\grad\Lcs$ (Proposition~\ref{prop:lb-cs-mlmc} in
Appendix~\ref{sec:prf-mlmc-bounds}).  Therefore, directly using the MLMC
estimator on $\grad\Lcs$ cannot provide guarantees for minimizing $\Lcs$;
instead, in \Cref{sec:doubling} we develop a doubling scheme that minimizes the
dual objective $\Llam(x;P_0) + \lambda \rho$ jointly over $x$ and
$\lambda$. This scheme relies on MLMC estimators for both the gradient
$\grad \Llam$ and the derivative of $\Llam$ with respect to $\lambda$.

Proposition~\ref{prop:ml-mom} guarantees that the second moment of our
gradient estimators remain bounded by a quantity that depends
logarithmically on $n$. For these estimators,
Proposition~\ref{prop:gangster} thus directly provides complexity guarantees
to minimize $\Lcvar$ and $\Llam$. We also provide a high probability bound on
the total complexity of the algorithm using a one-sided Bernstein
concentration bound. We state the guarantee below and present a short proof
in Appendix~\ref{prf:mlmc-complexity}. %

\begin{restatable}[MLMC complexity
  guarantees]{theorem}{restateThmMLMC}
  \label{thm:ml}
  For $\epsilon \in (0,B)$, set $n\asymp \frac{B^2}{\alpha \epsilon^2}$,
  $1\lesssim n_0 \lesssim \frac{\log n}{\alpha}$ and
  $T\asymp\frac{(GR)^2}{n_0\alpha\epsilon^2}\log^2{n}$. The stochastic 
  gradient
  iterates~\eqref{eq:sgd} with $\tilde{g}(x) = \ml[\grad \Lcvar(x;\cdot)]$
  satisfy
  $\E[\Lcvar(\bar{x}_T;P_0)] - \inf_{\x \in \X}\Lcvar(x; P_0) \le \epsilon$ with
  complexity at most
  \begin{equation*}
    n_0\log_2\prn*{\frac{n}{n_0}}T +
    5\sqrt{(n\log n)^2 + n_0n T \log n}
    \lesssim
    \frac{(GR + B)^2}{\alpha\epsilon^2}\log^2\frac{B^2}{\alpha\epsilon^2}
    \mbox{~~w.p~~}\ge 1-\frac{1}{n}.
  \end{equation*}
  The same conclusion holds when replacing $\Lcvar$ with $\Llam$ and
  $\alpha^{-1}$ with $1+B/\lambda$.

\end{restatable}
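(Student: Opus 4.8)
The plan is to assemble the pieces already in hand---the uniform bias bound of Proposition~\ref{prop:batch-bias}, the unbiasedness-and-sample-size identity of Claim~\ref{claim:mlmc}, and the second-moment bound of Proposition~\ref{prop:ml-mom}---feed them into the non-accelerated stochastic gradient guarantee of Proposition~\ref{prop:gangster}, and then separately control the \emph{random} running cost with a one-sided concentration inequality. First I would fix $F=\Lcvar(\cdot;P_0)$ and the surrogate $\overline F=\bL(\cdot;n)$. Since $\L(x;S_1^n)$ is a supremum over $q$ of functions convex in $x$, $\overline F$ is convex; Proposition~\ref{prop:batch-bias} gives $0\le F-\overline F\lesssim B(\alpha n)^{-1/2}$, so choosing $n\asymp B^2/(\alpha\epsilon^2)$ with a suitable constant makes $0\le F-\overline F\le\epsilon/2$ uniformly. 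Claim~\ref{claim:mlmc} says $\tilde g(x)=\ml[\grad\Lcvar(x;\cdot)]$ is unbiased for $\grad\overline F(x)$, and Proposition~\ref{prop:ml-mom} bounds its second moment by $\Gamma^2\asymp G^2\bigl(1+\tfrac{\log(n/n_0)}{\alpha n_0}\bigr)$. No smoothness assumption is needed here because we use plain SGM rather than acceleration.

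Next I would invoke the first half of Proposition~\ref{prop:gangster} with $\delta=\epsilon/2$ and this $\Gamma$: the averaged iterate $\bar x_T$ of~\eqref{eq:sgd} with $\eta\asymp R/(T^{1/2}\Gamma)$ obeys $\E F(\bar x_T)-\inf F\lesssim \epsilon/2+\Gamma R/\sqrt T$. Because $n_0\lesssim\log n/\alpha$ forces $1+\tfrac{\log(n/n_0)}{\alpha n_0}\lesssim\tfrac{\log n}{\alpha n_0}$ (and the analogous statement with $\alpha^{-1}$ replaced by $1+B/\lambda$ for $\Llam$), the stated choice $T\asymp\frac{(GR)^2}{n_0\alpha\epsilon^2}\log^2 n$ dominates $\Gamma^2R^2/\epsilon^2$, so the second term is $\le\epsilon/2$ and we obtain the $\epsilon$-accuracy-in-expectation claim.

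For the complexity, iteration $t$ draws $2^{J_t}n_0$ samples and evaluates $(\ell,\grad\ell)$ once at each of them---all pieces of $\ml[\grad\L]$ re-use the same batch, and the simplex sub-problems incur no further $\grad\ell$---so the total cost is $C_T=\sum_{t\le T}2^{J_t}n_0$. The summands are i.i.d., bounded by $n=2^{\jmax}n_0$, with mean $n_0(1+\log_2(n/n_0))$ (Claim~\ref{claim:mlmc}) and, by a short geometric-sum computation, second moment at most $3nn_0$. A one-sided Bernstein inequality with variance proxy $3Tnn_0$ and range $n$ then gives $C_T\le\E C_T+5\sqrt{(n\log n)^2+n_0nT\log n}$ with probability $\ge 1-1/n$, which is exactly the displayed bound; substituting $n$, $n_0$, $T$ (and, for $\Llam$, the corresponding branches of Propositions~\ref{prop:batch-bias} and~\ref{prop:ml-mom}, i.e.\ swapping $\alpha^{-1}$ for $1+B/\lambda$) and applying AM--GM to the cross term $GR\cdot B$ collapses everything into $\lesssim (GR+B)^2(\alpha\epsilon^2)^{-1}\log^{O(1)}(B^2/(\alpha\epsilon^2))$.

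The main obstacle is this last concentration step. The per-iteration cost $2^{J_t}n_0$ is heavy-tailed---with probability $\Theta(n_0/n)$ a single iteration pays $\Theta(n)$ evaluations---so Markov- or Chernoff-type bounds on $C_T$ are far too weak. The saving grace is that (i) we only need an \emph{upper} tail bound and (ii) despite the heavy tail the second moment $\E[(2^{J_t}n_0)^2]$ is only $O(nn_0)$, i.e.\ $O(n_0/n)$ times the square of the range; one-sided Bernstein is precisely the tool that converts this small-variance/large-range asymmetry into the $\sqrt{(n\log n)^2+n_0nT\log n}$ deviation term. A minor but necessary bookkeeping point along the way is to check that $\grad\L(x;S_1^{n_0})$, $\grad\L(x;S_1^{2^Jn_0})$, $\grad\L(x;S_1^{2^{J-1}n_0})$ and $\grad\L(x;S_{2^{J-1}n_0+1}^{2^Jn_0})$ are jointly computable with a single pass of $2^Jn_0$ first-order evaluations, so that the cost count above is honest.
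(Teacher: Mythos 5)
Your proposal is correct and follows essentially the same route as the paper: combine the bias bound (Proposition~\ref{prop:batch-bias}), the unbiasedness/sample-size identity (Claim~\ref{claim:mlmc}), and the second-moment bound (Proposition~\ref{prop:ml-mom}) with the plain SGM guarantee of Proposition~\ref{prop:gangster}, then control the random total sample count $\sum_t 2^{J_t}n_0$ via a one-sided Bernstein inequality using exactly the mean $n_0\log_2(2n/n_0)$, second moment $\le 3n_0n$, and almost-sure range $n$ that the paper computes. The only difference is that you spell out a few bookkeeping points (convexity of $\bL$, reuse of the batch across the levels of the estimator) that the paper leaves implicit.
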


\arxiv{\section{Lower bounds}\label{sec:lowerbounds}}
\notarxiv{\paragraph{Lower bounds.}}
We match the guarantees of Theorem~\ref{thm:ml} with lower bounds that hold in a
standard \emph{stochastic oracle} model~\cite{NemirovskiYu83,Lan12,BraunGuPo17},
where algorithms interact with a problem instance by iteratively
querying $x_t\in \X$  (for $t\in\N$) and observing $\ell(x_t;S)$ and $\grad 
\ell(x_t;S)$ with
$S\sim P_0$ (independent of $x_t$).  All algorithms we consider fit into this
model, with each gradient evaluation corresponding to an oracle
query. Therefore, to demonstrate that our MLMC guarantees are unimprovable in
the worst case (ignoring logarithmic factors), we formulate a lower bound on the
number of queries any oracle-based algorithm requires.

\begin{restatable}[Minimax lower bounds]{theorem}{restateThmLowerBounds}
  \label{thm:lb} 
  Let $G,R,\alpha,\lambda>0$, $\epsilon\in(0, GR/64)$, and
  sample space $\ss = [-1, 1]$. There exists a numerical constant
  $c > 0$ such that the following holds.
  \begin{itemize}[leftmargin=*]
  \item For each $d \ge 1$, domain $\mc{X} = \{x \in \R^d \mid \norm{x} 
  \le
    R\}$, and any algorithm, 
    there exists a distribution $P_0$ on $\ss$ and convex
    $G$-Lipschitz loss $\loss : \mc{X} \times \ss \to [0, GR]$ such that
    \begin{equation*}
      T \le c \frac{(GR)^2}{\alpha \epsilon^2}
      \mbox{~~implies~~}
      \E\brk{\Lcvar(x_T;P_0)}-\inf_{x'\in\X}\Lcvar(x';P_0) > 
      \epsilon.
    \end{equation*}
  \item There exists $d_\epsilon \lesssim (GR)^2 \epsilon^{-2}
    \log \frac{GR}{\epsilon}$ such that for
    $\mc{X} = \{x \in \R^d \mid \norm{x} \le R\}$,
    the same conclusion holds when replacing
    $\Lcvar$ with $\Llam$ and $\alpha$ with $\lambda / (GR)$.
  \end{itemize}
\end{restatable}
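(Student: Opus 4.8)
\noindent\emph{Proof proposal.} The plan is to reduce both statements to information‑theoretic (hypothesis‑testing) lower bounds in the stochastic oracle model, exploiting the fact that the DRO objectives ``see'' only a thin slice of the loss distribution, so that only a small fraction of oracle queries carries information about the minimizer. For CVaR the relevant slice has probability $\alpha$; for the $\chi^2$ penalty it has probability $\asymp\lambda/(GR)$ (the natural focus level, since concentrating $Q$ on an event of probability $p$ gains $\asymp GR$ while paying $\chi^2$ penalty $\asymp\lambda/p$), which is exactly the replacement $\alpha\mapsto\lambda/(GR)$ in the statement.

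For the CVaR bullet I would use a two‑point (Le Cam) construction. Fix an event $A$ with $\P_{P_0}(A)=\alpha$, let the loss depend only on $x_1$, set $\ell(x;s)=0$ for $s\notin A$, and on $A$ set $\ell(x;s)=\tfrac{GR}{2}+\tfrac{G}{2}\sigma(s)\,x_1$ with $\sigma(s)\in\{\pm1\}$; this is affine in $x$ (hence convex), $\tfrac{G}{2}$‑Lipschitz, and lies in $[0,GR]$. The two instances $P_0^{\pm}$ differ only in the conditional law of $\sigma$ given $A$, namely $\P(\sigma=1\mid A)=\tfrac12\pm\delta$ with $\delta\asymp\epsilon/(GR)$. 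Since the loss is $0$ off $A$ and nonnegative on $A$, the worst $\alpha$‑fraction is $A$, so $\Lcvar(x;P_0^{\pm})=\E[\ell(x;S)\mid A]=\tfrac{GR}{2}\pm G\delta\,x_1$, minimized at $x_1=\mp R$; choosing $G\delta R\asymp\epsilon$ forces any $\epsilon$‑suboptimal point strictly onto the ``correct side''. An oracle query at any $x$ returns $(0,0)$ off $A$ (identical under both instances) and reveals exactly $\sigma(S)$ on $A$ (through its gradient $\tfrac{G}{2}\sigma(S)$), so the per‑query KL between the two oracle laws is $\le\alpha\,\kl(\mathrm{Bern}(\tfrac12+\delta)\Vert\mathrm{Bern}(\tfrac12-\delta))\lesssim\alpha\delta^2$. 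By the divergence decomposition for interactive protocols, the $T$‑query transcripts are within KL $\lesssim T\alpha\delta^2\asymp T\alpha\epsilon^2/(GR)^2$; when $T\le c(GR)^2/(\alpha\epsilon^2)$ this is $\le\tfrac12$, and the two‑point lemma relating minimax error to total variation shows no algorithm can be $\epsilon$‑optimal in expectation on both instances. This holds for every $d\ge1$ since the loss uses only $x_1$.

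For $\Llam$ the one‑dimensional template fails: unlike CVaR, $\Llam$ is sensitive to the mean and variance of the loss, which produces an $x$‑dependent drift that is essentially the same under the two instances, so an algorithm can follow it and be near‑optimal without ever learning the signal. I would instead hide the signal across $d=d_\epsilon$ coordinates. Keep the rare event $A$ with $\P_{P_0}(A)=p\asymp\lambda/(GR)$, set $\ell\equiv0$ off $A$, and on $A$ set $\ell(x;s)=\tfrac{GR}{2}+\langle g_v(s),x\rangle$ with $g_v(s)\in\{\pm\tfrac{G}{2\sqrt d}\}^d$ a vector of conditionally independent signs whose $i$‑th coordinate has a small bias $\asymp\mu$ in the direction $v_i$, where $v$ ranges over a packing $\mathcal V\subseteq\{\pm1\}^d$ with pairwise Hamming distance $\gtrsim d$ and $\log|\mathcal V|\asymp d$. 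One shows the maximizing $Q$ concentrates on $A$ and, provided $d\gtrsim(GR/\epsilon)^2$, its within‑$A$ reweighting of the $\langle g_v(s),x\rangle$ fluctuations is $o(\epsilon)$, so $\Llam(x;P_v)=\mathrm{const}-\mu\langle v,x\rangle+o(\epsilon)$ with $\mu\asymp\epsilon/(R\sqrt d)$; its minimizer over the ball is $\asymp Rv/\sqrt d$, and $\epsilon$‑suboptimality forces $x$ to agree with $v$ on all but a small constant fraction of coordinates, hence to identify $v$ within $\mathcal V$. All information about $v$ lives in the $\asymp pT$ queries landing in $A$, each revealing $d$ biased coins of squared bias $\asymp(\mu\sqrt d/G)^2$, so Fano gives $pT\cdot d\cdot(\mu\sqrt d/G)^2\gtrsim\log|\mathcal V|\asymp d$, i.e.\ $pT\gtrsim(GR)^2/\epsilon^2$, i.e.\ $T\gtrsim(GR)^3/(\lambda\epsilon^2)=(GR)^2/\big((\lambda/(GR))\epsilon^2\big)$; and $d_\epsilon\lesssim(GR/\epsilon)^2\log(GR/\epsilon)$ is exactly what makes both the ``hiding'' valid and the packing large enough.

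The routine parts are the SGM‑to‑testing reduction, the interactive divergence decomposition, and the packing/Fano bookkeeping. The main obstacle is the precise analysis of $\Llam$ on the hard instance: because $\lambda$ is small relative to the loss range $GR$, the usual mean‑plus‑variance expansion of $\Llam$ is invalid and the maximizing $Q$ concentrates on the rare event, so one must bound the extra gain $Q$ can extract by aligning itself with $x$ inside $A$ and show it is $o(\epsilon)$ — this is where $d\gtrsim(GR/\epsilon)^2$ (with a logarithmic safety factor) enters, and tracking the dependence on $G,R,\lambda,\epsilon$ simultaneously through this step is the delicate point.
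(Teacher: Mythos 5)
Your CVaR argument is essentially the paper's: both are two-point Le Cam reductions from optimization to testing in which the two instances differ only on an event of probability $\asymp\alpha$, the separation is driven by a linear-in-$x_1$ term of slope $\asymp G\delta$, and the per-observation KL is $\lesssim\alpha\delta^2$, yielding $T\gtrsim(GR)^2/(\alpha\epsilon^2)$; the paper perturbs the probability of the high-loss atom under the loss $\ell(x;s)=x\cdot s$ rather than a conditional sign, but this is cosmetic, and both versions work for $d=1$. For $\Llam$, however, you take a genuinely different route. The paper does \emph{not} build a statistical hard instance: it sets $\ss=\{0,1\}$, $P_0=\bernrv(\lambda/(GR))$, $\ell(x;1)=f_\epsilon(x)$ (the classical Nemirovski--Yudin/Braun--Guzm\'an--Pokutta hard function for \emph{deterministic} first-order oracles in dimension $d_\epsilon\lesssim(GR)^2\epsilon^{-2}\log(GR/\epsilon)$) and $\ell(x;0)=-GR$, and then verifies the \emph{exact} identity $\Llam(x;P_0)=f_\epsilon(x)-(GR-\lambda)/2$ via the dual form, so that each query is informative only when $S=1$ and a Chernoff bound on $\binrv(T,\lambda/(GR))$ finishes the proof. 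That reduction buys you two things your Fano construction has to work for: there is no $o(\epsilon)$ error term to control (the worst-case $Q$ computation is closed-form and exact, with $\eta\opt=f_\epsilon(x)-GR$), and the dimension bound comes for free from the cited deterministic lower bound. Your approach is viable but its ``delicate point'' is real: you must show that the within-$A$ tilting of the maximizing $Q$ contributes only an $x$-dependent term of order $\E[\langle g_v(S),x\rangle^2\mid A]/(GR)\asymp G\norm{x}^2/(dR)\le GR/d\ll\epsilon$, and that the coefficient of the linear term $\langle v,x\rangle$ stays bounded away from zero after accounting for the $O(p)$ corrections from $\eta\opt$; this does work out for $d\gtrsim(GR/\epsilon)^2$, but it is precisely the bookkeeping the paper's reduction sidesteps. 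Both routes give matching $T\gtrsim(GR)^3/(\lambda\epsilon^2)$ lower bounds, and your heuristic for why the replacement $\alpha\mapsto\lambda/(GR)$ is the right one matches the paper's choice of $\P(S=1)=\lambda/(GR)$.
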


We present the proof in Appendix~\ref{sec:prf-lowerbounds}\arxiv{ and provide a  
sketch below}. Our proof for the penalized $\cs$ lower bound 
leverages a classical high-dimensional hard instance construction for 
oracle-based optimization, 
while our 
proof 
for CVaR is information-theoretic. Consequently, the CVaR 
lower bound is stronger: it holds for $d=1$ and extends 
to a global model where at every round the oracle provides the entire 
function $\ell(\cdot; S)$ rather than $\ell(x;S)$ and $\grad\ell(x;S)$ at the 
query point $x$.
\arxiv{
\begin{proof}[Proof sketch]
The proof of the CVaR lower bound relies on the classical reduction from 
optimization to testing~\cite[Chapter 5]{Duchi18} in conjunction with 
the Le Cam method~\cite{Yu97}. More precisely, we construct a pair of 
distributions $P_{-1}$ and $P_1$ that are statistically hard to 
distinguish yet are such 
that $\L(\cdot;P_{-1})$ and $\L(\cdot;P_1)$ have well-separated values at 
their respective minima. Our construction takes the loss to be 
$\ell(x;s)=x\cdot s$, and the distributions $P_{\pm1}$ to be perturbations 
of $\bernrv(\alpha)$, similarly to the lower bound of~\citet{DuchiNa20} for 
constrained-$\cs$.

Unlike the CVaR and constrained-$\cs$ objectives, the penalized-$\cs$ 
objective with the loss $\ell(x; s)=x\cdot s$ is not positively homogeneous 
in $x$, 
making the Le Cam lower bound strategy difficult to apply. Instead, 
we appeal to a classical high-dimensional hard instance construction for 
convex optimization~\cite{NemirovskiYu83,BraunGuPo17}. Choosing the 
sample space $\ss = \crl{0, 1}$, we construct $\ell(x;s)$ such that 
$\ell(x;1)$ is equal to the hard instance at $x$ and $\ell(x;0)=-GR$ is 
uninformative. We show that the robust loss is 
(up to an additive constant) equal to the hard instance and thus minimizing 
it requires sampling $S=1$ roughly $\Omega(\epsilon^{-2})$ times; setting 
$\P(S=1) = \lambda/GR$ thus establishes the desired lower bound.
\end{proof}}

 \arxiv{%
\section{A doubling scheme for minimizing $\Lcs$}\label{sec:doubling}

The remaining technical contribution in the paper is to revisit
the constrained $\cs$ objective~\eqref{eqn:chi-square-def}, which
is resistant to many of the techniques we have thus far developed.
In this section, we leverage duality relationships to approximate
the constrained objective~\eqref{eqn:chi-square-def} via
its penalized counterpart~\eqref{eqn:chi-square-reg-def}, $\Llam$.
We adjust notation to make the dependence 
of $\Llam^\lambda$ on $\lambda$ explicit, and defer all proofs to 
\Cref{app:doubling}. 

Our starting point is the recognition that, by duality
(cf.~\cite[Sec.~3.2]{Shapiro17}),
\begin{equation*}
  \Lcs(x; P_0) = \inf_{\lambda \ge 0} \left\{ \Llam^\lambda(x; P_0)
  + \lambda \rho \right\}
  = \inf_{\lambda \ge 0} \sup_{Q \ll P_0}
  \left\{\E_Q \loss(x; S) - \lambda
  \left[\divcs(Q, P_0) - \rho \right] \right\}
\end{equation*}
for any distribution $P_0$. For $0 \le \lambdaL \le \lambdaH$, we
may thus consider the approximation
\begin{equation*}
  \LcsRange{\lambdaL, \lambdaH}(x;P_0) \defeq 
  \min_{\lambda\in [\lambdaL, \lambdaH]} 
  f_\rho(x,\lambda)
  ~~\mbox{where}~~
  f_\rho(x,\lambda) \defeq \Llam^\lambda(x;P_0) + \lambda \rho.
\end{equation*}
By restricting $\lambda$ to an appropriate range, we can then approximate 
$\Lcs$ by
its truncated version, as the next lemma 
shows.

\begin{restatable}{lemma}{restateCSDualRange}\label{lem:cs-dual-range}
  For all $P_0$, $\rho$ and $\epsilon$,
  \begin{equation*}
    \min_{x\in\X} \LcsRange{\frac{\epsilon}{2\rho}, \frac{B}{\rho}}(x;P_0)
    \le \min_{x'\in\X} \Lcs(x';P_0) + \frac{\epsilon}{2}.
  \end{equation*}
\end{restatable}

Our strategy is therefore to jointly minimize
$f_\rho(x,\lambda)=\Llam^{\lambda}(x;P_0)+ \lambda \rho$ over both $x \in
\mc{X}$ and $\lambda \in [\lambdaL, \lambdaH]$ (rather than $[0, \infty]$),
using the approximation guarantee in Lemma~\ref{lem:cs-dual-range} to argue
that the restriction of $\lambda$ will have limited effect on the quality of
the resulting solution. We iterate the projected stochastic gradient method
with the multi-level Monte Carlo (MLMC) gradient estimator~\eqref{eqn:mlmc}
via \notarxiv{
  \begin{equation}
  \label{eq:x-lambda-sgd}
  x_{t+1} = \Pi_{\X}\prn*{x_{t} - \gamma_x \ml\brk[\big]{\grad 
      \Llam^{\lambda_t}(x_t)}},~
  \lambda_{t+1} = \Pi_{[\lambdaL,\lambdaH]}\prn*{\lambda_t - 
    \gamma_\lambda
    \ml\brk[\big]{\tfrac{\del}{\del \lambda} 
      \Llam^{\lambda_t}(x_t)+\rho}}.
\end{equation}
}
\arxiv{
  \begin{equation}
    \label{eq:x-lambda-sgd}
    \begin{split}
      x_{t+1} & = \Pi_{\X}\prn*{x_{t} - \gamma_x \ml\brk[\big]{\grad 
          \Llam^{\lambda_t}(x_t)}} \\
      \lambda_{t+1} & = \Pi_{[\lambdaL,\lambdaH]}\prn*{\lambda_t - 
        \gamma_\lambda
        \ml\brk[\big]{\tfrac{\del}{\del \lambda} 
          \Llam^{\lambda_t}(x_t)+\rho}}.
    \end{split}
  \end{equation}
}
If we can bound the moments of the MLMC-approximated gradients
$\ml$, we can then leverage standard stochastic gradient
analyses to prove convergence. We use the following
bound.
\begin{restatable}{lemma}{restateLemLambdaMom}\label{lem:ml-lambda-mom}
	We have 
	\begin{equation*}
	\E \prn*{\ml\brk[\big]{\tfrac{\del}{\del \lambda} 
			\Llam^{\lambda}(x;\cdot)+\rho}}^2 \lesssim 
	\frac{B^2}{\lambda^2}\prn*{
		1+\frac{B \log\frac{n}{n_0}}{\lambda n_0}} + \rho^2.
	\end{equation*}
\end{restatable}

\noindent
Therefore, 
we may find an $\epsilon$ approximate minimizer with complexity roughly 
$B^3\lambdaH^2/(\lambdaL^3\epsilon^2)$:
\begin{restatable}{lemma}{restateLemXLamSGD}\label{lem:x-lambda-sgd}
  Fix $\epsilon\in (0,B)$ and $\lambdaH \ge \lambdaL > 0$. For a suitable 
  setting of the parameters $n_0,n,T,\gamma_x$ and $\gamma_\lambda$, 
  the average $\bar{x}_T = \sum_{t\le T} x_t$ 
  of the iterates~\eqref{eq:x-lambda-sgd} satisfies
  $\E \LcsRange{\lambdaL,\lambdaH}(\bar{x}_T;P_0) \le \min_{x\in\X}  
  \LcsRange{\lambdaL,\lambdaH}(x;P_0) + \epsilon$, with complexity 
  \begin{equation*}
    \lesssim 
     \prn*{1+\frac{B}{\lambdaL}}\frac{(GR)^2 + B^2 \lambdaH^2 / 
      \lambdaL^2 + 
      \lambdaH^2 \rho^2}{\epsilon^2}\log^2\prn*{1+ 
      \frac{B}{\lambdaL\epsilon}}~~\mbox{with probability 
    }\ge 1-\frac{\eps^2}{B^2}.
  \end{equation*}
\end{restatable}

Directly substituting $\lambdaL  = \frac{\epsilon}{2\rho}$ and 
$\lambdaH=\frac{B}{\rho}$ results in a guarantee scaling as 
$\epsilon^{-5}$, which is worse than the mini-batch rate of 
$\epsilon^{-4}$. To improve on this, we divide $[\frac{\epsilon}{2\rho}, 
\frac{B}{\rho}]$ into $K = \log_2 \frac{B}{\epsilon}$ sub-intervals 
$[\lambda\pind{i+1},\lambda\pind{i}]$ satisfying 
$\lambda\pind{i+1}/\lambda\pind{i} = 2$.
We then perform the stochastic gradient method~\eqref{eq:x-lambda-sgd}
on each of these intervals $[\lambda^{(i + 1)}, \lambda^{(i)}]$ in turn,
yielding estimates $\bar{x}^{(i)}$ that
are each $\lesssim\epsilon$-suboptimal for the approximate
objective $\LcsRange{\lambda\pind{i+1},\lambda\pind{i}}$.
Using the bounded ratio $\lambda\pind{i+1} / \lambda\pind{i} = 2$,
this requires complexity roughly
$1/(\lambda\pind{i+1} \epsilon^2) \lesssim \rho /\epsilon^3$,
giving the following theorem.
\begin{restatable}{theorem}{restateThmDoubling}\label{thm:doubling}
	Fix $\epsilon\in (0,B)$, and for $i\in\N$ set 
	$\lambda\pind{i}=\frac{B}{\rho}2^{-i+1}$ and let $\bar{x}\pind{i}$ be an 
	$\epsilon/2$-approximate minimizer of 
	$\LcsRange{\lambda\pind{i+1},\lambda\pind{i}}$ computed via 
	stochastic 
	gradient iterations according to Lemma~\ref{lem:x-lambda-sgd}. 
	Then, 
	for $1+ K=\ceil{\log_2 \frac{2B}{\epsilon}}$ and some $i\opt \le K$ we 
	have
	$\E \Lcs(\bar{x}\pind{i\opt}; P_0) \le \min_{x\in\X} \Lcs(x;P_0) + 
	\epsilon$. 
	Computing $\bar{x}\pind{1},\ldots, \bar{x}\pind{K}$ requires a total 
	number of $\grad \ell$ evaluations 
	\begin{equation*}
	  \lesssim \frac{(GR)^2 (\rho B + \epsilon \log_2 
		\frac{B}{\epsilon})}{\epsilon^3} \log^2 
	\prn*{1+\frac{\rho 
			B}{\epsilon^2}}~~\mbox{with probability 
	}\ge 1-\frac{\epsilon}{B}.
	\end{equation*}
\end{restatable}

The index $i^\star$ is independent of randomness in our procedure, but we 
do not know it in advance. Instead, we may estimate the minimized objective for each $i$ and select the index 
 with the lowest estimate. Let $\hat{\lambda}\pind{i}$ be the average of the $\lambda$ iterations of our stochastic gradient method~\eqref{eq:x-lambda-sgd} for a particular interval $[\lambda\pind{i+1},\lambda\pind{i}]$. Our bias and variance bounds on $\Llam$ (Proposition~\ref{prop:batch-bias} and Proposition~\ref{prop:variance-extended} in the appendix) imply the we can estimate\footnote{
	To obtain an estimate that has error $\lesssim \epsilon$ with high probability, we can use the median of a logarithmic number of iid copies of the batch estimator.}  $f_\rho(\bar{x}\pind{i},\hat{\lambda}\pind{i})$ to accuracy $\lesssim \epsilon$ with a sample of size $\asymp B^2 / (\lambda\pind{i} \epsilon^2) \asymp  2^{i-K} B^3 \rho\epsilon^{-3}$.
Taking $i\opt$ to be the index $i$ minimizing this estimate, it is straightforward to argue that $\E \Lcs(\bar{x}\pind{i\opt}; P_0) - \min_{x\in\X} \Lcs(x;P_0) \lesssim 
\epsilon$. 
Therefore, the cost of selecting the best $i$ is at most the cost of performing the optimization.

Theorem~\ref{thm:doubling} provides a rigorous guarantee on the 
complexity of minimizing $\Lcs$ with a fixed constraint $\rho$ by 
optimizing the parameter $\lambda$ of $\Llam^\lambda$. In practice, we 
usually have no prior knowledge of $\rho$, so it will often make sense to 
directly tune $\lambda$ according to validation criteria rather than a target 
$\rho$.
We also note that~\citet{DuchiNa20} prove a lower bound of order
${\rho}{\epsilon^{-2}}$, which is smaller than our  
${\rho}{\epsilon^{-3}}$ rate. Establishing  the optimal rate for this 
problem 
remains an open question.

}
\section{Experiments}\label{sec:experiments}

We test our theoretical predictions with experiments on two
datasets. Our main focus is measuring how the total work in solving the DRO
problems depends on different gradient estimators. In particular, we quantify
the tradeoffs in choosing the mini-batch size $n$ in the estimator
$\grad \L(x;S_1^n)$ of Section~\ref{sec:batch} and the effect of using the MLMC
technique of Section~\ref{sec:multilevel}. To ensure that we operate in
practically meaningful settings, our experiments involve heterogeneous data, and
we tune the DRO objective to improve the generalization performance of ERM on
the hardest subpopulation. We provide a full account of experiments in
Appendix~\ref{app:experiments} and summarize them below.

Our digit recognition
experiment  reproduces~\cite[Section 3.2]{DuchiNa20}, where the 
training data includes the 60K MNIST training images mixed with 
600 images of typed digits from~\cite{deCamposBaVa09}, while our 
ImageNet experiment uses the ILSVRC-2012 1000-way classification task. 
In each experiment we use DRO to learn linear classifiers on top of 
pretrained neural network features (i.e., training the head of the network), 
taking $\ell$ to be the logarithmic loss %
 with squared-norm regularization;
see Appendix~\ref{app:experiments-setup}.
Each experiments compares different gradient estimators for 
minimizing the  $\Lcvar$, $\Lcs$ and $\Llam$ objectives.  
Appendix~\ref{app:experiments-tuning}  
details our hyper-parameter settings and their tuning procedures. 

Figure~\ref{fig:main-experiment} plots the training objective as 
optimization progresses. In Appendix~\ref{app:experiments-results} we 
provide expanded figures that also report the robust generalization 
performance. We find that the benefits of DRO manifest mainly when the 
metric of interest is continuous (e.g., log loss) as opposed to the 0-1 
loss. %

\begin{figure}
  \begin{center} \arxiv{\includegraphics[width=\columnwidth]{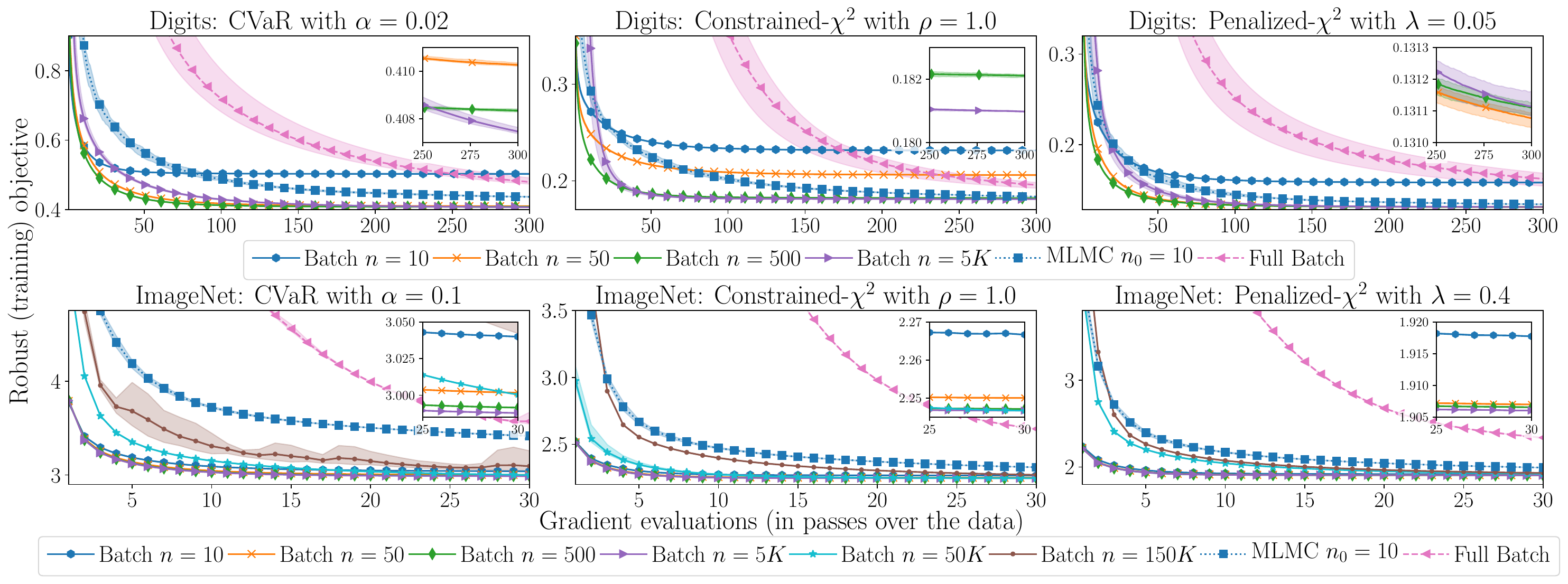}}%
  \notarxiv{\includegraphics[width=0.8\columnwidth]{figures/mnist_bs=10.pdf}}
  \notarxiv{\vspace{0pt}}
    \caption{Convergence of DRO objective in our digits and ImageNet
      classification experiments. Shaded areas indicate range of variability
      across 5 repetitions (minimum to maximum), and the zoomed-in regions
      highlight the (often very low) ``bias floor'' of small batch sizes.}
\label{fig:main-experiment}
\end{center} \notarxiv{\vspace{-0.6cm}} %
\end{figure}

\arxiv{\paragraph{Discussion.}}  Our analysis in Section~\ref{sec:batch-bias}
bounds the suboptimality of solutions resulting from using a mini-batch
estimators with batch size $n$, showing it must vanish as $n$ increases.
Figure~\ref{fig:main-experiment} shows that smaller batch sizes indeed converge
to suboptimal solutions, and that their suboptimality becomes negligible very
quickly: essentially every batch size larger than $10$ provides fairly small
bias (with the exception of $\Lcs$ in the digits experiment). The effect of bias
is particularly weak for $\Llam$, consistent with its superior theoretical
guarantees. We note, however, that the suboptimality we see in practice is far
smaller than the worst-case bounds in
Proposition~\ref{prop:batch-bias}\notarxiv{; we investigate this in detail in
  Appendix~\ref{app:experiments-discussion}.}\arxiv{. We investigate this in
  Appendix~\ref{app:experiments-discussion}, where we show that the bias
  $\L-\bL$ is in fact consistent with our theory, but the minimizers of $\L$ and
  $\bL$ are more similar than expected \emph{a priori}.}

While the MLMC estimator does not suffer from a bias floor (by design), it is
also much slower to converge. This may appear confusing, since the MLMC
convergence guarantees are optimal (for $\Lcvar$ and $\Llam$) while the
mini-batch estimator achieves the optimal rate only under certain
assumptions. Recall, however, that these assumptions are smoothness of the loss
(which holds in our experiments) and---for CVaR---sufficiently rapid 
decay of the
bias floor, which we verify empirically. 

For batch sizes in the range 50--5K, the traces in 
Figure~\ref{fig:main-experiment} look remarkably similar.
This is consistent with our theoretical analysis for $\Lcvar$ and $\Llam$, which
shows that the variance decreases linearly with the batch size and we may
therefore (with Nesterov acceleration) increase the step size proportionally and
expect the total work to remain constant. As theory predicts, this learning rate
increase is only possible up to a certain batch size (roughly 5K in our
experiments), after which larger batches become less efficient. Indeed, to reach
within 2\% of the optimal value, the full-batch method requires 27--36$\times$
more work than batch sizes 50--5K for ImageNet, and 9--16$\times$ more work for
the digits experiment (see Table~\ref{tab:speed-up-digits}
and~\ref{tab:speed-up-imagenet} for a precise breakdown of \notarxiv{these numbers}\arxiv{the number of epochs
required per algorithm for each robust objective}).

We also repeat our experiments with the dual SGM and prima-dual methods mentioned in Table~\ref{table:summary} and compare them with them our proposed method; see Appendix~\ref{app:experiments-baselines} for details.

\arxiv{We conclude the discussion by briefly touching upon the improvement that 
DRO yields
in terms of generalization metrics; we provide additional 
detail in Appendix~\ref{app:experiments-discussion}. In digit recognition 
experiment we observe that, compared to ERM with tuned $\ell_2$ 
regularization, DRO enables strictly better tradeoff between average and 
worst-subgroup performance. Specifically, it provides significant 
improvements in the worst sub-group loss---between 17.5\% and 27\% 
compared to ERM---with no negligible degradation in average loss and 
accuracy. It also provides minor gains in worst-group accuracy. For 
ImageNet the effect is more modest: in the worst-performing 10 classes we 
observe 
improvements of 5--10\% in log loss, as well as a roughly 4 point 
improvement in accuracy. These improvements, however, come at the
cost of degradation in average performance: the average loss increases by 
up to 10\% and the average accuracy drops by roughly 1 point.}

\arxiv{%
\paragraph{Runtime comparison.}
In Table~\ref{tab:compute} we report the gradient complexity and 
wallclock time to reach
accuracy within 2\% of the optimal value. For brevity, we show it
for a single robust objective (penalized-$\cs$), but we observe that
similar results across robust objectives.
We note that for small batch sizes the time per epoch is significantly larger 
than for larger batch sizes, this due in part to parallelization in evaluating 
$\ell$ and $\grad\ell$ and in part to logging and Python interpreter 
overhead, which increase linearly with the number of iterations. However, 
these effects diminish as the batch size grows, and for batch size 5K the 
wallclock time to reach an accurate solution is an order of magnitude 
smaller than with the full-batch method.
We run
our experiments with 4 Intel Xeon E5-2699 CPUs and 12--32Gb
of memory.
Increasing the number of CPUs or using GPUs would allow for greater 
parallelism and improve the runtime at greater batch sizes. However, 
increasing the model complexity (e.g., to a deep neural network) would 
have the opposite effect. Using 4 CPUs for linear classification gives 
roughly the same range of feasible batch sizes as a ResNet-50 on large GPU 
arrays.

\begin{table}
	\begin{center}
		\smaller
		\begin{tabular}{clcccccc}  
			\toprule
			& & \multicolumn{3}{c}{ImageNet times [minutes]} & 
			\multicolumn{3}{c}{Digits times [minutes]} \\
			\cmidrule(lr){3-5}
			\cmidrule(lr){6-8}
			\multicolumn{2}{c}{Algorithm} 
			& per epoch & to 2\% of opt &\# epochs  
			& per epoch & to 2\% of opt &\# epochs \\
			\midrule
			Batch
			& $n=10$ & $120 \pm 5$ & $850 \pm 30$ & 7 & $0.80 \pm 0.1$ & 
			$\infty$  & $\infty$ \\
			& $n=50$ & $23 \pm 0.7$ & $116 \pm 4$ & $\boldsymbol{5}$ & 
			$0.23 
			\pm 0.01$ & $24 \pm 1$  & $107\pm 1$ \\
			& $n=500$ & $5.9 \pm 0.2$ & $29 \pm 1$ & $\boldsymbol{5}$ & 
			$0.056 \pm 0.004$ & $5.8 \pm 0.4$  & $\boldsymbol{104} \pm 
			1$\\
			& $n=5K$ & $3.3 \pm 0.04$ & $\boldsymbol{16.5} \pm 0.2$ & 
			$\boldsymbol{5}$ & $0.033 \pm 0.004$ & 
			$\boldsymbol{4.4} \pm 0.7$  & $131 \pm 6$\\
			& $n=50K$ & $2.2 \pm 0.03$ & $50 \pm 0.9$ & $22$ & -- & --  & 
			--\\
			& $n=150K$ & $2.1 \pm 0.03$ & $55 \pm 0.7$ & $26$ & -- & -- & 
			--\\
			\midrule
			MLMC
			& $n_0 = 10$ & $16\pm 1$ & $\infty$ & $\infty$ & $0.34 \pm 
			0.02$ & 
			$\infty$ & $\infty$ \\
			\midrule
			\multicolumn{2}{c}{Full-batch} & $2.1$ & $380$ & $180$ & $0.022$ 
			& 
			$37.0$ & $1680$ \\
			\bottomrule
		\end{tabular}
		\caption{Comparison wallclock time (in minutes) of the different
			algorithms, in terms of time per epoch and time to reach within 2\% 
			of the
			best training loss. In the last two columns, we report the number of
			epochs required to reach within 2\% of the best training loss. We 
			report 
			$\infty$ for configurations that do not reach the 
			sub-optimality 
			goal for the duration of the experiment, and 
			omit standard deviations when then they are $0$.}
		\label{tab:compute}
	\end{center}
\end{table} %
}

 \arxiv{%
\section{Conclusion}\label{sec:conclusion}

This work provides rigorous convergence guarantees for solving 
large-scale convex $\phi$-divergence DRO problems with stochastic 
gradient methods, laying out a foundation for their use in practice; we 
conclude it by highlighting two directions for further research.

First, while our work resolves the optimal theoretical convergence rates for 
CVaR and $\cs$ penalty objectives, the corresponding result for $\cs$ 
constraint remains open. In particular, there is a gap between our 
$O(\rho\epsilon^{-3})$ upper and the $\Omega(\rho\epsilon^{-2})$ lower 
bound of \citet{DuchiNa20}. %
Moreover, combining the uniform convergence results in~\citet{DuchiNa20} 
with a cutting plane method gives complexity guarantees scaling a roughly 
as $\rho^2 d^2 \epsilon^{-2}$, so the $O(\rho\epsilon^{-3})$ rate can only 
be optimal in high-dimensional settings. 

Second, understanding the practical benefit of large-scale 
$\phi$-divergence DRO for machine learning requires further research.
Our experiments suggest that larger benefits are likely when (a) distinct 
subgroups are present in the data and (b) good calibration and hence low 
logarithmic loss (rather than simply high accuracy) is important. While our 
work 
focuses on convex losses $\ell$ for 
theoretical clarity and experimental simplicity, we note that all the 
algorithms we develop apply directly for non-convex losses. Furthermore, 
our bias and variance analyses are independent of the convexity of $\ell$, 
and our PyTorch implementation supports any prediction model via 
automatic differentiation. Therefore, a natural next step is to apply   
DRO for training modern predictors such as neural networks.
}

\notarxiv{\newpage

\section*{Broader Impact}

The robustness of machine learning (ML) models, or lack thereof, has 
far-reaching present and future societal consequences: in autonomous 
vehicles~\cite{KalraPa16,DaiVa18}, medical 
diagnosis~\cite{OakdenDuCaRe20}, facial 
recognition~\cite{BuolamwiniGe18}, credit scoring~\cite{FusterGoRaWa18}, 
and recidivism prediction~\cite{Chouldechova17, BarocasSe16}, failure of 
ML to perform robustly across sub-population or under distribution shift 
can have disastrous real-life consequences, particularly for members of 
underserved and/or under-represented groups.

Distributionally robust optimization (DRO) is emerging as a methodology 
for imposing the constraint that models perform uniformly well across 
subgroups, and several works conduct experiments 
demonstrating its benefit in promoting 
fairness~\cite{HashimotoSrNaLi18,DuchiHaNa20,WangGuHaCoGuJo20} and 
robustness~\cite{SinhaNaDu18,SagawaKoHaLi20} in ML. However, the 
computational experiments in these works are relatively small in scale, and 
there exist serious computational impediments to scaling up DRO. 
Consequently, the potential benefits of several DRO formulations 
remain unexplored.

The  main contribution of our work is in strengthening the theoretical and 
algorithmic foundations of two fundamental DRO formulations. In 
particular, for $\cs$-divergence uncertainty sets we give the first proof that 
stochastic gradient methods can scale to large data similarly to they way 
they scale for standard empirical risk minimization. We believe that our 
algorithms will serve a basis for future experimentation with CVaR and 
$\cs$ 
divergence DRO, and we hope that the resulting findings would lead to 
more robust and fair machine learning algorithms with positive societal 
impact. Towards that end, we will release an implementation of our DRO 
gradient estimators that integrates seamlessly into the PyTorch 
optimization framework and is therefore suitable for application in a wide 
range of ML tasks.

In addition, we believe that our work is a step towards a suite of 
algorithms capable of solving a broader class of DRO problems at scale, 
including e.g., uncertainty set with explicit group structure as proposed 
in~\cite{HuNiSaSu18,SagawaKoHaLi20}. We believe that such algorithm 
suite will empower machine learning researchers and engineers to create 
more reliable and ethical systems.

However, greater applicability and simplicity always comes with the risk of 
irresponsible and superficial use. In particular, we are concerned with the 
possibility that DRO might become a marketing scheme to sell off ML 
systems as robust without proper verification. Therefore, the development 
of robust training procedures must go hand-in-hand with the development 
of rigorous and independent evaluation methodologies for auditing of 
algorithms~\cite{HendrycksDi19,OKellySiNaDuTe18,KearnsNeRoWu18, 
CorbettGo18, MitchellWuZaBaVaHuSpRaGe19}.

 }

\arxiv{\subsection*{Acknowledgments}}
\notarxiv{\subsubsection*{Acknowledgments}} The authors would like to thank
Hongseok Namkoong for discussions and insights, as well as Nimit Sohoni for
comments on an earlier draft. \notarxiv{\subsubsection*{Sources of funding}}DL,
YC and JCD were supported by the NSF under CAREER Award CCF-1553086 and HDR
1934578 (the Stanford Data Science Collaboratory) and Office of Naval Research
YIP Award N00014-19-2288.  YC was supported by the Stanford Graduate Fellowship.
AS is supported by a Microsoft Research Faculty Fellowship, NSF CAREER Award CCF-1844855, NSF Grant CCF-1955039, a PayPal research gift, and a Sloan Research Fellowship.

\notarxiv{\subsubsection*{Competing interests}
The authors declare no competing interests.
}

\newpage

\bibliographystyle{abbrvnat-nourl} %

\newpage

\appendix
\part*{Appendix}
\section{Extended preliminaries}\label{app:prelims}

In this section we collect several basic results which we use in subsequent 
derivations in the paper: Section~\ref{app:prelims-char} gives several 
additional characterization of the robust objective $\L$, 
Section~\ref{app:prelims-compute} briefly discusses the computation of 
$\L$ and its costs, Section~\ref{app:dual-sgm} gives a short 
derivation of the complexity guarantees for ``dual SGM'' in 
Table~\ref{table:summary}, and 
Section~\ref{app:prelims-cs-bounded} introduces the 
notion of losses contained in a $\cs$ divergence ball.
Finally, Section~\ref{app:prelims-general} lists a few standard probabilistic 
bounds.

\subsection{Characterization of the robust 
objective}\label{app:prelims-char}

Here we give several equivalent characterizations of the robust objective
\begin{equation}\label{eq:pop-dro-reg-restated}
\L(\x;P) \defeq \sup_{Q\ll P:\div_\phi(Q, P) \le 
	\rho}\crl[\Big]{\E_{S\sim Q}\brk{\ell(\x;\S)} - \lambda\div_\psi(Q, P)}.
\end{equation}
where $\psi$, $\phi$ are closed convex functions from $\R_+$ to $\R$ 
satisfying $\psi(1)=\phi(1)=0$,
\begin{equation*}
\div_\phi(Q,P)\defeq \int \phi\prn*{\frac{\d Q}{\d P}}\d P,
~~\mbox{and}~~
\div_\psi(Q,P)\defeq \int \psi\prn*{\frac{\d Q}{\d P}}\d P.
\end{equation*}
For $\Phat{s_1^n}$ uniform on $s_1, s_2, \ldots, s_n$ (which we abbreviate 
$s_1^n$), we write 
\begin{equation}\label{eq:emp-dro-reg-restated}
\L(x;s_1^n) \defeq \L(x; \Phat{s_1^n}) =
\sup_{q\in\Delta^n: \sum_{i\le n}\frac{1}{n}\phi(nq_i) \le \rho}
\crl[\Bigg]{\sum_{i \le n}\prn*{q_i \ell(\x;s_i) - \tfrac{1}{n}\psi(nq_i)}}.
\end{equation}

\subsubsection{Inverse-cdf formulation}

Instead of expressing the objective in terms of distribution over $\ss$, we can
characterize the robust loss in terms of the inverse cdf of the distribution (over
$\R$) of $\ell(x;S)$. Let $F^{-1}$ denotes the inverse cdf of $\ell(x;S)$ under
$P$.  Note that $\ell(x;S)$ with $S\sim P$ is equal in distribution to
$F^{-1}(U)$ with $U\sim \unirv([0,1])$.  Therefore,
\begin{flalign}
\L(\x;P) &\defeq \sup_{Q':\div_\phi(Q', \unirv([0,1])) \le 
	\rho}\crl[\Big]{
\E_{U\sim Q'}\brk{F^{-1}(U)} - \lambda\div_\psi(Q', \unirv([0,1]))}
\nonumber \\&
= \sup_{r\in\rset} \int_0^1 \brk[\Big]{r(u) F^{-1}(u) - \lambda\psi(r(u))}\d u,
\label{eq:icdf-dro}
\end{flalign}
where the last equality follows from writing $r(u)=\frac{\d Q'}{\d 
\unirv([0,1])}(u)$, and the set $\rset$ is
\begin{equation}\label{eq:rset-def}
\rset \defeq \crl*{ r:[0,1]\to \R_+ ~\bigg|~ \int_0^1 r(u)\d u = 
1~~\mbox{and}~~\int_0^1 \phi(r(u))\d u \le \rho}.
\end{equation}

\subsubsection{Dual formulation}\label{app:prelims-dual}

We can convert the maximization over $r$ in Eq.~\eqref{eq:icdf-dro} (or $Q$
in~\eqref{eq:pop-dro-reg-restated}) with minimization over Lagrange
multipliers for the constraint that $r$ sums to 1 and the $\phi$-divergence
constraint, yielding
\begin{flalign}
  \L(\x;P) &= \inf_{\eta \in \R, \nu \ge 0}\Upsilon(x, \eta, \nu;P),~
  \mbox{where}~ \nonumber \\&
  \Upsilon(x, \eta, \nu;P) \defeq \int_0^1 \sup_{r\in \R_+}
  \brk[\Big]{r F^{-1}(u) - \eta (r-1) - \nu(\phi(r)-\rho) -\lambda\psi(r)}\d u,
  \label{eq:dual-exp-gen}
\end{flalign}
where the strong duality follows~\citet[Sec. 3.2]{Shapiro17}.
Writing $(g)^*[v] \defeq \sup_{t\in\mathrm{dom}(g)} \crl*{vt - g(t)}$ for the conjugate
function of $g$, we may express $\Upsilon$ as 
\begin{equation}\label{eq:pop-rob-dual}
\Upsilon(x, \eta, \nu;P) = 
\int_0^1 (\nu \phi + \lambda \psi)^*[F^{-1}(u)-\eta] \d u + \eta + \nu 
\rho
= \E (\nu \phi + \lambda \psi)^*[\ell(x;S)-\eta] + \eta + \nu\rho,
\end{equation} 
where the expectation is over $S \sim P$, \ie the distribution from which we observe
samples. On a finite sample $s_1^n$ we have
\begin{equation*}
\Upsilon(x, \eta,\nu; s_1^n) \defeq 
\Upsilon(x, \eta,\nu; \Phat{s_1^n})
= \frac{1}{n}\sum_{i\le n} (\nu \phi + \lambda \psi)^*[\ell(x;s_i)-\eta] + 
\eta + \nu\rho.
\end{equation*}

For pure-constraint objectives (with $\psi=0$), $\Upsilon$ simplifies to
\begin{equation}
\psi = 0 \implies 
\Upsilon(x, \eta,\nu;P) = \nu \E_{S\sim P} \phi^* 
\brk*{\frac{\ell(x;S)-\eta}{\nu}} 
+ \eta + \nu\rho.
\end{equation}
For pure-penalty objective (with $\phi = 0$) the Lagrange multiplier $\nu$ is unnecessary 
and we have
\begin{equation}\label{eq:penalty-dual}
\phi = 0 \implies 
\Upsilon(x, \eta;P) = \lambda \E_{S\sim P} \psi^* 
\brk*{\frac{\ell(x;S)-\eta}{\lambda}} 
+ \eta.
\end{equation}

Note that $\Upsilon$ is an expectation (i.e., an empirical risk) which means 
that to minimize $\L(x;P)$ we can, in principle, apply ERM jointly on 
$x,\eta$ and $\nu$, as we further discuss in \Cref{app:dual-sgm}.

Finally, we note that any $Q\opt$ attaining the supremum 
in~\eqref{eq:pop-dro-reg-restated} is of the form
\begin{equation*}
\frac{\d Q\opt}{\d P}(s) = {(\nu\opt + \lambda\psi)^*}'[\ell(x;s)-\eta\opt].
\end{equation*}
where $\eta\opt$ and $\nu\opt$ are optimal Lagrange multipliers 
in~\eqref{eq:dual-exp-gen} and ${(\nu\opt + \lambda\psi)^*}'$ is a 
subderivative of $(\nu\opt + \lambda\psi)^*$. For $\phi=0$ this 
specializes to
\begin{equation*}
\frac{\d Q\opt}{\d P}(s) = 
{\psi^*}'\brk*{\frac{\ell(x;s)-\eta\opt}{\lambda}}.
\end{equation*}
For a finite sample, we have
\begin{equation}\label{eq:emp-q-opt-expression}
q\opt_i = \frac{1}{n} {\psi^*}'\brk*{\frac{\ell(x;s_i)-\eta\opt}{\lambda}}.
\end{equation}

\subsubsection{Expressions for CVaR}

Recall that CVaR at level
$\alpha$ corresponds to $\phi=0$ and $\psi = \convind_{[0, 1/\alpha)}$.  The
  dual expression of CVaR simplifies to~\cite[Example 6.16]{ShapiroDeRu09}
\begin{equation*}
\Lcvar(x;P) = \inf_{\eta \in \R} \crl*{\frac{1}{\alpha} \E_{S\sim P} 
(\ell(x;S)-\eta)_+ + \eta}.
\end{equation*}
It also has a simple closed-form expression in terms of the inverse cdf of 
$\ell(x;S)$~\cite[Theorem 6.2]{ShapiroDeRu09}:
\begin{equation}\label{eq:cvar-cdf}
\Lcvar(x; P) = \frac{1}{\alpha}\int_{1-\alpha}^1 
F^{-1}(u)\mathrm{d}u.
\end{equation}
We note that this last expression is a direct consequence
of~\eqref{eq:icdf-dro}, since $\rset$ is the set of measures never exceeding
$\frac{1}{\alpha}$. On a finite sample $s_1^n$ this gives the closed-form
expression
\begin{equation}\label{eq:cvar-sample-closed-form}
\Lcvar(x;s_1^n) = \frac{1}{\alpha 
	n}\sum_{i=1}^{\floor{\alpha n}} \ell(x;s_{(i)})
+ 
\left(1 -
\frac{\floor{\alpha n}}{\alpha n}\right)
\ell(x;s_{(\floor{\alpha n}+1)}),
\end{equation}
where $s_{(1)}, \ldots, s_{(n)}$ are a permutation of $s_1^n$ satisfying 
$\ell(x;s_{(1)}) \ge \ell(x;s_{(2)}) \ge \cdots \ge \ell(x;s_{(n)})$. For $\alpha 
\le 1/n$ we simply have $\Lcvar(x;s_1^n) = \max_{i\le n} \ell(x;s_i)$.

The KL-divergence penalized CVaR at level $\alpha$ corresponds to $\psi(t) 
= \convind_{[0, 1/\alpha]}(t) + t\log t -t +1$, for which 
\begin{equation*}
\psi^*[v] = \begin{cases}
e^v- 1 & v < \log \frac{1}{\alpha} \\
\frac{1}{\alpha} - 1 + \frac{1}{\alpha}(v - \log\frac{1}{\alpha}) & 
\text{otherwise}, \\
\end{cases}
\end{equation*}
and the dual expression for $\LcvarR$ is given by~\eqref{eq:penalty-dual}. 
In the special case $\alpha \le 1/n$ the CVaR constraint becomes inactive, 
and we can minimize over $\eta$ in closed form to obtain the standard ``soft max'' objective 
$\LcvarR(x;s_1^n) = \lambda \log\prn[\big]{\frac{1}{n}\sum_{i\le n} 
\exp\prn*{\ell(x;s_i)/\lambda}}$.

\subsubsection{Expressions for $\Llam$ and $\Lcs$}
The penalized version of the $\cs$ objective corresponds to $\phi(t)=0$ 
and $\psi(t) = \half (t-1)^2$. Note that $\div_\phi(Q,P)$ is invariant under 
$\psi(t)\mapsto \psi(t) + c \cdot (t-1)$ for any $c\in\R$ because $\int 
(\frac{\d Q}{\d P}-1)\d P=0$. We find it more convenient to work 
with $\psi(t) = \half (t-1)^2 + (t-1) = \frac{1}{2}(t^2 - 1)$, for which the 
conjugate is simply  $\psi^*[v] = \half ((v)^2_+ +1)$. The 
dual form~\eqref{eq:penalty-dual} gives
\begin{equation}\label{eq:lam-dual}
\Llam(x;P) = \inf_{\eta\in\R} \crl*{
	\frac{1}{2\lambda}\E_{S\sim P}  {(\ell(x;S)-\eta)_+^2}  + \frac{\lambda}{2} 
	+ 
	\eta}.
\end{equation}
The infimum is attained at the $\eta\opt$ solving $\E (\ell(x;S)-\eta\opt)_+ 
= \lambda$. In other words, 
\begin{equation*}
\eta\opt = \E\brk*{\ell(x;S) \mid 
\ell(x;S) \ge \eta\opt} - \frac{\lambda}{\P(\ell(x;S)\ge\eta\opt)}
=
\Lcvar^{F(\eta\opt)}(x;P) - \frac{\lambda}{1-F(\eta\opt)},
\end{equation*}
where $F(t) = \P(\ell(x;S)\le t)$ is the cdf of $\ell(x;S)$. Letting 
$\mathfrak{G}(\eta\opt)$ denote the event that $\ell(x;S)\ge \eta\opt$, 
substituting back to 
the expression for $\Llam$ gives
\begin{equation*}
\Llam(x;P) = \E\brk*{\ell(x;S) \mid 
	\mathfrak{G}(\eta\opt)} + \frac{1}{2\lambda}
	\Var\brk*{\ell(x;S)\mid \mathfrak{G}(\eta\opt)} + 
	\frac{\lambda}{2}\prn*{\frac{1}{\P(\mathfrak{G}(\eta\opt))}-1}^2.
\end{equation*}
In words, $\Llam$ is a sum of a CVaR (at level $F(\eta\opt)$), a conditional 
variance regularization term and an outage probability regularization term. 
This expression simplifies considerably when $\lambda$ is sufficiently 
large. Specifically, we have,
\begin{flalign}\label{eq:lam-edge-case}
\lambda \ge B &\implies \lambda \ge \E\ell(x;S)-F^{-1}(0) 
\nonumber \\& 
\implies \eta^\star = \E\ell(x;S) - \lambda
~~\mbox{and}~~\P(\mathfrak{G}(\eta\opt))=1  
\nonumber\\& 
\implies \Llam(x;P) = \E\ell(x;S) + \frac{1}{2\lambda} \Var[\ell(x;S)].
\end{flalign}
That is, for sufficiently large $\lambda$ the objective $\Llam$ is simply the 
empirical risk with variance regularization (see also~\cite{DuchiNa20}).

For a finite sample we have
\begin{equation*}
\Llam(x;s_1^n) = 
	\frac{1}{2\lambda n}\sum_{i\le n} {(\ell(x;s_i)-\eta\opt_n)_+^2}  + 
	\frac{\lambda}{2} 
	+ 
	\eta\opt_n.
\end{equation*}
Where $\eta\opt_n$ is the solution to $\sum_{i\le n} 
{(\ell(x;s_i)-\eta\opt_n)_+} = n\lambda$, or equivalently
\begin{equation}\label{eq:lam-emp-eta-expression}
\eta\opt_n = \frac{1}{i\opt} \sum_{i\le i\opt} \ell(x;s_{(i)}) - \frac{\lambda 
n}{i\opt}~~\mbox{for the unique $i\opt$ such that}~~\ell(x;s_{(i\opt+1)})\le 
\eta\opt_n \le  ~\ell(x;s_{(i\opt)}),
\end{equation}
where $\{\ell(x;s_{(i)})\}$ are the sorted $\{\ell(x;s_i)\}$ and 
$\ell(x;s_{(n+1)})\defeq -\infty$.

An expression for $\Lcs$ follows via~\eqref{eq:lam-dual}
\begin{equation}\label{eq:cs-dual}
\Lcs(x;P) = \inf_{\lambda \ge 0} \crl*{\Llam(x;P) + \lambda\rho}
= \inf_{\eta \in \R} \crl*{ \sqrt{1+2\rho} \sqrt{ \E_{S\sim P}  
{(\ell(x;S)-\eta)_+^2}} + \eta },
\end{equation}
and the maximizing $Q$ is
\begin{equation}\label{eq:cs-opt-q}
\frac{\d Q\opt}{\d P}(s) = \frac{(\ell(x;s)-\eta\opt)_+}{
	\E_{S\sim P} (\ell(x;S)-\eta\opt)_+}.
\end{equation}

\subsubsection{Expression for $\grad\L$}

Let $Q\opt$ by a
distribution attaining the supremum in~\eqref{eq:pop-dro-reg-restated} and
recall that $\grad \ell(x;s)$ denotes an element in the sub-differential of
$\ell(x;s)$ w.r.t.\ $x$. Then the following vector is a subgradient of
$\L$~\cite[Corollary 4.4.4]{HiriartUrrutyLe93},
\begin{equation*}
\grad \L(x;P) = \E_{S\sim Q\opt} \grad \ell(x;S).
\end{equation*}
Similarly, for a sample of size $n$ and a maximizing $q\opt$, we have
\begin{equation}\label{eq:gradient-expression-emp}
\grad \L(x;s_1^n) = \sum_{i\le n} q\opt_i \grad \ell(x;s_i).
\end{equation}

\subsubsection{Smoothness of $\Llam$ and 
$\LcvarR$}\label{app:prelims-smoothness} 
The smoothness of $\L$ (i.e., Lipschitz continuity of its gradient) plays a 
role in our mini-batch gradient estimator complexity 
guarantees.  
 When the penalty term $\psi$ is strongly
convex, the maximizing $Q\opt$ (or $q\opt$) is unique, and if $\grad \ell$ is
$H$-Lipschitz then $\L$ is differentiable~\cite[Corollary
  4.4.5]{HiriartUrrutyLe93}. In particular, writing $Q\opt_x$ for the maximizing
$Q$ at point $x$, we have
\begin{flalign}
&\norm{\grad \L(x;P) - \grad \L(y;P)}
= \norm*{\int \crl*{\grad \ell(x;s) \d Q\opt_x(y) - 
\grad \ell(y;s) \d Q\opt_y(s)} }
\nonumber\\ & \quad \quad
\le \int {\norm{\grad \ell(x;s)-\grad \ell(y;s)} \d Q\opt_x(s)}
	+ \int \norm{\grad \ell(y;s)} \abs*{\frac{\d Q\opt_x}{\d P}(s) - 
	\frac{\d Q\opt_y}{\d P}(s)} \d P(s)
\nonumber\\ & \quad \quad
\le H \norm{x-y} + G \norm*{ Q\opt_x - Q\opt_y}_1.
\label{eq:smoothness-bound}
\end{flalign}

Therefore, if $Q\opt_x$ is Lipschitz w.r.t.\ $x$ in the 1-norm then $\grad 
\L$ is Lipschitz as well. This is indeed the case $\Llam$ and $\LcvarR$. 

\restateClaimSmoothObj*
\begin{proof}
	Since entropy is 1-strongly-convex w.r.t.\ the 1-norm, for $\LcvarR$ 
we 
	have that the penalty $\lambda \psi$ is $\lambda$-strongly-convex 
	w.r.t.\ the 1-norm and therefore~\cite[Lemma 2]{ShalevSi06c}
	\begin{equation*}
	\norm*{ Q\opt_x - Q\opt_y}_1 \le 
	\frac{1}{\lambda}\norm{\ell(x;\cdot)-\ell(y;\cdot)}_\infty \le 
	\frac{G}{\lambda}\norm{x-y},
	\end{equation*}
	which by~\eqref{eq:smoothness-bound} implies that $\grad \LcvarR$ is 
	$(H + G^2/\lambda)$-Lipschitz as required. For $\Llam$, we find it 
	easier to argue for a finite sample $s_1^n$. 
	By~\eqref{eq:emp-dro-reg-restated} we have $q\opt_x = 
	\argmax_{q\in\Delta^n}\crl*{q\top \ell(x) - \half \lambda n 
	\norm{q}_2^2}$, where $\ell_i(x) = \ell(x;s_i)$. Therefore, by $\lambda 
	n$ -strong-convexity w.r.t.\ the 2-norm, we have
	\begin{equation*}
	\norm{q\opt_x-q\opt_y}_1 \le \sqrt{n} \norm{q\opt_x-q\opt_y}_2
	\le \frac{1}{\lambda \sqrt{n}} \norm{\ell(x)-\ell(y)}_2
	\le \frac{G}{\lambda}\norm{x-y},
	\end{equation*}
	establishing that $\grad \Llam$ is also 
	$(H + G^2/\lambda)$-Lipschitz. 
	
	Finally, we note that $\LcvarR(x;P)\le \Lcvar$ because 
	$D_\psi(Q,P)\ge 0$ for all $Q$. Conversely since any feasible $Q$ 
	satisfies $\d Q /\d P 
	\le 1/\alpha$ we have $D_\psi(Q,P) = \int \d Q \log 
	\frac{\d Q}{\d P} \le 
	\log\frac{1}{\alpha}$ and therefore $\LcvarR(x;P) \ge \Lcvar(x;P) - 
	\lambda \log\frac{1}{\alpha}$.
\end{proof}

\subsection{Computational cost}\label{app:prelims-compute}
To compute $\L(x;s_1^n)$ and its (sub)gradient from $\{\ell(x;s_i)\}_{i\le n}$
and $\{\grad \ell(x;s_i)\}_{i\le n}$ we compute $q\opt$ that
maximizes~\eqref{eq:emp-dro-reg-restated} and substitute it back
in~\eqref{eq:gradient-expression-emp}. The substitution requires $O(nd)$ work,
so it remains to account for the work in computing $q\opt$. 

For CVaR, this
clearly amounts to sorting $\{\ell(x;s_i)\}_{i\le n}$ and therefore takes
$O(n\log n)$ time. Similarly, for $\Llam$ we may find sort the losses and find
$i\opt$ in~\eqref{eq:lam-emp-eta-expression}, and hence $\eta\opt_n$ and
$q\opt$, in $O(n)$ time. Alternatively, for any objective with $\phi=0$
(including $\Llam$ and $\LcvarR$ we can bisect directly on $\eta$, either to
minimize the expression~\eqref{eq:penalty-dual} or to satisfy the the simplex
constraint
$\sum_{i\le n}q\opt_i = \frac{1}{n} \sum_{i\le n}
(\psi^*)'[(\ell(x;s_i)-\eta\opt)/\lambda] = 1$.

For $\Lcs$ we may find $q\opt$ by 
performing similar bisection over $\eta$ via the 
expression~\eqref{eq:cs-dual}, again either minimizing it or solving for the 
 condition $\frac{1}{n}\sum_{i\le n} (\ell(x;s_i)-\eta\opt)_+^2 = 
(1+2\rho) \prn[\big]{\frac{1}{n}\sum_{i\le n} 
(\ell(x;s_i)-\eta\opt)_+}^2$. 
Finding an 
$\varepsilon$ accurate solution via bisection requires roughly $n \log 
\frac{B}{\varepsilon}$ time.

Since we are interested in large-scale application, we assume that $d \gg 
\log(nB/\varepsilon)$ and therefore the time to compute the objective and 
its gradient is $O(nd)$.

For simplicity and stability, our code implements the computation of 
$q\opt$ using bisection over $\eta$ for each of $\Llam,\Lcs$ and 
$\LcvarR$.  

\subsection{Stochastic gradient method on the dual
  objective}\label{app:dual-sgm}

\newcommand{\etaL}{\underline{\eta}}
\newcommand{\etaH}{\overline{\eta}}

Here we discuss the convergence guarantees for a simple stochastic 
gradient method using the dual expression~\eqref{eq:dual-exp-gen} for 
$\L(x;P_0)$ in order to minimize it over $x$. While several works consider 
such methods (see \Cref{sec:related}), we could not find direct reference for 
their runtime guarantees, and we therefore briefly derive it below.

Focusing on objectives with 
$\phi=0$ (as in~\eqref{eq:penalty-dual}), and writing $\gamma_x$ 
and $\gamma_\eta$ for step sizes, 
we write the iterations on $x$ and the Lagrange multiplier $\eta$ as
\begin{flalign}
x_{t+1} &= 
\Pi_{\X}\prn*{x_t - \gamma_x \grad \Upsilon(x_t,\eta_t; P_0) }  = 
\Pi_{\X}\prn*{x_t - \gamma_x {\psi^*}'\brk*{
		 \frac{\ell(x;S_i) - \eta_t}{\lambda} } 
\grad \ell(x;S_i)},~~\mbox{and} \nonumber\\
\eta_{t+1} &= 
\Pi_{[\etaL, \etaH]}\prn*{\eta_t - \gamma_\eta \frac{\del }{\del\eta} 
\Upsilon(x_t,\eta_t; 
P_0)}
	= 
\Pi_{[\etaL, \etaH]}\prn*{\eta_t + 
\gamma_\eta {\psi^*}'\brk*{
	\frac{\ell(x;S_i) - \eta_t}{\lambda} } - \gamma_\eta},
\label{eq:dual-sgm}
\end{flalign}
Where $S_1,S_2,\ldots$ are drawn iid from $P_0$. 

For CVaR, we have $(\psi^*)'[v] = \frac{1}{\alpha}\indic{v \ge 0}$ and we 
may restrict $\eta$ to the range $[\etaL,\etaH]=[0, B]$, as the optimal 
$\eta$ is the value at risk level $\alpha$ and therefore in the range of 
$\ell$. For $\Llam$ we have $(\psi^*)'[v] = (v)_+$ and we may take 
$[\etaL,\etaH]=[-\lambda, B]$ due to the condition 
$\E (\ell(x;S)-\eta\opt)_+ 
= \lambda$. In these settings, the method~\eqref{eq:dual-sgm} has the 
following guarantee
\begin{claim}
	Let $\epsilon\in(0,B)$. 
	For CVaR and a suitable choice of $\gamma_x, \gamma_\eta$ the 
	average iterate 
	$\bar{x}_T = \frac{1}{T}\sum_{t\le T} x_t$ satisfies
	\begin{equation*}
	\E \Lcvar(\bar{x}_T; P_0)  - \min_{x'\in\X} \Lcvar(x';P_0) \le \epsilon
	~~\mbox{for}~~
	T \asymp \frac{(GR)^2 + B^2}{\alpha^2 \epsilon^2}.
	\end{equation*}     
	Similarly, for $\cs$ penalty we have
	\begin{equation*}
	\E \Llam(\bar{x}_T; P_0)  - \min_{x'\in\X} \Llam(x';P_0) \le \epsilon
	~~\mbox{for}~~
	T \asymp \frac{(GR)^2+B^2}{\epsilon^2}\prn*{1+\frac{B^2}{\lambda^2}}.
	\end{equation*}     
\end{claim}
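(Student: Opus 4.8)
The plan is to recognize~\eqref{eq:dual-sgm} as an ordinary stochastic subgradient method applied to a \emph{single} convex function on a box, and then invoke Proposition~\ref{prop:gangster}. Write $\Upsilon(x,\eta;s)=\tfrac1\alpha(\ell(x;s)-\eta)_+ + \eta$ for CVaR and $\Upsilon(x,\eta;s)=\tfrac1{2\lambda}(\ell(x;s)-\eta)_+^2+\tfrac\lambda2+\eta$ for $\Llam$ (the integrands of the duals in~\eqref{eq:penalty-dual}), so that $\Upsilon(x,\eta;P_0)=\E_{S\sim P_0}\Upsilon(x,\eta;S)$. Each $\Upsilon(\cdot,\cdot;s)$ is \emph{jointly} convex in $(x,\eta)$: it is a convex nondecreasing function ($t\mapsto\tfrac{t_+}\alpha$, $t\mapsto\tfrac{(t_+)^2}{2\lambda}$) of the jointly convex map $(x,\eta)\mapsto\ell(x;s)-\eta$, plus an affine term, hence so is $\Upsilon(\cdot,\cdot;P_0)$; and since the integrand is globally Lipschitz in $(x,\eta)$ uniformly in $s$, drawing $S\sim P_0$ and taking a subgradient of $\Upsilon(\cdot,\cdot;S)$ gives an unbiased estimate of a subgradient of $\Upsilon(\cdot,\cdot;P_0)$. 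Thus~\eqref{eq:dual-sgm} is genuine SGD on the convex function $\Upsilon(\cdot,\cdot;P_0)$ over the box $\X\times[\etaL,\etaH]$.

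Next I would pin down $[\etaL,\etaH]$. For CVaR the optimal $\eta^\star(x)$ is a $(1-\alpha)$-quantile of $\ell(x;S)\in[0,B]$, so $\eta^\star(x)\in[0,B]$ and I take $[\etaL,\etaH]=[0,B]$. For $\Llam$, $\eta^\star(x)$ solves $\E(\ell(x;S)-\eta)_+=\lambda$; as $\E(\ell(x;S)-\eta)_+$ decreases from $\ge\lambda$ at $\eta=-\lambda$ to $0$ at $\eta=B$, always $\eta^\star(x)\in[-\lambda,B]$, while for $\lambda\ge B$ the identity $\eta^\star(x)=\E\ell(x;S)-\lambda$ of~\eqref{eq:lam-edge-case} narrows this to $[-\lambda,B-\lambda]$, an interval of width $O(B)$ uniformly in $\lambda$. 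For every $x$ the inner infimum in $\L(x;P_0)=\inf_\eta\Upsilon(x,\eta;P_0)$ is attained inside $[\etaL,\etaH]$, so $\min_{\X\times[\etaL,\etaH]}\Upsilon(\cdot,\cdot;P_0)=\min_{\X}\L(\cdot;P_0)$.

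The remaining ingredients are moment bounds and the SGM rate. The $x$-block of the stochastic subgradient is $\tfrac1\alpha\indic{\ell\ge\eta}\grad\ell$ (CVaR) or $\tfrac1\lambda(\ell-\eta)_+\grad\ell$ ($\Llam$), of norm at most $\Gamma_x:=G/\alpha$ resp.\ $\Gamma_x\lesssim G(1+B/\lambda)$ (using $(\ell-\eta)_+\le B-\etaL\le B+\lambda$); the $\eta$-block is $1-\tfrac1\alpha\indic{\ell\ge\eta}$ resp.\ $1-\tfrac1\lambda(\ell-\eta)_+$, of magnitude at most $\Gamma_\eta:=1/\alpha$ resp.\ $\Gamma_\eta\lesssim 1+B/\lambda$. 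Writing $D_\eta=\etaH-\etaL$ and rescaling the $\eta$ coordinate by $R/D_\eta$, the domain becomes a box of Euclidean diameter $\lesssim R$ on which the stochastic gradient has second moment $\le\Gamma_x^2+\Gamma_\eta^2 D_\eta^2/R^2\le(\Gamma_x+\Gamma_\eta D_\eta/R)^2$; Proposition~\ref{prop:gangster} (with $\delta=0$ and $F=\overline F=\Upsilon(\cdot,\cdot;P_0)$) applied to the averaged iterate $(\bar x_T,\bar\eta_T)$, with the step sizes $\gamma_x,\gamma_\eta$ that this reduction dictates, then gives $\Upsilon(\bar x_T,\bar\eta_T;P_0)-\min_{\X\times[\etaL,\etaH]}\Upsilon\lesssim(R\Gamma_x+D_\eta\Gamma_\eta)/\sqrt T$. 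Finally $\L(\bar x_T;P_0)=\inf_\eta\Upsilon(\bar x_T,\eta;P_0)\le\Upsilon(\bar x_T,\bar\eta_T;P_0)$, so $\E\L(\bar x_T;P_0)-\min_\X\L\lesssim(R\Gamma_x+D_\eta\Gamma_\eta)/\sqrt T$; substituting $R\Gamma_x+D_\eta\Gamma_\eta\lesssim(GR+B)/\alpha$ for CVaR, and (checking $\lambda<B$ and $\lambda\ge B$ separately, with $D_\eta\lesssim B$ in the latter) $R\Gamma_x+D_\eta\Gamma_\eta\lesssim(1+B/\lambda)(GR+B)$ for $\Llam$, and solving $(R\Gamma_x+D_\eta\Gamma_\eta)/\sqrt T=\epsilon$ for $T$, yields $T\asymp\tfrac{(GR)^2+B^2}{\alpha^2\epsilon^2}$ and $T\asymp\tfrac{(GR)^2+B^2}{\epsilon^2}(1+\tfrac{B^2}{\lambda^2})$ respectively.

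The convergence machinery here is entirely routine; the delicate part is the $\eta$-bookkeeping — choosing $[\etaL,\etaH]$ so that it contains $\eta^\star(x)$ uniformly in $x$ while keeping $D_\eta\Gamma_\eta$ within the target budget, which for $\Llam$ in the regime $\lambda\ge B$ forces the use of the edge-case identity~\eqref{eq:lam-edge-case} (otherwise the naive interval $[-\lambda,B]$ injects a spurious factor of $\lambda$) — together with confirming that $(\ell-\eta)_+/\lambda$ is uniformly bounded on the arguments actually visited. It is also worth stating explicitly (though standard) that the two-block rescaling reduces the weighted SGD regret to the single-norm bound of Proposition~\ref{prop:gangster}.
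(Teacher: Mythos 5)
Your proof is correct and follows essentially the same route as the paper's: it recasts the dual iteration as joint stochastic subgradient descent on $\Upsilon(x,\eta;P_0)$ over $\X\times[\underline{\eta},\overline{\eta}]$, bounds the per-block gradient moments $\Gamma_x,\Gamma_\eta$ and the width of the $\eta$-range, and invokes Proposition~\ref{prop:gangster}. The only substantive difference is that you treat the regime $\lambda\ge B$ more carefully than the paper, which takes $\overline{\eta}-\underline{\eta}=B+\lambda$ and would thereby inflate the bound for large $\lambda$; your use of~\eqref{eq:lam-edge-case} to shrink the $\eta$-interval to width $O(B)$ tightens exactly that edge case.
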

\begin{proof}
	By Proposition~\ref{prop:gangster}, the expected sub-optimality of 
	$\bar{x}_T$ is $\lesssim (\Gamma_x R + \Gamma_\eta 
	(\etaH-\etaL)/\sqrt{T}$, 
	where $\Gamma_x^2$ (respectively $\Gamma_\eta$) is an upper bound 
	on the second moment of $\grad \Upsilon(x,\eta;S)$ (respectively  
	$\frac{\del}{\del\eta}  \Upsilon(x,\eta;S)$). 
	 For $\Lcvar$ we have $\Gamma_x\le G/\alpha$, $\Gamma_\eta \le 
	 1/\alpha$ and $\etaH-\etaL=B$. For $\Llam$ we have $\Gamma_x \le 
	 G(1+B/\lambda)$, $\Gamma_\eta = 1+B/\lambda$ and 
	 $\etaH-\etaL=B+\lambda$.
	 The result follows from substituting $T \asymp 
	 \prn*{\Gamma_x^2 R^2  + \Gamma_\eta^2 (\etaH-\etaL)^2} 
	 \epsilon^{-2}$.
\end{proof}

\subsection{Uncertainty sets contained in $\cs$ divergence 
balls}\label{app:prelims-cs-bounded}

A number of our results hold for general subclass of the 
objective~\eqref{eq:pop-dro-reg-restated} with the following property.
\begin{definition}[$\cs$-bounded objective] \label{def:cs-bounded}
	An objective $\L(x;P_0)$ is 
$\CSbdd$-$\cs$-bounded if for all $x$ and all $Q\opt$ attaining the 
supremum in~\eqref{eq:pop-dro-reg-restated} we have $\divcs(Q\opt, P_0) 
\le \CSbdd$.  
\end{definition}

The three objectives we focus on are $\cs$-bounded.
\begin{claim}\label{claim:cs-bdd}
The objectives $\LcvarR,\Lcs$ and $\Llam$ are $\cs$-bounded with 
constants $\CSbdd=\frac{1}{\alpha}-1$,  $\CSbdd=\rho$ and 
$\CSbdd=B/\lambda$, respectively.
\end{claim}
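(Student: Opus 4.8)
The plan is to read the bound off the defining problem~\eqref{eq:pop-dro-reg-restated} for each of the three objectives, in each case comparing the optimal $Q\opt$ against a convenient feasible distribution. For $\Lcs$ this is immediate: here $\phi = \cs$ and $\psi = 0$, so the defining constraint in~\eqref{eq:pop-dro-reg-restated} is exactly $\divcs(Q, P_0) \le \rho$, and any $Q\opt$ attaining the supremum is in particular feasible; hence $\divcs(Q\opt, P_0) \le \rho$, i.e.\ $\CSbdd = \rho$ works.

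For $\Llam$ I would use that $\phi = 0$, so the supremum ranges over all $Q \ll P_0$, and $\psi = \cs$, so the penalty equals $\lambda\,\divcs(Q, P_0)$. Comparing $Q\opt$ against $P_0$ itself, which is feasible with $\divcs(P_0, P_0) = 0$, optimality of $Q\opt$ gives
\begin{equation*}
  \E_{Q\opt}\ell(x; S) - \lambda\,\divcs(Q\opt, P_0)
  = \Llam(x; P_0) \ge \E_{P_0}\ell(x; S).
\end{equation*}
Rearranging and using the range bound $0 \le \ell(x; \cdot) \le B$ yields
$\lambda\,\divcs(Q\opt, P_0) \le \E_{Q\opt}\ell(x; S) - \E_{P_0}\ell(x; S) \le B$,
that is $\divcs(Q\opt, P_0) \le B/\lambda$, which is the claimed constant.

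For $\LcvarR$ I would instead exploit the pointwise likelihood-ratio bound: since $\phi = 0$ and $\psi$ contains the term $\convind_{[0, 1/\alpha]}$, any $Q$ with finite objective (in particular $Q\opt$) satisfies $\frac{\d Q\opt}{\d P_0} \le \frac{1}{\alpha}$ $P_0$-almost surely. Expanding the $\cs$ divergence and changing measure via $\E_{P_0}\big[(\tfrac{\d Q\opt}{\d P_0})^2\big] = \E_{Q\opt}\big[\tfrac{\d Q\opt}{\d P_0}\big]$ gives
\begin{equation*}
  \divcs(Q\opt, P_0)
  = \tfrac{1}{2}\Big(\E_{P_0}\big[(\tfrac{\d Q\opt}{\d P_0})^2\big] - 1\Big)
  = \tfrac{1}{2}\Big(\E_{Q\opt}\big[\tfrac{\d Q\opt}{\d P_0}\big] - 1\Big)
  \le \tfrac{1}{2}\big(\tfrac{1}{\alpha} - 1\big) \le \tfrac{1}{\alpha} - 1 ,
\end{equation*}
which is the stated bound; since the feasible set is unchanged at $\lambda = 0$, the same computation shows $\Lcvar$ is $(\tfrac{1}{\alpha}-1)$-$\cs$-bounded as well.

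There is essentially no obstacle here; the only points worth stating carefully are the elementary change-of-measure identity used in the last display, and the fact that the suprema are attained (so the claim, quantified over all maximizing $Q\opt$, is not vacuous), which follows from the explicit expressions for $Q\opt$ derived in \Cref{app:prelims-dual}.
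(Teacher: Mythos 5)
Your proposal is correct and follows essentially the same route as the paper's proof: feasibility for $\Lcs$, comparison of $Q\opt$ against $P_0$ for $\Llam$, and the pointwise bound $\d Q\opt/\d P_0 \le 1/\alpha$ expanded inside the $\cs$ divergence for $\LcvarR$. Your version is in fact slightly more careful, since you retain the factor $\tfrac12$ in $\divcs$ and then relax to the stated constant, whereas the paper's display silently drops it.
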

\begin{proof}
	That $\Lcs$ is $\rho$-$\cs$-bounded is obvious from definition. For 
	$\Llam$ we have
	\begin{flalign*}
	\Llam(x;P_0) &= \E_{S\sim Q\opt} \ell(x;S) - \lambda \divcs(Q\opt, P_0)
	\\& \ge \E_{S\sim P_0} \ell(x;S) - \lambda \divcs(P_0;P_0) = \E_{S\sim 
	P_0} 
	\ell(x;S) 
	\end{flalign*}
	and consequently
	\begin{equation*}
	\divcs(Q\opt, P_0) \le \frac{E_{S\sim Q\opt} \ell(x;S)-\E_{S\sim P_0} 
	\ell(x;S) }{\lambda} \le \frac{B}{\lambda}.
	\end{equation*} 
	Finally, for $\LcvarR$ every feasible $Q$ satisfies $\d Q/\d P_0 \le 
	1/\alpha$ and therefore
	\begin{equation*}
	\divcs(Q,P_0) = \int \prn*{\frac{\d Q}{\d P_0}(s)}^2 \d P_0(s) - 1
	\le \frac{1}{\alpha} \prn*{\frac{\d Q}{\d P_0}(s)}\d P_0(s) - 1
	= \frac{1}{\alpha}  - 1.
	\end{equation*}
\end{proof}

\subsection{General results}\label{app:prelims-general}
We conclude this section of the appendix by stating three general results 
that aid our analysis. First, we give a lemma stating that a binomial random 
variable with parameters $n$ and $\alpha$ has a constant probability of 
being at least $\sqrt{\alpha(1-\alpha)n}$ below its mean.

\begin{lemma}\label{lem:bin-tail-be}
  Let $n\in\N$ and $\alpha \in (0,1)$. There exists a numerical constant
  $C\in\R$ such that
  \begin{equation*}
    \P\prn[\big]{ \binrv(n,\alpha) \le n\alpha - \sqrt{n\alpha(1-\alpha)} } 
    \ge \P(\mc{N}(0,1) \le -1) -
    \frac{C}{\sqrt{\alpha(1-\alpha)n}}.
  \end{equation*}
\end{lemma}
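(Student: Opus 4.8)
The plan is to derive this directly from the Berry--Esseen theorem applied to the standardized binomial. First I would write $\binrv(n,\alpha) = \sum_{i=1}^n X_i$ with $X_1,\ldots,X_n \simiid \bernrv(\alpha)$, so that each $X_i$ has mean $\alpha$, variance $\sigma^2 \defeq \alpha(1-\alpha)$, and third absolute central moment $\mu_3 \defeq \E \abs{X_i - \alpha}^3$. The key observation is that, since $\sqrt{n\alpha(1-\alpha)} = \sqrt{n}\,\sigma$, the event in the statement is exactly $\crl*{S_n \le -1}$, where $S_n \defeq (\binrv(n,\alpha) - n\alpha)/(\sqrt{n}\,\sigma)$ is the normalized sum. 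Thus it suffices to lower bound $\P(S_n \le -1)$.

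Next I would invoke the Berry--Esseen theorem, which provides a numerical constant $C_0$ with $\sup_{t\in\R} \abs{\P(S_n \le t) - \Phi(t)} \le C_0 \mu_3 / (\sigma^3 \sqrt{n})$, where $\Phi$ is the standard normal cdf. Taking $t = -1$ gives $\P(S_n \le -1) \ge \Phi(-1) - C_0 \mu_3 / (\sigma^3 \sqrt{n})$, and since $\Phi(-1) = \P(\mc{N}(0,1) \le -1)$ the only remaining task is to bound the ratio $\mu_3/\sigma^3$. A one-line computation gives $\mu_3 = \alpha(1-\alpha)^3 + (1-\alpha)\alpha^3 = \alpha(1-\alpha)\brk*{\alpha^2 + (1-\alpha)^2} \le \alpha(1-\alpha) = \sigma^2$, using $\alpha^2 + (1-\alpha)^2 = 1 - 2\alpha(1-\alpha) \le 1$. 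Hence $\mu_3/\sigma^3 \le \sigma^{-1} = (\alpha(1-\alpha))^{-1/2}$, and substituting yields $\P(S_n \le -1) \ge \P(\mc{N}(0,1)\le -1) - C_0/\sqrt{\alpha(1-\alpha)n}$, which is the claim with $C = C_0$.

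There is essentially no obstacle here; the one point worth noting is the degenerate regime: when $\sqrt{\alpha(1-\alpha)n}$ is small (in particular, $n$ small relative to $1/(\alpha(1-\alpha))$) the right-hand side may be nonpositive, in which case the inequality holds trivially since the left-hand side is a probability, so the argument above covers all $n$ and $\alpha$. One could alternatively avoid Berry--Esseen and lower bound the binomial tail via a second-moment (Paley--Zygmund) argument combined with a local central limit estimate, but the Berry--Esseen route is by far the cleanest and gives the stated $1/\sqrt{\alpha(1-\alpha)n}$ error term with no extra work.
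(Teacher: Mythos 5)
Your proof is correct and follows essentially the same route as the paper: both standardize the binomial, apply the Berry--Esseen theorem at $t=-1$, and bound the normalized third absolute moment by $(\alpha(1-\alpha))^{-1/2}$ via $\alpha^2+(1-\alpha)^2\le 1$. The only cosmetic difference is that the paper phrases the normalization in terms of the sample mean $Y$ rather than the standardized sum, which changes nothing.
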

\begin{proof}
  Note that
  $\P\prn[\big]{ \binrv(n,\alpha) \le n\alpha - \sqrt{n\alpha(1-\alpha)} } =
  \P(Y\sqrt{n} \le -1)$ where
  $Y=\frac{1}{n}\frac{\binrv(n,\alpha) - n\alpha}{\sqrt{\alpha(1-\alpha)}}$ is
  the mean of $n$ independent random variable with zero mean, unit variance, and
  absolute third moment
  $\rho = \frac{\alpha^2 + (1-\alpha)^2}{\sqrt{\alpha(1-\alpha)}} \le
  \frac{1}{\sqrt{\alpha(1-\alpha)}}$. The Berry-Esseen
  theorem~\cite[Theorem 3.4.17]{Durrett19} states that for such $Y$ we have
  $|\P(Y\sqrt{n} \le t) - \P(\mc{N}(0,1) \le t)| \le C\rho/\sqrt{n}$, for all
  $t\in \R$; substituting $t=1$ and $\rho\le \frac{1}{\sqrt{\alpha(1-\alpha)}}$
  concludes the proof.
\end{proof}

Second, we state the Efron-Stein inequality in vector form, which follows 
from applying the standard scalar bound element-wise.
\begin{lemma}[Efron-Stein inequality~{\cite[][Theorem 
3.1]{BoucheronLuMa13}}]
  \label{lem:efron-stein}
  Let $X_1^{n+1}$ be i.i.d random variables and $f:\mc{X}^n \to \R^m$. Let 
  $I$ be uniform on $\{1,\ldots,n\}$ and let $\tilde{X}_1^N$ be such that 
  $\tilde{X}_i = X_i$ for $i\ne I$ and $\tilde{X}_I = X_{n+1}$. Then
  \begin{equation}\label{eqn:efron-stein}
    \Var[ f(X_1^n) ]
    \le \frac{n}{2}
    \E{}\norm{f(X_1^n) - f(\tilde{X}_1^n)}^2.
  \end{equation}
\end{lemma}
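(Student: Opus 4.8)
The plan is to reduce the vector-valued statement to the classical scalar Efron--Stein inequality applied coordinatewise. Recall that for a vector-valued function the variance is understood as $\Var[f(X_1^n)] = \E\norm{f(X_1^n) - \E f(X_1^n)}^2 = \sum_{j=1}^m \Var[f_j(X_1^n)]$, so it suffices to bound each $\Var[f_j(X_1^n)]$ and sum.

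First I would invoke the standard scalar Efron--Stein inequality (e.g., \cite[Theorem 3.1]{BoucheronLuMa13} in its original one-dimensional form): for any $g:\mc{X}^n\to\R$ and i.i.d.\ $X_1,\dots,X_n$ together with independent copies $X_i'$,
\begin{equation*}
  \Var[g(X_1^n)] \le \frac{1}{2}\sum_{i=1}^n \E\brk[\big]{\prn[\big]{g(X_1^n) - g(X_1^{i-1},X_i',X_{i+1}^n)}^2}.
\end{equation*}
Apply this with $g = f_j$ for each $j\in\{1,\dots,m\}$. Since the $X_i$ are i.i.d., we may use the single extra variable $X_{n+1}$ to realize all the independent copies $X_i'$ simultaneously: for the random index $I$ uniform on $\{1,\dots,n\}$ and independent of everything else, the perturbed tuple $\tilde X_1^n$ (equal to $X_1^n$ except with $\tilde X_I = X_{n+1}$) satisfies, for each fixed $i$, that $(X_1^n,\tilde X_1^n)$ conditioned on $I=i$ has exactly the law of $(X_1^n, (X_1^{i-1},X_{n+1},X_{i+1}^n))$. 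Hence
\begin{equation*}
  \frac{1}{2}\sum_{i=1}^n \E\brk[\big]{\prn[\big]{f_j(X_1^n) - f_j(X_1^{i-1},X_{n+1},X_{i+1}^n)}^2}
  = \frac{n}{2}\,\E\brk[\big]{\prn[\big]{f_j(X_1^n) - f_j(\tilde X_1^n)}^2},
\end{equation*}
where the factor $n$ comes from rewriting the sum over $i$ as $n$ times the average over $I$.

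Summing the resulting bound $\Var[f_j(X_1^n)] \le \frac{n}{2}\E[(f_j(X_1^n) - f_j(\tilde X_1^n))^2]$ over $j=1,\dots,m$ and using linearity of expectation gives
\begin{equation*}
  \Var[f(X_1^n)] = \sum_{j=1}^m \Var[f_j(X_1^n)] \le \frac{n}{2}\sum_{j=1}^m \E\brk[\big]{\prn[\big]{f_j(X_1^n) - f_j(\tilde X_1^n)}^2} = \frac{n}{2}\,\E\norm{f(X_1^n) - f(\tilde X_1^n)}^2,
\end{equation*}
which is exactly~\eqref{eqn:efron-stein}. There is no real obstacle here: the only point requiring a line of care is the bookkeeping that converts the deterministic sum over coordinates $i$ in the scalar inequality into the expectation over the uniform random index $I$, which is a routine reindexing. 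Everything else is linearity of expectation and the definition of the vector variance.
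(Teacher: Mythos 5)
Your proof is correct and matches the paper's approach exactly: the paper simply states that the vector form ``follows from applying the standard scalar bound element-wise,'' and your argument fills in precisely those details (coordinatewise application of the scalar Efron--Stein inequality, the reindexing of the sum over replaced coordinates as $n$ times the expectation over the uniform index $I$, and summation over coordinates via $\Var[f]=\sum_j \Var[f_j]$). No gaps.
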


Third, we give a general lemma on the variance of sampling without 
replacement, which we specialize to the simplex for later use.
\begin{lemma}\label{lem:subset-cs}
  Let $p \in \Delta^k$ and let $\mc{I}$ be a random subset of $[k]$ of size 
  $k/2$. Then 
  \begin{equation*}
  \E \prn[\Bigg]{\sum_{i\in\mc{I}} p_{i} - \half}^2 \le \half \norm[\Big]{p - 
  \tfrac{1}{k}\ones}^2 = \frac{1}{2k} \divcs(p, \tfrac{1}{k}\ones).
  \end{equation*}
\end{lemma}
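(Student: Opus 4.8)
The plan is to identify the left-hand side with the variance of a linear statistic under sampling without replacement, and then to evaluate that variance exactly.

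Set $X_i = \indic{i \in \mc{I}}$ and $S = \sum_{i \in \mc{I}} p_i = \sum_{i=1}^k p_i X_i$. Since $\mc{I}$ is a uniformly random subset of $[k]$ of size $k/2$, each coordinate satisfies $\P(i \in \mc{I}) = \tfrac12$, hence $\E S = \tfrac12 \sum_i p_i = \tfrac12$ and therefore $\E(S - \tfrac12)^2 = \Var(S)$. It remains to bound $\Var(S)$.

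To compute $\Var(S)$ one uses the joint law of the inclusion indicators (equivalently, the standard finite-population variance formula). Directly: $\Var(X_i) = \tfrac14$, and for $i \ne j$, $\E[X_i X_j] = \P(\{i,j\} \subseteq \mc{I}) = \frac{(k/2)(k/2 - 1)}{k(k-1)} = \frac{k-2}{4(k-1)}$, so $\mathrm{Cov}(X_i, X_j) = \frac{k-2}{4(k-1)} - \tfrac14 = -\frac{1}{4(k-1)}$. Expanding $\Var(S) = \sum_i p_i^2 \Var(X_i) + \sum_{i \ne j} p_i p_j \mathrm{Cov}(X_i, X_j)$ and substituting $\sum_{i\ne j} p_i p_j = 1 - \sum_i p_i^2$ gives $\Var(S) = \frac{1}{4(k-1)}\bigl(k \sum_i p_i^2 - 1\bigr)$. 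Then the Pythagorean decomposition about the uniform vector, $\sum_i p_i^2 = \tfrac1k + \norm{p - \tfrac1k \ones}^2$, turns this into $\Var(S) = \frac{k}{4(k-1)} \norm{p - \tfrac1k \ones}^2$.

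Finally, since $k \ge 2$ we have $\frac{k}{4(k-1)} \le \tfrac12$, which yields the stated inequality $\E(S - \tfrac12)^2 \le \tfrac12 \norm{p - \tfrac1k \ones}^2$; rewriting the right-hand side in terms of $\divcs(p, \tfrac1k \ones)$ is a direct substitution of the definition of the $\chi^2$-divergence. There is no genuine obstacle in this argument: the only step requiring care is the negative correlation of the inclusion indicators and the exact value $\mathrm{Cov}(X_i, X_j) = -\tfrac{1}{4(k-1)}$, after which the claim follows by elementary arithmetic together with $k \ge 2$.
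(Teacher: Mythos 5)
Your proof is correct and follows essentially the same route as the paper: both compute the second moment exactly via the pairwise inclusion probability $\P(\{i,j\}\subseteq\mc{I}) = \frac{k-2}{4(k-1)}$, arrive at the identity $\E(\sum_{i\in\mc{I}}p_i - \half)^2 = \frac{k}{4(k-1)}\|p-\tfrac1k\ones\|^2$, and conclude with $\frac{k}{4(k-1)}\le\half$. The only cosmetic difference is that the paper centers to $q = p - \tfrac1k\ones$ first and uses $\sum_i q_i = 0$, whereas you expand the variance in terms of $p$ and then apply the Pythagorean decomposition $\sum_i p_i^2 = \tfrac1k + \|p-\tfrac1k\ones\|^2$; these are the same calculation.
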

\begin{proof}
  Let us denote $q = p - \tfrac{1}{k}\ones$. We have
  \begin{align*}
    \E\prn*{\sum_{i \le k}p_i\indic{i\in\mc{I}} - \frac{1}{2}}^2
    & = \E\prn*{\sum_{i\le k}q_i\indic{i\in\mc{I}}}^2  \\&\stackrel{(i)}{=} 
    \frac{1}{2}\sum_{i\le k}q_i^2
      + \sum_{i\neq j}q_i q_j
      \E\indic{i\in\mc{I}\mbox{~and~}j\in\mc{I}} \\
    & \stackrel{(ii)}{=} \frac{1}{2}\norm{q}^2 + \frac{k-2}{4(k-1)}
      \sum_{i\le k}\sum_{j\neq i} q_i q_j  = \frac{1}{2}\norm{q}^2 + 
      \frac{k-2}{4(k-1)}
      \sum_{i\le k}q_i(1-q_i) \\
    & \stackrel{(iii)}{=} \prn*{\frac{1}{2} - \frac{k-2}{4(k-1)}}\norm{q}^2  \le 
    \frac{1}{2}\norm{q}^2,
  \end{align*}
  where $(i)$ stems from $\P(i \in \mc{I}) = \frac{1}{2}$, $(ii)$ from
  $\P(i \in I\mbox{~and~}j\in I) = \frac{1}{2}\frac{k/2-1}{k-1}$ and $(iii)$
  from $\sum_{i\le k}q_i = 0$. Noting that $\divcs(p, \tfrac{1}{k}\ones) = 
  k\norm{q}^2$ concludes the proof.
\end{proof}
\section{Proofs from Section~\ref{sec:batch}}\label{prf:batch}
This section completes  the proof and discussion of the results in 
Section~\ref{sec:batch}. First, in Section~\ref{app:bias}, we 
prove the bias bounds in Proposition~\ref{prop:batch-bias} and argue their 
tightness in the worst case. 
Section~\ref{app:batch-extra-assumptions} provides additional discussion 
of the smoothness and Lipschitz inverse-cdf assumptions sometimes used 
in this section. Then, in Section~\ref{prf:prop-variance-grad} we bound the 
variance of the mini-batch estimators for $\cs$-bounded penalty 
objectives and their gradient, obtaining 
Proposition~\ref{prop:variance-grad}  as a corollary. We also argue that 
similar bounds do not hold for the $\cs$ constraint objective. In 
Section~\ref{app:gangster} we review the standard convergence guarantees 
for stochastic gradients iterations with and without Nesterov acceleration, 
and in Section~\ref{prf:thm-complexity-batch} we combine all these 
ingredients to prove Theorem~\ref{thm:batch-complexity}.

\subsection{Bias of batch estimator}\label{app:bias}
\subsubsection{Proof of Proposition~\ref{prop:batch-bias}}\label{prf:prop-batch-bias}

\restatePropBatchBias*

\begin{proof}
	We first show that the bound $\L \ge \bL$ holds for any loss of the 
	form~\eqref{eq:pop-dro-reg-restated} and then proceed to show each of 
	the bounds~\eqref{eq:cvar-bias}--\eqref{eq:icdf-bias}. We 
	remark here that the bound~\eqref{eq:cs-bias} actually holds for any 
	$\rho$-$\cs$-bounded objective (Definition~\ref{def:cs-bounded}).

\paragraph{Proof of $\L(x;P_0) \ge \bL(x;n)$.}
The dual expression~\eqref{eq:pop-rob-dual} gives
\begin{flalign*}
\L(x;P_0) &= \inf_{\eta\in\R, \nu\ge 0} \E_{S\sim P_0}\crl*{
 (\nu \phi + \lambda \psi)^*[\ell(x;S)-\eta] + \eta + \nu\rho}
\\&
=
\inf_{\eta\in\R, \nu\ge 0}  \E_{S_1^n\sim P_0^n} \crl[\Bigg]{
	\frac{1}{n}\sum_{i\le n} (\nu \phi + \lambda 
	\psi)^*[\ell(x;S_i)-\eta] + \eta + \nu\rho}
\\&
 \ge \E_{S_1^n\sim P_0^n} \inf_{\eta\in\R, \nu\ge 0} \crl[\Bigg]{
	\frac{1}{n}\sum_{i\le n} (\nu \phi + \lambda 
	\psi)^*[\ell(x;S_i)-\eta] + \eta + \nu\rho} = \E \L(x;S_1^n) = \bL(x;n),
\end{flalign*}
where the inequality follows from exchanging the expectation and the 
infimum.

  \paragraph{Proof of the CVaR bias bound~\eqref{eq:cvar-bias}.}
  By Eq.~\eqref{eq:cvar-sample-closed-form} we have
  \begin{equation*}
  \bLcvar(x;n) = \E \Lcvar(x;S_1^n) = \frac{1}{\alpha 
  	n}\sum_{i=1}^{\floor{\alpha n}} \E\ell(x;S_{(i)})
  + 
  \left(1 -
  \frac{\floor{\alpha n}}{\alpha n}\right)
  \E \ell(x;S_{(\floor{\alpha n}+1)}),
  \end{equation*}
  where $\ell(x;S_{(i)})$ is the $i$th order statistic of $\ell(x;S_1^n)$ (in
  decreasing order). Recalling that $F$ denotes the cdf of $\ell(x;S)$, we may
  write $\ell(x;S)=F^{-1}(U)$ with $U$ uniform on $[0,1]$. Therefore,
  $\ell(x;S_{(i)})=F^{-1}(U_{(i)})$ where $U_{(i)}\sim \betarv(n-i+1,i)$ is the
  $i$th order statistic of $n$ iid $\unirv([0,1])$ random
  variables~\cite[Sec. 4.6]{Pitman93}.  Taking expectation, we have
  \begin{equation*}
  \E_{S_1^n\sim P_0^n} \ell(x;S_{(i)}) = \int_0^1 
  F_Z^{-1}(u)f_{\betarv(n-i+1, i)}(u)\mathrm{d}u,
  \end{equation*}
  where $f_{\betarv(a, b)}$ is the density function of the Beta random 
  variable of parameters $a, b$. Substituting back, we have
  \begin{flalign}
  \bLcvar(x;n) &= \frac{1}{\alpha}\int_0^1 \softind(u) F^{-1}(u)\d 
  u,~~\mbox{where}~~\nonumber \\&
  \softind(u) = \frac{1}{ n}\sum_{i=1}^{\floor{\alpha n}} f_{\betarv(n-i+1, 
  i)}(u) 
+\left(\alpha - \frac{\floor{\alpha n}}{ n}\right) 
  f_{\betarv(n-\floor{\alpha n}, \floor{\alpha n}+1)}(u).
 \label{eq:cvar-bias-soft-indicator}
  \end{flalign}
  
  Using
  \begin{equation}\label{eq:sum-of-betas}
    \begin{split}
  \frac{1}{n}\sum_{i=1}^n f_{\mathsf{Beta}(n-i+1, i)}(u) &=
  \frac{1}{n}\sum_{i=1}^n \frac{n!}{(n-i)!(i-1)!}u^{n-i}(1-u)^{i-1} \\& =
  \sum_{i=0}^{n-1}{n-1 \choose i-1}(1-u)^iu^{n-1-i} = 1,
  \end{split}
  \end{equation}
  we have that
  \begin{equation*}
  1-\softind(u) \le \frac{1}{n}\sum_{i=\floor{\alpha n}+1}^{n} 
  f_{\betarv(n-i+1, i)}(u).
  \end{equation*}
  Recalling Eq.~\eqref{eq:cvar-cdf} for $\Lcvar(x;P_0)$, and recalling that 
  $F^{-1}(u)\in [0,B]$ for all $u$ by assumption, we bound the bias as
\begin{align}
  \Lcvar(x; P_0) - \bLcvar(x;n)
  &= \frac{1}{\alpha}\int_0^1 \brk*{\indic{u\ge 1-\alpha} - \softind(u)} 
  F^{-1}(u) \d u
  \nonumber \\& \le  \frac{B}{\alpha}\int_{1-\alpha}^1 \brk*{\indic{u\ge 
  1-\alpha} - 
  \softind(u)} \d u  \nonumber \\
  & \le \frac{B}{\alpha}\int_{1-\alpha}^1 
  \left(\frac{1}{n}\sum_{i=\floor{\alpha n}+1}^{n} f_{\betarv(n-i+1, 
  i)}(u)\right)\d u \nonumber \\
  & = \frac{B}{\alpha n} \sum_{i=\floor{\alpha n} + 1}^{n} 
  \P(\mathsf{Beta}(n-i+1, i) \ge 1-\alpha).
  \label{eq:cvar-bias-tail-sum}
\end{align}

To conclude, it suffices to bound the tail probability of the Beta random
variables. We have~\cite[see, e.g.,][Ex. 5 in Sec. 4.6]{Pitman93}
\begin{align*}
  \P(\betarv(n-i+1, i) \ge 1-\alpha)
  & = 1 - \P(\betarv(n-i+1, i) \le 1-\alpha) \\
  & = \P(\binrv(n;1-\alpha) \le n-i) = \P(\binrv(n;\alpha) \ge i),
\end{align*}
and the multiplicative Chernoff bound~\cite[Theorem 4.3]{MotwaniRa95} 
gives
\begin{equation*}
\P(\binrv(n;\alpha) \ge i) \le \exp\prn*{
-\frac{i-n\alpha}{3}\min\crl*{ \frac{i-n\alpha}{n\alpha}, 1}}.
\end{equation*}
Therefore, for $\alpha n \ge 9$,
\begin{align*}
  \sum_{i = \floor{\alpha n} + 1}^n\P(\binrv(n;\alpha) \ge i)
  & \le \sum_{i=\floor{\alpha n}+1}^{2\floor{\alpha 
  n}}\exp\prn*{-\frac{(i-n\alpha)^2}{3n\alpha}}
    + \sum_{i=2\floor{\alpha n} + 1} ^\infty \exp\prn*{-\frac{i-n\alpha}{3}} 
    \\
  & \le 1 + \int_0^\infty \exp\prn*{-\frac{u^2}{3n\alpha}}\mathrm{d}u +
    \exp\prn*{-\frac{2\floor{\alpha n} +1 - \alpha n}{3}}\sum_{i=0}^\infty 
    e^{-i/3} \\
  & \le 1 + \frac{\sqrt{3\pi\alpha n}}{2} + \frac{e^{-3}}{1 - e^{-1/3}}  \le 
  3\sqrt{\alpha n}.
\end{align*}
Substituting into~\eqref{eq:cvar-bias-tail-sum} and using $\Lcvar(x;P_0) 
\le B$ when $\alpha n \le 9$ gives the final bound
\begin{equation}\label{eq:cvar-bias-precise}
  \Lcvar(x; P_0) - \bLcvar(x;n) \le B \min\crl*{\frac{3}{\sqrt{\alpha n}}, 1}.
\end{equation}

\paragraph{Proof of the bound~\eqref{eq:cs-bias}.}
We start with the 
expression~\eqref{eq:icdf-dro} specialized for the $\Lcs$, 
	\begin{equation*}
	\Lcs(x;P_0) = \sup_{r\in \rset} \int_{0}^{1} r(\beta) 
	F^{-1}(1-\beta) \d \beta,
	\end{equation*}
	where
	\begin{equation*}
	\rset = \crl*{r:[0,1]\to \R_+ ~\Big\vert~ 
		\norm{r}_1 =1,~\norm{r}_2^2 \le 1+2\rho,~\mbox{and $r$ is 
			non-increasing}};
	\end{equation*}
	The restriction of $\rset$ to non-increasing 
	functions is ``free'' since $F^{-1}$ is  non-decreasing. Our strategy is to 
	relate $F^{-1}$ to CVaR and then apply the corresponding 
	bias bounds~\eqref{eq:cvar-bias}---this type of 
	transformation is closely related to the Kusuoka representation of 
	coherent risk measures~\cite{Kusuoka01}. Specifically, note that
	\begin{equation*}
	\Lcvar^\alpha = 
	\frac{1}{\alpha}\int_{0}^{\alpha} F^{-1}_Z(1-\beta)\d \beta
	\implies  F^{-1}_Z(1-\alpha) = \frac{\d}{\d\alpha} (\alpha 
	\Lcvar^\alpha).
	\end{equation*}
	Therefore, for any $r\in\rset$ integration by parts gives
	\begin{equation*}
	\int_{0}^{1} r(\beta) F_Z^{-1}(1-\beta) \d \beta
	= \int_{0}^{1} r(\alpha) \frac{\d}{\d\alpha} (\alpha 
	\Lcvar^\alpha)\d \alpha
	= r(1) \Lcvar^1 - \int_0^1 r'(\alpha) \alpha \Lcvar^\alpha d\alpha.
	\end{equation*}
	The CVaR bias bound~\eqref{eq:cvar-bias-precise} tells us that 
	$\Lcvar^\alpha \le 
	\bLcvar^\alpha + \biasbound(\alpha)$ where $\biasbound(\alpha) = 3B 
	\min 
	\crl*{\sqrt{\frac{1}{\alpha n}}, 1}$. Moreover, we may write  
	$\bLcvar^{\alpha} = \frac{1}{\alpha}\int_{0}^{\alpha} 
	\E \Fhat^{-1}(1-\beta)\d \beta$, where $\Fhat$ denotes the empirical 
	cdf of the losses $\ell(x;S_1),\ldots,\ell(x;S_n)$. Noting that 
	$r'(\alpha)\le 0$ 
	for all $\alpha$, we may write
	\begin{flalign*}
	- \int_0^1 r'(\alpha) \alpha \Lcvar^\alpha d\alpha
	&\le 
	- \int_0^1 r'(\alpha) \alpha \bLcvar^\alpha d\alpha
	- \int_0^1 r'(\alpha) \alpha \cdot \biasbound(\alpha) d\alpha
	\\&= \E \int_0^1 r(\beta) \Fhat^{-1}(1-\beta)\d \beta
	-r(1)\bLcvar^{1} + \int_0^1 [r(\alpha)-r(1)] (\alpha \cdot 
	\biasbound(\alpha))' 
	\d \alpha,
	\end{flalign*}	
	where in the final equality we used again integration by parts along with 
	$\E \Fhat^{-1}(1-\alpha) = \frac{\d}{\d \alpha}(\alpha \bLcvar)$. 

	Substituting back and using $\Lcvar^1 = \bLcvar^1 = \E \ell(x;S)$, we 
	obtain
	\begin{equation*}
	\int_{0}^{1} r(\beta) F_Z^{-1}(1-\beta) \d \beta
	- \E \int_0^1 r(\beta) \Fhat^{-1}(1-\beta)\d \beta
	\le \sup_{r\in\rset} \int_0^1 [r(\alpha)-r(1)] (\alpha \cdot 
	\biasbound(\alpha))' \d \alpha \eqdef E
	\end{equation*}
	Taking a supremum over $r\in\rset$, we 
	conclude that
	\begin{flalign}
	\Lcs(x;P_0) &= \sup_{r\in \rset} \int_{0}^{1} r(\beta) 
	F_Z^{-1}(1-\beta) \d \beta
	\le 
	\sup_{r\in \rset} \E \int_{0}^{1} r(\beta) 
	\Fhat^{-1}(1-\beta)\d \beta + E
	\nonumber \\&
	\le \E \sup_{r\in \rset} \int_{0}^{1} r(\beta) 
	\Fhat^{-1}(1-\beta)\d \beta + E
	= \bLcs(x;n) + E.
	\label{eq:gen-kusuoka}
	\end{flalign}
	
	It remains to bound the quantity $E$, which we do via the 
	the Cauchy-Schwarz 
	inequality and the definition of  $\rset$, which gives
	\begin{equation*}
	  E = \int_0^1 [r(\alpha)-r(1)] (\alpha \cdot \biasbound(\alpha))'
          \d \alpha 
	\le 
	\norm{r}_2 \norm{(\alpha \cdot \biasbound(\alpha)'}_2 \le 
	\sqrt{1+2\rho} 
	\cdot 
	\norm{(\alpha \cdot \biasbound(\alpha))'}_2
	\end{equation*}
	for all $r\in\rset$.  We calculate $(\alpha \cdot \biasbound(\alpha))' = 
	\biasbound(0) 
	\indic{\alpha 
	\le 1/n} + \half \biasbound(\alpha) \indic{\alpha > 1/n}$, so that
	\begin{equation*}
	\norm{(\alpha \cdot \biasbound(\alpha)'}_2^2 = \frac{\biasbound^2(0)}{n}\prn*{
	1 + \int_{1/n}^1 \frac{\d \beta}{4\beta}	
} \le (3B)^2 \cdot \frac{4+\log n}{4n}.
	\end{equation*}
	for all $r\in\rset$, giving the required bound.
	
	\begin{remark}
	The bound~\eqref{eq:gen-kusuoka} hold for any 
	loss~\eqref{eq:pop-dro-reg-restated} and not just $\Lcs$. Moreover, the 
	final bound using Cauchy-Schwarz is equally valid for any 
	$\rho$-$\cs$-bounded uncertainty set. In particular, consider the 
	Cressie-Read uncertainty sets~\cite{CressieRe84} corresponding to 
	$k$-norm the constraint $\norm{r}_k^2 \le 1+2\rho$. For 
	$k> 2$ they satisfy $\norm{r}_2^2 \le 1+2\rho$ and our bias bounds 
	holds (using H\"{o}lder's inequality instead of Cauchy-Schwarz 
	removes the logarithmic factor). For $k\in(1,2)$ H\"{o}lder's inequality 
	gives bounds decaying as $n^{-(k-1)/k}$. 
\end{remark}

  \paragraph{Proof of the bound~\eqref{eq:icdf-bias}}
  Starting with CVaR, we return to the
  expression~\eqref{eq:cvar-bias-soft-indicator} for the bias and note that
  	\begin{equation*}
  	\brk*{\indic{u\ge 1-\alpha} - \softind(u)} F^{-1}(u) 
  	\le \brk*{\indic{u\ge 1-\alpha} - \softind(u)} \crl*{F^{-1}(1-\alpha) + 
\icdfLip \cdot 
  	(u-(1-\alpha))}
  	\end{equation*}
  	holds for all $u$, because when $u<1-\alpha$ we have that 
$\indic{u\ge 
  	1-\alpha} - \softind(u)\le 0$ and so we increase the LHS by replacing 
  	$F^{-1}$ with an under-estimate, while for $u\ge 1-\alpha$ we have 
  	$1 - \softind(u) \ge 0$ due to~\eqref{eq:sum-of-betas} and we increase 
  	the LHS be replacing it with an $F^{-1}$ with an over-estimate. 
  	Substituting into~\eqref{eq:cvar-bias-soft-indicator} and calculating gives
  	\begin{flalign}
  	&\Lcvar(x;P_0)-\bLcvar(x;n) 
  	\nonumber\\&\quad
  	\le 
  	\frac{1}{\alpha}\int_{0}^{1}\left(\indic{u \ge 1-\alpha}
  	-\softind\left(u \right)\right)
  	\left[F^{-1}\left(1-\alpha\right)+
  	\icdfLip \cdot \left(u -\left[1-\alpha\right]\right)\right]\d u 
  	\nonumber\\&\quad
  	\overset{\left(i\right)}{=}
  	 \frac{\icdfLip}{\alpha}\int_{0}^{1}\left(\indic{u \ge1-\alpha}
  	-\softind\left(u \right)\right) u \d u 
  	\nonumber\\&\quad
  	\overset{\left(ii\right)}{=}
  	\icdfLip\left[\frac{1}{2\alpha}\left(1-\left(1-\alpha\right)^{2}\right)
  	-\frac{1}{\alpha n }\sum _{i=n -\left\lfloor \alpha n \right\rfloor 
  	+1}^{n }\frac{i}{n +1}-\left(1-\frac{\left\lfloor \alpha n \right\rfloor 
  	}{\alpha n }\right)\frac{n -\left\lfloor \alpha n \right\rfloor }{n +1}\right]
  	\nonumber\\&\quad 
  	=\icdfLip\left[1-\frac{\alpha}{2}-\frac{1}{\alpha 
  	n \left(n +1\right)}\frac{\left\lfloor \alpha n \right\rfloor 
  	}{2}\left(2n -\left\lfloor \alpha n \right\rfloor 
  	+1\right)-\left(1-\frac{\left\lfloor \alpha n \right\rfloor }{\alpha 
  	n }\right)\frac{n -\left\lfloor \alpha n \right\rfloor }{n +1}\right]
  	\nonumber\\&\quad
  	=\icdfLip\left[1-\frac{\alpha}{2}-\frac{1}{2}\frac{\left\lfloor \alpha 
  	n \right\rfloor }{\alpha n \left(n +1\right)}\left(\left\lfloor \alpha 
  	n \right\rfloor +1\right)-\frac{n -\left\lfloor \alpha n \right\rfloor 
  	}{n +1}\right]
  	\nonumber\\&\quad
  	=\icdfLip\left[\frac{1}{n +1}+\frac{\left\lfloor \alpha n \right\rfloor 
  	}{n +1}\left[1-\frac{\left\lfloor \alpha n \right\rfloor }{2\alpha 
  	n }\right]-\frac{\alpha}{2}-\frac{1}{2}\frac{\left\lfloor \alpha 
  	n \right\rfloor }{\alpha n \left(n +1\right)}\right]
  	\nonumber\\&\quad
  	\le\icdfLip\left[\frac{1}{n +1}+\frac{\alpha 
  	n }{2\left(n +1\right)}-\frac{\alpha}{2}\right]\le\frac{\icdfLip}{n +1}.
 	 \label{eq:cvar-bias-icdf}
  	\end{flalign}
  	Above, $(i)$ uses the fact that $\frac{1}{\alpha}\softind$ is a convex 
  	combination of densities to deduce that 
  	\[
  	\int_{0}^{1}\left(\indic{u \ge 
  	1-\alpha}
  	-\softind\left(u \right)\right)
  	\left[F^{-1}\left(1-\alpha\right)-
  	\icdfLip \cdot \left(1-\alpha\right)\right]\d u = 0,\]
  	 and $(ii)$ uses the 
  	definition~\eqref{eq:cvar-bias-soft-indicator} of $\softind$ along with 
  	the fact that $\E \betarv(a,b) = \frac{a}{a+b}$.
  	
  	This bound extends to any $\L$ of the 
  	form~\eqref{eq:pop-dro-reg-restated} via~\eqref{eq:gen-kusuoka}, 
  	since we have $\biasbound(\alpha) = \icdfLip/(n+1)$ independent of 
  	$\alpha$ and consequently $(\alpha \cdot \biasbound(\alpha) )' = 
  	\icdfLip/(n+1)$, giving
  	\begin{equation*}
  	E =  \sup_{r\in\rset} \int_0^1 [r(\alpha)-r(1)] (\alpha \cdot 
  	\biasbound(\alpha))' \d \alpha = 
  	\frac{\icdfLip}{n+1} \cdot \sup_{r\in\rset} \int_0^1 [r(\alpha)-r(1)] 
  	\d\alpha \le 
  	\frac{\icdfLip}{n+1},
  	\end{equation*}
  	since $\int r(\alpha) \d \alpha = 1$ for all $r\in\rset$ regardless of 
  	$\phi$ and $\psi$.

  \paragraph{Penalized-$\cs$} 	We use the shorthand $Z=\ell(x,S)$ and 
	for a sample $S_1^n$ we let $Z_i = \ell(x,S_i)$. By 
	Eq.~\eqref{eq:lam-dual},
	\begin{equation*}
	\Llam(x;P_0) = \Upsilon(\eta\opt; P_0) = \E 
	\frac{(Z-\eta^\star)_+^2}{2\lambda} + 
	\eta^*  + \frac{\lambda}{2},
	\end{equation*}
	where $\eta^*$ is the unique solution to $\E(Z-\eta^\star)_+ = 
	\lambda$. (We omit the dependence of $\Upsilon$ on $x$ as $x$ is 
	constant throughout). 
	Similarly, we have that
	\begin{equation*}
	\Llam(x; S_1^n) = \Upsilon(\eta_n;S_1^n),~~\mbox{where}~~
	\Upsilon(\eta_n;S_1^n)\defeq 
	\sum_{i=1}^n\frac{(Z_i-\eta)_+^2}{2\lambda n} + \eta  
	+ 
	\frac{\lambda}{2}
	\end{equation*}
	and $\eta_n$ is the unique solution to 
	$\frac{1}{n}\sum_{i=1}^n (Z_i-\eta)_+ = \lambda$.  Convexity of 
	$\Upsilon$  w.r.t.\ $\eta$ gives us
	\begin{equation*}
	\Upsilon(\eta_n;S_1^n) \ge \Upsilon(\eta^\star;S_1^n) + 
	\Upsilon'(\eta^\star; S_1^n)(\eta_n-\eta^\star).
	\end{equation*}
	Taking expectation, we observe that $\E\Upsilon(\eta_n;S_1^n) = 
	\bLlam(x;n)$ and 
	$\E \Upsilon(\eta^\star;S_1^n) = 
	\Upsilon(\eta^\star;P_0)=\Llam(x;P_0)$. 
	Therefore, 
	by the Cauchy-Schwarz inequality
	\begin{flalign}
	&\bLlam(x;n) -  \Llam(x;P_0) =\E 
	\Upsilon'(\eta^\star;S_1^n)(\eta_n-\eta^\star) 
	\nonumber \\ & \quad\quad\quad
	\stackrel{(\star)}{=} \E \Upsilon'(\eta^\star;S_1^n)(\eta_n-\E 
	\eta_n) \ge - \sqrt{\var \Upsilon'(\eta^\star;S_1^n)}\sqrt{\var \eta_n},
	\label{eq:cauchy-schwarz-ftw}
	\end{flalign}
	where $(\star)$ uses that
        $\E\Upsilon'(\eta^\star;S_1^n) = \E\Upsilon'(\eta\opt;P_0)=0$ by 
	the definition of $\eta^\star$, and therefore we may replace 
	$\eta_n-\eta\opt$ 
	with $\eta_n - \E \eta_n$. %
	We now proceed to bound each variance separately. First, we have
	\begin{flalign}
	\var \Upsilon'(\eta^\star;S_1^n) &= \E 
	\brk*{\frac{1}{n}{\sum_{i=1}^n\prn*{\frac{(Z_i-\eta^*)_+}{\lambda} - 
	1}}}^2 = \frac{1}{n} \E \prn*{\frac{(Z-\eta^*)_+}{\lambda} - 1}^2 
	\nonumber\\
    & = \frac{1}{n}\brk*{ \E \frac{(Z-\eta^\star)_+^2}{\lambda^2}  -1 }
    = \frac{1}{n}\brk*{ \frac{1}{\lambda}(2 \Llam(x;P_0) - 2\eta^* - 
    \lambda) - 1} \le \frac{2B}{\lambda n},
	\label{eq:var-psi-bound}
	\end{flalign}
	where in the final transition we used $\Llam(x;P_0) \le B$ and 
	$\eta^\star \ge -\lambda$ due to  $\E(Z-\eta^\star)_+ = 
	\lambda$ and $Z\ge 0$. 
	
	To handle the second variance we use the Efron-Stein inequality 
	(\Cref{lem:efron-stein}). Let 
	$I$ be uniformly distributed on $[n]$, and define 
	\begin{equation*}
		\tilde{Z}_1^n=(Z_1,\ldots,Z_{I-1}, Z'_I, Z_{I+1}, \ldots, Z_n),
	\end{equation*}
	where $Z'$ is an i.i.d. copy of $Z$. Let
	$\tilde{\eta}_n$ be the solution to $\frac{1}{n}\sum_{i=1}^n 
	(\tilde{Z}_i-\eta)_+ = \lambda$. Then,
	\begin{equation}\label{eq:efron-stein}
	\var \eta_n \le \frac{n}{2} \E (\eta_n -\tilde{\eta}_n)^2
	\end{equation}
	Define the random 
	set 
	\begin{equation*}
	\mc{A} \defeq \{ i\mid Z_i - \eta_n > 0\}.
	\end{equation*}
	Recalling that
	$\sum_{i=1}^n 
	(\tilde{Z}_i-\tilde{\eta}_n)_+ = \sum_{i=1}^n 
	({Z}_i-\eta_n)_+ = \lambda n$, we have
	\begin{flalign*}
	0 &= \sum_{i\in [n]} \crl*{
		({Z}_i-\eta_n)_+ - (\tilde{Z}_i-\tilde{\eta}_n)_+}
	\le \sum_{i\in \mc{A}} \crl*{
		({Z}_i-\eta_n) - (\tilde{Z}_i-\tilde{\eta}_n)_+}\\
	&\le \sum_{i\in \mc{A}} \crl*{
		({Z}_i-\eta_n) - (\tilde{Z}_i-\tilde{\eta}_n)}
	= |\mc{A}|(\tilde{\eta}_n - \eta_n) + (Z_I - Z_I')\indic{I\in \mc{A}},
	\end{flalign*}
	and therefore $\eta_n - \tilde{\eta}_n \le \frac{B\indic{I\in 
	\mc{A}}}{|\mc{A}|}$. Similarly defining $\tilde{\mc{A}} \defeq \{ i\mid 
	\tilde{Z}_i - \tilde{\eta}_n > 0\}$ and applying the same argument with 
	$\tilde{\eta}_n$ and $\eta_n$ swapped allows us to conclude that
	\begin{equation*}
	(\eta_n - \tilde{\eta}_n)^2 \le B^2 \max\crl*{
	\frac{\indic{I\in\mc{A}}}{|\mc{A}|^2}, 
	\frac{\indic{I\in\tilde{\mc{A}}}}{|\tilde{\mc{A}}|^2}}
	\le B^2 \prn*{
		\frac{\indic{I\in\mc{A}}}{|\mc{A}|^2} + 
		\frac{\indic{I\in\tilde{\mc{A}}}}{|\tilde{\mc{A}}|^2}
	}.
	\end{equation*} 
	Taking expectation, we obtain
	\begin{equation*}
	\E(\eta_n - \tilde{\eta}_n)^2 \le 2B^2 \E \brk*{
	\frac{\indic{I\in\mc{A}}}{|\mc{A}|^2} }
	= %
	\frac{2B^2}{n} \E \brk*{\frac{1}{|\mc{A}|} },
	\end{equation*}
	where the final transition follows from $\E \brk{\,\indic{I\in\mc{A}} 
	\mid 
	|\mc{A}|} = {|\mc{A}|}/{n}$ (since $I$ is uniform on $[n]$).
	Assume for the moment that $\lambda \le B$. Then we must have 
	$\eta_n \ge 0$ and moreover 
	$|\mc{A}| 
	\ge n\min\{1, \lambda / B\}$ with probability 1. Substituting back 
	into~\eqref{eq:efron-stein}, we get the variance bound
	\begin{equation}\label{eq:var-eta-bound}
	\var \eta_n \le \frac{B^3}{\lambda n}.
	\end{equation}
	Combining~\eqref{eq:var-eta-bound},~\eqref{eq:var-psi-bound} 
	and~\eqref{eq:cauchy-schwarz-ftw} gives the result for $\lambda \le B$. 
	
	In the edge case that $\lambda \ge B$, Eq.~\eqref{eq:lam-edge-case} 
	gives us that
	\begin{flalign*}
	\bLlam(x;n) &= \E \frac{1}{n}\sum_{i\le n} Z_i + \frac{1}{2\lambda}\E 
	\var[Z_1^n]
	= \E Z + \frac{n-1}{2n\lambda} \Var[Z] \\ &= \L(x;P_0) - 
	\frac{1}{2\lambda}\Var[Z] \ge \L(x;P_0) - \frac{B^2}{2\lambda n}.
	\end{flalign*}
	We note that in this case may easily form an unbiased estimator of 
	$\Llam$ by using the standard unbiased variance estimator.
      \end{proof}

\subsubsection{Worst-case tightness of bias bounds}\label{sec:worst-case-bias}

\begin{proposition}\label{prop:lb-bias}
  For $p\in [0, 1]$, let $P_0 = \mathsf{Bernoulli}(p_0)$ and
  $\ell(\x;\s) = B \cdot s$. The following results hold.
  \begin{itemize}
  \item Set $p_0=\alpha$, then
    \begin{equation*}
      \Lcvar(\x;P_0) - \bLcvar(\x;n) \gtrsim 
      \frac{B\sqrt{1-\alpha}}{\sqrt{\alpha n}}.
    \end{equation*}
  \item Set $p_0=(1+2\rho)^{-1}$, then
    \begin{equation*}
      \Lcs(\x;P_0) - \bLcs(\x;n) \gtrsim B\sqrt{\frac{\rho}{n}}.
    \end{equation*}
  \item Set $p_0=\lambda/B \le 1/2$, then
    \begin{equation*}
      \Llam(\x;P_0) - \bLlam(\x;n) \gtrsim \frac{B^2}{\lambda n}.
    \end{equation*}
  \end{itemize}
\end{proposition}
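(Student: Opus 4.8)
The plan is to handle all three objectives by the same recipe: (i) use the dual and inverse-cdf formulas of Appendix~\ref{app:prelims-char} to obtain closed forms for $\L(x;P_0)$ and for the empirical value $\L(x;s_1^n)$ in terms of $K := \#\{i\le n: s_i=1\} \sim \binrv(n,p_0)$; (ii) deduce that the bias equals $\E[g(K)]$ for an explicit nonnegative ``Jensen-gap'' function $g$; and (iii) lower-bound $\E[g(K)]$ via the Berry--Esseen estimate of Lemma~\ref{lem:bin-tail-be}, which shows $K$ deviates from its mean by at least $\sqrt{\Var K}=\sqrt{np_0(1-p_0)}$ with probability bounded below by a universal constant whenever $np_0(1-p_0)$ exceeds an absolute constant (the complementary small-variance regime is disposed of by bounding the bias crudely from below, e.g.\ by $\gtrsim B$ when $np_0\lesssim 1$). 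Because $\ell(x;S)=B\cdot S$ with $S\sim\bernrv(p_0)$, the loss takes value $B$ with probability $p_0$ and $0$ otherwise, and the empirical distribution of $\ell(x;s_1^n)$ places mass $K/n$ at $B$; this two-point structure makes every closed form elementary.

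For CVaR with $p_0=\alpha$: formula~\eqref{eq:cvar-cdf} gives $\Lcvar(x;P_0)=\tfrac1\alpha\int_{1-\alpha}^1 B\,du = B$, while reading off~\eqref{eq:cvar-sample-closed-form} (with $K$ order statistics equal to $B$ and $n-K$ equal to $0$) gives $\Lcvar(x;s_1^n)=B\min\{1,K/(\alpha n)\}$. Hence $\Lcvar(x;P_0)-\bLcvar(x;n)=B\,\E[(1-K/(\alpha n))_+]=\tfrac{B}{\alpha n}\E[(\alpha n-K)_+]$, and since $\E K=\alpha n$, Lemma~\ref{lem:bin-tail-be} yields $\E[(\alpha n-K)_+]\ge \sqrt{n\alpha(1-\alpha)}\,(\P(\mc N(0,1)\le -1)-C/\sqrt{\alpha(1-\alpha)n})\gtrsim\sqrt{n\alpha(1-\alpha)}$, so the bias is $\gtrsim \tfrac{B}{\alpha n}\sqrt{n\alpha(1-\alpha)}=B\sqrt{(1-\alpha)/(\alpha n)}$.

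For the $\cs$ constraint with $p_0=(1+2\rho)^{-1}$: applying~\eqref{eq:cs-dual} to the two-point (resp.\ empirical two-point) distribution and minimizing the resulting expression over $\eta$ (the minimizer is $0$ when the mass at $B$ is $\le p_0$ and $B$ otherwise) gives $\Lcs(x;P_0)=B$ and $\Lcs(x;s_1^n)=B\min\{1,\sqrt{(1+2\rho)K/n}\}$. Thus $\Lcs(x;P_0)-\bLcs(x;n)=B\,\E[(1-\sqrt{(1+2\rho)K/n})_+]$, and using $1-\sqrt t\ge\tfrac12(1-t)$ for $t\in[0,1]$ this is $\ge\tfrac{B(1+2\rho)}{2n}\E[(np_0-K)_+]$; Lemma~\ref{lem:bin-tail-be} again gives $\E[(np_0-K)_+]\gtrsim\sqrt{np_0(1-p_0)}=\sqrt{2n\rho}/(1+2\rho)$, so the bias is $\gtrsim \tfrac{B}{n}\sqrt{n\rho}=B\sqrt{\rho/n}$.

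The penalized $\cs$ case ($p_0=\lambda/B\le 1/2$) is the delicate one, and I expect it to be the main obstacle. Set $\phi(u):=\Llam(x;\,u\delta_B+(1-u)\delta_0)$, so $\Llam(x;P_0)=\phi(p_0)$ and $\bLlam(x;n)=\E\phi(K/n)$. By the dual form~\eqref{eq:penalty-dual}, $\phi$ is an infimum over $\eta$ of functions affine in $u$, hence concave, and computing the optimal $\eta$ (which equals $0$ at $u=p_0$) gives $\phi(p_0)=\tfrac12(B+\lambda)$, that $\phi$ is $C^1$ at $p_0$, and $\phi''(u)=-\lambda/u^3$ for $u\in[p_0,1]$. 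Subtracting the supporting line of $\phi$ at $p_0$: on $\{\hat p:=K/n\in[p_0,2p_0]\}$ one has $\phi''(u)\le -B^3/(8\lambda^2)$, so $\phi(p_0)-\phi(\hat p)\ge -\phi'(p_0)(\hat p-p_0)+\tfrac{B^3}{16\lambda^2}(\hat p-p_0)^2$, while elsewhere concavity gives $\phi(p_0)-\phi(\hat p)\ge -\phi'(p_0)(\hat p-p_0)$. Taking expectations, the linear terms cancel since $\E[\hat p-p_0]=0$, leaving $\Llam(x;P_0)-\bLlam(x;n)\ge \tfrac{B^3}{16\lambda^2 n^2}\E[(K-np_0)^2\indic{np_0\le K\le 2np_0}]$. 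Applying Lemma~\ref{lem:bin-tail-be} to $n-K\sim\binrv(n,1-p_0)$ (its upper-tail counterpart) shows that with constant probability $K$ lies in $[np_0+\tfrac12\sqrt{np_0(1-p_0)},\,np_0+\sqrt{np_0(1-p_0)}]\subseteq[np_0,2np_0]$ (the inclusion holding once $np_0\ge 1$, and $2p_0\le 1$ keeping $\hat p$ in range), so $\E[(K-np_0)^2\indic{np_0\le K\le 2np_0}]\gtrsim np_0(1-p_0)\asymp n\lambda/B$ using $1-p_0\ge\tfrac12$; hence the bias is $\gtrsim\tfrac{B^3}{\lambda^2 n^2}\cdot\tfrac{n\lambda}{B}=\tfrac{B^2}{\lambda n}$. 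The crux is precisely that $\phi$ is \emph{asymmetrically} curved about $p_0$ ($|\phi''|$ jumps from $B^2/\lambda$ to $B^3/\lambda^2\ge 2B^2/\lambda$), so the matching bound forces one to extract the Jensen gap from the steeper ($\hat p>p_0$) side while keeping the first-order terms in exact balance — using the flatter side alone would yield only $\gtrsim B/n$.
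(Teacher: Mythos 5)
Your proposal is correct and follows essentially the same route as the paper's proof: reduce each objective on the Bernoulli instance to an explicit function of the binomial count $K$, observe that the linear part of the Jensen gap cancels because $\E K = np_0$, and extract the remaining one-sided gap via the Berry--Esseen bound of Lemma~\ref{lem:bin-tail-be} (plus a Chernoff bound to keep $\hat p \le 2p_0$ in the penalized case, exactly as the paper does). One small inaccuracy: your closed form $\Lcs(x;s_1^n)=B\min\{1,\sqrt{(1+2\rho)K/n}\}$ is not an equality --- when $K/n<p_0$ the minimizing $\eta$ in~\eqref{eq:cs-dual} is strictly negative, and the true value is $B\bigl(K/n+\sqrt{2\rho (K/n)(1-K/n)}\bigr)$ --- but since you only evaluated the dual objective at $\eta=0$ you have an \emph{upper} bound on $\Lcs(x;s_1^n)$, which is the direction needed to lower-bound the bias, so the argument goes through unchanged.
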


\begin{proof} As before, we treat each case separately.
	  
  \paragraph{CVaR.} 
  First, note that $\Lcvar(x; P_0) = B$ since for $Q$ such 
  that
  $Q(1)=1$ we have $\frac{\d Q}{\d P_0}(s) = \frac{1}{\alpha}\indic{s=1}$ and
  therefore $Q\in\usetCVaR(P_0)$.  Second, for a sample $S_1^n\in\{0,1\}^n$ we
  have
  \begin{equation*}
    \Lcvar(x;S_1^n) = B \max\crl[\Bigg]{1, \frac{1}{\alpha 
	n}\sum_{i\in[n]} S_i}.
  \end{equation*}
  Therefore
  \begin{flalign*}
    \Lcvar(x; P_0) - \bLcvar(x;n) &= \frac{B}{\alpha n} \, \E \prn[\Bigg]{
      n\alpha - \sum_{i\in[n]} S_i }_+ \\& \ge
    \frac{B\sqrt{1-\alpha}}{\sqrt{\alpha n}} \P\prn[\big]{ \binrv(n,\alpha) \le
      n\alpha - \sqrt{n\alpha(1-\alpha)} }\gtrsim
    \frac{B\sqrt{1-\alpha}}{\sqrt{\alpha n}},
  \end{flalign*}
  where the final bound follows from the Berry-Esseen theorem (see
  Lemma~\ref{lem:bin-tail-be}).
  \paragraph{Constrained-$\cs$.} The $\cs$ divergence between two 
  Bernoulli
  random variables is
	\begin{equation*}
	\divcs(\bernrv(q), \bernrv(p)) = \half p \prn*{\frac{q}{p}-1}^2 + 
	\half (1-p) \prn*{\frac{1-q}{1-p}-1}^2 = \frac{(q-p)^2}{2p(1-p)}.
	\end{equation*}
	Therefore, for any $p\in(0,1)$, the element in $\usetCS(\bernrv(p))$ 
	that maximizes $Q(1)$ is $Q=\bernrv(q)$  with 
	$q=\min\crl*{1, p + \sqrt{2\rho}\sqrt{ p(1-p)}}$. 
	Set  $p_0 = \frac{1}{1+2\rho}$ and note that the function 
	$f(p) = p + 
	\sqrt{2\rho}\sqrt{ p(1-p)} = p + \sqrt{(1-p_0)/p_0}\sqrt{p(1-p)}$ 
	satisfies $f(p_0)=1$ and $f'(p)\ge \frac{1}{2p_0}$ for all 
	$p\le p_0$. Therefore, we 
	have
	\begin{equation*}
	\Lcs(x; \bernrv(p)) \le B\brk*{1-\prn*{\frac{p_0 - p}{2p_0}}_{+}}
	\end{equation*}
	for all $p\in(0,1)$, with equality at $p=p_0$.  
	In particular, setting $P_0=\bernrv(p_0)$ implies $\Lcs(x; P_0) = 
	B$  and for a sample $S_1^n\sim P_0^n$ with  
	$\hat{p}=\frac{1}{n}\sum_{i\in[n]} S_i$ we have $\Lcs(x;S_1^n) \le 
	B\prn*{1-\frac{p_0-\hat{p}}{2p_0}}$. Therefore
	\begin{equation*}
	\Lcs(x;P_0)-\bLcs(x;n) \ge \frac{B}{2p_0}\E(p_0-\hat{p})_+
	= \frac{B}{2p_0 n}\cdot\E \prn[\Bigg]{ np_0  - \sum_{i\in[n]} S_i 
	}_+ \stackrel{(\star)}{\gtrsim}
	\frac{B\sqrt{1-p_0}}{\sqrt{p_0 n}} = B\sqrt{\frac{2\rho}{n}},
	\end{equation*}
	where $(\star)$ follows from the CVaR case and for the final equality we
        substitute the definition of $p_0$.
       
      \paragraph{Penalized-$\cs$.}
      For any $p\in(0,1)$ we have
      \begin{flalign*}
      \L(x;\bernrv(p)) &= \sup_{q\in[0,1]} \crl*{qB - \lambda 
     \divcs(\bernrv(q), 
      \bernrv(p))}
      \\& = \sup_{q\in(0,1)} \crl*{qB - \frac{\lambda(q-p)^2}{2p(1-p)}} 
      =\begin{cases}
     pB\prn*{1+\frac{(1-p)B}{2\lambda}}  & p \le \lambda /B \\
     B - \frac{\lambda(1-p)}{2p}  & \mbox{otherwise.} \\
           \end{cases}
      \end{flalign*}  
      Simplifying, we have,
      \begin{equation*}
      \L(x;\bernrv(p)) \le  \frac{B+\lambda}{2} +  \frac{B^2}{2\lambda}\cdot 
      \brk*{\prn*{p - \frac{\lambda}{B}} 
      - \frac{B}{\lambda}\frac{\prn*{p - 
      \frac{\lambda}{B}}_{+}^2}{1+\frac{B}{\lambda}{\prn*{p - 
      \frac{\lambda}{B}}_{+}}}
  },
      \end{equation*}
      with equality at $p=\lambda/B$. 
      Taking $p_0 = \lambda /B$ and  and for a sample $S_1^n\sim P_0^n$ 
      letting 
      $\hat{p}=\frac{1}{n}\sum_{i\in[n]} S_i$, we have
      \begin{equation*}
      \Llam(x;P_0) - \bLlam(x;n) \ge - \frac{B}{2p_0}\E (\hat{p}-p_0)
      + \frac{B}{2p_0^2}\E \frac{\prn*{\hat{p} 
      -p_0}_{+}^2}{1+\frac{1}{p_0}{\prn*{\hat{p} - 
      			p_0}_{+}}}.
      \end{equation*}
      Since $\E \hat{p} = p_0$, we may lower bound this as
      \begin{equation*}
      \Llam(x;P_0) - \bLlam(x;n) \ge \frac{B(1-p_0)}{4np_0}\P( \sqrt{n^{-1} 
      p_0(1-p_0)} \le \hat{p}-p_0 \le p_0).
      \end{equation*}
      We have 
      \begin{flalign*}
      &\P( \sqrt{n^{-1} 
      	p_0(1-p_0)} \le \hat{p}-p_0 \le p_0) \\&\quad\quad
      \ge \P(\binrv(n,p_0) \ge np_0 + 
      	\sqrt{n
      	p_0(1-p_0)})- \P(\binrv(n,p_0) \ge 2np_0) \gtrsim 1
      \end{flalign*}
      by Berry-Esseen and Chernoff, and the result follows by substituting 
      $p_0=\lambda/B$.
\end{proof}

\subsection{Discussion of additional 
assumptions}\label{app:batch-extra-assumptions}

\subsubsection{Smoothness of $\ell$}\label{app:batch-smooth-ell}
The guarantees for the accelerated gradient iterations~\eqref{eq:agd}, 
detailed in Appendix~\ref{app:gangster}, require the objective function be 
smooth, i.e., have Lipschitz gradient. However, the degree of smoothness 
need not be high: as~\citet{Nesterov05b} and subsequent 
work~\cite{Lan12,DuchiBaWa12} observed, even if $\grad \L$ is order 
$G^2/\eps$ Lipschitz, acceleration allows finding an $\epsilon$ accurate 
solution in roughly $GR/\eps$ steps (a quadratic improvement over the 
SGM 
rate), as long as the gradient variance is itself of order $\epsilon$; the 
accelerated rates in Theorem~\ref{thm:batch-complexity} stem from this 
fact. 

By Claim~\ref{claim:smooth-obj}, for $\L$ to have roughly $G^2/\eps$ 
Lipschitz gradient, the loss gradients $\grad\ell$ have to be 
$H=G^2/\eps$ Lipschitz. This is in fact a weak assumption, because every 
$G$-Lipschitz loss $\ell$ has a \emph{smoothed} version $\tilde{\ell}$ 
that satisfies $|\tilde{\ell}(x;s)-\ell(x;s)|\lesssim \eps$ for all $x,s$ and 
that 
$\grad\tilde{\ell}(x;s)$ is $G^2/\eps$ Lipschitz. For example, we may 
replace 
the hinge loss $\ell(x;s)=(1-x^\top s)_+$ with $\tilde{\ell}(x;s)=\epsilon 
\log(1+ \exp([1-x^\top s]/\eps))$. More generally, the 
smoothing~\cite{GuzmanNe15}
\begin{equation}\label{eq:gen-smoothing}
\tilde{\ell}(x;s) = \inf_{y\in\X} \crl*{ \ell(y;s) + 
\frac{G^2}{2\epsilon}\norm{y-x}^2}
\end{equation} 
works for any $G$-Lipschitz $\ell$. 

In practice, we are often at liberty to replace the original loss $\ell$ with its 
smoothed version $\tilde{\ell}$ and minimize the resulting objective 
$\tilde{\L}$ which is guaranteed to be sufficiently smooth and 
approximates $\L$ to accuracy $\epsilon$. Indeed, in the ``statistical 
learning''  
model where we observe the entire $\ell(\cdot;S)$ per sample of $S\sim 
P_0$, we can 
apply the smoothing~\eqref{eq:gen-smoothing} to enforce the smoothness 
requirement without loss of generality. 
Therefore, our smoothness assumption can fail to hold only in situation 
where $\ell$ is non-smooth and $\ell$ and $\grad\ell$ are strict 
black-boxes, so we cannot compute~\eqref{eq:gen-smoothing} without 
multiple black-box queries.

\subsubsection{Lipschitz inverse-cdf}\label{app:batch-icdf}

The inverse-cdf of $\ell(x;S)$ is Lipschitz if and only if the distribution of 
$\ell(x;S)$ has positive density in the interval $[\min_{s\in\ss}\ell(x;s), 
\max_{s\in\ss}\ell(x;s)]$. This is a rather strong assumption that fails 
whenever $\ss$ is discrete or $\ell(x;S)$ is distributed as two separate 
bulks. However, the conclusions of our analysis under the Lipschitz 
inverse-cdf assumption hold under two natural relaxations.

\paragraph{Near-Lipschitz inverse-cdf and discrete loss distributions.}
Note that if $F^{-1}$ satisfies $|F^{-1}(u)-\tilde{F}^{-1}(u)| \le \delta$ for 
all $u\in[0,1]$ and a $\icdfLip$-Lipschitz $\tilde{F}^{-1}(u)$, then we can 
repeat the proof of the bound~\eqref{eq:icdf-bias} to show that 
$\L(x;P_0)-\bL(x;n) \le \delta + \frac{\icdfLip}{n+1}$ for all objectives of 
the form~\eqref{eq:pop-dro-reg-restated}. Moreover, suppose that $P_0$ 
is 
uniform on $N$ elements $s_1^N$ such that $\ell(x;s_i)$ is increasing in 
$i$, and 
suppose that it holds that
\begin{equation}\label{eq:discrete-icdf-assumption}
\ell(x;s_{i+1}) - \ell(x;s_{i}) \le \frac{\icdfLip}{N}.
\end{equation}
That is, the increments in the loss are not too far from uniform. Then, the 
piecewise linear function $\tilde{F}^{-1}$ connecting the steps in $F^{-1}$ 
is $\icdfLip$-Lipschitz and satisfies $|F^{-1}(u)-\tilde{F}^{-1}(u)| \le 
\icdfLip/N$. Therefore, the 
assumption~\eqref{eq:discrete-icdf-assumption} implies that for any 
mini-batch size $n< N$, we have $\L(x;P_0)-\bL(x;n) \le 2\icdfLip/(n+1)$. 
We note also that  the assumption 
$n< N$ is essentially without loss of generality, since for $n=N$ we can 
simply use a full-batch method with no bias at all. 

\paragraph{CVaR bias bounds with locally Lipschitz inverse-cdf.}
The proof of the bound~\eqref{eq:cvar-bias-icdf}
also works if $F^{-1}$ is Lipschitz in a small neighborhood of the CVaR 
cutoff $1-\alpha$, because for values of $u$ that are roughly 
$\sqrt{n^{-1}\alpha}$ far from $1-\alpha$ we may bound $|\indic{u\ge 
1-\alpha} - \softind(u)|$ via  tail bounds, as in the proof of the 
bound~\eqref{eq:cvar-bias}. Therefore, we expect the bias of 
$\Lcvar(x;P_0)-\bL(x;n)$ to vanish with rate $1/n$ whenever the 
distribution of $\ell(x;S)$ has a density at the $1-\alpha$ quantile loss 
value. Prior work shows that, from an asymptotic perspective, the converse 
is also true: when $\ell(x;S)$ does not have a density at the $1-\alpha$ 
quantile, the bias vanishes with asymptotic rate 
$n^{-1/2}$~\cite[cf.][Theorem 2]{TrindadeUrShZr07}.

\subsection{Proofs of variance bounds}\label{prf:prop-variance-grad}
We give a more general statement of the variance bound using the notion of 
$\CSbdd$-$\cs$-bounded objectives (Definition~\ref{def:cs-bounded}); 
Proposition~\ref{prop:variance-grad} follows immediately from 
Claim~\ref{claim:cs-bdd}. 

\begin{customprop}{\ref*{prop:variance-grad}'}\label{prop:variance-extended}
	Let $\L$ be an objective of the form~\eqref{eq:pop-dro-reg-restated}. If 
	$\L$ is 
	$\CSbdd$-$\cs$-bounded, we have that for all $n\in \N$ and  $x\in\X$
	\begin{equation*}
	\Var[ \L(x;S_1^n) ] \le \frac{2(1+C)}{n}B^2.
	\end{equation*}
	If in addition $\phi=0$ and $\psi$ is strictly convex, we have
	\begin{equation*}
	\Var[ \grad\L(x;S_1^n) ] \le \frac{8(1+C)}{n}G^2.
	\end{equation*}
\end{customprop}

\begin{proof}
We first show the bound on the objective variance. 	By the the Efron-Stein 
inequality (see Lemma~\ref{lem:efron-stein}), we have
\begin{equation}\label{eq:obj-efron-stein}
\Var[ \L(x;S_1^n)] \le \frac{n}{2} \E{} \prn{  \L(x;S_1^n) - 
	 \L(x;\tilde{S}_1^n)}^2,
\end{equation}
where $S$ and $\tilde{S}$ are identical 
except in a random entry $I$ for which $\tilde{S}_I$ is an iid copy of 
$S_I$.  Let $q$ and $\tilde{q}$ denote the maximizers 
of~\eqref{eq:emp-dro-reg-restated} for samples $S_1^n$ and 
$\tilde{S}_1^n$ respectively. In addition, let $Z_i = \ell(x;S_i)$ and 
$\tilde{Z}_i = \ell(x;\tilde{S}_i)$. Clearly, $\L(x;S_1^n)$ is convex in $Z$ 
and satisfies $\frac{\del}{\del Z}\L(x;S_1^n) = q$. Therefore, 
\begin{equation*}
\L(x;S_1^n) - \L(x;\tilde{S}_1^n) \le \inner{\frac{\del}{\del 
Z}\L(x;S_1^n) }{Z-\tilde{Z}} = q_I (\ell(x;S_I) - \ell(x;\tilde{S}_I)).
\end{equation*}
Applying the argument again with $S$ and $\tilde{S}$ swapped, we find that
\begin{equation*}
|\L(x;S_1^n) - \L(x;\tilde{S}_1^n)| \le \max\{q_I,\tilde{q}_I\} |\ell(x;S_I) - 
\ell(x;\tilde{S}_I)| \le B\sqrt{q_I^2 + \tilde{q}_I^2}.
\end{equation*}

Therefore, using the fact the $q_I$ and $\tilde{q}_I$ are identically 
distributed, we have
\begin{equation*}
\E \prn{  \L(x;S_1^n) -  \L(x;\tilde{S}_1^n)}^2 \le
2B^2 \E q_I^2 = \frac{2G^2}{n} \E \norm{q}_2^2 
= \frac{4B^2}{n^2}(\CSbdd+1),
\end{equation*}
where the final bound is due to $\norm{q}_2^2 = 
\frac{1}{n}(2\divcs(q,\frac{1}{n}\ones)+1)$ and the 
$\CSbdd$-$\cs$-bounded property of $\L$. Substituting back 
into~\eqref{eq:obj-efron-stein} gives the claimed objective variance bound.
	
 Next, to show the bound on the gradient variance we invoke Efron-Stein 
 elementwise to obtain
  \begin{equation*}\label{eq:grad-efron-stein}
  \Var[\grad \L(x;S_1^n)] \le \frac{n}{2} \E{} \norm{ \grad \L(x;S_1^n) - 
  	\grad \L(x;\tilde{S}_1^n)}^2.
  \end{equation*}
By the 
expression~\eqref{eq:gradient-expression-emp} for $\grad \L$ we have
\begin{flalign*}
\norm{ \grad \L(x;S_1^n) - \grad \L(x;\tilde{S}_1^n)}
&= \norm[\Bigg]{\sum_{i \ne I} (q_i - \tilde{q}_i) \grad \ell(x;S_i) + 
q_I\grad 
\ell(x;S_I) -\tilde{q}_I \grad\ell(x;\tilde{S}_I)}
\\ &
\le G\prn[\Bigg]{\sum_{i \ne I} |q_i - \tilde{q}_i|  + q_I + \tilde{q}_I},
\end{flalign*}
where the bound follows from the triangle inequality and the fact that 
$\ell$ is $G$-Lipschitz. 

Now, observe that $q_i = \frac{1}{n}{\psi^*}'[ (\ell(x;S_i) - \eta)/\lambda ]$ 
for some 
$\eta\in \R$ by Eq.~\eqref{eq:emp-q-opt-expression}. Similarly, $q_i = 
\frac{1}{n}{\psi^*}'[ (\ell(x;S_i) - \tilde{\eta})/\lambda ]$ for some 
$\tilde{\eta}$. Since 
$\psi$ is strictly convex we have that ${\psi^*}'$ is continuous and 
monotonic non-decreasing. Consequently, either $q_i \ge \tilde{q}_i$ for 
all $i\ne I$ (if $\eta \le \tilde{\eta}$), or $q_i \le \tilde{q}_i$ for all $i\ne I$ 
(if $\eta \ge \tilde{\eta}$). In either case, we have
\begin{equation*}
\sum_{i \ne I} |q_i - \tilde{q}_i|  = \abs[\Bigg]{\sum_{i \ne I}(q_i -  
\tilde{q}_i)}
= \abs*{q_I - \tilde{q}_I},
\end{equation*}
where the final equality used the fact that $\sum_{i\le n} q_i = \sum_{i\le 
n} \tilde{q}_i =1$. 
Substituting back, we find that
\begin{equation*}
\norm{ \grad \L(x;S_1^n) - \grad \L(x;\tilde{S}_1^n)} \le 2G\max\{q_I, 
\tilde{q}_I\} \le 2G \sqrt{q_I^2 + \tilde{q}_I^2}.
\end{equation*}
The remainder of the proof is identical to that of the objective variance 
bound, except with $2G$ replacing $B$.
\end{proof}

Proposition~\ref{prop:variance-extended} implies that the variance of the  
$\cs$ constraint 
objective $\Lcs$ is at most $2(1+\rho)B^2/n$. However, our gradient 
variance bound requires $\phi=0$ and therefore does not apply to $\grad 
\Lcs$. The following proposition shows that the requirement $\phi=0$ is 
necessary, since no upper bound of the from $O(1) 
(1+\rho)G^2/n$ 
holds for $\Var[\grad\Lcs]$.

\begin{proposition}[Variance of the mini-batch gradient estimator for  
$\Lcs$]\label{prop:lb-variance}
  For any $n>4$ and $\rho \ge 0$, there exists a distribution $P_0$ over
  $\ss = \crl{0,1,2}$ and a $G$-Lipschitz loss $\ell:[-1,1]\times\ss\to 
  [0,1]$ such that
  \begin{equation*}
    \Var\brk*{\nabla \Lcs(0;\S_1^n)} \gtrsim \frac{\rho^2}{(1+\rho)^2}G^2.
  \end{equation*}
\end{proposition}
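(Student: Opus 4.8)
The plan is to exhibit an instance on which the constrained‑$\chi^2$ maximizer concentrates a large, sample‑dependent fraction of mass onto a few points \emph{and} on which $\Lcs(\cdot;S_1^n)$ is non‑smooth at the query point, so that a legitimate subgradient oracle can return values straddling $0$ with spread of order $\tfrac{\rho}{1+\rho}G$. Take $d=1$, $\X=[-1,1]$, a small constant $c>0$ with $c+G\le 1$, and set $\ell(x;2)=c+G\abs{x}$ (a high loss with a kink at the origin) and $\ell(x;0)=\ell(x;1)\equiv 0$ (the two low‑loss types merely populate $\ss=\{0,1,2\}$ and contribute zero subgradient at $x=0$). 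Put $P_0(2)=p_H$ with $p_H=\rho/(1+\rho)^2\le\tfrac14$ and split the remaining mass over $s\in\{0,1\}$. Since every type's loss is the common kink factor $c+G\abs{x}$ times a constant, for each realization $S_1^n$ one has the exact identity $\Lcs(x;S_1^n)=(c+G\abs{x})\,w_H^\star$ in a neighbourhood of $0$, where $w_H^\star=w_H^\star(S_1^n)$ is the largest $\chi^2$‑ball weight that can be placed on the type‑$2$ points; hence $\partial_x\Lcs(0;S_1^n)=w_H^\star\,[-G,G]$, a symmetric interval, and any measurable selection from it is a valid gradient estimator in the sense of~\eqref{eq:gradient-expression-emp}.

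Next I would lower bound $w_H^\star$. The type‑$2$ points being interchangeable at $x=0$, the closed form for the constrained maximizer (cf.~\eqref{eq:icdf-dro}--\eqref{eq:cs-dual}) gives $w_H^\star=\min\{\,1,\ \mu_H+\sqrt{2\rho\,\mu_H(1-\mu_H)}\,\}$ with $\mu_H=\tfrac1n\sum_i\indic{S_i=2}$, so $w_H^\star\ge\sqrt{\rho\,\mu_H}$ whenever $\mu_H\le\tfrac12$. Let $E=\{n_H\ge\tfrac12 np_H\}$ when $np_H\ge1$ and $E=\{n_H\ge1\}$ otherwise; a Binomial tail estimate (Lemma~\ref{lem:bin-tail-be}, plus its Poisson‑regime analogue) yields $\P(E)\gtrsim\min\{1,n\rho\}$, and on $E$ one checks $w_H^\star\gtrsim\rho/(1+\rho)$ in the first regime and $w_H^\star\ge\sqrt{\rho/n}$ in the second (and, separately, $p_H\ge(1+2\rho)^{-1}$ forces $w_H^\star=1$ once $\rho\gtrsim1$). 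I would then take the subgradient oracle returning $\grad\Lcs(0;S_1^n)=\sigma(S_1^n)\,w_H^\star(S_1^n)\,G$ on $E$ — a point of $w_H^\star[-G,G]$ — and $0$ off $E$, where $\sigma\in\{-1,+1\}$ is a balanced sign nearly independent of $E$ and of $w_H^\star$ (e.g.\ the parity of the type‑$2$ count in a subsample of fixed size disjoint from the one defining $E$). Then $\E\grad\Lcs(0;S_1^n)=0$ and
\begin{equation*}
  \Var\bigl[\grad\Lcs(0;S_1^n)\bigr]=\E\bigl[(w_H^\star)^2\,\indic{E}\bigr]\,G^2\ \gtrsim\ \P(E)\cdot\begin{cases}(\rho/(1+\rho))^2\,G^2,&np_H\ge1,\\[2pt](\rho/n)\,G^2,&np_H<1,\end{cases}
\end{equation*}
and in both regimes (using $\P(E)\gtrsim1$ in the first and $\P(E)\gtrsim n\rho$, $1+\rho\asymp1$ in the second) this is $\gtrsim\tfrac{\rho^2}{(1+\rho)^2}G^2$.

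The main obstacle is the sign‑alternation step: one must establish that $\partial_x\Lcs(0;S_1^n)$ really is the full symmetric interval $w_H^\star[-G,G]$ and observe that this is exactly the instability ruled out by the hypothesis $\phi\ne0$ — for penalty objectives the maximizer is unique and Proposition~\ref{prop:variance-grad} pins the gradient down up to an $O(1/n)$ fluctuation, so no such interval survives, which is the content of the present negative result. Two smaller points also need care: verifying the lower bound on $w_H^\star$ on a constant‑probability event \emph{uniformly in $\rho\in(0,\infty)$} — the delicate cases being $\rho\to0$, handled by the rare event $\{n_H\ge1\}$ together with $w_H^\star\gtrsim\sqrt{\rho/n}$ there, and $\rho\gtrsim1$, where $p_H\ge(1+2\rho)^{-1}$ makes $w_H^\star=1$ outright — and checking that the auxiliary sign $\sigma$ can be chosen simultaneously $\pm1$‑balanced and (nearly) independent of the conditioning, which a short computation on a disjoint block of samples makes routine.
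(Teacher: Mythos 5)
There is a genuine gap: the variance in your construction is manufactured entirely by an adversarial, \emph{sample-dependent} choice of subgradient at the kink of $\ell(\cdot;2)$ at $x=0$, not by the $\cs$-constrained reweighting. In the paper's model the mini-batch estimator is $\grad\Lcs(0;S_1^n)=\sum_i q_i\opt\,\grad\ell(0;S_i)$ where $\grad\ell(0;s)$ is a fixed measurable selection of the subdifferential, i.e.\ a function of $(x,s)$ alone (this is exactly what the Efron--Stein step in the proof of Proposition~\ref{prop:variance-grad} relies on when it cancels $\grad\ell(x;S_i)$ against $\grad\ell(x;\tilde S_i)$ for $i\ne I$). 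Under that convention all your type-$2$ samples receive the \emph{same} value $g_2\in[-G,G]$, your estimator collapses to $w_H^\star(S_1^n)\,g_2$, and its variance is $\Var[w_H^\star]\,g_2^2=O(1/n)$ by concentration of the empirical frequency $\mu_H$ around $p_H$ --- no $n$-independent lower bound survives. If instead you permit the selection $\sigma(S_1^n)\in\{\pm G\}$ computed from a disjoint block, the identical trick applies verbatim to $\Llam$ and even to ERM: uniqueness of $q\opt$ for penalty objectives does not resolve the non-uniqueness of $\grad\ell(0;2)$, so the subdifferential of $\Llam(\cdot;S_1^n)$ at $0$ is the same symmetric interval and your oracle would defeat Proposition~\ref{prop:variance-grad} as well. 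Since the whole point of this proposition is to \emph{separate} $\Lcs$ from the penalized objectives, an argument that applies equally to both cannot be correct within the paper's framework.

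The paper's proof avoids subgradient ambiguity entirely: each $\ell(\cdot;s)$ is linear near $0$ with a fixed gradient ($-G$ for $s\in\{0,2\}$, $+G$ for $s=1$), the loss values are $0$, $\tfrac{1}{30n}$, $1$, and the probabilities are tuned so that $\P(S=1)=\tfrac{1}{1+2\rho}$ sits exactly at the threshold where the $\cs$ ball can place all its mass on the value-$1$ samples, while $\P(S=2)\approx\tfrac{\log 2}{n}$ makes the presence or absence of a value-$2$ sample a constant-probability coin flip. The variance then comes from a genuine discontinuity of the constrained maximizer $q\opt$ as a function of the empirical distribution: on one constant-probability event (no value-$2$ sample, at least $np_1$ value-$1$ samples) the gradient is exactly $+G$, while on another (exactly one value-$2$ sample, fewer than $np_1$ value-$1$ samples) $q\opt$ is forced to spread mass of order $\rho/(1+\rho)$ onto the $-G$ samples, giving gradient at most $(1-c\,\tfrac{\rho}{1+\rho})G$. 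This instability of $q\opt$ under the hard constraint is the mechanism your proposal is missing.
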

\begin{proof}
  We construct $P_0$ as follows,
  \begin{equation*}
  \P(S=2)=p_2=1-2^{1/n}\approx \frac{\log 2}{n}~~\mbox{and}~~
  \P(S=1)=p_1=\frac{1}{1+2\rho}.%
  \end{equation*}
  (Note that we may assume without loss of generality that $\rho \gtrsim 
  1/n$, so that $P(S=0)=1-p_1-p_2>0$, since for $\rho=0$ we already 
  have a standard $1/n$ lower bound on the variance). We set the loss 
  values to 
  be
  \begin{equation*}
  \ell(0;0)=0~~,~~\ell(0;1)=\frac{1}{30n}
  ~~\mbox{and}~~\ell(0;2)=1,
  \end{equation*}
  and the loss gradients as
  \begin{equation*}
  \grad\ell(0;2)=\grad\ell(0;0)=-G~~\mbox{and}~~\ell(0;1)=G.
  \end{equation*}
  
  The source of high variance in this construction is that, for a sample 
  $S_1^n$, the maximizing $q\opt$ behaves very differently when $S_i=2$ 
  for some $i$ and when $S_i \ne 2$ for all $i$. In the former case, we show 
  that $q\opt$ puts significant mass on samples with $S_i\ne 1$, so 
  $\grad\L(0;S_1^n) < G(1-c)$ for some $c\gtrsim \rho/(1+\rho)$. In the 
  latter case, we show that with constant probability $q\opt$ places mass 
  only on samples with $S_i=1$, and so $\grad\L(0;S_1^n) = G$. Since 
  either scenario occurs with constant probability, the variance bound 
  follows.
  
  \newcommand{\Count}{\mathrm{C}}
   \newcommand{\Ev}{\mathfrak{E}}
  
  To provide a detailed proof, let $\Count_k(S_1^n) = \sum_{i\le 
  n}\indic{S_i=k}$ be the number of samples with value $k$, for 
  $k\in\{0,1,2\}$, and consider the events
  \begin{equation*}
  \Ev_a(S_1^n) = \{\Count_2(S_1^n) = 0~\mbox{and}~\Count_1(S_1^n) \ge 
  np_1\} 
  \end{equation*}
  and
  \begin{equation*}
  \Ev_b(S_1^n) = \{\Count_2(S_1^n) = 1~\mbox{and}~\Count_1(S_1^n) <  
  np_1\}.
  \end{equation*}
  Note that we chose $p_2$ such that $\P(\Count_2(S_1^n) = 
  0)=(1-p_2)^n=\half$ and  that $\P(\Count_1(S_1^n) \ge np_1) \gtrsim 1$ 
  since $n p_1$ is roughly the median of $\Count_1(S_1^n)$. 
  Similarly, $\P(\Count_2(S_1^n) = 
  1)=np_0(1-p_0)^{n-1}\approx \frac{\log2 }{2}$ and $\P(\Count_1(S_1^n) 
  < np_1) \gtrsim 1$. Therefore,
  \begin{equation}\label{eq:evs-are-likely}
  \P(\Ev_a(S_1^n))\gtrsim 1~~\mbox{and}~~ \P(\Ev_b(S_1^n))\gtrsim 1.
  \end{equation}
  We bound 
  $\grad 
  \Lcs$ conditional on each event in turn.
  
  Under event $\Ev_a(S_1^n)$, the empirical loss distribution is Bernoulli 
  with 
  parameter $\Count_1(S_1^n)/n \ge p_1 = 1/(1+2\rho)$ and consequently 
  $q\opt$ places mass only on samples with value 1 (see further discussion 
  in the proof of Proposition~\ref{prop:lb-bias}). Therefore, we have
  \begin{equation}\label{eq:grad-ev-a}
  \E\brk{\grad \Lcs(0;S_1^n)\mid \Ev_a(S_1^n)} = G.
  \end{equation}
  
  To bound the gradient under event $\Ev_b(S_1^n)$, assume that without 
  loss of 
  generality that $S_1=2$ is the unique sample with that value. We consider 
  separately the cases $q\opt_1 > 2/3$ and $q\opt_1 \le 2/3$. In the 
  former, we clearly have $\grad \Lcs(0;S_1^n) \le -q\opt_1 G + 
  (1-q\opt_1)G < -G/3$. In the latter case, we recall 
  Eq.~\eqref{eq:cs-opt-q} showing that $q\opt$ is of the form
  \begin{equation*}
  q\opt_i = \frac{(\ell(x;S_i) - \eta\opt)_+}{\sum_{j\le n}(\ell(x;S_j) - 
  \eta\opt)_+}
  \end{equation*}
  for some $\eta^\star\in \R$. The fact that $q\opt_1 \le 2/3$ and that 
  there are at most $n$ samples with value 
  $\ell(0;1)=1/(30n)$ gives the following 
  bound on $\eta\opt$
  \begin{equation*}
 \frac{2}{3}\ge q\opt_1 =  
 \frac{\ell(0;S_1)-\eta\opt}{\sum_{j\le n}(\ell(x;S_j) - 
 	\eta\opt)_+} \ge \frac{1-\eta\opt}{31/30-n\eta\opt}
 \implies \eta\opt \le -\frac{1}{3n}.
  \end{equation*}
  Suppose $S_j=0$ and $S_i=1$, then
  \begin{equation*}
  r = \frac{q\opt_j}{q\opt_i}=\frac{\ell(0;0)-\eta\opt}{\ell(0;1)-\eta\opt}
  = 1 - \frac{\ell(0;1)}{\ell(0;1)-\eta\opt} \ge \frac{7}{8}.
  \end{equation*}
  Assuming that $S_1=2$, we have that the total 
  weight under $q\opt$ of samples with gradient $-G$ is 
  \begin{equation*}
  q\opt_1 + (1-q\opt_1) \frac{r \Count_0(S_1^n)}{\Count_1(S_1^n) + r 
  	\Count_0(S_1^n)} \ge \frac{7}{8} (1-p_1) = \frac{7\rho}{4(1+2\rho)},
  \end{equation*}
  which implies $\grad \Lcs(0;S_1^n) \le -\frac{7\rho}{4(1+2\rho)}G + 
  (1-\frac{7\rho}{4(1+2\rho)})G \le G(1-\frac{7\rho}{2(1+2\rho)})$. We 
  conclude 
  that
  \begin{equation}\label{eq:grad-ev-b}
  \E\brk{\grad \Lcs(0;S_1^n)\mid \Ev_b(S_1^n)} \le  
  G(1-c)~~\mbox{for}~~c=\min\crl*{\frac{4}{3}, 
  \frac{7\rho}{2(1+2\rho)}}\gtrsim 
  \frac{\rho}{1+\rho}.
  \end{equation}
  
  Let $\tilde{S}_1^n$ be an independent copy of $S_1^n$. We combine our 
  conclusions~\eqref{eq:evs-are-likely},~\eqref{eq:grad-ev-a} 
  and~\eqref{eq:grad-ev-b} to form a variance bound as follows,
  \begin{flalign*}
  &\Var[\grad\Lcs(0;S_1^n)] = \frac{1}{2} \E \prn*{\grad\Lcs(0;S_1^n)-
  	\grad\Lcs(0;\tilde{S}_1^n)}^2 
   \\&\quad \quad
   \ge 
   \frac{1}{2} \E \brk*{\prn*{\grad\Lcs(0;S_1^n)-
  	\grad\Lcs(0;\tilde{S}_1^n)}^2 ~\Big\vert~ \Ev_a(S_1^n), 
  	\Ev_b(\tilde{S}_1^n)} \P(\Ev_a(S_1^n),\Ev_b(\tilde{S}_1^n) )
  \\ & \quad \quad \ge 
  \half c^2 G^2 \cdot \P(\Ev_a(S_1^n))\cdot \P(\Ev_b(\tilde{S}_1^n) )\gtrsim 
  \frac{\rho^2}{(1+\rho)^2}G^2.
  \end{flalign*}
\end{proof}

\subsection{Convergence rates of stochastic gradient 
  methods}\label{app:gangster}

We state below the classical convergence
rates for standard and accelerated stochastic gradient methods, under a 
somewhat 
non-standard assumption that the stochastic gradient estimates are 
unbiased for a uniform approximation of the objective function with 
additive error $\delta$.

\arxiv{
\restatePropGangster*
}
\notarxiv{
  \begin{proposition}[Convergence of stochastic gradient
    methods~{\cite[][Corollary 1]{Lan12}}]\label{prop:gangster}
Let $F:\X\to\R$ and $\overline{F}:\X\to\R$ satisfy $0\le 
F(x)-\overline{F}(x)\le
\delta$ for all $\X\in\R$. Assume that $\overline{F}$ is convex and that a
stochastic gradient estimator $\tilde{g}$ satisfies $\E{} \tilde{g}(x)
\in \del \overline{F}(x)$ and $\E{} \norm{\tilde{g}(x)}^2 \le \Gamma^2$ for
all $x\in\X$. For $T\in\N$, the iterate $\bar{x}_T$ in the
sequence~\eqref{eq:sgd} with 
 $\eta \asymp \frac{R}{ T^{1/2}\Gamma}$
satisfies
\begin{equation}\label{eq:sgd-rate}
\E F(\bar{x}_T) - \inf_{x'}F(x') \lesssim \delta + \frac{\Gamma
  R}{\sqrt{T}}.
\end{equation}
If in addition $\grad \overline{F}$ is $\Lambda$-Lipschitz and $\Var
\brk*{\tilde{g}(x)} \le \sigma^2$ for all $x\in\X$, the iterate $y_T$
in the sequence~\eqref{eq:agd} with $\eta \asymp
\min\{\frac{1}{\Lambda}, \frac{R}{T^{3/2}\sigma}\}$ and $\theta_t =
\frac{2}{t+1}$ satisfies
\begin{equation}\label{eq:agd-rate}
\E F(y_T) - \inf_{x'}F(x') \lesssim \delta + \frac{\Lambda R^2}{T^2} +
\frac{\sigma R}{\sqrt{T}}.
\end{equation}
\end{proposition}
}
\begin{proof}
	\cite{Lan12} gives us the rates~\eqref{eq:sgd-rate} 
	and~\eqref{eq:agd-rate} but for $\overline{F}$ rather than $F$. That is, it 
	guarantees that SGM finds $\bar{x}_T$ such that
	\begin{equation*}
	\E \overline{F}(\bar{x}_T) - \inf_{x'}\overline{F}(x') \lesssim 
	\frac{\Gamma R}{\sqrt{T}}.
	\end{equation*}
	To remove the bars, we use $0\le F(x)-\overline{F}(x)\le 
	\delta$ to write 
	\begin{equation*}
	-\inf_{x'}{F}(x') \ge  -\inf_{x'}\overline{F}(x') 
	~~\mbox{and}~~
	{F}(\bar{x}_T) \ge \overline{F}(\bar{x}_T) + \delta.
	\end{equation*}
\end{proof}

\begin{remark}
	In the unconstrained case $\X=\R^d$, the
	recursion~\eqref{eq:agd} reduces to the more familiar form
	\begin{equation}\label{eq:agd-pytorch}
	v_{t+1} = \omega_t v_t - \eta \tilde{g}(x_t),~~x_{t+1} = x_t + 
	\omega_{t+1} v_{t+1} - \eta \tilde{g}(x_t),
	\end{equation}
	where $\omega_t = (1-\theta_{t-1})\frac{\theta_t}{\theta_{t-1}}$ is a
	time-varying ``momentum'' parameter; the sequences $y_t,z_t$ are 
	related to
	$v_t$ via $v_t = \frac{\theta_t}{\omega_t}(z_t - y_t)$ and
	$y_{t+1} = x_t - \eta \tilde{g}(x_t)$. 
\end{remark}

\subsection{Proofs of complexity bounds}\label{prf:thm-complexity-batch}
\restateThmBatchComplexity*
\renewcommand{\err}{\mathsf{err}}
\begin{proof} To prove each bound in the theorem we choose $n$ large 
enough via one of the bounds in Proposition~\ref{prop:batch-bias} and 
then choose $T$ to guarantee $\epsilon$-accurate solution via  
Proposition~\ref{prop:gangster}. For a (potentially random) point 
$\bar{x}\in\X$ and robust risk $\L$,
  we define the shorthand
  \begin{equation*}
    \err(x; \L) \defeq \E\L(\bar{x};P_0) - \inf_{x\in\X}\L(x;P_0).
  \end{equation*}
  We summarize our choices of $n$ and $T$ for different robust objectives,
  under different assumptions in Table~\ref{batch-scaling}. In the statement of
  the theorem, we sometimes  upper bound $a\vee b \defeq \max\crl{a, b}$ 
  by $a+b$ for 
  readability, and state
  the tighter rates here.
  
  \begin{table}\label{batch-scaling}
  	\renewcommand{\arraystretch}{2.25}
  	\begin{center}
  		\begin{tabular}{ccccc}
  			\toprule
  			Loss & $\grad$ est. & $n$ & $T$ &  complexity $=nT$
  			\\
  			\midrule
  			$\Lcvar$& 
  			$\grad \Lcvar$&
  			$ \frac{B^2}{\alpha \epsilon^2}$ &
  			$\frac{(GR)^2}{\epsilon^2}$&
  			$\frac{(GR)^2B^2}{\alpha\epsilon^4}$\\
  			$\Lcs$& 
  			$\grad \Lcs$&
  			$\frac{(1+\rho)B^2}{\epsilon^2}\log \frac{(1+\rho)B^2}{\epsilon^2}$
  			&
  			$\frac{(GR)^2}{\epsilon^2}$&
  			$\frac{(1+\rho)(GR)^2 
  				B^2}{\epsilon^4}\log\frac{(1+\rho)B^2}{\epsilon^2}$\\
  			$\Llam$& 
  			$\grad \Llam$&
  			$\frac{B^2}{\lambda \epsilon}$ &
  			$\frac{(GR)^2}{\epsilon^2}$&
  			$\frac{(GR)^2 B^2 }{\lambda\epsilon^3}$\\
  			any $\L$ in~\eqref{eq:pop-dro-reg}& 
  			$\grad \L$&
  			$ \frac{\icdfLip}{\epsilon}$ &
  			$\frac{(GR)^2}{\epsilon^2}$&
  			$\frac{(GR)^2 \icdfLip }{\epsilon^3}$\\
  			\midrule
  			$\Lcvar$ & 
  			$\grad \LcvarR$&%
  			$\frac{B^2}{\alpha \epsilon^2}$ &
  			$\frac{GR\sqrt{\log\frac{1}{\alpha}+\nu}}{\epsilon}\vee 
  			\frac{(GR)^2}{B^2}$&
  			$\frac{(GR)^2}{\alpha\epsilon^2}\prn*{1 \vee 
  			\frac{B^2}{GR\epsilon}\sqrt{\log\frac{1}{\alpha} + 
  			\nu}}$%
  			\\
  			$\Lcvar$ & 
  			$\grad \LcvarR$&%
  			$\frac{\icdfLip}{\epsilon}$ &
  			$\frac{GR\sqrt{ 
  					\log\frac{1}{\alpha}+\nu}}{\epsilon}\vee\frac{(GR)^2}{\alpha 
  				\icdfLip 
  				\epsilon}$&
  			$ \frac{(GR)^2}{\epsilon^2} \prn*{\frac{1}{\alpha}\vee
  				\frac{\icdfLip}{GR}\sqrt{\log\frac{1}{\alpha} + \nu}}$\\
  			$\Llam$& 
  			$\grad \Llam$&
  			$ \frac{B^2}{\lambda \epsilon}$ &
  			$\frac{GR\sqrt{1+\nu}}{\epsilon}\vee \frac{(GR)^2}{B\epsilon}$&
  			$\frac{GRB}{\lambda\epsilon^2}\prn*{B\sqrt{1+\nu} \vee GR}$\\
  			\bottomrule
  		\end{tabular}
  		
  		\caption{Parameter settings for Theorem~\ref{thm:batch-complexity}. 
  			For $\LcvarR$ we take $\lambda \asymp 
  			\frac{\epsilon}{\log(\frac{1}{\alpha})}$. For $\Llam$ assume 
  			$\lambda 
  			\ge \epsilon$. $\nu \defeq H\epsilon/G^2$. We use the shorthand 
  			$a\vee b$ for $\max\{a,b\}$.
  		}
  	\end{center}
  \end{table}

  \paragraph{CVaR.} We distinguish between the different possible assumptions on
  the loss $\ell$ and distribution $P_0$ as they yield different rates.
  \begin{enumerate}[label=(\alph*),leftmargin=*]
  \item \underline{Non-smooth $\ell$:} let $\bar{x}_T$ be the iterates
    of~\eqref{eq:sgd}, the sub-optimality guarantee of~\eqref{eq:sgd-rate} and
    the bias bound of Proposition~\ref{prop:batch-bias} yield
  \begin{equation*}
    \err(\bar{x}_T; \Lcvar) \lesssim \frac{B}{\sqrt{\alpha n}} + \frac{GR}{\sqrt{T}}.
  \end{equation*}  
  In that case, setting $n \asymp \frac{B^2}{\alpha \epsilon^2}$ guarantees that
  the bias is smaller than $\epsilon$ and setting
  $T \asymp \frac{(GR)^2}{\epsilon^2}$ yields that
  $\err(\bar{x}_T;\Lcvar) \lesssim \epsilon$.

\item \underline{Smooth $\ell$:} if $\ell$ is $H$-smooth, we consider the
  $\LcvarR$ objective with $\lambda =
  \frac{\epsilon}{\log\frac{1}{\alpha}}$. This guarantees that, for all $x\in\X$
  \begin{equation*}
    \LcvarR(\x;P_0) \le \Lcvar(\x;P_0) \le \LcvarR(\x;P_0) + \epsilon.
  \end{equation*}
  $\bLcvarR$ being $(\tfrac{G^2\log(1/\alpha)}{\epsilon}+H)$-smooth, the final
  iterate of the sequence~\eqref{eq:agd} achieves
  \begin{equation*}
    \err(y_T;\Lcvar) \lesssim \epsilon + \frac{B}{\sqrt{\alpha n}}
    + \frac{(GR)^2(\log\frac{1}{\alpha}+\nu)}{\epsilon T^2}
    + \frac{GR}{\sqrt{\alpha nT}}.
  \end{equation*}
  To make sure that the second and third terms are smaller than $\epsilon$, we
  set
  $T = \frac{(GR)^2}{\alpha n \epsilon^2}\vee
    \frac{GR}{\epsilon}\sqrt{\log\tfrac{1}{\alpha} + \nu}$. To guarantee small bias,
  we set $n \asymp \frac{B^2}{\alpha\epsilon^2}$; the resulting complexity 
  is
  \begin{equation*}
    nT \asymp \frac{(GR)^2}{\alpha\epsilon^2}
    \max\crl*{1, \frac{B^2}{GR\epsilon}\sqrt{\log\frac{1}{\alpha} + \nu}}.
  \end{equation*}
\item \underline{Smooth $\ell$ and inverse cdf Lipschitz:} in this case, the
  regret guarantees of the iterates of~\eqref{eq:agd-rate} is
    \begin{equation*}
      \err(y_T;\Lcvar) \lesssim \epsilon + \frac{\icdfLip}{n} +
      \frac{(GR)^2(\log\frac{1}{\alpha}+\nu)}{\epsilon T^2}
      + \frac{GR}{\sqrt{\alpha nT}}.
    \end{equation*}
    We once again set $T = \frac{(GR)^2}{\alpha n \epsilon^2}\vee
    \frac{GR}{\epsilon}\sqrt{\log\tfrac{1}{\alpha} + \nu}$, and choosing
    $n\asymp \frac{\icdfLip}{\epsilon}$ yields the result.
  \end{enumerate}

  \paragraph{Penalized-$\cs$.} We distinguish between whether or not $\ell$
  is smooth.
  \begin{enumerate}[label=(\alph*),leftmargin=*]
  \item \underline{Non-smooth $\ell$:} for the sequence of iterates 
  of~\eqref{eq:sgd},
  we have
  \begin{equation*}
    \err(\bar{x}_T;\Llam) \lesssim \frac{B^2}{\lambda n} + \frac{GR}{\sqrt{T}},
  \end{equation*}
  and setting $n\asymp \frac{B^2}{\lambda \epsilon}$ and
  $T\asymp \frac{(GR)^2}{\epsilon^2}$ yields the fist rate.
  \item \underline{Smooth $\ell$:} We now turn to acceleration,
  we have
  \begin{equation*}
    \err(y_T; \Llam) \lesssim \frac{B^2}{\lambda n} +
    \frac{R^2\prn*{\frac{G^2}{\lambda}+ H}}{T^2} +
    GR\sqrt{\frac{1+\frac{B}{\lambda}}{nT}}.
  \end{equation*}
  First, noting that $\lambda \ge \epsilon$ guarantees that
  $R^2\prn{\frac{G^2}{\lambda}+ H} \le \frac{(GR)^2(1+\nu)}{\epsilon}$.
  Furthermore, we simplify the variance term since $B/\lambda \ge 1$.  We thus
  set
  $T \asymp \frac{GR}{\epsilon}\sqrt{1+\nu}\vee \frac{(GR)^2B}{\lambda n \epsilon^2}$
  and choose $n \asymp \frac{B^2}{\lambda \epsilon^2}$. This yields the final result
  \begin{equation*}
    nT \lesssim \frac{GRB}{\lambda \epsilon^2}\prn*{B\sqrt{1+\nu}
      \vee (GR)}.
    \end{equation*}
  \end{enumerate}
  \paragraph{Constrained-$\cs$.} This case is straightforward---without 
  any bound on
  the variance in the worst-case, we turn to the basic SGM 
  guarantee~\eqref{eq:sgd-rate}; we have
  \begin{equation*}
    \err(\bar{x}_T;\Lcs) \lesssim B\sqrt{1+2\rho}\sqrt{\frac{\log n}{n}}
    + \frac{GR}{\sqrt{T}}.
  \end{equation*}
  We set $T \asymp \frac{(GR)^2}{\epsilon^2}$ and
  $n\asymp\frac{(1+2\rho)B^2}{\epsilon^2}\log\prn{(1+2\rho)B^2\epsilon^{-2}}$.
  We then have
  \begin{equation*}
    B\sqrt{1+2\rho}\sqrt{\frac{\log n}{n}} =
    \epsilon\sqrt{
      1+\frac{\log\log\prn{\frac{(1+2\rho)B^2}{\epsilon^2}}}
      {\log\prn{\frac{(1+2\rho)B^2}{\epsilon^2}}}} \le \sqrt{2}\epsilon,
  \end{equation*}
  and this concludes the proof.
  
  \paragraph{Lipschitz inverse-cdf.} The sequence of iterates~\eqref{eq:sgd}
  yield error
  \begin{equation*}
    \err(\bar{x}_T;\L) \le \frac{\icdfLip}{n} + \frac{G R}{\sqrt{T}},
  \end{equation*}
  and setting $n\asymp \frac{\icdfLip}{\epsilon}, 
  T\asymp\frac{(GR)^2}{\epsilon^2}$
  concludes the proof of the theorem.
\end{proof}

\section{Proofs of Section~\ref{sec:multilevel}}\label{sec:prf-multilevel}
We now provide additional discussion of the multilevel Monte 
Carlo estimator for general functions $\funct$, whose form we restate here 
for convenience
\begin{equation}\label{eq:ml-def-restate}
\ml[\funct] \defeq \funct(x;S_1^{n_0}) + \frac{1}{q(J)} 
\mld_{2^J n_0},~\mbox{where}~
\mld_k \defeq \funct(x;S_1^{k}) - \frac{
	\funct(x;S_1^{k/2}) +\funct(x;S_{k/2+1}^{k})
}{2}.
\end{equation}
Section~\ref{sec:prf-mlmc-bounds} provides upper bounds on the  
moments of $\ml$ for estimating $\LcvarR$, $\Llam$, and their gradients, 
proving Claim~\ref{claim:mlmc} and Proposition~\ref{prop:ml-mom}. In 
that section we also prove that similar second moment bounds do not 
always hold for $\grad\Lcs$. In Section~\ref{prf:mlmc-complexity} we 
prove the complexity guarantees in Theorem~\ref{thm:ml}, and we conclude 
in Section~\ref{app:compare-with-blanchet} with a comparison of some of 
our design choices to the original proposal of~\citet{BlanchetGl15}.

\subsection{Proofs of moment bounds}\label{sec:prf-mlmc-bounds}

\begin{customclaim}{\ref*{claim:mlmc}'}
	  For any function $\funct$, the estimator $\ml[\funct]$ with parameters $n = 2^{\jmax} n_0$ satisfies
	\begin{equation*}
	\E \ml[\funct] = \E \funct(S_1^{\kmax}),
	~
	\text{requiring expected sample size}~\E 2^J n_0 =
	n_0(1+\log_2(\kmax/n_0)).
	\end{equation*}
\end{customclaim}
\begin{proof}
	For any even $k$, $\E \mld_k = \E \funct(S_1^k) - 
	\E\funct(S_1^{k/2})$. Therefore, the expectation of $\ml[\funct]$ 
	telescopes: 
	$\E\ml[\funct] = \E\brk{\funct(S_1^{n_0})} + \sum_{j=1}^{\jmax} 
	\E\mld_{2^j n_0} = 
	\E\brk{\funct(S_1^{\kmax})}$. The expected number of samples follows 
	from direct calculation: $\E[2^J] = \sum_{j=1}^{\jmax}2^j \P(J=j) = 
	\jmax + 1$.
\end{proof}

We  have the following bound on the second moment of the estimator,
\begin{equation}\label{eq:ml-second-moment}
\E\,\norm[\Big]{\ml\brk[\big]{\funct}}^2
\le 2\norm[\Big]{\funct(S_1^{n_0})}^2 + \sum_{j=1}^{\jmax} 
\frac{2}{q(j)}\E\norm*{\mld_{2^jn_0}}^2
\le 
2\norm[\Big]{\funct(S_1^{n_0})}^2 + \sum_{j=1}^{\jmax} 
2^{j+1}\E\norm*{\mld_{2^jn_0}}^2.
\end{equation}
For $\cs$-bounded (Definition~\ref{def:cs-bounded}) pure-penalty losses 
such as $\LcvarR$ and $\Llam$, we argue that 
$\E\norm{\mld_k}^2 \lesssim 1/k$, so that $2^{j} \E\norm{\mld_{2^j 
		n_0}}^2 \lesssim 1/n_0$. Substituting into the 
bound~\eqref{eq:ml-second-moment} gives the following guarantees, from 
which Proposition~\ref{prop:ml-mom} follows immediately via 
Claim~\ref{claim:cs-bdd}. 

\begin{customprop}{\ref*{prop:ml-mom}'}\label{prop:ml-mom-extended}
  Let $\L$ be an objective of the form~\eqref{eq:pop-dro-reg-restated} 
  with $\phi=0$ and strictly convex $\psi$. 
  If $\L$ is $\CSbdd$-$\cs$-bounded, we have that for all
  $x\in\X$, the multi-level Monte Carlo estimator with parameters $n$ and
  $n_0$ satisfies %
  \begin{equation*}
    \E{}\, \prn[\Big]{\ml\brk[\big]{\L}}^2 \le
    2B^2\prn*{1 + \frac{2C}{n_0}\log_2(n/n_0)}
    \mbox{~~and~~}
    \E{}\,\norm*{\ml\brk[\big]{\grad\L}}^2
    \le 2G^2\prn*{1 + \frac{2C}{n_0}\log_2(n/n_0)}.
  \end{equation*}
\end{customprop}
\begin{proof}
  The proof follows similarly to the proof of
  Proposition~\ref{prop:variance-extended}, where the key step is to bound
  $\mld_k$ for $k \in 2\N$. We distinguish between estimating the gradient and
  the loss as, for the latter, one needs to account for estimating the
  regularizer $\mathrm{D}_\psi$.

  \paragraph{Gradient estimator.} We start with the proof of the
  second moment of the gradient estimator. Let $k \in 2\N$ and let $q, q'$ 
  and $q''$ be the maximizer
  of~\eqref{eq:emp-dro-reg-restated} for $S_1^k, S_1^{k/2}$ and 
  $S_{k/2+1}^k$
  respectively. We have %
  \begin{flalign*}
    \norm{\mld_k} & = \norm[\Bigg]{ \sum_{i\le k} \prn*{q_i  - 
    \tfrac{1}{2}q'_i 
    \indic{i\le k/2} - \tfrac{1}{2}q''_{i-k/2} \indic{i> k/2}}\nabla \ell(x;S_i)} \\&
    \le G\sum_{i\le k/2}|q_i - \tfrac{1}{2}q'_i| + G\sum_{i > k/2}
    |q_i - \tfrac{1}{2}q''_{i-k/2}|.
  \end{flalign*}
   For
  $i \in \crl{1, \ldots, k/2}$, it holds that
  $q_i = \frac{1}{n}{\psi^{\ast}}'[(\ell(x;S_i) - \eta)/ \lambda]$ and
  $q'_i =\frac{2}{n} {\psi^{\ast}}'[(\ell(x;S_i) - \eta') / \lambda]$ for
  $\eta, \eta' \in \R$. Since that $\psi$ is strictly convex, ${\psi^\ast}'$ is
  increasing and $q_i - \tfrac{1}{2}q_i'$ is of constant sign for
  $i \in \crl{1, \ldots, k/2}$. Therefore,
  \begin{equation*}
    \sum_{i\le k/2}\abs*{q_i - \half q'_i} = \left\vert \sum_{i\le k/2} 
    \prn*{q_i - \half q_i'}\right\vert
    = \left\vert \sum_{i\le k/2} q_i - \frac{1}{2} \right\vert.
  \end{equation*}
  By symmetry, it thus holds that
  \begin{equation*}
    \E{}\,\norm{\mld_k}^2 \le 4G^2\E{}\,\prn*{\sum_{i\le k/2}q_i - \frac{1}{2}}^2
    \stackrel{(i)}{\le} \frac{2}{k}\divcs(q, \tfrac{1}{k}\ones)
    \stackrel{(ii)}{\le} \frac{2CG^2}{k},
  \end{equation*}
  where $(i)$ is due to Lemma~\ref{lem:subset-cs} and $(ii)$ follows from 
  the assumption
  that $\L$ is $\CSbdd$-$\cs$-bounded. Substituting 
  into~\eqref{eq:ml-second-moment}, we have
  \begin{align*}
    \E{}\, \norm{\ml\brk[\big]{\nabla \L}}^2
    & \le 2G^2 + 4CG^2\sum_{j\le 1}^{\jmax}\frac{1}{q(j)2^jn_0} \\
    & \le 2G^2 + \frac{4CG^2}{n_0}\prn*{\jmax - \frac{1}{2}} \\
    & \le G^2\prn*{2 + \frac{4C}{n_0}\log_2(n/n_0)}.
  \end{align*}
  This concludes the argument for the gradient.
  
  \paragraph{Loss estimator.} %
  With the same notation, let us
  define $\tilde{q} \defeq [\half q', \half q''] \in \Delta^k$. We first prove
  that $\mld_k \ge 0$. Indeed, we have
  \begin{align*}
    \L(\x;S_1^k) =\sum_{i\le k} q_i \ell(x;S_i) - \lambda\divpsi(q, \tfrac{1}{k}\ones)
    & \stackrel{(i)}{\ge} \sum_{i\le k} \tilde{q}_i \ell(x;S_i) -
      \lambda\divpsi(\tilde{q}, \tfrac{1}{k}\ones) \\
    & \stackrel{(ii)}{=} \frac{1}{2}\L(\x; S_1^{k/2})
      + \frac{1}{2}\L(\x;S_{k/2+1}^k),
  \end{align*}
  where $(i)$ is because $q$ is the maximizer for $S_1^k$ and $(ii)$ 
  because the
  $\psi$-divergence tensorizes, i.e.,
  $\divpsi(\tilde{q}, \tfrac{1}{k}\ones) = \frac{1}{2}\divpsi(q',
  \tfrac{2}{k}\ones) + \frac{1}{2}\divpsi(q'', \tfrac{2}{k}\ones)$.  This
  guarantees that $\mld_k = \L(\x;S_1^k) - \frac{1}{2}\L(\x; S_1^{k/2})- 
  \frac{1}{2}\L(\x;S_{k/2+1}^k)\ge 0$. 
  
  Let us now upper bound $\mld_k$.  To that
  end, we define $\tilde{q}' = 2q_1^{k/2} + \delta$ where $\delta \in \R^{k/2}$
  is a fixed-sign vector such that $\tilde{q}'$ lies in $\Delta^{k/2}$. More 
  precisely, if
  $\tilde{q}'^\top\ones > 1$, $\delta$ decreases the mass of the largest
  coordinate until $\tilde{q}'_{(1)} = \frac{2}{k}$ and iterates along the
  sorted coordinates until $\tilde{q}' \in \Delta^{k/2}$. If
  $\tilde{q}'^\top\ones < 1$, $\delta$ similarly increases the smallest
  coordinate to $\frac{2}{k}$ until $\tilde{q}' \in \Delta^{k/2}$. 
  Without loss of generality, we can assume
  that $\psi$ attains its minimum at $t=1$ (otherwise may 
  replaced it by $\psi(t)-\psi'(1)(t-1)$ without changing the objective). 
  Therefore, since
  $\tilde{q}'$ is closer to $\tfrac{2}{k}\ones$ than $2q_1^{k/2}$, it holds that
  \begin{equation*}
    \divpsi(\tilde{q}', \tfrac{2}{k}\ones) =
    \frac{2}{k}\sum_{i\le k/2}\psi(\tfrac{k\tilde{q}'}{2})
    \le \frac{2}{k}\sum_{i\le k/2}\psi(kq_i)
  \end{equation*}
  Finally, we know that $q'$ is optimal for $S_1^{k/2}$ and so
  \begin{align*}
    \L(\x; S_1^{k/2})
    & \ge \sum_{i \le k/2}\tilde{q}'_i \ell(x;S_i)
      - \lambda\divpsi(\tilde{q}', \tfrac{2}{k}\ones) \\
    & \ge 2\sum_{i\le k/2}q_i \ell(x;S_i) 
      - \sum_{i\le k/2}[-\delta_i]_+B  -  \lambda\frac{2}{k} \sum_{i\le 
      	k/2}\psi(kq_i)\\
    & =  2\sum_{i\le k/2}q_i \ell(x;S_i) 
      - 2\brk*{\sum_{i\le k/2}q_i - \frac{1}{2}}_+B
      - \lambda\frac{2}{k} \sum_{i\le k/2}\psi(kq_i).
  \end{align*}
  The same argument for the indices $\crl{k/2+1, \ldots, k}$ yields (recall that $\divpsi(q, \frac{1}{k}\ones) = \frac{1}{k}\sum_{i\le k} \psi(kq_i)$)
  \begin{equation*}
    \mld_k \le 2B\crl*{\brk*{\sum_{i=1}^{k/2}q_i - \frac{1}{2}}_+
      + \brk*{\sum_{i=k/2+1}^{k}q_i - \frac{1}{2}}_+} = 2B \abs*{
      \sum_{i=1}^{k/2}q_i - \frac{1}{2}
    }.
  \end{equation*}
  Therefore, we have
  \begin{equation*}
    \E(\mld_k)^2 \le 4B^2\E\brk*{\sum_{i\le k/2}q_i
      - \frac{1}{2}}^2 \stackrel{(i)}{\le} \frac{2CB^2}{k},
  \end{equation*}
  where $(i)$ follows from Lemma~\ref{lem:subset-cs} and the 
  $\CSbdd$-$\cs$-boundedness of $\L$. Substituting 
  into~\eqref{eq:ml-second-moment} yields the desired bound on 
  $\ml\brk{\L}$.
\end{proof}

Having established the gradient estimator upper bounds for pure-penalty 
objectives, we demonstrate that similar bounds \emph{do not} extend to 
the case of $\cs$ constraint.
  
      \begin{proposition}[Lower bound in the case of constrained-$\cs$]
        \label{prop:lb-cs-mlmc}
        For every $\rho\ge 1$, $n_0$ and $n\ge 4$,
        there exists a
        distribution $P_0$ over $\ss=\{0,1,2\}$ and a $G$-Lipschitz loss
        $\ell: [-1,1]\times\ss \to \R_+$ such that the multi-level Monte Carlo 
        gradient
        estimator with parameters $n_0$ and $n$ satisfies
	\begin{equation*}
	\E\norm[\Big]{\ml\brk[\big]{\grad\Lcs}}^2 \gtrsim 
	 \frac{n}{n_0}  G^2.
	\end{equation*}
      \end{proposition}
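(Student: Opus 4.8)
The plan is to reuse the hard instance behind the mini-batch variance lower bound (Proposition~\ref{prop:lb-variance}) and show that it forces the MLMC increments $\mld_k$ to carry second moment of order $G^2$, which---unlike the $G^2/k$ behaviour exploited in Proposition~\ref{prop:ml-mom}---makes the MLMC estimator blow up. Concretely, write $A\defeq\grad\Lcs(x;S_1^{n_0})$ and $n=2^{\jmax}n_0$, so that $\ml[\grad\Lcs]=A+q(J)^{-1}\mld_{2^Jn_0}$ with $J$ drawn independently of the samples. Conditioning on $J$, using $\|u+v\|^2\ge\tfrac12\|v\|^2-\|u\|^2$, $\sum_j q(j)=1$, and $\|A\|\le G$ (a convex combination of the $\grad\ell$'s), one gets
\[
\E\big\|\ml[\grad\Lcs]\big\|^2 \;\ge\; \tfrac12\sum_{j=1}^{\jmax}\frac{1}{q(j)}\,\E\|\mld_{2^jn_0}\|^2 - G^2 \;\ge\; \frac{2^{\jmax}}{4}\,\E\|\mld_n\|^2 - G^2,
\]
where the last step keeps only $j=\jmax$ and uses $q(\jmax)^{-1}=2^{\jmax-1}$. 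So it suffices to produce an instance with $\E\|\mld_n\|^2\gtrsim G^2$ (the degenerate case $\jmax=O(1)$, i.e.\ $n/n_0=O(1)$, reduces to $\E\|\ml\|^2\gtrsim G^2$, which the same construction gives since $\grad\Lcs(x;S_1^{n_0})$ equals $G$ with constant probability).

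For the second step take $P_0$ on $\ss=\{0,1,2\}$ and $\ell$ as in Proposition~\ref{prop:lb-variance}: $\P(S{=}2)=p_2=1-2^{-1/n}$, $\P(S{=}1)=p_1=1/(1+2\rho)$, losses $0,\tfrac1{30n},1$ and gradients $-G,G,-G$ at $s=0,1,2$ (so $d=1$ and $\mld_n$ is a scalar). From the case analysis in that proof: if a sub-batch's empirical loss law has exactly one $2$ and fewer than $(\text{size}\cdot p_1)$ ones, then its maximizer puts mass at least $c\defeq\min\{\tfrac43,\tfrac{7\rho}{2(1+2\rho)}\}$ on gradient-$(-G)$ samples, so $\grad\Lcs\le G(1-c)\le-\tfrac{G}{6}$ (using $\rho\ge1\Rightarrow c\ge\tfrac76$); and if its empirical law is $\bernrv(\hat p)$ with $\hat p\ge p_1=1/(1+2\rho)$, then the $\chi^2$-ball permits full concentration on the larger loss value, forcing $\grad\Lcs=G$. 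Let $\mathfrak{E}$ be the event that $S_1^n$ has exactly one coordinate equal to $2$, it lies in $S_1^{n/2}$, $S_1^n$ has fewer than $np_1$ ones, and $S_{n/2+1}^n$ has at least $(n/2)p_1$ ones. On $\mathfrak{E}$ the three ingredients of $\mld_n$ satisfy $\grad\Lcs(x;S_1^n)\le-\tfrac{G}{6}$, $\grad\Lcs(x;S_1^{n/2})\in[-G,G]$, $\grad\Lcs(x;S_{n/2+1}^n)=G$, hence
\[
\mld_n = \grad\Lcs(x;S_1^n) - \tfrac12\grad\Lcs(x;S_1^{n/2}) - \tfrac12\grad\Lcs(x;S_{n/2+1}^n) \le -\tfrac{G}{6}+\tfrac{G}{2}-\tfrac{G}{2} = -\tfrac{G}{6},
\]
so $\E\|\mld_n\|^2\ge\tfrac{G^2}{36}\,\P(\mathfrak{E})$. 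To see $\P(\mathfrak{E})\gtrsim1$: the probability that $S_1^n$ has exactly one $2$, in the first half, is $\tfrac12 np_2(1-p_2)^{n-1}\gtrsim1$ by the choice $p_2=1-2^{-1/n}$ (which makes $(1-p_2)^n=\tfrac12$); conditionally on this, the counts of ones in the two halves are independent binomials with means within $O(1)$ of $(n/2)p_1$, and a standard binomial anti-concentration bound (Lemma~\ref{lem:bin-tail-be}) gives a constant probability that the second-half count lies in a one-standard-deviation window above $(n/2)p_1$ while the first-half count falls a standard deviation below, which together imply the two count conditions. Substituting back yields $\E\|\ml[\grad\Lcs]\|^2\gtrsim 2^{\jmax}G^2=(n/n_0)G^2$.

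The main obstacle is obtaining the constant in Step three uniformly over all $\rho\ge1$. When $\rho$ is large enough that $np_1<1$, the requirement ``$S_1^n$ has fewer than $np_1$ ones'' forces no ones at all in $S_1^n$, which makes $S_{n/2+1}^n$ all zeros and $\mld_n$ collapse; that regime must instead be handled by a companion event---$S_1^{n/2}$ contains a $2$ while $S_{n/2+1}^n$ contains at least one $1$ and no $2$, so that $\grad\Lcs(x;S_1^n)=\grad\Lcs(x;S_1^{n/2})=-G$ and $\grad\Lcs(x;S_{n/2+1}^n)=G$, giving $\mld_n=-G$---which in turn needs $p_1$ enlarged to a constant so that such a $1$ appears with constant probability. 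Reconciling the boundary-$\chi^2$-ball instance ($p_1=1/(1+2\rho)$, moderate $\rho$) with the vacuous-$\chi^2$ instance (constant $p_1$, large $\rho$) into a single family delivering $\E\|\mld_n\|^2\gtrsim G^2$ with a $\rho$-, $n$-, and $n_0$-independent constant is where essentially all of the bookkeeping lives; the reduction in Step one and the sign/magnitude computation for $\mld_n$ are routine.
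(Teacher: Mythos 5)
Your proof follows the paper's argument for Proposition~\ref{prop:lb-cs-mlmc} essentially step for step: the same three-point instance from Proposition~\ref{prop:lb-variance}, the same event forcing one half-batch gradient to equal $G$ while the full-batch gradient is at most $-G/6$ (you put the lone value-$2$ sample in the first half rather than the second, which changes nothing), and the same reduction that keeps only the $j=\jmax$ level to pass from $\E\norm{\mld_n}^2\gtrsim G^2$ to $2^{\jmax}G^2=(n/n_0)G^2$; your explicit use of $\norm{u+v}^2\ge\tfrac12\norm{v}^2-\norm{u}^2$ is in fact more careful than the paper's bare assertion $\E\norm{\ml}^2\ge 2^{\jmax-1}\E\norm{\mld_n}^2$. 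The large-$\rho$ obstacle you flag is genuine --- when $np_1<1$, i.e.\ $2\rho+1>n$, requiring fewer than $np_1$ ones overall is incompatible with requiring at least $(n/2)p_1$ ones in one half, and the paper's own event $\mathfrak{E}_a(S_1^{n/2})\cap\mathfrak{E}_b(S_1^n)$ is just as empty in that regime --- but it costs far less bookkeeping than you fear: the proposition only asserts existence of \emph{some} $P_0$ for each triple $(\rho,n,n_0)$, so you may simply case-split on whether $2\rho+1\le n$ and deploy your companion instance (constant $p_1$, with the $\chi^2$ ball large enough that $q\opt$ concentrates on the max-loss sample) only in the complementary regime; no single unified family is needed.
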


  \providecommand{\Count}{\mathrm{C}}
  \providecommand{\Ev}{\mathfrak{E}}
  
      \begin{proof}
      	We reuse the construction and notation in the proof of  
      	 Proposition~\ref{prop:lb-variance} and so do not repeat it.
    	 For a sample $S_1^n$, we consider the event $\Ev_a(S_1^{n/2})$ 
    	 where $S_i \ne 2$ for all $i\le n/2$ and there are at least $n p_1 /2$ 
    	 samples with value 1. We argue in the proof of 
    	 Proposition~\ref{prop:lb-variance} (Eq.~\eqref{eq:grad-ev-a}) that 
    	 under this event we have
    	 \begin{equation*}
    	 \E\brk{\grad \Lcs(0;S_{1}^{n/2})\mid \Ev_a(S_1^{n/2})} = G.
    	 \end{equation*}
    	 Moreover, we have
    	 \begin{equation*}
    	 \grad \Lcs(0;S_{n/2+1}^{n}) \ge - G
    	 \end{equation*}
    	 with probability 1, so overall
    	 \begin{equation*}
    	 \E\brk*{\half\grad \Lcs(0;S_{1}^{n/2})+ \half\grad 
    	 \Lcs(0;S_{n/2+1}^{n})~\Big\vert~ \Ev_a(S_1^{n/2})} \ge 0.
    	 \end{equation*}
    	 
    	 We also consider the event $\Ev_b(S_1^n)$ that there is exactly one 
    	 sample with value 2 and less the $np_1$ samples with value 1. As per 
    	 the proof of Proposition~\ref{prop:lb-variance} 
    	 (Eq.~\eqref{eq:grad-ev-b}) we have
    	 \begin{equation*}
    	 \E\brk{\grad \Lcs(0;S_{1}^{n})\mid \Ev_b(S_1^{n})} \le -\frac{1}{6}G,
    	 \end{equation*}
    	 where we used $\rho \ge 1$. 
    	 
    	 Moreover, by the arguments in the proof of 
    	 Proposition~\ref{prop:lb-variance}, we have
    	 \begin{equation*}
    	 \P\prn*{\Ev_a(S_1^{n/2}) \cap \Ev_b(S_1^n)} \gtrsim 1.
    	 \end{equation*}
    	 Therefore, since $\mld_n = \grad \Lcs(0;S_{1}^{n}) - \half\grad 
    	 \Lcs(0;S_{1}^{n/2})- \half\grad \Lcs(0;S_{1}^{n/2})$, we have
    	 \begin{flalign*}
    	 \E \norm{\mld_n}^2 &\ge \E \brk*{ \norm{\mld_n}^2 \mid 
    	 \Ev_a(S_1^{n/2}) \cap 
    	 \Ev_b(S_1^n)} \P\prn*{\Ev_a(S_1^{n/2}) \cap \Ev_b(S_1^n)}
    	 \\&\ge 
    	 \frac{G^2}{36}\P\prn*{\Ev_a(S_1^{n/2}) \cap \Ev_b(S_1^n)} \gtrsim 
    	 G^2.
    	 \end{flalign*}
    	 The proof is complete by noting that
      	\begin{equation*}
      	\E \abs*{\ml[ \grad \Lcs(0;\cdot)]}^2 \ge 2^{\jmax-1} \E 
      	\norm{\mld_n}^2 
      	= \frac{n}{2n_0} \E \norm{\mld_n}^2  \gtrsim  \frac{n}{n_0}G^2.
      	\end{equation*}
      \end{proof}
      Since the number $T$ of SGM iterations must be proportional to the 
      second moment of the gradient estimator, 
      Proposition~\ref{prop:lb-cs-mlmc} tells us that in the worst case we 
      might have to set $T\asymp n (GR)^2/\eps^2$, in which case we 
      might as well use a mini-batch estimator with batch size $n$ and run 
      $(GR)^2/\eps^2$ SGM steps.
      
      \subsection{Proof of complexity bounds}\label{prf:mlmc-complexity}
      \restateThmMLMC*
      \begin{proof}
        The convergence guarantee of Proposition~\ref{prop:gangster} 
        and
        the second moment bound of Proposition~\ref{prop:ml-mom} directly 
        give
        that iterates of the form~\eqref{eq:sgd} with the MLMC gradient
        estimator guarantees a regret smaller than $\epsilon$ for
        $n\asymp \frac{B^2}{\alpha \epsilon^2}$,
        $1\lesssim n_0 \lesssim \frac{\log n}{\alpha}$ and
        $T\asymp\frac{(GR)^2}{n_0\alpha\epsilon^2}\log^2{n}$. However, 
        since the
        multilevel estimator randomizes the batch size, it remains to show that
        the number of samples concentrates below the claimed bound. Let 
        $K_t = n_0 2^{J_t}$ be the batch size at time $t$, and note that
        \begin{flalign*}
        \E K_1 &= n_0 \log_2 \frac{2n}{n_0},\\
        \E K_1^2 &= 3n_0 n - 2n_0^2\le 3n_0 n,~~\mbox{and}\\
         K_1 &\le  n~~\mbox{with probability 1}.
        \end{flalign*} 
        Therefore, since $K_1^T$ are iid, a one-sided Bernstein 
        bound~\cite[Prop. 2.14]{Wainwright19} implies that
        \begin{equation*}
          \P\brk*{\sum_{t \le T}K_t \ge n_0\log_2(2n/n_0)T + \delta}
          \le \exp\prn*{-\frac{\delta^2}{6n_0nT + \tfrac{n\delta}{3}}}.
        \end{equation*}
        Solving in $\delta$ for the RHS to be equal to $\frac{1}{n}$ yields
        $\delta = \frac{n\log n}{3}\prn{1+\sqrt{1+216\tfrac{T n_0}{n\log n}}}$.
        We replace $n, n_0$ and $T$ by their values and conclude the proof.
      \end{proof}
      
\subsection{Comparison 
  with~\citet{BlanchetGl15}}\label{app:compare-with-blanchet} There are 
  two differences between our MLMC estimator and the proposal of 
  \citet{BlanchetGl15}. First, we take $J$ to be a truncated 
  $\mathsf{Geo}(1/2)$ random variable while they suggest $J\sim 
  \mathsf{Geo}(2^{-3/2})$ without truncation---as we further discuss 
  below, this modification is crucial for ensuring a useful second moment 
  bound in our setting. The second difference is that we allow for a 
  minimum sample size $n_0>1$ as opposed to $n_0=1$ in~ 
  \cite{BlanchetGl15}. This modification is somewhat less important, as 
  $n_0=1$ suffices for optimal gradient complexity, but choosing slightly 
  larger $n_0$ is helpful in practice and can provably reduce the sequential 
  depth of SGM by logarithmic factors.

  Let us discuss in more detail the choice $p=1/2$ in our construction of 
  $J\sim \min\{\mathsf{Geo}(p), \jmax\}$. Inspection of 
  Claim~\ref{claim:mlmc} shows that $p<1/2$ implies that the expected 
  sample cost is $\E 2^J n_0 \le \frac{n_0}{1-2p}$ independent of 
  $n=2^{\jmax}n_0$, so in principle we could compute unbiased estimates 
  even for $\E \funct(S_1^\infty)$, i.e., the population objective. However, 
  any $p<1/2$ would result in overly large second moments: substituting  
  $q(j)\propto p^{-j}$ and $\E\norm{\mld_k}^2 \asymp 1/k$ 
  in~\eqref{eq:ml-second-moment} would result in bounds scaling with 
  $(n/n_0)^{\log_2 1/(2p)}$. Therefore, $p=1/2$ is the only value for which 
  both the second moment and expected number of samples are 
  sub-polynomial in $n$. In contrast,~\citet{BlanchetGl15} 
  apply 
  the MLMC estimator to more regular functionals for which 
  $\norm{\mld_k}^2 \lesssim 1/k^2$, and consequently can use a smaller 
  value 
  for $p$. %
\section{Lower bound proofs}\label{sec:prf-lowerbounds}

This section proves our lower bounds, which we restate for ease of 
reference.

\restateThmLowerBounds*

Since our proofs for CVaR and $\cs$ penalty are quite different, we 
present them separately in Theorems~\ref{thm:cvar-lb} and 
\ref{thm:lam-lb}, respectively. 

\subsection{CVaR lower bound}
To prove the CVaR lower bound we use the following standard Le Cam 
reduction from stochastic optimization to hypothesis testing.

\begin{lemma}{\cite[Chapter 5]{Duchi18}}\label{lem:opt-to-test}
	Let $\mc{P}$ be a set of distributions and $P_{-1}, P_1 \in \mc{P}$ and 
	define
	\begin{equation*}
	\mathsf{d}_\mathrm{opt}(P_{1}, P_{-1}) \defeq \sup\crl*{\delta' \ge 0
		\;\middle|\; \mbox{no $x\in\X$ is $\delta'$-optimal for both 
		$\L(\cdot; P_{-1})$ and $\L(\cdot; P_{1})$}}.
	\end{equation*}
		Then for any 
	measurable mapping $\hat{x}_n:\ss^n\to\mc{X}$ we have
	\begin{equation*}
	\sup_{P\in\mc{P}}
	\E_{S_1^n \sim P^n}\crl*{\L(\hat{x}_n(S_1^n); P)} - 
	\inf_{x'\in\mc{X}}\L(x'; P)
	\ge \frac{\mathsf{d}_\mathrm{opt}(P_{1}, P_{-1})}{2}\prn*{1-\sqrt{\frac{ 
	n}{2} \divkl(P_{-1}, P_{1})}}, 
	\end{equation*}
\end{lemma}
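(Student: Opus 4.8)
The plan is to reproduce the classical Le Cam reduction from optimization to binary hypothesis testing, as in~\cite[Chapter 5]{Duchi18}. Fix any $\delta' < \mathsf{d}_\mathrm{opt}(P_1, P_{-1})$. By the definition of $\mathsf{d}_\mathrm{opt}$, no $x\in\mc{X}$ is simultaneously $\delta'$-optimal for $\L(\cdot;P_{-1})$ and for $\L(\cdot;P_1)$; equivalently, the $\delta'$-optimal sets $\mc{X}^{\delta'}_v \defeq \crl*{x\in\mc{X}: \L(x;P_v) \le \inf_{x'}\L(x';P_v) + \delta'}$ for $v\in\{-1,1\}$ are disjoint. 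Given the optimization map $\hat{x}_n$, I would build a test $\Psi:\ss^n\to\{-1,1\}$ by setting $\Psi(s_1^n)=v$ whenever $\hat{x}_n(s_1^n)\in\mc{X}^{\delta'}_v$ and $\Psi(s_1^n)=1$ otherwise; disjointness (together with measurability of $\hat{x}_n$) makes $\Psi$ a well-defined test.

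The first key step is the separation inequality linking testing error to excess risk. If $S_1^n\sim P_v^n$ and $\Psi(S_1^n)\ne v$, then $\hat{x}_n(S_1^n)\notin\mc{X}^{\delta'}_v$ (in the case $\Psi=-v$ because $\mc{X}^{\delta'}_{-v}\cap\mc{X}^{\delta'}_v=\emptyset$, and in the ``neither'' case by construction), so $\L(\hat{x}_n(S_1^n);P_v)-\inf_{x'}\L(x';P_v) > \delta'$. Applying Markov's inequality to this nonnegative excess-risk variable gives
\[
\E_{S_1^n\sim P_v^n}\crl*{\L(\hat{x}_n(S_1^n);P_v)}-\inf_{x'}\L(x';P_v) \ge \delta'\,\P_{S_1^n\sim P_v^n}\prn*{\Psi(S_1^n)\ne v}.
\]
Averaging over $v\in\{-1,1\}$ and using $\sup_{P\in\mc{P}}(\cdot)\ge\tfrac12\sum_v(\cdot)$, the left-hand side of the lemma is at least $\delta'$ times the average error $\tfrac12\sum_v\P_{P_v^n}(\Psi\ne v)$, which is in turn at least $\delta'\inf_{\Psi'}\tfrac12\sum_v\P_{P_v^n}(\Psi'\ne v)$, the Bayes error of the two-point test.

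Two standard facts then finish the argument. First, Le Cam's identity: $\inf_{\Psi'}\tfrac12\sum_v\P_{P_v^n}(\Psi'\ne v) = \tfrac12\prn*{1-\norm{P_{-1}^n-P_1^n}_{\mathrm{TV}}}$. Second, Pinsker's inequality together with tensorization of the KL divergence over the $n$ i.i.d.\ coordinates: $\norm{P_{-1}^n-P_1^n}_{\mathrm{TV}} \le \sqrt{\tfrac12\divkl(P_{-1}^n,P_1^n)} = \sqrt{\tfrac{n}{2}\,\divkl(P_{-1},P_1)}$. Chaining these bounds,
\[
\sup_{P\in\mc{P}}\E_{S_1^n\sim P^n}\crl*{\L(\hat{x}_n(S_1^n);P)}-\inf_{x'}\L(x';P) \ge \frac{\delta'}{2}\prn*{1-\sqrt{\tfrac{n}{2}\,\divkl(P_{-1},P_1)}},
\]
and letting $\delta'\uparrow\mathsf{d}_\mathrm{opt}(P_1,P_{-1})$ yields the claim (the bound is trivial when the right-hand side is negative, since the left-hand side is nonnegative). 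There is no substantive obstacle here — the lemma is essentially a textbook result — and the only points that need mild care are defining $\Psi$ consistently on the ``neither'' event and passing to the limit $\delta'\uparrow\mathsf{d}_\mathrm{opt}$ rather than working at the supremum directly.
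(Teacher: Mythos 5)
Your proposal is correct and is precisely the standard Le Cam reduction that the paper itself does not prove but cites from \cite[Chapter 5]{Duchi18}: disjointness of the $\delta'$-optimal sets yields a test, Markov's inequality converts test error into excess risk, and Le Cam's identity plus Pinsker's inequality with KL tensorization give the stated bound. All the steps, including the care taken with the ``neither'' branch of the test and the limit $\delta'\uparrow\mathsf{d}_\mathrm{opt}$, are sound.
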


Armed with Lemma~\ref{lem:opt-to-test}, we state and prove the
lower bound for CVaR.

\begin{customthm}{\ref*{thm:lb}a}[CVaR lower bound]\label{thm:cvar-lb}
  Let $G,R,\alpha>0$, $\epsilon\in(0,GR/64)$, $\ss=[-G,G]$, 
  $\xset=[-R,R]$, 
  and
 $\ell(x,s) = x\cdot s$. For any (potentially randomized) mapping
 $\hat{x}_n : \ss^n \to \xset$ there exists a distribution $P_0$ over $\ss$
 such that, 
 \begin{equation*}
   n \le \frac{(GR)^2}{2048\alpha \epsilon^2}
   \mbox{~~implies~~} \E\Lcvar(\hat{x}_n(S_1^n);P_0) - 
   \inf_{\x\in\X}\Lcvar(\x; P_0)
   \ge \epsilon.
 \end{equation*}
\end{customthm}

  \newcommand{\cvarrm}{\mathrm{CVaR}^\alpha} 

\begin{proof}  %
  Let us first assume that $\alpha \le \frac{1}{2}$. For
  $\delta \le \min\crl{\alpha, 1-2\alpha}$, $\mu>0$ and $v\in\{-1,1\}$ we
  consider the distributions $P_v$ such that for $\S_v \sim P_v$ we
  have %
  \begin{equation}\label{eq:cvar-hard-distributions}
    \S_v = G\cdot\begin{cases}
      \mu & \mbox{~~with probability~~} \alpha + \delta v \\
      -1 & \mbox{~~with probability~~} 1 - \alpha - \delta v
    \end{cases}.
  \end{equation}
  For $\x\in[-R, R]$, we let
  $\ell(\x;s) = x \cdot s$. Since the CVaR objective is positively
  homogeneous, we have 
  \begin{equation*}
  \Lcvar(x;P_v) = |x| \cdot \Lcvar(\sign(x);P_v). 
  \end{equation*}
  It 
  therefore suffices to compute $\Lcvar(\pm 1; P_{\pm 1})$.  A quick
  calculation yields
  \begin{flalign*}
   & \Lcvar(1;S_1) = G\mu,~~ \Lcvar(-1,S_{1}) = G,\\
    &\Lcvar(1;S_{-1}) =
    G\mu\prn*{1-\frac{\delta}{\alpha}} - G\frac{\delta}{\alpha}
    ~~\mbox{and}~~
     \Lcvar(-1;S_{-1})  = G.
  \end{flalign*}
  We thus have a closed-form expression for the CVaR objective: for $P_1$ 
  we have
    \begin{equation*}
  \Lcvar(\x;P_1) = -Gx\indic{x\le 0} + Gx\mu\indic{x\ge 0},
  \end{equation*}
  which clearly
  attains its minimum at $x=0$ where it has value $0$. 
  Choosing $\mu$ such that  
  \[
  \mu = 
  \frac{\delta}{2\alpha}\prn*{1-\frac{\delta}{2\alpha}}^{-1}
  \]
  gives  $\Lcvar(1;S_{-1}) = -G\mu$ and
  \begin{equation*}
  \Lcvar(\x;P_{-1}) = -Gx\indic{x\le 0} -Gx\mu\indic{x\ge 0},
  \end{equation*}
 which attains its minimum at $x=R$ where it has value
  $-GR\mu$.
  We therefore have that
  \begin{equation}\label{eq:sep-bound}
  \mathsf{d}_\mathrm{opt}(P_{1}, P_{-1}) = \frac{GR\mu}{2}  \ge 
  \frac{GR\delta}{4\alpha}.
  \end{equation}
  Moreover, we have $t\log t -t + 1 \le (t-1)^2$ for all $t\ge 0$, so that 
  $\divkl(Q,P)\le 2\divcs(Q,P)$ for all $Q,P$, and in particular
  \begin{equation}\label{eq:kl-bound}
  \divkl(P_{-1}, P_{1})  \le 2\divcs(P_{-1}, P_{1}) = 
  \frac{4\delta^2}{(1-\alpha-\delta)(\alpha + \delta)} \le 
  \frac{8\delta^2}{\alpha},
  \end{equation}
  where that last transition used $\delta \le \alpha$ and $\alpha \le 
  1/2$. 
  
  We take
  \begin{equation*}
  \delta = \sqrt{\frac{\alpha}{16(n+\alpha^{-1})}},
  \end{equation*}
  where so that  $\divkl(P_{-1}, P_{1})\le 1/(2n)$ and 
  Lemma~\ref{lem:opt-to-test} combined with~\eqref{eq:sep-bound} 
  and~\eqref{eq:kl-bound} gives 
  \begin{equation*}
  \sup_{P\in\mc{P}}
  \E_{S_1^n \sim P^n}\crl*{\L(\hat{x}_n(S_1^n); P)} - 
  \inf_{x'\in\mc{X}}\L(x'; P)
  \ge \frac{GR}{32\sqrt{\alpha n + 1}},
  \end{equation*}
  and the result follows from substituting $n \le 
  \frac{(GR)^2}{2048\alpha \epsilon^2}$. When $\alpha \ge 1/2$ the result 
  follows from the standard lower bound for
  stochastic convex optimization (e.g.~\cite[Thm. 5.2.10]{Duchi18}).
\end{proof}

\subsection{Penalized-$\cs$ lower bound}
Computation of $\Llam(x;P_{\pm})$ for the CVaR lower bound 
construction~\eqref{eq:cvar-hard-distributions} shows that the argument 
does not easily transfer to the penalized-$\cs$ objective because---as 
opposed to constrained-$\cs$ and CVaR---it is not positive 
homogeneous in $x$. 

Sidestepping this difficulty, we prove our lower bound using the  
different machinery of high-dimensional hard instances for  oracle-based 
optimization~\cite{NemirovskiYu83}. We consider two standard oracles. 
First is the deterministic first-order oracle, that for a function
$f:\R^d \to \R$ and a query $x$ returns
\begin{equation*}
  \mathsf{O}^\mathrm{D}_f(x) \defeq \prn{f(x), \nabla f(x)},
\end{equation*}
where we recall that $\nabla f(x)$ is an arbitrary element of $\partial
f(x)$. Second is the stochastic oracle, that for a loss function 
$\ell:\X\times\ss \to \R$ and distribution 
$P_0$ returns the randomized mapping
\begin{equation*}
  \mathsf{O}^{\mathrm{S}}_{\ell, P_0}(x)
  \defeq \prn{\ell(\x;\S), \nabla\ell(\x;\S)}, \mbox{~~for~~} S\sim P_0.
\end{equation*}

We construct the  hard instance for $\Llam$ based on the standard hard 
instance for non-stochastic convex optimization, whose properties are as 
follows.

\begin{proposition}[\citet{BraunGuPo17}, Theorem 
V.1]\label{prop:standard-oracle-lower-bound}
  Let $\epsilon, G, R > 0$. There exists
  $d_\epsilon \lesssim (GR)^2\epsilon^{-2}\log\frac{GR}{\epsilon}$ such 
  that the following holds for $\X=\{x\in\R^{d_\epsilon}\mid \norm{x}\le 
  R\}$. For
  any (possibly randomized) algorithm there exists $f_\epsilon:\X\to [0, 
  GR]$ 
  convex
  and $G$-Lipschitz such the query $x_T$ 
  $\mathsf{O}^\mathrm{D}_{f_\epsilon}$ at iteration $T$ 
   satisfies
  \begin{equation*}
    T \le c\frac{(GR)^2}{\epsilon^2}
    \mbox{~~implies~~} \E f_\epsilon(x_T) - \inf_{\norm{x'} \le R}f_\epsilon(x') \ge \epsilon,
  \end{equation*}
  for a numerical constant $c > 0$.
\end{proposition}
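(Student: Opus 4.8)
The plan is to produce a distribution over convex $G$-Lipschitz functions $f:\X\to[0,GR]$ on which every \emph{deterministic} algorithm fails, and then invoke a standard minimax (Yao) argument to obtain the stated ``for every (possibly randomized) algorithm there exists $f_\epsilon$'' conclusion; the instance is the information-theoretic hard family of~\citet{BraunGuPo17}. First I would rescale, replacing $x$ by $x/R$ and $f$ by $f/(GR)$, so that it suffices to treat $G=R=1$ and to show that reaching accuracy $\epsilon$ (now with $\epsilon\in(0,1/64)$) forces $T\gtrsim\epsilon^{-2}$ oracle queries, using a dimension $d_\epsilon\asymp\epsilon^{-2}\log(1/\epsilon)$.

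For the hard family, set $N\asymp\epsilon^{-2}$ and partition the $d_\epsilon$ coordinates into $N$ blocks $B_1,\dots,B_N$ of size $k\asymp\log N$. An instance is indexed by a hidden tuple $\theta=(\theta_1,\dots,\theta_N)$ with $\theta_m$ drawn uniformly from $B_m$, and I would take $f_\theta(x)=\max_{m\le N}\langle e_{\theta_m},x\rangle+\tfrac{1}{\sqrt N}$, which is convex, $1$-Lipschitz, and (up to a harmless rescaling) takes values in $[0,1]$. Two facts drive the argument. First, a \emph{resolution lemma}: since the coordinates $\theta_1,\dots,\theta_N$ are distinct, the minimum of $f_\theta$ over the unit ball is $0$, attained only near $x^\star=-\tfrac{1}{\sqrt N}\sum_m e_{\theta_m}$, whereas if $\langle e_{\theta_m},x\rangle\ge-\tfrac{1}{2\sqrt N}$ for even one block $m$ — in particular whenever block $m$'s active coordinate is ``unknown'' and hence left near $0$ — then $f_\theta(x)\ge\tfrac{1}{2\sqrt N}\gtrsim\epsilon$. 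Second, a \emph{per-query information bound}: a subgradient of a pointwise maximum is supported on the single index attaining the max, and the function value is likewise determined by a single $\theta_m$, so one call to $\mathsf{O}^{\mathrm D}_{f_\theta}$ conveys only $O(\log N)$ bits about $\theta$, while $\theta$ itself carries $\Theta(N\log N)$ bits of entropy; the block size $k\asymp\log N$ is precisely what keeps the per-query information bounded.

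These combine through Fano's inequality: if $T\le cN$ for a small absolute constant $c$, then after $T$ rounds a constant fraction of the blocks $m$ still have $\theta_m$ nearly uniform over $B_m$ conditional on the transcript, so with constant probability the algorithm's output $x_T$ leaves such a block's active coordinate unresolved, and the resolution lemma gives $\E f_\theta(x_T)-\min f_\theta\gtrsim\epsilon$; a minimax/Yao step converts this ``hard on average'' statement into the required form, and undoing the initial rescaling produces the bound with $d_\epsilon\lesssim(GR)^2\epsilon^{-2}\log(GR/\epsilon)$. The step I expect to be the main obstacle is adaptivity: one must rule out a single cleverly chosen query that simultaneously resolves many blocks (e.g.\ by putting mass on a well-chosen coordinate subset and reading off the function value). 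Controlling this is exactly where the well-hiddenness afforded by the factor-$k$ dimension inflation is used — it makes the chance that any one query ``hits'' an unlocated needle small — and formalizing it, via a careful conditioning/martingale bookkeeping of what each query can reveal about $\theta$, is the technical crux, carried out in full by~\citet{BraunGuPo17}.
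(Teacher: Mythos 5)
The first thing to say is that the paper does not prove this statement at all: Proposition~\ref{prop:standard-oracle-lower-bound} is imported verbatim as Theorem~V.1 of \citet{BraunGuPo17} and used as a black box in the proof of Theorem~\ref{thm:lam-lb}. So there is no in-paper proof to compare against; what you have written is an attempted reconstruction of the cited external result. As such a reconstruction, your sketch identifies the right ingredients --- a max-of-coordinates instance with hidden ``needle'' coordinates in blocks, a resolution lemma saying an unresolved block costs $\Omega(1/\sqrt{N})$ in suboptimality, a per-query locality property of the subgradient oracle, and a Yao/minimax step to pass from deterministic to randomized algorithms --- and your closing sentence candidly defers the adaptivity crux back to \citet{BraunGuPo17}, which means the proposal does not stand on its own as a proof.

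Beyond that structural point, there is one concrete accounting error worth flagging. With blocks of size $k\asymp\log N$ and $\theta_m$ uniform on $B_m$, the entropy of $\theta$ is $N\log_2 k=\Theta(N\log\log N)$, not $\Theta(N\log N)$ as you assert; since each query can plausibly reveal $\Theta(\log N)$ bits (the identity of the argmax block plus its needle), the raw ratio entropy-over-bits-per-query gives only $T\gtrsim N\log\log N/\log N=o(N)$, which is \emph{not} the claimed $\Omega(\epsilon^{-2})$ bound. The argument has to be run at the level of blocks rather than bits: each query resolves $O(1)$ blocks (up to lucky guesses, controlled by the union bound that forces $k\gtrsim\log(1/\epsilon)$ and hence $d_\epsilon\asymp\epsilon^{-2}\log(1/\epsilon)$), while the resolution lemma --- made quantitative via $\sum_m\|x_{B_m}\|^2\le 1$, so that at most $O(N/k)$ blocks can carry enough mass to evade the $1/(2\sqrt N)$ floor --- forces $\Omega(N)$ blocks to be resolved. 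Your text gestures at exactly this mechanism in the last paragraph, but the ``$\Theta(N\log N)$ bits of entropy'' framing as written would not close the argument, so either fix the block-level bookkeeping or simply keep the proposition as a citation, as the paper does.
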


In other words, any ``dimension-free'' algorithm needs to interact
$\Omega(\epsilon^{-2})$ times with the deterministic oracle to obtain an 
$\epsilon$-suboptimal
point. With this result, we prove our lower bound for optimizing $\Llam$.

\begin{customthm}{\ref*{thm:lb}b}[Penalized-$\cs$ lower 
bound]\label{thm:lam-lb}
	Let $G,R,\lambda>0$ and $\epsilon\in(0, GR)$. There exists
	$d_\epsilon \lesssim (GR)^2\epsilon^{-2}\log\frac{GR}{\epsilon}$ such
	that the following holds for
	$\X=\{x\in\R^{d_\epsilon}\mid \norm{x}\le R\}$ and
	$\ss \subseteq [-1,1]$.  For every algorithm there exists a distribution
	$P_0$ over $\ss$ and $\ell:\X\times\ss\to[-GR,GR]$ convex and 
	$G$-Lipschitz
	in $x$, such that the query $x_T$ to $\mathsf{O}^{\mathrm{S}}_{\ell, 
	P_0}$ at iteration $T$ satisfies
	\begin{equation*}
	T \le c\frac{(GR)^3}{\lambda\epsilon^2} \mbox{~~implies~~}
	\E\brk{\Llam(x_T;P_0)}-\min_{x'\in\X}\Llam(x';P_0) > 
	\epsilon,
	\end{equation*}
	for $c>0$ independent of $G,R,\lambda$ and $\epsilon$.
\end{customthm}
\begin{proof}
 Consider any convex and $G$ Lipschitz $f:\X\to[0,GR]$, 
  define 
  $\ss \defeq \crl{0, 1}$ and $P_0 = \bernrv(\frac{\lambda}{GR})$, 
  and construct the following loss
  \begin{equation*}
    \ell(\x; \S) \defeq \begin{cases}
      f(x) & \mbox{~~if~~} \S = 1 \\
      -GR & \mbox{~~if~~} \S = 0.
    \end{cases}
  \end{equation*}
  (If $\lambda > GR$ the 
  result follows from the standard $(GR)^2/\epsilon^2$ lower bound for 
  convex optimization). 
 Expressing the 
  resulting objective $\Llam$ with the dual form~\eqref{eq:lam-dual} 
gives
  \begin{align*}
    \Llam(\x; P_0) 
                   & = \inf_{\eta\in\R} \crl*{\frac{\lambda}{2} + \eta
                     +                 
\frac{1}{2\lambda}\brk*{\frac{\lambda}{GR}\prn*{f_\epsilon(x) 
                     - \eta}^2_+
                     + \prn*{1-\frac{\lambda}{GR}}(-GR-\eta)_+^2}}
                 \\&=
                 f(x) - \frac{GR-\lambda}{2},
  \end{align*}
  since $\eta\opt = f(x)-GR \ge -GR$. We get that minimizing $\Llam$ is  
  equivalent to optimizing $f$. 
  
  Fix an algorithm interacting with $\mathsf{O}^{\mathrm{S}}_{\ell, P_0}$ 
  and note that it implies a (randomized) algorithm interacting with 
  $\mathsf{O}^{\mathrm{D}}_{f}$. Therefore we may take $f=f_{\epsilon}$, 
  the hard function for this algorithm that 
Proposition~\ref{prop:standard-oracle-lower-bound} guarantees.
  Note that an algorithm interacting with $\mathsf{O}^{\mathrm{S}}_{\ell, 
  P_0}$ receives information on $f_\epsilon$ only when $S=1$. Therefore, 
  the worst-case expected optimality gap when minimizing $\Llam$ with 
  $T$ queries to $\mathsf{O}^{\mathrm{S}}_{\ell, P_0}$ is identical to the 
  worst-case expected optimality gap  when minimizing $f_\epsilon$ with 
  $\binrv(T, \frac{\lambda}{GR})$ queries. Therefore, 
  Proposition~\ref{prop:standard-oracle-lower-bound} tells us that for 
  some $c'>0$,
  \begin{align*}
    \E[\L(x_T;P_0) - \inf_{\norm{x'}\le R} \L(x';P_0)]
    & \ge \epsilon  \cdot
      \P\prn*{ \binrv \prn[\Big]{T, \frac{\lambda}{GR}} \le c'\cdot 
      \frac{(GR)^2}{\epsilon^2}}.
  \end{align*}
  Substituting $T\le \frac{c}{4} \cdot \frac{(GR)^3}{\lambda\epsilon^2}$ 
  gives that $\P( \binrv(T, \frac{\lambda}{GR}) \le c'\cdot 
  \frac{(GR)^2}{\epsilon^2}) \ge \frac{1}{2}$ by a standard Chernoff bound. 
  The result follows by properly adjusting the constant factors (e.g., 
  replacing $\epsilon$ with $2\epsilon$). 
\end{proof} \arxiv{%
\section{Doubling schemes proofs}\label{app:doubling}

We now complete the proofs of the claims in Section~\ref{sec:doubling}.

\restateCSDualRange*
\begin{proof}
	Le
	$x\opt, \lambda\opt = \argmin_{x\in\X,\lambda \ge 0}  
	\crl*{f_\rho(x,\lambda)
	}$, noting that $\min_{x'\in\X} \Lcs(x';P_0) = 
	f_\rho(x\opt,\lambda\opt)$. 
	For any $x,\lambda$ let $Q\opt_{x,\lambda}$ be the maximizing $Q$ 
	in~\eqref{eq:pop-dro-reg-restated} for these values of $x,\lambda$. 
	Moreover, let $\div(x,\lambda) = \divcs(Q\opt_{x,\lambda}, P_0)$. By 
	Claim~\ref{claim:cs-bdd}, for all $\lambda > B/\rho$ we have that 
	$\div(x,\lambda) < \rho$, and consequently $\lambda > \lambda\opt$, 
	i.e., $\lambda\opt \le B/\rho$, and hence that upper bound has no 
	impact on accuracy.
	
	When in addition we have $\lambda\opt \ge \epsilon/(2\rho)$ then 
	clearly $\min_{x\in\X} \LcsRange{\frac{\epsilon}{2\rho}, 
		\frac{B}{\rho}}(x;P_0) = \LcsRange{0, \infty}(x;P_0)
	= \min_{x'\in\X} \Lcs(x';P_0)$. Otherwise, if $\lambda\opt < 
	\epsilon/(2\rho)\eqdef \lambda_\eps$ we may write
	\begin{flalign*}
	\min_{x\in\X} \LcsRange{\frac{\epsilon}{2\rho}, \frac{B}{\rho}}(x;P_0)
	&\stackrel{(i)}{\le} 
	f_\rho(x\opt, \lambda_\eps) \stackrel{(ii)}{\le}  
	f_\rho(x\opt,\lambda\opt)
	+ \brk*{\tfrac{\del}{\del\lambda}f_\rho(x\opt, \lambda_\eps)} 
	(\lambda_\eps 
	-\lambda\opt)
	\\&= \min_{x'\in\X} \Lcs(x';P_0) +\brk*{\rho - 
		\div(x\opt,\lambda_\eps)}(\lambda_\eps - \lambda\opt)
	\\&\stackrel{(iii)}{\le} 
	\min_{x'\in\X} \Lcs(x';P_0) + \lambda_\eps \rho = 
	\min_{x'\in\X} \Lcs(x';P_0) + \frac{\eps}{2}.
	\end{flalign*}
	Where we used $(i)$ that $x\opt$ and $\lambda_\eps$ are feasible 
	points in the joint minimization of $f_\rho(x,\lambda)$ over $x\in\X$ 
	and 
	$\lambda\in [\lambda_\eps, B/\rho]$; $(ii)$ the convexity of $f$ in 
	$\lambda$; and $(iii)$ the fact that $\div(x\opt,\lambda_\eps) \ge 0$ 
	and $\lambda\opt \le \lambda_\eps$.
\end{proof}

\restateLemLambdaMom*
\begin{proof}
	Recall the definition~\eqref{eq:ml-def-restate} of the MLMC estimator of  
	a general $\funct$ and the expression~\eqref{eq:ml-second-moment} 
	for its second moment. Suppose that $\funct(\cdot) = \funct_1(\cdot) + 
	\funct_2(\cdot) + c$, where $c$ is a constant. Then
	\begin{equation*}
	\E \norm{\mld_k \brk[\big]{ \funct}}^2 
	= \E \norm{\mld_k \brk[\big]{ \funct_1 +\funct_2}}^2 
	\le 2\E \norm{\mld_k \brk[\big]{ \funct_1 }}^2 + 2\E \norm{\mld_k 
		\brk[\big]{ \funct_2 }}^2.
	\end{equation*}
	Consequently, by~\eqref{eq:ml-second-moment}, we have
	\begin{equation}\label{eq:ml-sum-decomp}
	\E \norm{\ml \brk[\big]{ \funct}}^2 \le 2c^2 + 2\E \norm{\ml \brk[\big]{ 
			\funct_1}}^2 + 2 \E \norm{\ml \brk[\big]{ \funct_2}}^2.
	\end{equation}
	
	We apply this observation to $\ml\brk[\big]{\tfrac{\del}{\del \lambda} 
		\Llam^{\lambda}(x;\cdot)+\rho}$ by noting that
	\begin{equation*}
	\tfrac{\del}{\del \lambda} \Llam^{\lambda}(x;S_1^n) = -\divcs(q\opt; 
	\tfrac{1}{n}\ones) =  
	\frac{1}{\lambda}\prn[\Bigg]{\Llam^\lambda(x;S_1^n) 
		-\frac{1}{n}\sum_{i\le n} 
		q\opt_i \ell(x;S_i)}.
	\end{equation*}
	Proposition~\ref{prop:ml-mom-extended} gives us the bound $\E 
	\prn*{\ml \brk[\big]{ \Llam^\lambda(x;S_1^n) }}^2\lesssim B^2\prn*{1 + 
		\frac{B}{\lambda n_0} 
		\log\frac{n}{n_0}}$. Moreover, we have that
	\begin{equation*}
	\E \prn*{\ml \brk[\Big]{ \frac{1}{n}\sum_{i\le n} 
			q\opt_i \ell(x;S_i)}}^2 \lesssim B^2\prn*{1 + \frac{B}{\lambda n_0} 
		\log\frac{n}{n_0}}
	\end{equation*}
	By exactly the same argument that proves the gradient second moment 
	bound in Proposition~\ref{prop:ml-mom-extended}. The result then 
	follows by substituting into~\eqref{eq:ml-sum-decomp}.
\end{proof}

\restateLemXLamSGD*
\begin{proof}
	We take $n\asymp \frac{B}{\lambdaL \epsilon}$ to guarantee bias below 
	$\epsilon/2$ by Proposition~\ref{prop:batch-bias}, and we take $n_0 
	\asymp \frac{B}{\lambdaL}\log n$ to guarantee that 
	\begin{equation*}
	\Gamma_x^2 \defeq \sup_{x\in\X, \lambda\in[\lambdaL,\lambdaH]}
	\E \norm*{
		\ml\brk[\big]{\grad \Llam^{\lambda}(x; \cdot)}}^2 \lesssim G^2
	\end{equation*}
	and, by Lemma~\ref{lem:ml-lambda-mom},
	\begin{equation*}
	\Gamma_\lambda^2 \defeq \sup_{x\in\X, 
		\lambda\in[\lambdaL,\lambdaH]}
	\E \prn*{\ml\brk[\big]{\tfrac{\del}{\del \lambda} 
			\Llam^{\lambda}(x;\cdot)+\rho}}^2 \lesssim 
	\frac{B^2}{\lambdaL^2} + \rho^2.
	\end{equation*}
	Let $\bar{\lambda}_T = \sum_{t\le T} \lambda_t$ be the average of the 
	$\lambda$ iterates in~\eqref{eq:x-lambda-sgd}. 
	By appropriate choice of $\eta$ and $\eta'$ we guarantee (via 
	Proposition~\ref{prop:gangster}) that
	\begin{equation*}
	\E \LcsRange{\lambdaL,\lambdaH}(\bar{x}_T;P_0)   
	\le \E f_\rho(\bar{x}_T, \bar{\lambda}_T)
	\le \min_{x\in\X, \lambda\in[\lambdaL, \lambdaH]}f_\rho(x,\lambda) + 
	\mathrm{err}_T
	= \min_{x\in\X}\LcsRange{\lambdaL,\lambdaH}(x;P_0)  
	+ \mathrm{err}_T,
	\end{equation*} 
	where
	\begin{equation*}
	\mathrm{err}_T \lesssim  \frac{\eps}{2} + \frac{\Gamma_x R + 
		\Gamma_\lambda (\lambdaH - \lambdaL)}{\sqrt{T}}.
	\end{equation*}
	Therefore, by taking 
	\begin{equation*}
	T\asymp \frac{\Gamma_x^2 R^2 + 
		\Gamma_\lambda^2 \lambdaH^2}{\epsilon^2} \asymp\frac{(GR)^2 + 
		B^2 \lambdaH^2 / 
		\lambdaL^2 + 
		\lambdaH^2 \rho^2}{\epsilon^2} 
	\end{equation*}
	we guarantee that $\mathrm{err}_T 
	\le \eps$, and the complexity bound follows from substituting $n_0, 
	n$ and $T$ in the high probability upper bound  
	$n_0\log_2\prn*{\frac{n}{n_0}}T +
	5\sqrt{(n\log n)^2 + n_0n T \log n}$
	shown in Theorem~\ref{thm:ml}.
\end{proof}

\restateThmDoubling*
\begin{proof}
	By Lemma~\ref{lem:x-lambda-sgd}, finding an $\epsilon$ approximate 
	solution in the interval $[\lambda\pind{i+1},\lambda\pind{i}]$ requires  
	\begin{equation*}
	\lesssim \prn*{1+\frac{\rho B}{2^{K-i}\epsilon}}\frac{(GR)^2 + B^2  
	}{\epsilon^2}\log^2\prn*{1+ 
		\frac{\rho B}{\epsilon^2}}
	\end{equation*}
	gradient computations, where we have used 
	$\lambda\pind{i}/\lambda\pind{i+1} \le 2$, $\lambda\pind{i} \le 
	\frac{B}{\rho}$, and $\lambda\pind{i+1} \ge \frac{\eps}{2\rho} 2^{K-i}$. 
	Summing over $i$ (and applying a union bound) gives the claimed 
	guarantee. 
	Since the minimizer of $f_\rho(x,\lambda)$ over $x\in \X$ and 
	$\lambda\in [\frac{\eps}{2\rho}, \frac{B}{\rho}]$ is equivalent is identical 
	to its minimizer in one of the intervals $[\lambda\pind{i+1}, 
	\lambda\pind{i}]$ for $i\le K$, the result follows from 
	Lemma~\ref{lem:cs-dual-range}.
\end{proof}

}
\notarxiv{%
\section{A doubling scheme for minimizing $\Lcs$}\label{sec:doubling}

In this section, we obtain a stronger guarantee for minimizing the
$\cs$-constraint robust objective. Namely, we leverage duality relationships to
approximate the constrained
objective\notarxiv{ $\Lcs$ }\arxiv{~\eqref{eqn:chi-square-def} }via its penalized
counterpart\arxiv{~\eqref{eqn:chi-square-reg-def}}, $\Llam$.  We adjust notation
to make the dependence of $\Llam^\lambda$ on $\lambda$ explicit.

Our starting point is the recognition that, by duality
(cf.~\cite[Sec.~3.2]{Shapiro17}),
\begin{equation*}
  \Lcs(x; P_0) = \inf_{\lambda \ge 0} \left\{ \Llam^\lambda(x; P_0)
  + \lambda \rho \right\}
  = \inf_{\lambda \ge 0} \sup_{Q \ll P_0}
  \left\{\E_Q \loss(x; S) - \lambda
  \left[\divcs(Q, P_0) - \rho \right] \right\}
\end{equation*}
for any distribution $P_0$. For $0 \le \lambdaL \le \lambdaH$, we
may thus consider the approximation
\begin{equation*}
  \LcsRange{\lambdaL, \lambdaH}(x;P_0) \defeq 
  \min_{\lambda\in [\lambdaL, \lambdaH]} 
  f_\rho(x,\lambda)
  ~~\mbox{where}~~
  f_\rho(x,\lambda) \defeq \Llam^\lambda(x;P_0) + \lambda \rho.
\end{equation*}
By restricting $\lambda$ to an appropriate range, we can then approximate 
$\Lcs$ by
its truncated version, as the next lemma 
shows.

\begin{restatable}{lemma}{restateCSDualRange}\label{lem:cs-dual-range}
  For all $P_0$, $\rho$ and $\epsilon$,
  \begin{equation*}
    \min_{x\in\X} \LcsRange{\frac{\epsilon}{2\rho}, \frac{B}{\rho}}(x;P_0)
    \le \min_{x'\in\X} \Lcs(x';P_0) + \frac{\epsilon}{2}.
  \end{equation*}
\end{restatable}

\begin{proof}
	Le
	$x\opt, \lambda\opt = \argmin_{x\in\X,\lambda \ge 0}  
	\crl*{f_\rho(x,\lambda)
	}$, noting that $\min_{x'\in\X} \Lcs(x';P_0) = 
	f_\rho(x\opt,\lambda\opt)$. 
	For any $x,\lambda$ let $Q\opt_{x,\lambda}$ be the maximizing $Q$ 
	in~\eqref{eq:pop-dro-reg-restated} for these values of $x,\lambda$. 
	Moreover, let $\div(x,\lambda) = \divcs(Q\opt_{x,\lambda}, P_0)$. By 
	Claim~\ref{claim:cs-bdd}, for all $\lambda > B/\rho$ we have that 
	$\div(x,\lambda) < \rho$, and consequently $\lambda > \lambda\opt$, 
	i.e., $\lambda\opt \le B/\rho$, and hence that upper bound has no 
	impact on accuracy.
	
	When in addition we have $\lambda\opt \ge \epsilon/(2\rho)$ then 
	clearly $\min_{x\in\X} \LcsRange{\frac{\epsilon}{2\rho}, 
		\frac{B}{\rho}}(x;P_0) = \LcsRange{0, \infty}(x;P_0)
	= \min_{x'\in\X} \Lcs(x';P_0)$. Otherwise, if $\lambda\opt < 
	\epsilon/(2\rho)\eqdef \lambda_\eps$ we may write
	\begin{flalign*}
	\min_{x\in\X} \LcsRange{\frac{\epsilon}{2\rho}, \frac{B}{\rho}}(x;P_0)
	&\stackrel{(i)}{\le} 
	f_\rho(x\opt, \lambda_\eps) \stackrel{(ii)}{\le}  
	f_\rho(x\opt,\lambda\opt)
	+ \brk*{\tfrac{\del}{\del\lambda}f_\rho(x\opt, \lambda_\eps)} 
	(\lambda_\eps 
	-\lambda\opt)
	\\&= \min_{x'\in\X} \Lcs(x';P_0) +\brk*{\rho - 
		\div(x\opt,\lambda_\eps)}(\lambda_\eps - \lambda\opt)
	\\&\stackrel{(iii)}{\le} 
	\min_{x'\in\X} \Lcs(x';P_0) + \lambda_\eps \rho = 
	\min_{x'\in\X} \Lcs(x';P_0) + \frac{\eps}{2}.
	\end{flalign*}
	Where we used $(i)$ that $x\opt$ and $\lambda_\eps$ are feasible 
	points in the joint minimization of $f_\rho(x,\lambda)$ over $x\in\X$ 
	and 
	$\lambda\in [\lambda_\eps, B/\rho]$; $(ii)$ the convexity of $f$ in 
	$\lambda$; and $(iii)$ the fact that $\div(x\opt,\lambda_\eps) \ge 0$ 
	and $\lambda\opt \le \lambda_\eps$.
      \end{proof}
      
Our strategy is therefore to jointly minimize
$f_\rho(x,\lambda)=\Llam^{\lambda}(x;P_0)+ \lambda \rho$ over both $x \in
\mc{X}$ and $\lambda \in [\lambdaL, \lambdaH]$ (rather than $[0, \infty]$),
using the approximation guarantee in Lemma~\ref{lem:cs-dual-range} to argue
that the restriction of $\lambda$ will have limited effect on the quality of
the resulting solution. We iterate the projected stochastic gradient method
with the multi-level Monte Carlo (MLMC) gradient estimator~\eqref{eqn:mlmc}
via \notarxiv{
  \begin{equation}
  \label{eq:x-lambda-sgd}
  x_{t+1} = \Pi_{\X}\prn*{x_{t} - \gamma_x \ml\brk[\big]{\grad 
      \Llam^{\lambda_t}(x_t)}},~
  \lambda_{t+1} = \Pi_{[\lambdaL,\lambdaH]}\prn*{\lambda_t - 
    \gamma_\lambda
    \ml\brk[\big]{\tfrac{\del}{\del \lambda} 
      \Llam^{\lambda_t}(x_t)+\rho}}.
\end{equation}
}
\arxiv{
  \begin{equation}
    \label{eq:x-lambda-sgd}
    \begin{split}
      x_{t+1} & = \Pi_{\X}\prn*{x_{t} - \gamma_x \ml\brk[\big]{\grad 
          \Llam^{\lambda_t}(x_t)}} \\
      \lambda_{t+1} & = \Pi_{[\lambdaL,\lambdaH]}\prn*{\lambda_t - 
        \gamma_\lambda
        \ml\brk[\big]{\tfrac{\del}{\del \lambda} 
          \Llam^{\lambda_t}(x_t)+\rho}}.
    \end{split}
  \end{equation}
}
If we can bound the moments of the MLMC-approximated gradients
$\ml$, we can then leverage standard stochastic gradient
analyses to prove convergence. We use the following
bound.
\begin{restatable}{lemma}{restateLemLambdaMom}\label{lem:ml-lambda-mom}
	We have 
	\begin{equation*}
	\E \prn*{\ml\brk[\big]{\tfrac{\del}{\del \lambda} 
			\Llam^{\lambda}(x;\cdot)+\rho}}^2 \lesssim 
	\frac{B^2}{\lambda^2}\prn*{
		1+\frac{B \log\frac{n}{n_0}}{\lambda n_0}} + \rho^2.
	\end{equation*}
\end{restatable}
\begin{proof}
	Recall the definition~\eqref{eq:ml-def-restate} of the MLMC estimator of  
	a general $\funct$ and the expression~\eqref{eq:ml-second-moment} 
	for its second moment. Suppose that $\funct(\cdot) = \funct_1(\cdot) + 
	\funct_2(\cdot) + c$, where $c$ is a constant. Then
	\begin{equation*}
	\E \norm{\mld_k \brk[\big]{ \funct}}^2 
	= \E \norm{\mld_k \brk[\big]{ \funct_1 +\funct_2}}^2 
	\le 2\E \norm{\mld_k \brk[\big]{ \funct_1 }}^2 + 2\E \norm{\mld_k 
		\brk[\big]{ \funct_2 }}^2.
	\end{equation*}
	Consequently, by~\eqref{eq:ml-second-moment}, we have
	\begin{equation}\label{eq:ml-sum-decomp}
	\E \norm{\ml \brk[\big]{ \funct}}^2 \le 2c^2 + 2\E \norm{\ml \brk[\big]{ 
			\funct_1}}^2 + 2 \E \norm{\ml \brk[\big]{ \funct_2}}^2.
	\end{equation}
	
	We apply this observation to $\ml\brk[\big]{\tfrac{\del}{\del \lambda} 
		\Llam^{\lambda}(x;\cdot)+\rho}$ by noting that
	\begin{equation*}
	\tfrac{\del}{\del \lambda} \Llam^{\lambda}(x;S_1^n) = -\divcs(q\opt; 
	\tfrac{1}{n}\ones) =  
	\frac{1}{\lambda}\prn[\Bigg]{\Llam^\lambda(x;S_1^n) 
		-\frac{1}{n}\sum_{i\le n} 
		q\opt_i \ell(x;S_i)}.
	\end{equation*}
	Proposition~\ref{prop:ml-mom-extended} gives us the bound $\E 
	\prn*{\ml \brk[\big]{ \Llam^\lambda(x;S_1^n) }}^2\lesssim B^2\prn*{1 + 
		\frac{B}{\lambda n_0} 
		\log\frac{n}{n_0}}$. Moreover, we have that
	\begin{equation*}
	\E \prn*{\ml \brk[\Big]{ \frac{1}{n}\sum_{i\le n} 
			q\opt_i \ell(x;S_i)}}^2 \lesssim B^2\prn*{1 + \frac{B}{\lambda n_0} 
		\log\frac{n}{n_0}}
	\end{equation*}
	By exactly the same argument that proves the gradient second moment 
	bound in Proposition~\ref{prop:ml-mom-extended}. The result then 
	follows by substituting into~\eqref{eq:ml-sum-decomp}.
\end{proof}

\noindent
Therefore, 
we may find an $\epsilon$ approximate minimizer with complexity roughly 
$B^3\lambdaH^2/(\lambdaL^3\epsilon^2)$:
\begin{restatable}{lemma}{restateLemXLamSGD}\label{lem:x-lambda-sgd}
  Fix $\epsilon\in (0,B)$ and $\lambdaH \ge \lambdaL > 0$. For a suitable 
  setting of the parameters $n_0,n,T,\gamma_x$ and $\gamma_\lambda$, 
  the average $\bar{x}_T = \sum_{t\le T} x_t$ 
  of the iterates~\eqref{eq:x-lambda-sgd} satisfies
  $\E \LcsRange{\lambdaL,\lambdaH}(\bar{x}_T;P_0) \le \min_{x\in\X}  
  \LcsRange{\lambdaL,\lambdaH}(x;P_0) + \epsilon$, with complexity 
  \begin{equation*}
    \lesssim 
     \prn*{1+\frac{B}{\lambdaL}}\frac{(GR)^2 + B^2 \lambdaH^2 / 
      \lambdaL^2 + 
      \lambdaH^2 \rho^2}{\epsilon^2}\log^2\prn*{1+ 
      \frac{B}{\lambdaL\epsilon}}~~\mbox{with probability 
    }\ge 1-\frac{\eps^2}{B^2}.
  \end{equation*}
\end{restatable}
\begin{proof}
	We take $n\asymp \frac{B}{\lambdaL \epsilon}$ to guarantee bias below 
	$\epsilon/2$ by Proposition~\ref{prop:batch-bias}, and we take $n_0 
	\asymp \frac{B}{\lambdaL}\log n$ to guarantee that 
	\begin{equation*}
	\Gamma_x^2 \defeq \sup_{x\in\X, \lambda\in[\lambdaL,\lambdaH]}
	\E \norm*{
		\ml\brk[\big]{\grad \Llam^{\lambda}(x; \cdot)}}^2 \lesssim G^2
	\end{equation*}
	and, by Lemma~\ref{lem:ml-lambda-mom},
	\begin{equation*}
	\Gamma_\lambda^2 \defeq \sup_{x\in\X, 
		\lambda\in[\lambdaL,\lambdaH]}
	\E \prn*{\ml\brk[\big]{\tfrac{\del}{\del \lambda} 
			\Llam^{\lambda}(x;\cdot)+\rho}}^2 \lesssim 
	\frac{B^2}{\lambdaL^2} + \rho^2.
	\end{equation*}
	Let $\bar{\lambda}_T = \sum_{t\le T} \lambda_t$ be the average of the 
	$\lambda$ iterates in~\eqref{eq:x-lambda-sgd}. 
	By appropriate choice of $\eta$ and $\eta'$ we guarantee (via 
	Proposition~\ref{prop:gangster}) that
	\begin{equation*}
	\E \LcsRange{\lambdaL,\lambdaH}(\bar{x}_T;P_0)   
	\le \E f_\rho(\bar{x}_T, \bar{\lambda}_T)
	\le \min_{x\in\X, \lambda\in[\lambdaL, \lambdaH]}f_\rho(x,\lambda) + 
	\mathrm{err}_T
	= \min_{x\in\X}\LcsRange{\lambdaL,\lambdaH}(x;P_0)  
	+ \mathrm{err}_T,
	\end{equation*} 
	where
	\begin{equation*}
	\mathrm{err}_T \lesssim  \frac{\eps}{2} + \frac{\Gamma_x R + 
		\Gamma_\lambda (\lambdaH - \lambdaL)}{\sqrt{T}}.
	\end{equation*}
	Therefore, by taking 
	\begin{equation*}
	T\asymp \frac{\Gamma_x^2 R^2 + 
		\Gamma_\lambda^2 \lambdaH^2}{\epsilon^2} \asymp\frac{(GR)^2 + 
		B^2 \lambdaH^2 / 
		\lambdaL^2 + 
		\lambdaH^2 \rho^2}{\epsilon^2} 
	\end{equation*}
	we guarantee that $\mathrm{err}_T 
	\le \eps$, and the complexity bound follows from substituting $n_0, 
	n$ and $T$ in the high probability upper bound  
	$n_0\log_2\prn*{\frac{n}{n_0}}T +
	5\sqrt{(n\log n)^2 + n_0n T \log n}$
	shown in Theorem~\ref{thm:ml}.
\end{proof}
Directly substituting $\lambdaL  = \frac{\epsilon}{2\rho}$ and 
$\lambdaH=\frac{B}{\rho}$ results in a guarantee scaling as 
$\epsilon^{-5}$, which is worse than the mini-batch rate of 
$\epsilon^{-4}$. To improve on this, we divide $[\frac{\epsilon}{2\rho}, 
\frac{B}{\rho}]$ into $K = \log_2 \frac{B}{\epsilon}$ sub-intervals 
$[\lambda\pind{i+1},\lambda\pind{i}]$ satisfying 
$\lambda\pind{i+1}/\lambda\pind{i} = 2$.
We then perform the stochastic gradient method~\eqref{eq:x-lambda-sgd}
on each of these intervals $[\lambda^{(i + 1)}, \lambda^{(i)}]$ in turn,
yielding estimates $\bar{x}^{(i)}$ that
are each $\lesssim\epsilon$-suboptimal for the approximate
objective $\LcsRange{\lambda\pind{i+1},\lambda\pind{i}}$.
Using the bounded ratio $\lambda\pind{i+1} / \lambda\pind{i} = 2$,
this requires complexity roughly
$1/(\lambda\pind{i+1} \epsilon^2) \lesssim \rho /\epsilon^3$,
giving the following theorem.
\begin{restatable}{theorem}{restateThmDoubling}\label{thm:doubling}
	Fix $\epsilon\in (0,B)$, and for $i\in\N$ set 
	$\lambda\pind{i}=\frac{B}{\rho}2^{-i+1}$ and let $\bar{x}\pind{i}$ be an 
	$\epsilon/2$-approximate minimizer of 
	$\LcsRange{\lambda\pind{i+1},\lambda\pind{i}}$ computed via 
	stochastic 
	gradient iterations according to Lemma~\ref{lem:x-lambda-sgd}. 
	Then, 
	for $1+ K=\ceil{\log_2 \frac{2B}{\epsilon}}$ and some $i\opt \le K$ we 
	have
	$\E \Lcs(\bar{x}\pind{i\opt}; P_0) \le \min_{x\in\X} \Lcs(x;P_0) + 
	\epsilon$. 
	Computing $\bar{x}\pind{1},\ldots, \bar{x}\pind{K}$ requires a total 
	number of $\grad \ell$ evaluations 
	\begin{equation*}
	  \lesssim \frac{(GR)^2 (\rho B + \epsilon \log_2 
		\frac{B}{\epsilon})}{\epsilon^3} \log^2 
	\prn*{1+\frac{\rho 
			B}{\epsilon^2}}~~\mbox{with probability 
	}\ge 1-\frac{\epsilon}{B}.
	\end{equation*}
\end{restatable}
\begin{proof}
	By Lemma~\ref{lem:x-lambda-sgd}, finding an $\epsilon$ approximate 
	solution in the interval $[\lambda\pind{i+1},\lambda\pind{i}]$ requires  
	\begin{equation*}
	\lesssim \prn*{1+\frac{\rho B}{2^{K-i}\epsilon}}\frac{(GR)^2 + B^2  
	}{\epsilon^2}\log^2\prn*{1+ 
		\frac{\rho B}{\epsilon^2}}
	\end{equation*}
	gradient computations, where we have used 
	$\lambda\pind{i}/\lambda\pind{i+1} \le 2$, $\lambda\pind{i} \le 
	\frac{B}{\rho}$, and $\lambda\pind{i+1} \ge \frac{\eps}{2\rho} 2^{K-i}$. 
	Summing over $i$ (and applying a union bound) gives the claimed 
	guarantee. 
	Since the minimizer of $f_\rho(x,\lambda)$ over $x\in \X$ and 
	$\lambda\in [\frac{\eps}{2\rho}, \frac{B}{\rho}]$ is equivalent is identical 
	to its minimizer in one of the intervals $[\lambda\pind{i+1}, 
	\lambda\pind{i}]$ for $i\le K$, the result follows from 
	Lemma~\ref{lem:cs-dual-range}.
\end{proof}
The index $i^\star$ is independent of randomness in our procedure, but we 
do not know it in advance. Instead, we may estimate the minimized objective for each $i$ and select the index 
 with the lowest estimate. Let $\hat{\lambda}\pind{i}$ be the average of the $\lambda$ iterations of our stochastic gradient method~\eqref{eq:x-lambda-sgd} for a particular interval $[\lambda\pind{i+1},\lambda\pind{i}]$. Our bias and variance bounds on $\Llam$ (Proposition~\ref{prop:batch-bias} and Proposition~\ref{prop:variance-extended} in the appendix) imply the we can estimate\footnote{
	To obtain an estimate that has error $\lesssim \epsilon$ with high probability, we can use the median of a logarithmic number of iid copies of the batch estimator.}  $f_\rho(\bar{x}\pind{i},\hat{\lambda}\pind{i})$ to accuracy $\lesssim \epsilon$ with a sample of size $\asymp B^2 / (\lambda\pind{i} \epsilon^2) \asymp  2^{i-K} B^3 \rho\epsilon^{-3}$.
Taking $i\opt$ to be the index $i$ minimizing this estimate, it is straightforward to argue that $\E \Lcs(\bar{x}\pind{i\opt}; P_0) - \min_{x\in\X} \Lcs(x;P_0) \lesssim 
\epsilon$. 
Therefore, the cost of selecting the best $i$ is at most the cost of performing the optimization.

Theorem~\ref{thm:doubling} provides a rigorous guarantee on the 
complexity of minimizing $\Lcs$ with a fixed constraint $\rho$ by 
optimizing the parameter $\lambda$ of $\Llam^\lambda$. In practice, we 
usually have no prior knowledge of $\rho$, so it will often make sense to 
directly tune $\lambda$ according to validation criteria rather than a target 
$\rho$.
We also note that~\citet{DuchiNa20} prove a lower bound of order
${\rho}{\epsilon^{-2}}$, which is smaller than our  
${\rho}{\epsilon^{-3}}$ rate. Establishing  the optimal rate for this 
problem 
remains an open question.

}
\section{Experiments}\label{app:experiments}

In this section we give a detailed description of our experiments. We begin with
a description of the problems we study 
(Section~\ref{app:experiments-setup}) followed by our hyperparameter 
settings (Section~\ref{app:experiments-tuning}) and brief remarks about 
our PyTorch implementation (Section~\ref{app:experiments-pytorch}). 
Then, in
Sections~\ref{app:experiments-results} and~\ref{app:experiments-discussion} we
present and discuss our results in detail, including speed-up factors over
full-batch optimization, a study of the generalization impacts of the DRO
objective, and direct empirical evaluation of the bias $\L -\bL$ which we 
bound in Proposition~\ref{prop:batch-bias}.

\subsection{Dataset description}\label{app:experiments-setup}

\paragraph{Digits.} We consider the MNIST handwritten digit recognition 
dataset with the standard train/test
split into  with $6\cdot 10^4$ and
$10^4$ training and test images, respectively. 
There are $10$ classes corresponding to the ten
digits. We augment the training set with $N_{\mathrm{typed}} = 600$ 
randomly chosen digits from the characters 
dataset~\cite{deCamposBaVa09}, i.e.,  $1\%$ of the 
hand-written digits. Our test set includes the MNIST test set as well as a 
class-balanced sample of
8K typed digits not included in the training data. Creating an 8K image test 
set requires that we disregard the original test/train split 
of~\cite{deCamposBaVa09}, but is important in order to make estimates of 
per-class accuracy reliable. To featurize our data, we train a small 
convolutional Neural Network (two convolutional layers, two 
fully-connected layers with ReLU activation function) with a standard ERM 
objective and 10 epochs of SGM on the MNIST training set (with no typed digits). For 
both handwritten and typed digits, we use the activations of the last layer 
as the feature vector. 

We perform DRO to learn a linear classifier $x$ on our features, taking the loss
$\ell$ to be multi-class logarithmic loss with a quadratic regularization term
on $x$ (the weight part only, not the bias), namely, for a data point
$s = (z, y)$ with $z \in \R^d, y\in[C]$ (with $C$ the number of classes) and
regularization strength $\mu \ge 0$, we use
\begin{equation*}
  \ell([x, b]; (z, y)) \defeq \log\prn*{\sum_{c=1}^C \exp(\tri{x_c - x_y, z} + b_c - b_y)}
  + \frac{\mu}{2}\sum_{c=1}^C\norm{x_c}_2^2,
\end{equation*}
where $x \in \R^{C\times d}, b\in\R^C$ and $x_c$ denotes the $c$-th row of
$x$. As the generalization metric, we report accuracy and log loss on the worst
sub-group of the data---where a sub-group corresponds to a tuple (subpopulation,
class), e.g., (typed, 9).

\paragraph{ImageNet.} The ImageNet dataset
comprises of $1.2\cdot 10^6$ training images and $5\cdot 10^{4}$ test 
images with 
$1000$ different classes. We featurize the dataset using a pre-trained 
ResNet-50~\cite{HeZhReSu16} (trained on ImageNet itself with an ERM 
objective). We use those features as the input to a linear classifier, with 
regularized multi-class logarithmic loss as in the previous experiment. As 
the robust generalization metric, we report 
the average loss and accuracy on the 10 classes with highest test loss. 

\subsection{Hyperparameter tuning}\label{app:experiments-tuning}

We fix the budget of our algorithms to $300$ epochs for
Digits and $30$ epochs for ImageNet, where an epoch corresponds to $N$ 
computations of $\grad\ell$, where $N$ is the training set size.
For all (mini)-batch methods we use Nesterov 
acceleration~\eqref{eq:agd-pytorch} with constant momentum $\omega = 
0.9$; we did not carefully tune this parameter but did observe it performs 
better than no momentum. For MLMC using no momentum  
($\omega=0$) performs slightly better than momentum 0.9, so we use no momentum in this case.
We also perform iterate averaging with the 
scheme
of~\citet{ShamirZh13} with parameter $3$ (roughly
averaging over the last third of the iterates).
Our experiments with CVaR use $\grad \Lcvar$ rather than $\grad 
\LcvarR$, in contrast to our theory; we leave empirical exploration of 
entropy smoothing for CVaR to future work.

\paragraph{Stepsizes.} We tune our stepsizes with a coarse-to-fine strategy.
More precisely, for each stepsize in $\crl{10^i}_{-5\le i \le 0}$, we perform a
single run of the experiment, and pick the best two
stepsizes in terms of the final training value. For these two stepsizes, we
evaluate $\frac{\eta}{2}, 2\eta$ and select the stepsize that
gives the best value of the training loss. For this final stepsize, we repeat
the experiments with $5$ different seeds (affecting weight initialization and
mini batch samples but not the dataset structure) and report the minimum and
maximum across seeds at each iteration. We select all the stepsizes in our
experiments using this strategy, except for batch size $n=10$ in 
ImageNet where we extrapolated the stepsize from other batch sizes. 
Table~\ref{table:step-sizes} summarizes our step size choices---for batch 
sizes up to 5K we see a clear linear relationship between the batch size and 
optimal step size.

\paragraph{{$\ell_2$}-regularization and parameters of the robust loss} We 
choose the strength of the regularizer in the set $\crl{0, 10^{-5}, \ldots, 
  10^{-1}}$.
For each robust loss, we consider an appropriate grid of either the size of the
uncertainty set ($\alpha$ and $\rho$) or the strength of the penalty
($\lambda$). We evaluate each configuration ($\ell_2$ regularization and robust
loss parameters) with the stepsizes from the coarse grid and pick the
configuration that achieves a good trade-off in terms worst-subgroup and
average-case generalization. For simplicity, we choose the same regularization
strenght for all the robust losses---$\mu = 10^{-3}$ for ImageNet and
$\mu = 10^{-2}$ for Digits. For ERM, we choose the two values of $\ell_2$
regularization that optimize either worst subgroup loss or worst subgroup
accuracy. That is, for ImageNet we tune the $\ell_2$ regularization for the best
result on either the worst 10 classes loss and worst 10 classes accuracy
respectively, and for Digits we choose the values that optimize loss/accuracy on
the hardest typed class---for both experiments, this results in
$\mu \in \crl{10^{-4}, 10^{-3}}$ for ERM.

\newcommand{\lrf}[2]{$#1\cdot 10^{-#2}$}

\begin{table}
  \begin{center}
    \begin{tabular}{clcccccc}
      \toprule
      & & \multicolumn{3}{c}{ImageNet}
      & \multicolumn{3}{c}{Digits} \\
      \cmidrule(lr){3-5}
      \cmidrule(lr){6-8}
      \multicolumn{2}{c}{}
      & $\Lcvar$%
      & $\Lcs$%
      & $\Llam$%
      & $\Lcvar$%
      & $\Lcs$%
      & $\Llam$\\%
      \multicolumn{2}{c}{Algorithm}
      & $\alpha = 0.1$
      & $\rho=1$
      & $\lambda = 0.4$
      & $\alpha=0.02$
      & $\rho=1$
      & $\lambda=0.05$ \\
      \midrule
      Batch
      & $n=10$ & $1\cdot 10^{-4}$ & $2\cdot 10^{-4}$ & $2\cdot 10^{-4}$ 
      & \lrf{1}{4} & \lrf{5}{5} & \lrf{5}{5}\\
      & $n=50$ & $5\cdot 10^{-4}$ & \lrf{1}{3} & \lrf{1}{3} & \lrf{1}{4} & 
      \lrf{2}{4} & \lrf{1}{4}\\
      & $n=500$ & \lrf{5}{3} & \lrf{1}{2} & \lrf{1}{2} & \lrf{1}{3} & \lrf{2}{3} & \lrf{1}{3} \\
      & $n=5K$ & \lrf{5}{2} & \lrf{1}{1} & \lrf{1}{1} & \lrf{5}{3} & \lrf{2}{2} & \lrf{1}{2}\\
      & $n=50K$ & \lrf{2}{1} & \lrf{5}{1} & \lrf{2}{1} & -- & -- & --\\
      & $n=150K$ &  \lrf{5}{1} & \lrf{5}{1} & \lrf{5}{1} & -- & -- & --\\                    
      \midrule
      MLMC
      & $n_0 = 10$ & \lrf{1}{3}%
       & \lrf{2}{3}%
       & \lrf{2}{3}%
      & \lrf{5}{4}%
      & \lrf{5}{4}%
      & \lrf{5}{4}%
      \\
      \midrule
      \multicolumn{2}{c}{Full-batch} & \lrf{5}{1} & \lrf{5}{1} & \lrf{5}{1} & \lrf{1}{2} & \lrf{2}{2} & \lrf{1}{2} \\
      \bottomrule
    \end{tabular}
    \caption{Stepsizes for the experiments we present in this work. We use 
    momentum 0.9 for all configurations except MLMC, where we do not use 
    momentum. We
      select the stepsizes according to the `coarse-to-fine' strategy we
      describe in this section.
    }\label{table:step-sizes}
  \end{center}
\end{table}

\subsection{PyTorch Integration}\label{app:experiments-pytorch}
Figure~\ref{fig:pytorch} illustrates our integration of DRO into PyTorch. 
Users simply define the robust loss they wish to use (in the
example $\Lcs$ with $\rho=1$) and feed the loss for the examples in the batch to
the robust layer.
While our current implementation only supports the robust objectives we 
analyze---namely, CVaR,
KL-regularized CVaR, constrained-$\cs$ and penalized-$\cs$---it is easy 
to extend to other choices of $\phi$ and $\psi$.

\begin{figure}
	\begin{center}
		\includegraphics[scale=0.8]{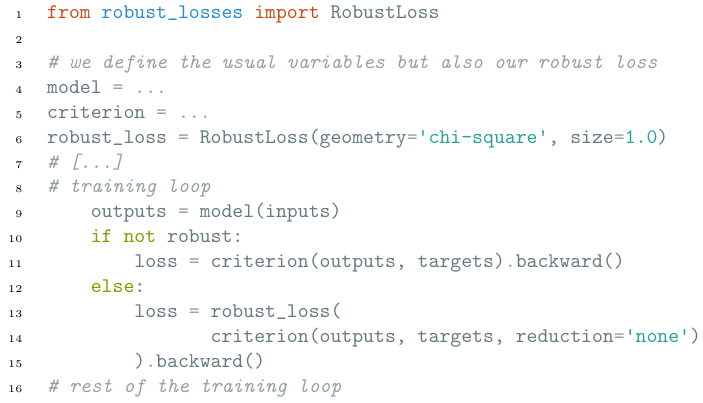}
		\caption{\label{fig:pytorch}An example training loop in PyTorch where 
		one 
			can decide to use
			the robust training objective at the cost of three extra lines of code 
			(lines
			1, 6 and 13).}
	\end{center}
\end{figure}

\subsection{Experiment results}\label{app:experiments-results}
We complement the training curves in Figure~\ref{fig:main-experiment} 
with comparisons of robust generalization 
metrics and training efficiency. 
In Figures~\ref{fig:gen-digits} and \ref{fig:gen-imagenet} we show the 
training curves of Figure~\ref{fig:main-experiment} along with two 
``robust'' generalization metrics and two ``average'' performance metrics.
For Digits, we consider the loss and accuracy on the worst 
sub-group---typically the typed digit 9---as the robust generalization 
metrics. For ImageNet, we look at the average loss (resp. accuracy) on the 
10 labels with highest loss (resp. lowest accuracy). In each figure we also 
show the values achieved by ERM with two different regularization strengths 
chosen to optimize either loss or accuracy on the worst-subgroup.
In Tables~\ref{tab:speed-up-digits} and~\ref{tab:speed-up-imagenet} we 
compare the number of epochs the various algorithms require to reach a 
training loss within 2\% of the minimal value found across all runs. To 
achieve such convergence with the full batch method we run it for much 
longer: 30K epochs for Digits and 1K epochs for ImageNet.

\begin{figure}
  \begin{center} 
  \includegraphics[width=\linewidth]{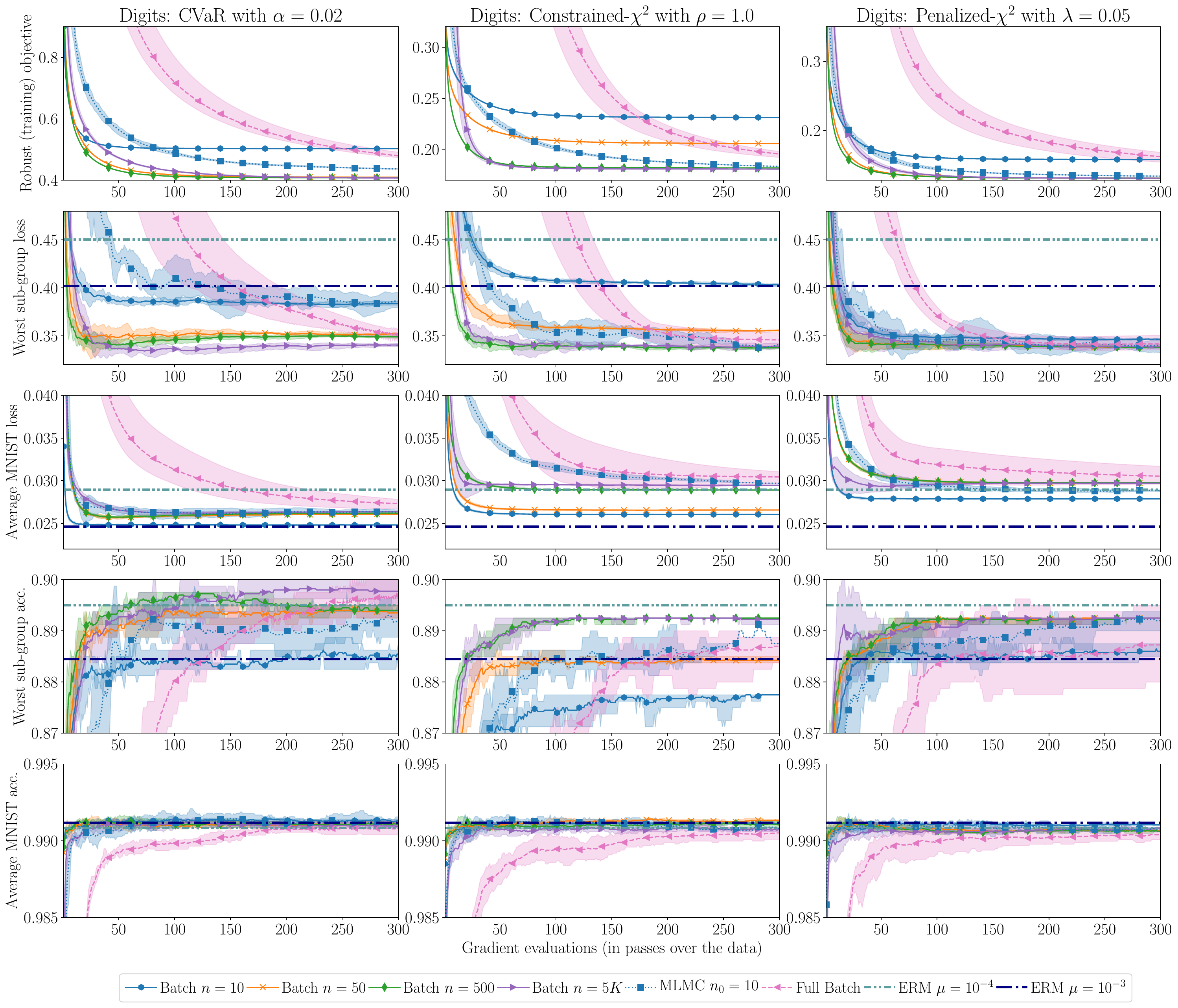}
    \caption{Detailed results from our digit recognition experiment. Shaded 
    areas indicate range of variability
      across 5 repetitions (minimum to maximum), and the zoomed-in 
      regions
      highlight the (often very small) ``bias floor'' of small batch sizes.}
\label{fig:gen-digits}
\end{center}
\end{figure}

\begin{figure}
  \begin{center} 
  \includegraphics[width=\linewidth]{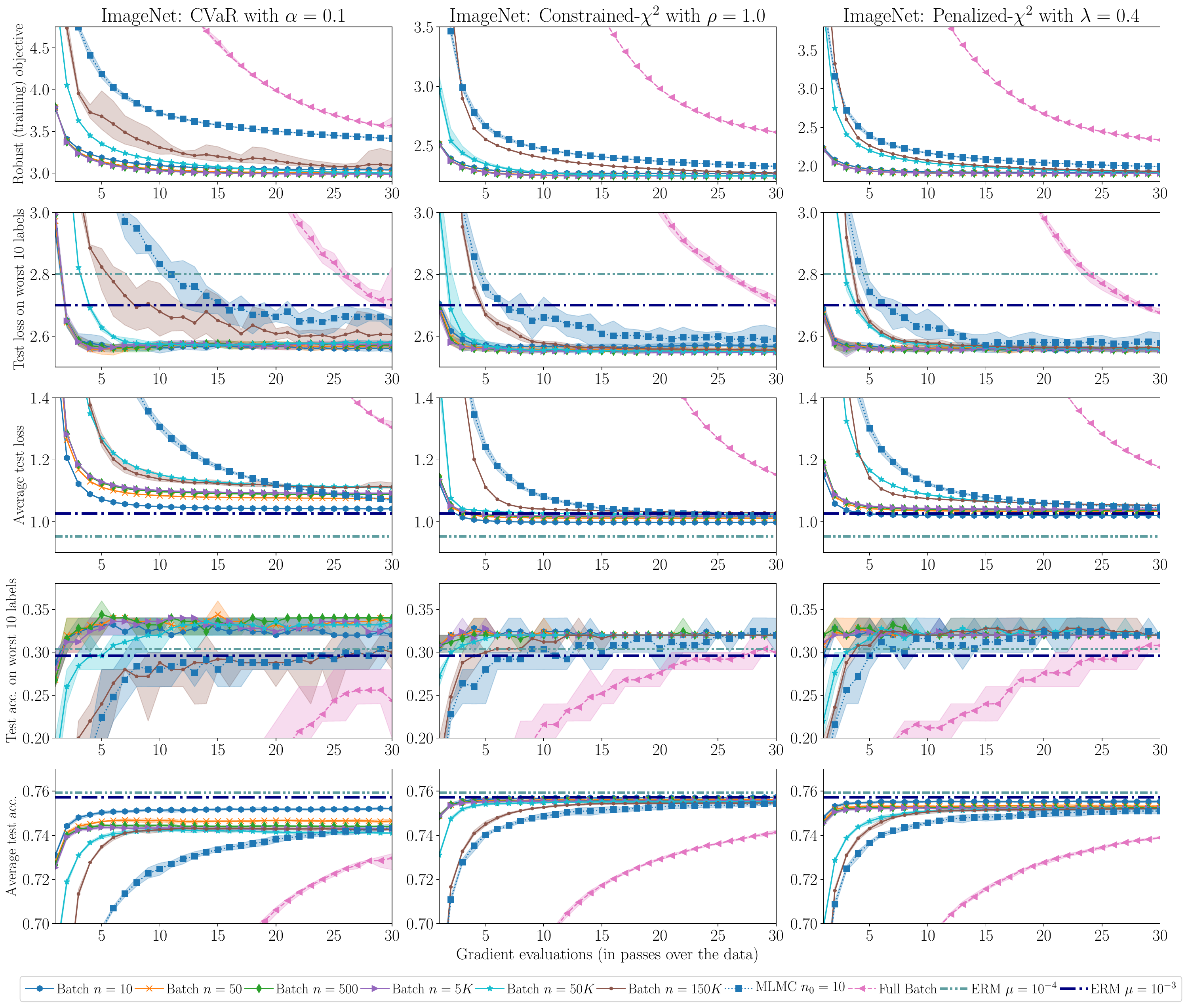}
    \caption{Detailed results from our ImageNet classification experiment. 
    Shaded areas indicate range of variability
      across 5 repetitions (minimum to maximum), and the zoomed-in 
      regions
      highlight the (often very small) ``bias floor'' of small batch sizes.}
\label{fig:gen-imagenet}
\end{center}
\end{figure}

\subsection{Discussion}\label{app:experiments-discussion}

\subsubsection{Generalization performance}

We now take a closer look at the curves presented in
Figures~\ref{fig:gen-digits} and \ref{fig:gen-imagenet}. We first note that, in
the context of machine learning, one does not wish to reach the minimum of the
training objective but rather find a model that achieves good generalization
performance. From that perspective, we observe that mini-batch methods 
achieve
their best generalization performance in a shorter time than necessary to
converge on the training objective, e.g., less than 50 epochs for CVaR
on Digits when the training objective always requires more than 115 epochs.

In the case of Digits, we observe that DRO achieves a better trade-off than 
ERM in all settings. More precisely, DRO achieves better worst sub-group 
loss and accuracy than either of the ERM runs with no visible degradation in 
average accuracy and slightly worse average loss. We observe a similar 
trend in the case of ImageNet, albeit with a more visible degradation in 
average loss and accuracy. 

We note that in the Digits experiment batch size $n=10$ has 
generalization performance more similar to ERM. This is an expected 
by-product of the bias inherent in small batch size, as in the edge case  
$n=1$, the mini-batch method degenerates to ERM.

\citet{HuNiSaSu18} observe that applying DRO objectives of the 
form~\eqref{eq:pop-dro-reg-restated} directly on the 0-1 loss amounts to 
a simple monotonic transformation of the average accuracy, and is 
therefore equivalent to minimizing average accuracy. Thus, in as far as the 
logarithmic 
loss is a surrogate to the 0-1 loss (which is arguably the case in near 
realizable-settings), DRO might not provide improvements in robust 
accuracy. This is consistent with the observations in our experiments, 
where we see only small effects on the accuracy in the Digits experiments 
(which is close to realizable), and a somewhat more pronounced but still 
modest effect on ImageNet (which is not quite realizable, as the training 
accuracy is below 90\%). Nevertheless, these observation do not preclude 
DRO from improvement the subpopulation test loss itself, as we see in our 
experiments:  for Digits DRO provides between between 17.5\% and 27\% 
reduction in worst subgroup loss compared to ERM, and for ImageNet the 
reduction is a more modest 5.6\% and 9\%. 
While the common practice in machine learning is to view accuracy as the 
more important performance metric, logarithmic loss is also operationally 
meaningful, as it measures the calibration of the model predictions. Thus, 
DRO is potentially helpful in situations where robust precise uncertainty 
estimates are important.

We remark that approaches that explicitly target the subgroups on which 
we measure the generalization~\cite[e.g.,][]{SagawaKoHaLi20} will likely 
perform better than DRO. However, in contrast to these methods DRO is   
agnostic to the subgroup definition---except that we use a subgroup  
validation set in order to tune its uncertainty set size---and therefore 
requires less data annotation.

\subsubsection{Optimization performance}

\begin{table}
	\begin{center}
		\begin{tabular}{lcccc|c}
			\toprule
			&\multicolumn{4}{c|}{Number of epochs to 2\% of opt} & Speed-up 
			\\
			& $n=50$
			& $n=500$
			& $n=5K$
			& Full-batch
			& vs.\ full-batch\\
			\midrule
			$\Lcvar, \alpha=0.02$
			& $189\pm 3$
			& $\boldsymbol{115}\pm 1$
			& $193 \pm 4$
			& $1035$
			& $9.0\times$\\
			$\Lcs, \rho = 1$
			& $\infty$
			& $74\pm 1$
			& $\boldsymbol{60} \pm 3$
			& $570$
			& $9.5\times$ \\
			$\Llam, \lambda=0.05$
			& $107\pm 1$
			& $\boldsymbol{104} \pm 1$
			& $131 \pm 5$
			& $1680$
			& $16.2\times$ \\
			\bottomrule
		\end{tabular}
		\caption{Empirical complexity for the Digits experiment in terms of 
			number of epochs required to reach within 2\% of
			the optimal training objective value, averaged across 5 seeds $\pm$ 
			one 
			standard deviation. (For the full-batch experiments we only ran one 
			seed).  The ``speed-up'' column gives the ratio between 
			the full batch complexity and the best mini-batch complexity.}
		\label{tab:speed-up-digits}
	\end{center}
\end{table}

\begin{table}
	\begin{center}
		\begin{tabular}{lccccccc|c}
			\toprule
			&\multicolumn{7}{c|}{Number of epochs to 2\% of opt} & Speed-up 
			\\
			$n=$
			& $10$
			& $50$
			& $500$
			& $5K$
			& $50K$
			& $150K$ 
			& Full-batch
			& vs.\ full-batch\\
			\midrule
			$\Lcvar, \alpha=0.1$
			& $20$
			& $10$
			& $\boldsymbol{9}$
			& $\boldsymbol{9}$
			& $19$
			& -
			& $245$ %
			& $27\times$ \\
			$\Lcs, \rho = 1$
			& $6$
			& $\boldsymbol{5}$
			& $\boldsymbol{5}$
			& $\boldsymbol{5}$
			& $8\pm 1$ 
			& $23$
			& $160$ %
			& $32\times$ \\
			$\Llam, \lambda=0.4$
			& $7$
			& $\boldsymbol{5}$
			& $\boldsymbol{5}$
			& $\boldsymbol{5}$
			& $22$
			& $26$
			& $180$ %
			& $36\times$ \\
			\bottomrule
		\end{tabular}
		\caption{Empirical complexity for the ImageNet experiment in terms 
		of 
			number of epochs required to reach within 2\% of
			the optimal training objective value, averaged across 5 seeds $\pm$ 
			one 
			standard deviation, whenever it is not zero. (For the full-batch 
			experiments we only ran one seed).  The ``speed-up'' column gives 
			the ratio between 
			the full batch complexity and the best mini-batch 
			complexity.}\label{tab:speed-up-imagenet}
	\end{center}
\end{table}

As Figure~\ref{fig:main-experiment} and Tables~\ref{tab:speed-up-digits} 
and~\ref{tab:speed-up-imagenet} indicate,  mini-batch methods converge 
significantly faster than full-batch. We also see that, 
while theoretically optimal, MLMC methods are slower to converge. 
Furthermore, the bias is empirically much 
smaller than what the theory predicts and setting the batch size as small as 
50 guarantees negligible bias; we investigate this further below. As the 
theory predicts, the MLMC method (for 
corresponding values of $n_0$) effectively counteracts this bias, and is able 
to converge to the optimal value even when $n_0$ is 10. 

We also note that the effect of batch size on the depth of the algorithm 
(number of iterations) is remarkably consistent with the theoretical 
prediction of the variance-based analysis in Section~\ref{sec:batch}: for 
smaller batch sizes the number of steps is roughly inversely proportional to 
the batch size, and the total amount of work is constant. The best stepsize 
also grows linearly with the batch size (see Table~\ref{table:step-sizes}). As 
batch sizes grow, the best 
stepsize plateaus and the number of steps required for convergence 
also stops decreasing with the batch size, making the total work become 
larger.

\notarxiv{%
\paragraph{Runtime comparison.}
In Table~\ref{tab:compute} we report the gradient complexity and 
wallclock time to reach
accuracy within 2\% of the optimal value. For brevity, we show it
for a single robust objective (penalized-$\cs$), but we observe that
similar results across robust objectives.
We note that for small batch sizes the time per epoch is significantly larger 
than for larger batch sizes, this due in part to parallelization in evaluating 
$\ell$ and $\grad\ell$ and in part to logging and Python interpreter 
overhead, which increase linearly with the number of iterations. However, 
these effects diminish as the batch size grows, and for batch size 5K the 
wallclock time to reach an accurate solution is an order of magnitude 
smaller than with the full-batch method.
We run
our experiments with 4 Intel Xeon E5-2699 CPUs and 12--32Gb
of memory.
Increasing the number of CPUs or using GPUs would allow for greater 
parallelism and improve the runtime at greater batch sizes. However, 
increasing the model complexity (e.g., to a deep neural network) would 
have the opposite effect. Using 4 CPUs for linear classification gives 
roughly the same range of feasible batch sizes as a ResNet-50 on large GPU 
arrays.

\begin{table}
	\begin{center}
		\smaller
		\begin{tabular}{clcccccc}  
			\toprule
			& & \multicolumn{3}{c}{ImageNet times [minutes]} & 
			\multicolumn{3}{c}{Digits times [minutes]} \\
			\cmidrule(lr){3-5}
			\cmidrule(lr){6-8}
			\multicolumn{2}{c}{Algorithm} 
			& per epoch & to 2\% of opt &\# epochs  
			& per epoch & to 2\% of opt &\# epochs \\
			\midrule
			Batch
			& $n=10$ & $120 \pm 5$ & $850 \pm 30$ & 7 & $0.80 \pm 0.1$ & 
			$\infty$  & $\infty$ \\
			& $n=50$ & $23 \pm 0.7$ & $116 \pm 4$ & $\boldsymbol{5}$ & 
			$0.23 
			\pm 0.01$ & $24 \pm 1$  & $107\pm 1$ \\
			& $n=500$ & $5.9 \pm 0.2$ & $29 \pm 1$ & $\boldsymbol{5}$ & 
			$0.056 \pm 0.004$ & $5.8 \pm 0.4$  & $\boldsymbol{104} \pm 
			1$\\
			& $n=5K$ & $3.3 \pm 0.04$ & $\boldsymbol{16.5} \pm 0.2$ & 
			$\boldsymbol{5}$ & $0.033 \pm 0.004$ & 
			$\boldsymbol{4.4} \pm 0.7$  & $131 \pm 6$\\
			& $n=50K$ & $2.2 \pm 0.03$ & $50 \pm 0.9$ & $22$ & -- & --  & 
			--\\
			& $n=150K$ & $2.1 \pm 0.03$ & $55 \pm 0.7$ & $26$ & -- & -- & 
			--\\
			\midrule
			MLMC
			& $n_0 = 10$ & $16\pm 1$ & $\infty$ & $\infty$ & $0.34 \pm 
			0.02$ & 
			$\infty$ & $\infty$ \\
			\midrule
			\multicolumn{2}{c}{Full-batch} & $2.1$ & $380$ & $180$ & $0.022$ 
			& 
			$37.0$ & $1680$ \\
			\bottomrule
		\end{tabular}
		\caption{Comparison wallclock time (in minutes) of the different
			algorithms, in terms of time per epoch and time to reach within 2\% 
			of the
			best training loss. In the last two columns, we report the number of
			epochs required to reach within 2\% of the best training loss. We 
			report 
			$\infty$ for configurations that do not reach the 
			sub-optimality 
			goal for the duration of the experiment, and 
			omit standard deviations when then they are $0$.}
		\label{tab:compute}
	\end{center}
\end{table} %
}

\begin{figure}
	\begin{center}
		 \includegraphics[width=0.48\linewidth]{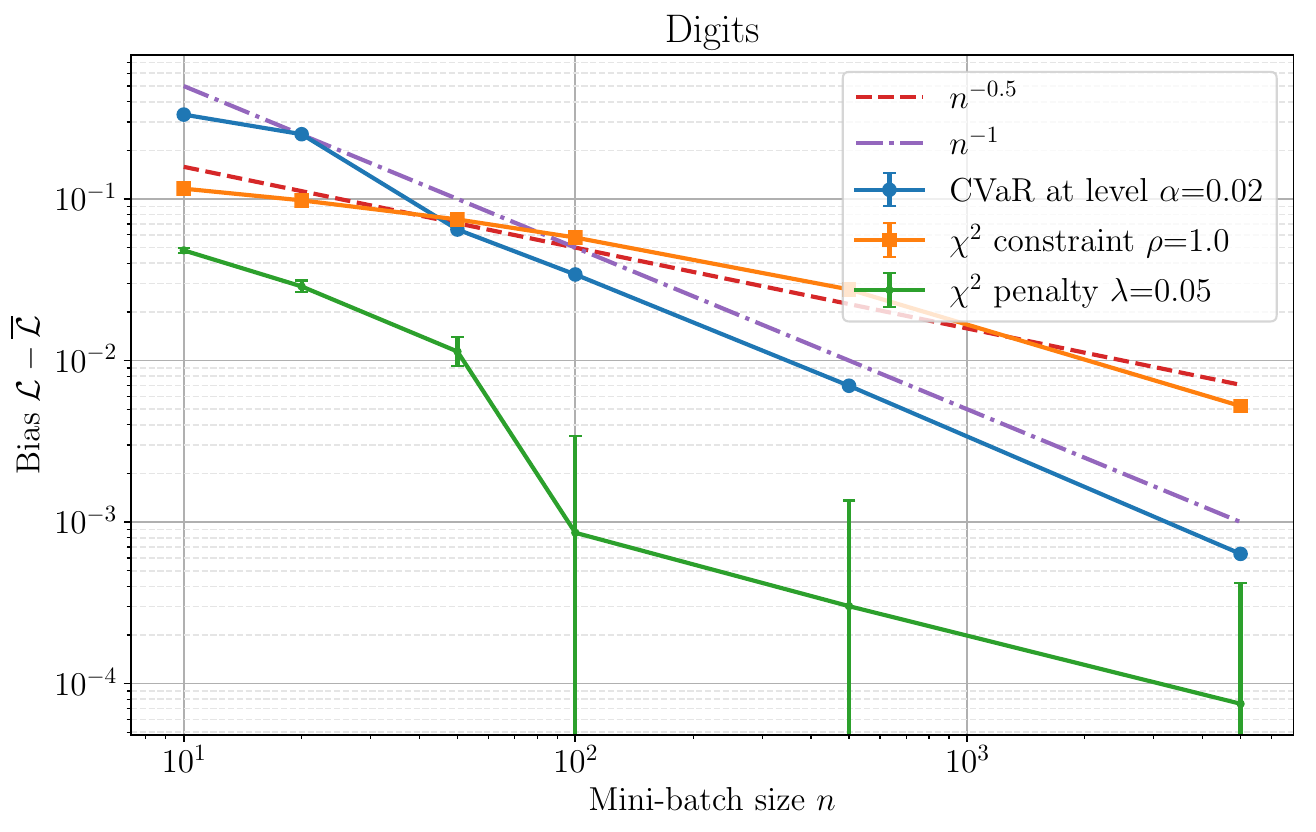}
		 ~~
		 \includegraphics[width=0.48\linewidth]{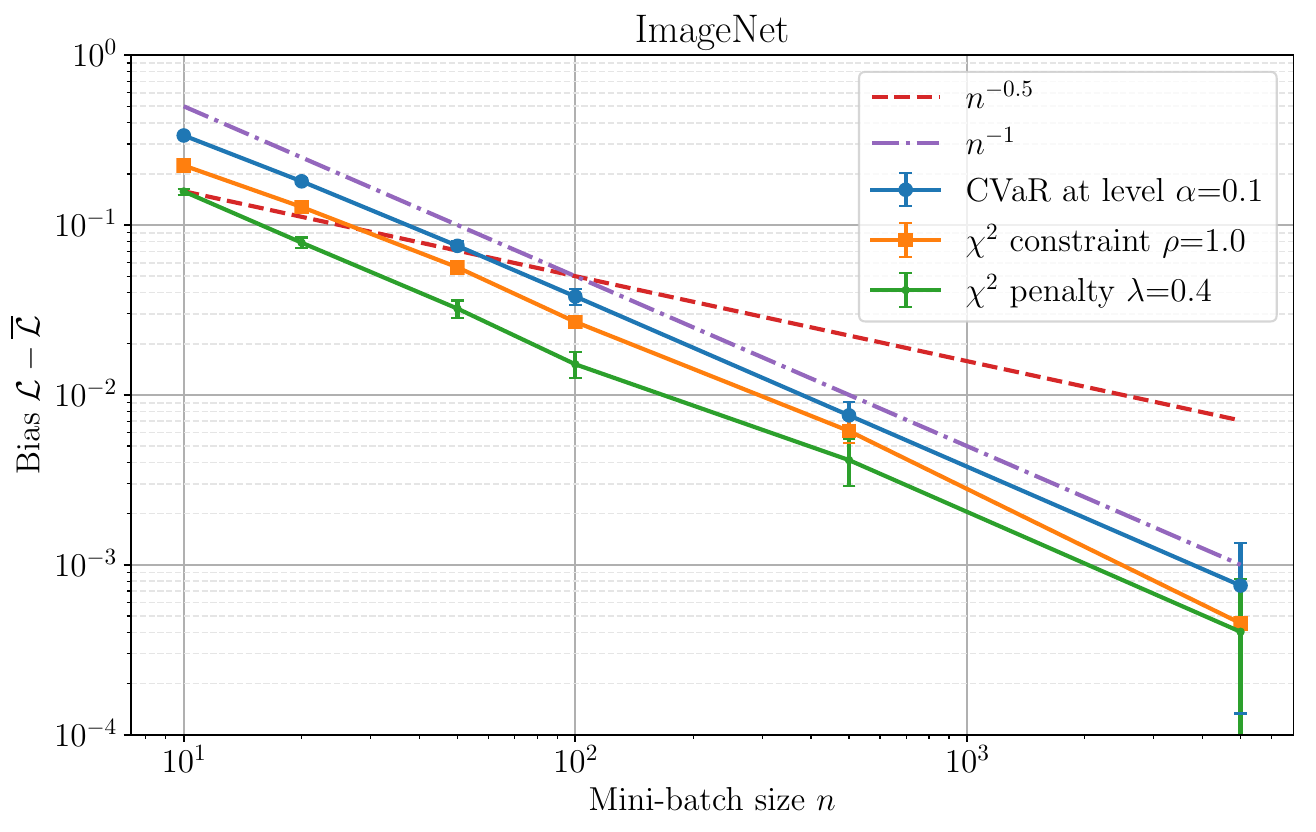}
		\caption{Evaluation of the bias $\L(\bar{x}_T;P_0) - \bL(\bar{x}_T;n)$ 
		at the last iterate $\bar{x}_T$ of the experiments in 
		Figure~\ref{fig:main-experiment}, for different batch sizes $n$. (These 
		batch sizes $n$ are not the same as the mini-batch size used to 
		compute $\bar{x}_T$; we take the latter to be 10). Error bars indicate a 95\% confidence interval 
		computed 
		using the bootstrap.}
		\label{fig:bias}
	\end{center}
\end{figure}

\paragraph{Bias analysis.}
Figure~\ref{fig:main-experiment} shows that even for small batch 
sizes---where the guarantees of Proposition~\ref{prop:batch-bias} are 
essentially vacuous---stochastic gradient steps with the mini-batch 
gradient estimator find solutions very close to optimal. There could be two 
explanations for this finding: (a) $\L$ and $\bL$ are actually much closer to 
each other than the theory predicts, or (b) $\L$ and $\bL$ are far apart as 
expected, but still their minimizers are close. 

To test hypothesis 
(a), we examine the loss values at the last iterate $\bar{x}_T$ of our Digits 
and ImageNet experiments with mini-batch size 10. For each objective, 
we estimate $\bL(\bar{x}_T;n)$ for various values of $n$ by averaging 50K 
evaluations of $\L(\bar{x}_T; S_1^n)$, and use it to compute an estimate of 
the bias $\L(\bar{x}_T;P_0) - \bL(\bar{x}_T;n)$.\footnote{
	For CVaR it is fact possible to compute $\bL(\bar{x}_T;n)$ in closed form 
	via~\eqref{eq:cvar-sample-closed-form}; we do that for the Digits 
	experiment. Scaling the computation to ImageNet is nontrivial, so there 
	we use an empirical estimate instead.}
In Figure~\ref{fig:bias} we plot the bias estimate against the mini-batch 
size $n$. We see that hypothesis (a) is false: for both ImageNet and Digits, 
the difference $\L(\bar{x}_T;P_0) - \bL(\bar{x}_T;n)$ is quite large at small 
$n$, as our upper bounds and matching lower bounds in the 
Bernoulli case would suggest. We also see that the bias decays as $1/n$ in 
all cases except for $\cs$ constraint in Digits; this is again consistent with 
our theory as we expect the inverse-cdf assumption to be relevant in 
practice and particularly for CVaR where it only needs to hold around the 
$1-\alpha$ quantile. 
We conclude that despite the significant bias at small batch size $n$, 
approximate minimizers of $\bL(x;n)$ are also approximate minimizers of 
$\L(x;P_0)$. This is possibly due to the fact that the bias $\L(x;P_0) - 
\bL(x;n)$ is nearly constant as a function of $x$. We leave further study of 
this hypothesis to future work.

\subsection{Comparison with alternative optimization methods}\label{app:experiments-baselines}

We complement the worst-case complexity comparison in Table~\ref{table:summary} by repeating our experiments with two alternative optimization methods: dual SGM and primal-dual methods.

\subsubsection{Comparison with dual SGM}
\paragraph{Experiment description.} Recall the dual SGM method we describe and
analyze in Section~\ref{app:dual-sgm}. The complexity guarantees of dual SGM depend quadratically on the size
of the uncertainty set---scaling with $\alpha^{-2}$ for CVaR and with 
$\lambda^{-2}$ for the penalized version of the $\cs$ objective. In
contrast, our theory predicts that the method we propose have an optimal
linear dependence on the size of the uncertainty set. Here we
empirically test this prediction on the Digits experiment. To do so, we compare the
performance of our proposed mini-batch method with dual SGM for uncertainty sets of
increasing size. For CVaR we consider
\begin{equation*}
  \alpha \in \crl{0.02, 0.006, 0.002, 0.0006, 0.0002},
\end{equation*}
and for penalized $\cs$ we consider
\begin{equation*}
\lambda \in \crl{0.05, 0.015, 0.005, 0.0015, 0.0005}.
\end{equation*}

\paragraph{Parameter tuning.}
 For each uncertainty set size, we jointly tune
the stepsizes $\gamma_x$ and $\gamma_\eta$ over the following grids
\begin{equation*}
  \gamma_x \in \crl{1 \cdot 10^{-i}, 3 \cdot 10^{-i}}_{3\le i \le 6}, \, \gamma_\eta \in
  \crl{1 \cdot 10^{-i}}_{2\le i\le 5}.
\end{equation*}
We choose a coarser grid for $\gamma_\eta$ as we noticed that the value of
$\gamma_\eta$ had a marginal influence on the final performance. For both the mini-batch algorithm and dual SGM, we pick the batch size $n = 500$ We follow the same
averaging scheme and momentum as in our previous experiments.

\begin{figure}
  \begin{center}
    \includegraphics[width=\linewidth]{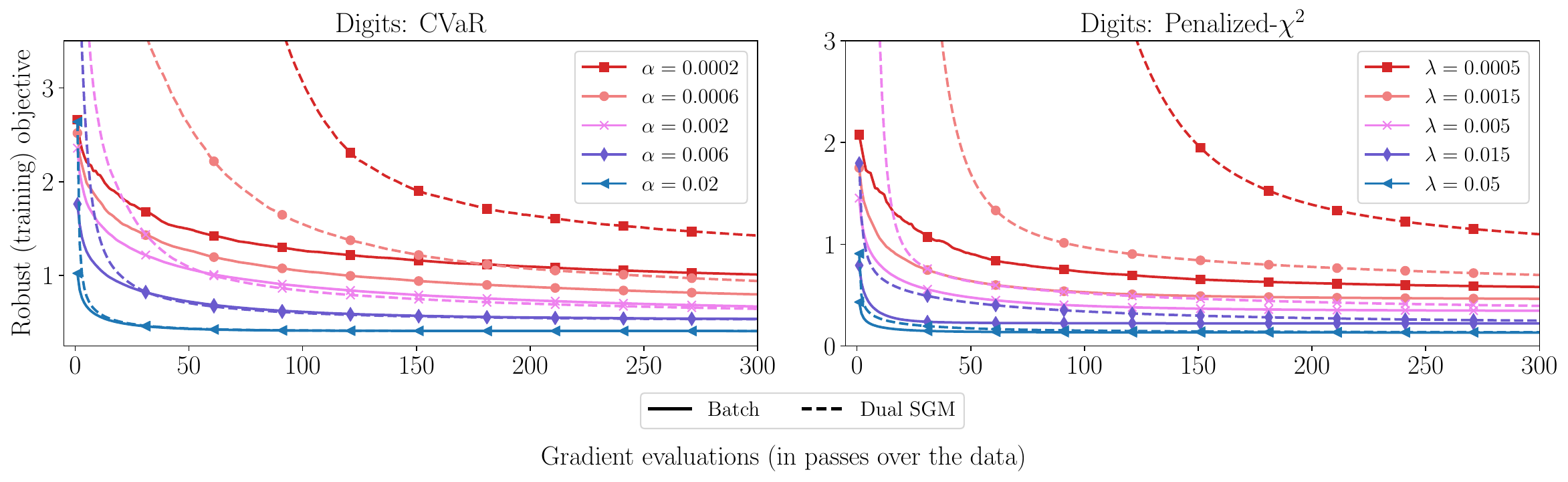}
    \caption{Comparison of batch methods to dual SGM on the digits experiments
      for increasing sizes of uncertainty set sizes or regularization. We
      observe that as the size grows, dual SGM performs increasingly worse.}
    \label{fig:dual-sgm}
  \end{center}
\end{figure}

\paragraph{Discussion of results.}
We plot the results of the experiment in Figure~\ref{fig:dual-sgm}. As the theory predicts, when the size of the
uncertainty set grows, dual SGM performs significantly worse than batch
methods. Conversely, as expected, for small uncertainty sets dual SGM performs on par with the mini-batch
method. We empirically observe that the performance of dual SGM depends only weakly on the choice of $\gamma_\eta$. As a result, dual SGM is not much more difficult to tune than the mini-batch method.

\subsubsection{Comparison with primal-dual methods}
\paragraph{Experiment description.} 
We now turn to primal-dual methods, whose complexity guarantees scale as $\epsilon^{-2}$ but are linear in $N$, and are therefore expected to become less efficient as the size of the training set grows. To test this prediction, we repeat our Digits and ImageNet experiments (with $N=60.6$K and $N=1.2$M, respectively) using these alternative methods for the contrained-$\cs$ and CVaR objectives. We then compare their performance to that of  gradient methods with our mini-batch estimator.

\paragraph{Method description.}
Primal-dual methods maintain an iterate sequence $\{x_t,q_t\}_{t\in\N}$, where $q_t \in \uset(P_0)\subset \Delta^N$ represent an online estimate of the distribution $q$ attaining the maximum in~\eqref{eq:finite-rob} at $x_1,\ldots, x_t$. 
To compute $x_{t+1},q_{t+1}$, we sample a batch of $n$ indices $J_1^n$ drawn independently from $q_{t}$, and (denoting $S_i = s_{J_i}$) estimate the gradient of $\sum_{i=1}^N q_i \ell(x;s_i)$ with respect to $x$ and $q$ as follows:
\begin{equation*}
\tilde{g}^{x}_t = \frac{1}{n}\sum_{i=1}^n \grad \ell(x_t; S_i)
~~\mbox{and}~~
[\tilde{g}^{q}_t]_j = \frac{1}{n}\sum_{i=1}^n \frac{1} {q_{J_i}}\ell(x_t; S_i) \indic{J_i = j}.
\end{equation*}
To compute $x_{t+1}$ from $x_t$ and $\tilde{g}^{x}_t$ we apply the same stochastic gradient scheme we use in our previous experiments (Nesterov momentum 0.9).\footnote{
	The performance of the primal-dual method appears fairly insensitive to the use of momentum so we keep the parameter the same as in our previous experiments for simplicity.}
 We also use the averaging scheme in~\cite{ShamirZh13} with parameter 3 as before.
To compute $q_{t+1}$  from $q_t$ and $\tilde{g}^{q}_t$ we apply a mirror descent step. For the constrained $\cs$ problem the step is of the form
\begin{equation}\label{eq:pd-step-cs}
q_{t+1} =
\argmax_{q: \divcs(q,\frac{1}{N}\ones) \le \rho}
	\crl*{ \inner{q}{\gamma_q \tilde{g}^{q}_t}
		+ \half \norm{q-q_t}^2}=
\argmin_{q\in\Delta^N: \norm{q-\frac{1}{N}\ones}^2 \le 2\rho/n}
	\norm{ q - (q_t + \gamma_q \tilde{g}^{q}_t)}^2,
\end{equation}
i.e., a Euclidean projection of the unconstrained gradient step on $q_t$ to the uncertainty set. For the CVaR problem, the step is of the form
\begin{equation}\label{eq:pd-step-cvar}
q_{t+1} =
\argmax_{q\in\Delta^N: \linf{q} \le \frac{1}{\alpha N}}
\crl*{ \inner{q}{\mathrm{clip}(\gamma_q \tilde{g}^{q}_t)}
	+ \divkl(q, q_t)},
\end{equation}
where $\mathrm{clip}(x)$ is the Euclidean projection of $x$ to $[-1,1]^N$. 

The $\cs$ step is essentially the same as in~\cite{NamkoongDu16}, while the CvaR step is different from the proposal by~\citet{CuriLeJeKr19}. Nevertheless, local norms regret analysis~\cite{ClarksonHaWo12,Shalev12} readily shows that with appropriate $\gamma_x$ and $\gamma_q$ the step~\eqref{eq:pd-step-cvar} allows us to find $\epsilon$-optimal solutions within $\lesssim \frac{N\log\frac{1}{\alpha}B^2 + G^2 R^2}{\epsilon^2}$ iterations, similarly to the guarantee that~\citet{CuriLeJeKr19} show for a computationally intractable determinantal point process scheme. They also propose a tractable approximation for this scheme, but do not prove that it converges to the solution of the CVaR problem.

\paragraph{Parameter tuning.}
For every training task we jointly tune the parameters $\gamma_x$ and $\gamma_q$. We tune $\gamma_x$ over the values $10^{-i}$, $2\cdot10^{-i}$ and $5\cdot 10^{-i}$ for $i\ge 1$ (similarly to our previous experiments) and we tune $\gamma_q$ over the values $10^{-i}$ and $3\cdot 10^{-i}$ for $i\ge 1$. The best-performing values of $(\gamma_x, \gamma_q)$ are $(0.02, 0.003)$ for Digits/CVaR; $(0.02, 3\cdot 10^{-7})$ for Digits/$\cs$; $(0.05, 3\cdot 10^{-5})$ for ImageNet/CVaR; and $(0.02, 3\cdot 10^{-11})$ for ImageNet/$\cs$. 
We use batch size $n=500$ throughout. 

\begin{figure}
	\begin{center}
		\includegraphics[width=0.9\linewidth]{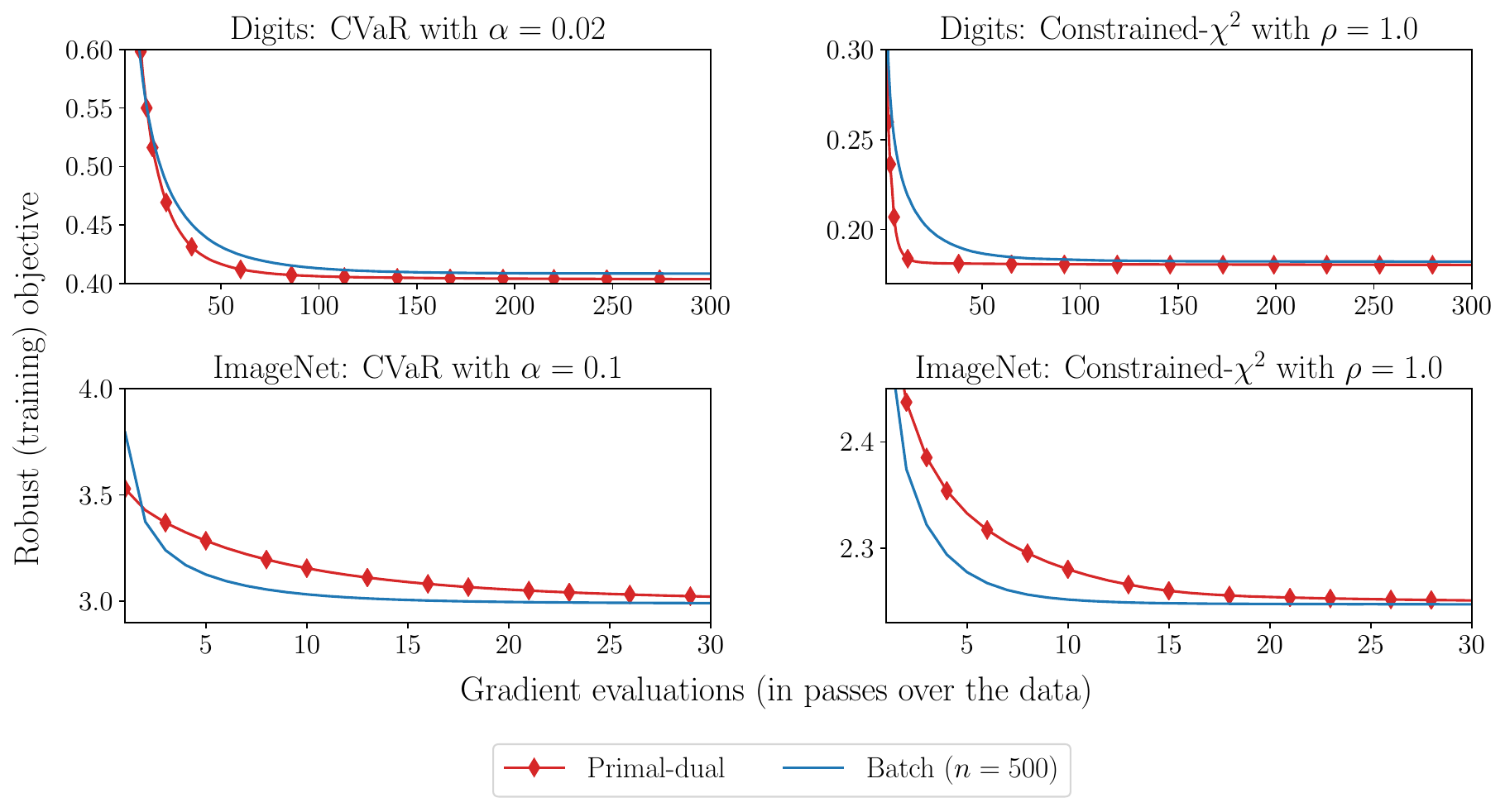}
		\caption{Comparison of batch methods to primal-dual methods. We observe that the primal-dual methods are more efficient on the Digits experiment, but the trend reverses on the large-scale ImageNet experiment.}
		\label{fig:primal-dual}
	\end{center}
\end{figure}

\paragraph{Discussion of results.}
Figure~\ref{fig:primal-dual} compares primal-dual and mini-batch primal methods with the best-performing hyperparameters, for two datasets and two objectives. For the Digits experiment, the primal-dual method perform better that the primal-only method (for $\cs$ significantly so). 
This may appear surprising, since the primal-dual complexity guarantees are larger by an additional factor of $N=60.6$K for this dataset. 
However, a closer look at the analysis of primal-dual methods shows that the term $NB^2$ is actually an upper bound on $\sum_{i=1}^N [\ell(x;s_i)]^2$ at $x=x_1,x_2,\ldots$. As the method converges, many data points are correctly classified with high confidence and therefore have very low value of $[\ell(x;s_i)]^2$. Hence, a more realistic complexity estimate would replace $N$ by the number of incorrectly classified training points, which for Digits is quite small (less than 100). Moreover, we observe that the optimal value of $\gamma_x$ for primal-dual methods is significantly larger than the corresponding step size for the primal-only method, likely because $\tilde{g}^{x}$ gives uniform weights to each $s_i$ as opposed to the adversarial weight of the primal-only method. The larger step sizes enable more rapid optimization over $x$.

For the larger-scale ImageNet experiment, the primal-only method significantly outperforms the primal-dual method. This is consistent with the above discussion, since here the number of misclassified training examples is large (more than 100K).

As an additional illustration of the superior scalability of primal-only method, consider a  thought experiment where we replicate each element in our dataset $m$ times to form a new dataset of size $mN$. Clearly, this will have no impact on the primal-only method. In contrast, the norm of $\tilde{g}^{q}$ will grow by a factor of $m$, and we may expect the complexity of the method to increase by that factor as well.

Finally, we remark that tuning the primal-dual method is considerably more difficult than tuning the primal-only method. In addition to having two learning rates to search over, using an overly large value for $\gamma_q$ typically causes the algorithm to converge to a suboptimal point rather than diverge. Therefore, the common procedure of decreasing the learning rate until divergence no longer occurs will fail for the primal-dual method.
 
\end{document}